\numberwithin{equation}{section}
\newtheorem{theorem}{Theorem}
\newtheorem{proposition}[theorem]{Proposition}
\newtheorem{lemma}[theorem]{Lemma}
\newtheorem{corollary}[theorem]{Corollary}
\newtheorem{conjecture}[theorem]{Conjecture}
\theoremstyle{remark}
\newtheorem*{remark}{Remark}
\newtheorem{remarknu}[theorem]{Remark}
\newtheorem*{note}{Note}
\def\al{\alpha}
\def\be{\beta}
\def\de{\delta}
\def\ga{\gamma}
\def\ep{\varepsilon}
\def\la{\lambda}
\def\om{\omega}
\def\De{\bigtriangleup}
\def\Na{\bigtriangledown}
\def\Z{{\mathbb Z}}
\def\V#1{\Vert #1\Vert}
\def\po#1#2{(#1)_#2}
\def\coef#1{\left\langle#1\right\rangle}
\def\iso{\operatorname{\#iso}}
\def\Iso{\operatorname{\#Iso}}
\def\End{\operatorname{end}}
\def\Estring{\operatorname{\#1str}}
\def\EEstring{\operatorname{\#2str}}
\def\Pol{\operatorname{Pol}}
\def\fl#1{\left\lfloor#1\right\rfloor}
\def\cl#1{\left\lceil#1\right\rceil}
\begin{document}
\title[Mod-$3^k$ behaviour of recursive sequences]{A 
method for determining the mod-$3^k$ behaviour of
recursive sequences}
\author[C. Krattenthaler and 
T.\,W. M\"uller]{C. Krattenthaler$^{\dagger}$ and
T. W. M\"uller$^*$} 

\address{$^{\dagger*}$Fakult\"at f\"ur Mathematik, Universit\"at Wien,
Nordbergstrasze~15, A-1090 Vienna, Austria.
WWW: {\tt http://www.mat.univie.ac.at/\lower0.5ex\hbox{\~{}}kratt}.}

\address{$^*$School of Mathematical Sciences, Queen Mary
\& Westfield College, University of London,
Mile End Road, London E1 4NS, United Kingdom.%\newline
%WWW: \tt http://www.maths.qmw.ac.uk/\~{}twm/.
}

\thanks{$^\dagger$Research partially supported by the Austrian
Science Foundation FWF, grants Z130-N13 and S9607-N13,
the latter in the framework of the National Research Network
``Analytic Combinatorics and Probabilistic Number
Theory"\newline\indent
$^*$Research supported by Lise Meitner Grant M1201-N13 of the Austrian
Science Foundation FWF}

\subjclass[2010]{Primary 05A15;
Secondary 05E99 11A07 20E06 20E07 68W30}

\keywords{Polynomial recurrences, 
free subgroup numbers, 
Catalan numbers, Motzkin numbers, Riordan numbers,
Schr\"oder numbers, Eulerian numbers, 
trinomial coefficients, Delannoy numbers}

\begin{abstract}
We present a method for obtaining congruences modulo powers of~$3$ for
sequences given by recurrences of finite 
depth with polynomial coefficients.
We apply this method to 
Catalan numbers, Motzkin numbers, Riordan numbers,
Schr\"oder numbers, Eulerian numbers, 
trinomial coefficients, Delannoy numbers, and to 
functions counting free subgroups of finite index in
the inhomogeneous modular group and its lifts.
This leads to numerous new results, including many extensions
of known results to higher powers of~$3$. 
\end{abstract}
\maketitle

\section{Introduction}

In \cite{DeSaAA}, Deutsch and Sagan present a rather systematic
study of the behaviour of Catalan and Motzkin numbers and related
sequences (mostly) modulo $2$ and~$3$. In order to arrive at their
results, they use clever ad hoc arguments, combining combinatorial
and computational techniques. The results in \cite{DeSaAA} which are
of particular interest to us here are those concerning the modulus~3.
One finds numerous such theorems there,
determining the behaviour of Motzkin numbers, Motzkin prefix numbers,
Riordan numbers, central trinomial coefficients, central binomial
coefficients, Catalan numbers, central Delannoy numbers, Schr\"oder
numbers, hex tree numbers, central Eulerian numbers, and Ap\'ery
numbers modulo~3. The purpose of the present paper is to demonstrate
that, with the 
exception of the Ap\'ery numbers, 
all these --- seemingly unrelated --- results are in fact
direct consequences of a {\it single} ``master theorem,"  namely
Theorem~\ref{thm:general} in Section~\ref{sec:gen}. 
Moreover, this same theorem also allows us to refine all these
results to congruences modulo {\it arbitrary powers} of~$3$. 

A second motivation for our paper comes from \cite{MuPu}.
In Theorem~3(i) of that paper, the number of free subgroups 
of given index in the inhomogeneous modular group $PSL_2(\Z)$ is
considered modulo~3. Our original goal was to extend that theorem to
higher powers of~3. The corresponding results are described in
Section~\ref{sec:Free}. In the course of our work on this problem, we
became aware of the paper \cite{DeSaAA} of Deutsch and Sagan, which
pointed the way to many further applications of the method used, and,
in particular, to the unifying ``master theorem."

This ``master theorem" embeds itself into a general
method which applies to a class of sequences given by a
recurrence of finite depth with polynomial
coefficients, and gives rise to
congruences modulo arbitrarily large powers of~$3$.
More precisely, let
\begin{equation} \label{eq:Psidef} 
\Psi(z)=\sum _{k\ge0} ^{}\sum _{n_1>\cdots >n_k\ge0} ^{}
z^{3^{n_1}+3^{n_2}+\dots+3^{n_k}}
=\prod _{j=0} ^{\infty} (1+z^{3^j}).
\end{equation}
Suppose we are given an integer sequence $(f_n)_{n\ge0}$, 
whose generating function
$F(z)=\sum _{n\ge0} ^{}f_n\,z^n$ satisfies a (formal) differential
equation of the form
\begin{equation} \label{eq:diffeq}
\mathcal P(z;F(z),F'(z),F''(z),\dots,F^{(s)}(z))=0,
\end{equation}
where $\mathcal P$ is a polynomial with integer coefficients, which has a
unique power series solution $F(z)$ with integer coefficients.
(In the literature, series obeying a polynomial relation of the
form \eqref{eq:diffeq} are known as {\it differentially algebraic}
series; see, for instance, \cite{BeReAA}.)
In its simplest form,
given a $3$-power $3^\ga$,
what our method affords is an algorithmic procedure to find
a polynomial in the {\it basic series} $\Psi(z)$ given by
\eqref{eq:Psidef} with coefficients which are Laurent polynomials in
$z$ and $1+z$, such that this
polynomial agrees with $F(z)$ when coefficients are reduced 
modulo~$3^\ga$. It turns out that, to make the method more
flexible, in applications one will also have to use variations of the basic
series $\Psi(z)$ obtained by substitutions of the variable $z$.
See Section~\ref{sec:method} for the description of the method and
these variations. 

We point out that this method is in the spirit of
the one developed in \cite{KaKMAA}, where a generating function
method was described to determine the behaviour of
recursive sequences modulo powers of $2$. The most significant
difference lies in the choice of the basic series. In \cite{KaKMAA},
one works with the series $\Phi(z)=\sum _{n\ge0} ^{}z^{2^n}$ and
variations thereof, whereas here we put the series $\Psi(z)$ in the
focus of our attention. A second notable difference is that, because
of the particular behaviour of powers of $\Psi(z)$ modulo~$3$, we
have to allow powers of $1+z$ as denominators in the
coefficients of the polynomials of $\Psi(z)$ that we are considering
here. (In \cite{KaKMAA}, coefficients of the polynomials in $\Phi(z)$
were Laurent polynomials in $z$ only.)

The special feature of the aforementioned ``master theorem,"
Theorem~\ref{thm:general}, is that it treats, in a uniform fashion,
functional equations of the form \eqref{eq:diffeq}, which, after reduction
modulo~$3$, become {\it quadratic} in $F(z)$, apart from satisfying a few more
technical conditions. 
The ubiquity of applications of Theorem~\ref{thm:general}, which we
describe in Sections~\ref{sec:Motzkin}--\ref{sec:Eulerian},
ultimately stems from the fact that numerous combinatorial sequences
have generating functions satisfying a quadratic functional equation.

\medskip
In the remainder of this introduction, we briefly describe the
contents of the paper. 
The next two sections discuss our main character, the formal
power series $\Psi(z)$ defined in \eqref{eq:Psidef}. 
The announced method for finding congruences for recursive sequences
requires knowledge of certain theoretical properties of $\Psi(z)$.
First of all, one needs to know how to extract the explicit value of 
a concrete coefficient in a polynomial expression in $\Psi(z)$ 
modulo a given $3$-power. 
This is discussed in Section~\ref{sec:extr}. 
The appendix 
\begin{comment}
in \cite{KrMuAE}
\end{comment}
provides the resulting congruences modulo~$27$
for the coefficients of $\Psi^3(z)$ and $\Psi^5(z)$.
Section~\ref{sec:extr} also contains several theoretical results pertaining
to dependence/independence of powers of $\Psi(z)$, which are
used in Section~\ref{sec:Psi}.
Second, it is of great interest to know 
polynomial identities for $\Psi(z)$ modulo a given $3$-power
which are of {\it minimal degree}. These {\it ``minimal polynomials"}
are the subject of Section~\ref{sec:Psi}. While it is indeed
easy to see that $\psi(z)$ is algebraic modulo powers of~$3$,
the finding of polynomial relations of {\it minimal\/} degree is quite
challenging. We provide a lower bound on the minimal degree, which
in particular shows that
the series $\Psi(z)$ is transcendental over 
$\mathbb Z[z]$ (see Theorem~\ref{conj:1}).
In
Section~\ref{sec:method}, we describe our method of expressing the
generating function of a recursive sequence, when reduced modulo a
given $3$-power, as a polynomial in $\Psi(z)$ with coefficients that
are Laurent polynomials in $z$ and $1+z$. 

We then proceed with 
a first illustration of our method, by applying it to ``almost
central" binomial coefficients, see Section~\ref{sec:sample}.

The ``centrepiece" of our paper is Section~\ref{sec:gen},
which presents the aforementioned ``master theorem." This theorem
describes a general family of functional-differential equations,
where our method from Section~\ref{sec:method} works in a completely
automatic fashion. We demonstrate the power of this theorem 
in the subsequent sections by applying it --- in this order ---
to Motzkin numbers, Motzkin prefix numbers, Riordan numbers,
central trinomial coefficients, central binomial coefficients and their
sums, Catalan numbers, central Delannoy numbers, Schr\"oder numbers,
hex tree numbers, free subgroup numbers
for lifts of the inhomogeneous modular group,
and central Eulerian numbers. All congruence results modulo~$3$
from \cite{DeSaAA}
are derived completely automatically, and moreover we display
the generating function results modulo~9 and 27 which are obtained by
applying our method. The case of central Eulerian numbers is in fact
rather challenging, since it requires several preparations until one
sees that our method is actually applicable, cf.\
Section~\ref{sec:Eulerian}. 
It would cover too much space to display the explicit description
of the congruence classes modulo~$9$ or $27$ (or higher $3$-powers)
for all the above sequences, although this only
amounts to routine calculations. We therefore confine ourselves
to just one illustration, concerning the free subgroup numbers
of $PSL_2(\Z)$, our original motivation for
this paper; see Section~\ref{sec:Free}.

Our paper concludes with a short discussion of congruences modulo powers
of~$3$ for Ap\'ery numbers. They provide examples of number sequences
where our method does not work. Nevertheless, the pattern which they
satisfy modulo~$9$ seems to be very interesting; see
Conjectures~\ref{thm:Apery9} and \ref{thm:Apery39}.

\begin{note}
This paper is accompanied by a {\sl Mathematica} file
and a {\sl Mathematica} notebook so that an interested reader is
able to redo (most of) the computations that are presented in
this article. File and notebook are available at the article's
website
{\tt http://www.mat.univie.ac.at/\lower0.5ex\hbox{\~{}}kratt/artikel/3psl2z.html}.
\end{note}

\section{Coefficient extraction from powers of $\Psi(z)$}
\label{sec:extr}

In Section~\ref{sec:method} we are going to describe a method for expressing
differentially algebraic formal power series, after
the coefficients of the series have been reduced modulo $3^\ga$, 
as polynomials in the
Cantor-like series\footnote{We call it ``Cantor-like"
since the exponents of the monomials
appearing in $\Psi(z)$ all have a ternary expansion consisting of
$1$'s and $0$'s only. This is reminiscent of the ternary expansion of
the numbers in the classical Cantor set all of which have a ternary
expansion consisting of $2$'s and $0$'s only.} $\Psi(z)$ (see \eqref{eq:Psidef} for the definition), the
coefficients being Laurent polynomials in $z$ and $1+z$. 
Such a method would be
without value if we could not, at the same time, provide a procedure
for extracting coefficients from the series $(1+z)^{-L}\Psi^K(z)$,
where $K$ and $L$ are non-negative integers. The description
of such a procedure is the topic of this section.
Roughly speaking, this procedure consists of expanding powers of
$\Psi(z)$ in terms of the auxiliary series in \eqref{eq:tHdef}
(see \eqref{eq:Psipoteven} and \eqref{eq:Psipotodd}), which can
be further expanded using only a subclass of these series 
(see Lemma~\ref{lem:Hbi}),
and then reading the coefficients from these auxiliary series, which
is (more or less) straightforward (see the appendix).

\medskip
We begin with an expansion of $\Psi^2(z)$, on which all subsequent
arguments in this section will be based.

\begin{lemma} \label{lem:Psi-2}
We have
\begin{equation} \label{eq:Psi-2}
\Psi^2(z)=\frac {1} {1+z}\sum_{s\ge0}
\sum_{k_1>\dots>k_s\ge0}3^s
\prod _{j=1} ^{s}\frac {z^{3^{k_j}}(1+z^{3^{k_j}})} {1+z^{3^{k_j+1}}},
\end{equation}
where the term for $s=0$ has to be interpreted as $1$.
\end{lemma}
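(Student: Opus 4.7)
The plan is to recognize the right-hand side as an infinite product and then telescope.

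First I would note that the factor $3^s$ together with the sum over strictly decreasing tuples $k_1>\cdots>k_s\ge 0$ is just the natural expansion of a subset-indexed product. That is, the double sum
\[
\sum_{s\ge0}\sum_{k_1>\cdots>k_s\ge0}3^s\prod_{j=1}^{s}\frac{z^{3^{k_j}}(1+z^{3^{k_j}})}{1+z^{3^{k_j+1}}}
\]
equals the formal infinite product
\[
\prod_{k\ge0}\left(1+\frac{3\,z^{3^{k}}(1+z^{3^{k}})}{1+z^{3^{k+1}}}\right),
\]
which converges in $\mathbb Z[[z]]$ because the factor indexed by $k$ is $1$ plus a series starting in degree $3^{k}$.

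Next I would combine each factor over a common denominator, giving
\[
1+\frac{3\,z^{3^{k}}(1+z^{3^{k}})}{1+z^{3^{k+1}}}=\frac{1+3z^{3^{k}}+3z^{2\cdot3^{k}}+z^{3^{k+1}}}{1+z^{3^{k+1}}}=\frac{(1+z^{3^{k}})^{3}}{1+z^{3^{k+1}}},
\]
using the key identity $(1+x)^{3}=1+3x+3x^{2}+x^{3}$ with $x=z^{3^{k}}$ (so that $x^{3}=z^{3^{k+1}}$). This is the one substantive algebraic observation of the proof, and identifies the factor as a perfect cube over a single linear denominator.

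Then the product telescopes. For any $N$,
\[
\prod_{k=0}^{N}\frac{(1+z^{3^{k}})^{3}}{1+z^{3^{k+1}}}=(1+z)^{3}\cdot\frac{\prod_{k=1}^{N}(1+z^{3^{k}})^{2}}{1+z^{3^{N+1}}},
\]
since each factor $1+z^{3^{k}}$ with $1\le k\le N$ appears three times in the numerator and once in the denominator, while $(1+z)^{3}$ and $(1+z^{3^{N+1}})^{-1}$ are the endpoints. Passing to the formal limit $N\to\infty$ (legitimate because $1+z^{3^{N+1}}\to 1$ in $\mathbb Z[[z]]$) yields $(1+z)^{3}\prod_{k\ge1}(1+z^{3^{k}})^{2}$.

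Finally, since $\Psi(z)=(1+z)\prod_{k\ge1}(1+z^{3^{k}})$, we have $\prod_{k\ge1}(1+z^{3^{k}})^{2}=\Psi^{2}(z)/(1+z)^{2}$, so the whole sum equals $(1+z)\Psi^{2}(z)$, and multiplying by the prefactor $1/(1+z)$ gives $\Psi^{2}(z)$, as claimed. I do not expect any real obstacle: once one spots the subset-sum-as-product reformulation, the cube identity and the telescoping are essentially forced.
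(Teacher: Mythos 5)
Your proof is correct and is essentially the paper's own argument run in reverse: the paper starts from $\Psi^2(z)=\prod_{j\ge0}(1+z^{3^j})^2$, rewrites it as $\frac{1}{1+z}\prod_{j\ge0}\frac{(1+z^{3^j})^3}{1+z^{3^{j+1}}}$, identifies each factor as $1+3\frac{z^{3^j}(1+z^{3^j})}{1+z^{3^{j+1}}}$ via the same cube identity, and expands the product, whereas you start from the expanded sum and telescope back. The two directions are equivalent in content, so there is nothing to add.
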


\begin{proof}
We have
\begin{align*}
\Psi^2(z)&=\prod _{j=0} ^{\infty} (1+z^{3^j})^2\\
&=\frac {1} {1+z}\prod _{j=0} ^{\infty}
\frac {(1+z^{3^j})^3} {1+z^{3^{j+1}}} \\
&=\frac {1} {1+z}\prod _{j=0} ^{\infty}
\left(1+3\frac {z^{3^j}(1+z^{3^j})} {1+z^{3^{j+1}}} \right).
\end{align*}
Expansion of the product directly results in \eqref{eq:Psi-2}.
%We have
%\begin{align*}
%\Psi^2(z)&=\prod _{j=0} ^{\infty} (1+z^{3^j})^2\\
%&=\frac {1} {1+z}(1+z)^3\prod _{j=1} ^{\infty} (1+z^{3^j})^2\\
%&=\frac {1} {1+z}(1+z^{3}+3z(1+z))\Psi^2(z^3)\\
%&=\frac {1} {1+z}(1+z^{3})\Psi^2(z^3)
%+3z\Psi^2(z^3).
%\end{align*}
%If one iterates this relation $m$ times, then a straightforward
%induction on $m$ shows that
%$$
%\Psi^2(z)=\Psi^2(z^{3^m})
%\sum_{s=0}^{m}\
%\sum_{0\le k_1<\dots<k_s\le m-1} {3^s} 
%\Bigg(
%\prod _{j=1} ^{s}\frac {z^{3^{k_j}}(1+z^{3^{k_j}})}
%      {1+z^{3^{k_{j-1}+1}}}
%\Bigg)
%\frac {1+z^{3^m}} {1+z^{3^{k_s+1}}},
%$$
%where, by convention, $k_0=-1$, and the term in the sum over $s$ 
%has to be interpreted as $(1+z^{3^m})/(1+z)$ if $s=0$.
%Letting $m\to\infty$ and reversing the order of the summation indices 
%$k_j$ now leads to \eqref{eq:Psi-2}.
\end{proof}

\begin{remark}
The significance of Lemma~\ref{lem:Psi-2} lies largely in the
fact that, if one considers \eqref{eq:Psi-2} modulo a $3$-power,
then the sum over $s$ on the right-hand side gets truncated. We shall
exploit this frequently. The simplest example is the reduction of
\eqref{eq:Psi-2} modulo~3, which we state here explicitly as
\begin{equation} \label{eq:PsiEq}
\Psi^2(z)=\frac {1} {1+z}\quad 
\text {modulo }3,
\end{equation}
since we shall make use of it later at several points.
\end{remark}

For convenience, we write
\begin{equation} \label{eq:tHdef} 
\widetilde 
H_{a_1,a_2,\dots,a_s}(z):=
\sum_{k_1>\dots>k_s\ge0}
\prod _{j=1} ^{s}\left(\frac {z^{3^{k_j}}(1+z^{3^{k_j}})}
{1+z^{3^{k_j+1}}}\right)^{a_j}.
\end{equation}
Using this notation, Lemma~\ref{lem:Psi-2} can be rephrased as
\begin{equation} \label{eq:Psi-2A}
\Psi^2(z)=\frac {1} {1+z}\sum_{s\ge0}
3^s\widetilde H_{\underbrace{\scriptstyle 1,1,\dots,1}_{s\text{ times}}}(z).
\end{equation}
We point out that $\widetilde 
H_{a_1,a_2,\dots,a_s}(z)$ is the monomial quasi-symmetric function
$$M_{a_1,a_2,\dots,a_s}(R(0),R(1),\dots),$$ 
where
\begin{equation} \label{eq:R} 
R(k)=\frac {z^{3^{k}}(1+z^{3^{k}})}
{1+z^{3^{k+1}}}
\end{equation}
(cf.\ \cite[p.~357]{StanBI}).

It is not difficult to see that powers of
$\Psi(z)$ can be expressed in the form
\begin{equation} \label{eq:Psipoteven}
\Psi^{2K}(z)=\frac {1} {(1+z)^K}\sum _{r=1} ^{K}
{\sum _{a_1,\dots,a_r\ge1} ^{}}
c_{K}(a_1,a_2,\dots,a_r)
\widetilde H_{a_1,a_2,\dots,a_r}(z),
\end{equation}
respectively as
\begin{equation} \label{eq:Psipotodd}
\Psi^{2K+1}(z)=\frac {1} {(1+z)^K}\Psi(z)\sum _{r=1} ^{K}
{\sum _{a_1,\dots,a_r\ge1} ^{}}
c_{K}(a_1,a_2,\dots,a_r)
\widetilde H_{a_1,a_2,\dots,a_r}(z),
\end{equation}
where the coefficients $c_{K}(a_1,a_2,\dots,a_r)$ 
can be determined explicitly.

Consequently, 
the coefficient extraction problem will be solved if we
are able to say how to extract coefficients from the series
$$(1+z)^{-L}\widetilde H_{a_1,a_2,\dots,a_r}(z)\quad \text {and}\quad 
(1+z)^{-L}\Psi(z)\widetilde H_{a_1,a_2,\dots,a_r}(z).$$
This is in fact rather straightforward,
if one has the patience to distinguish enough subcases. 
Roughly speaking, one analyses coefficient extraction from the subexpressions 
$$
\Psi(z)\frac {1} {(1+z)^L}
\prod _{j=1} ^{s}\left(\frac {z^{3^{k_j}}(1+z^{3^{k_j}})} 
{1+z^{3^{k_j+1}}}\right)^{a_j}
$$
and
$$
\frac {1} {(1+z)^L}
\prod _{j=1} ^{s}\left(\frac {z^{3^{k_j}}(1+z^{3^{k_j}})} 
{1+z^{3^{k_j+1}}}\right)^{a_j}.
$$
This is made more precise in the appendix.
\begin{comment}
How this is done explicitly for the series which are relevant for the
moduli~$9$ and $27$, is worked out in the appendix of \cite{KrMuAE}.
\end{comment}
How this is used is exemplified for the case of 
free subgroup numbers of 
the inhomogeneous modular group in Section~\ref{sec:Free}.

\medskip
%\begin{comment} %Das bleibt ausgeblendet
It is also important to have information on dependence or
independence of the series $\widetilde H_{a_1,a_2,\dots,a_r}(z)$.
When $a_1,a_2,\dots,a_r$ vary over all possible choices, 
the series $\widetilde H_{a_1,a_2,\dots,a_r}(z)$ are {\it not\/}
linearly independent over the ring $(\Z/3\Z)[z]$, or over
$(\Z/3^\ga\Z)[z]$ for a positive integer $\ga$.

However, we shall show below (see Corollary~\ref{lem:Hind}) that, 
if we restrict to $a_i$'s which are not divisible by~$3$, 
then the corresponding series $\widetilde H_{a_1,a_2,\dots,a_r}(z)$,
together with the (trivial) series $1$, 
{\it are} linearly independent over $(\Z/3\Z)[z]$, and hence
as well over $(\Z/3^\ga\Z)[z]$ for arbitrary positive integers
$\ga$ (and also over $\Z[z]$).
This fact underpins the arguments in the next section.

For {\it arbitrary} tuples of integers $(b_1,b_2,\dots,b_s)$, the
series $\widetilde H_{b_1,b_2,\dots,b_s}(z)$
can be expressed as a linear combination over
$(\Z/3^\ga\Z)[z,(1+z)^{-1}]$ 
of $1$ and the {\it former} series, and the corresponding
computation can be carried out in
an algorithmic fashion; see Lemma~\ref{lem:Hbi}. 

\medskip
The problem of linear dependence/independence will be disposed of by
transferring it to the problem of dependence/independence for the
simpler series
$$
H_{a_1,a_2,\dots,a_r}(z):=\sum_{n_1>n_2>\dots>n_r\ge0}
z^{a_13^{n_1}+a_23^{n_2}+\dots+a_r3^{n_r}}.
$$
These series are in fact the analogues, for the prime~$3$, of series
which were considered in \cite[Sec.~3]{KaKMAA} for the prime $2$, 
and which were instrumental there in a theory for solving
differential and functional equations modulo powers of $2$.
To see their relevance in the present context, one should observe
that 
\begin{align} \notag
\widetilde H_{a_1,a_2,\dots,a_r}(z)&=
\sum_{k_1>\dots>k_r\ge0}
\prod _{j=1} ^{s}\left(\frac {z^{3^{k_j}}(1+z^{3^{k_j}})}
{1+z^{3^{k_j+1}}}\right)^{a_j}\\
\notag
&=
\sum_{k_1>\dots>k_r\ge0}
\prod _{j=1} ^{s}\left(\frac {z^{3^{k_j}}(1+z)^{3^{k_j}}}
{(1+z)^{3^{k_j+1}}}\right)^{a_j}\quad 
\text {modulo }3\\
&=
H_{a_1,a_2,\dots,a_r}\left(\frac {z} {(1+z)^2}\right)\quad 
\text {modulo }3.
\label{eq:HH}
\end{align}
The idea which we have in mind is that,
as a result of the last congruence,
independence of (certain) series $H_{a_1,a_2,\dots,a_r}(z)$ implies
{\it a fortiori} independence of the corresponding series
$\widetilde H_{a_1,a_2,\dots,a_r}(z)$.

The auxiliary results which follow now are analogues of Lemma~6,
Corollary~7, and Lemma~9 in \cite{KaKMAA}.
The first of these pertains to the uniqueness of
representations of integers as sums of powers of~$3$ with
multiplicities, tailor-made for application to the series
$H_{a_1,a_2,\dots,a_r}(z)$.

\begin{lemma} \label{lem:aiodd}
Let $d,r,s$ be positive integers with $r\ge s$, $c$ an integer
with $\vert c\vert\le d$, and let 
$a_1,a_2,\dots,a_r$ respectively $b_1,b_2,\dots,b_s$ be two sequences of 
integers, none of them divisible by~$3$, 
with $1\le a_i\le d$ for $1\le i\le r$, and $1\le b_i\le
d$ for $1\le i\le s$. If
\begin{equation} \label{eq:a2b2}
a_13^{2rd}+a_23^{2(r-1)d}+\dots+a_r3^{2d}=
b_13^{n_1}+b_23^{n_2}+\dots+b_s3^{n_s}+c
\end{equation}
for integers $n_1,n_2,\dots,n_s$ with $n_1>n_2>\dots>n_s\ge0$, then
$r=s$, $c=0$, $a_i=b_i$, and $n_i=2d(r+1-i)$ for $i=1,2,\dots,r$.
\end{lemma}

\begin{proof}
We use induction on $r$.

First, let $r=1$. Then $s=1$ as well, and \eqref{eq:a2b2} becomes
\begin{equation} \label{eq:a2b2A}
a_13^{2d}=b_13^{n_1}+c.
\end{equation}
If $n_1>2d$, then the above equation, together with the assumption
that $a_1$ is not divisible by~$3$, implies
$$
a_13^{2d}\equiv c\quad \text {modulo }3^{2d+1}.
$$
However, by assumption, we have $\vert c\vert\le d<a_13^{2d}$, which is
absurd.

If $d<n_1< 2d$, then it follows from \eqref{eq:a2b2A} that $c$ must
be divisible by $3^{n_1}$. Again by assumption, we have 
$\vert c\vert\le d<3^d<3^{n_1}$, so that $c=0$. But then
\eqref{eq:a2b2A} cannot be satisfied since $b_1$ is
assumed not to be divisible by~$3$.

If $0\le n_1\le d$, then we estimate
$$
b_13^{n_1}+c\le d\left(3^{d}+1\right)\le (3^d-1)(3^d+1)<3^{2d},
$$
which is again a contradiction to \eqref{eq:a2b2A}. 

The only remaining possibility is $n_1=2d$. If this is substituted in
\eqref{eq:a2b2A} and the resulting equation is combined with 
$\vert c\vert\le d<3^{2d}$, then the conclusion is that the equation
can only be satisfied if $c=0$ and $a_1=b_1$, in accordance with
the assertion of the lemma.

\medskip
We now perform the induction step. We assume that the assertion of
the lemma is established for all $r<R$, and we want to show that this
implies its validity for $r=R$. Let $t$ be maximal such that $n_t\ge
2d$. Then reduction of \eqref{eq:a2b2} modulo $3^{2d}$ yields
\begin{equation} \label{eq:a2b2B}
b_{t+1}3^{n_{t+1}}+b_{t+2}3^{n_{t+2}}+\dots+b_s3^{n_s}+c\equiv 0\quad 
\text {modulo }3^{2d}.
\end{equation}
Let us write $b\cdot 3^{2d}$ for the left-hand side in
\eqref{eq:a2b2B}. Then, by dividing \eqref{eq:a2b2} (with $R$ instead
of $r$) by $3^{2d}$, we obtain
\begin{equation} \label{eq:a2b2C}
a_13^{2(R-1)d}+a_23^{2(R-2)d}+\dots+a_{R-1}3^{2d}=
b_13^{n_1-2d}+b_23^{n_2-2d}+\dots+b_t3^{n_t-2d}+b-a_R.
\end{equation}
We have
\begin{align*}
0\le b&\le
3^{-2d}d\left(3^{2d-1}+3^{2d-2}+\dots+3^{2d-s+t}+1\right)\\[2mm]
&\le 3^{-2d}d\left(\tfrac {1} {2}(3^{2d}-3^{2d-s+t})+1\right)\le d.
\end{align*}
Consequently, we also have $\vert b-a_R\vert\le d$.
This means that we are in a position to apply the induction
hypothesis to \eqref{eq:a2b2C}. The conclusion is that
$t=R-1$, $b-a_R=0$, $a_i=b_i$, and $n_i=2d(R+1-i)$ for
$i=1,2,\dots,R-1$. If this is used in \eqref{eq:a2b2} with $r=R$,
then we obtain
$$
a_R3^{2d}=c
$$
or
$$
a_R3^{2d}=b_R3^{n_R}+c,
$$
depending on whether $s=R-1$ or $s=R$. The first case is absurd since
$c\le d<3^{2d}\le a_R3^{2d}$. On the other hand, the second case
has already been
considered in \eqref{eq:a2b2A}, and we have seen there that it
follows that $c=0$, $a_R=b_R$, and $n_R=2d$.

This completes the proof of the lemma.
\end{proof}

The independence of the series $H_{a_1,a_2,\dots,a_r}(z)$
with all $a_i$'s not divisible by~$3$ is now an easy consequence.

\begin{corollary} \label{lem:Hind}
The series $H_{a_1,a_2,\dots,a_r}(z)$, with all $a_i$\!'s not divisible
by~$3$, together
with the series $1$ are
linearly independent over $(\Z/3\Z)[z]$, and consequently as
well over $(\Z/3^\ga\Z)[z]$ for an arbitrary positive integer
$\ga$, and over $\Z[z]$.
\end{corollary}

\begin{proof}
Let us suppose that
\begin{equation} \label{eq:Hlincomb}
p_0(z)+\sum _{i=1}
^{N}p_i(z)H_{a_1^{(i)},a_2^{(i)},\dots,a_{r_i}^{(i)}}(z)=0,
\end{equation} 
where the $p_i(z)$'s are non-zero polynomials in $z$
over $\Z/3\Z$ (respectively over $\Z/3^\ga\Z$, or over $\Z$), the
$r_i$'s are positive integers, and 
$a_j^{(i)}$, $j=1,2,\dots,r_i$, $i=1,2,\dots,N$, are integers not
divisible by~$3$. 
We may also assume that the tuples
$(a_1^{(i)},a_2^{(i)},\dots,a_{r_i}^{(i)})$, $i=1,2,\dots,N$, 
are pairwise distinct. Choose $i_0$ such
that $r_{i_0}$ is maximal among the $r_i$'s. Without loss of
generality, we may assume that the coefficient of $z^0$ in $p_{i_0}(z)$
is non-zero (otherwise we could multiply both sides of 
\eqref{eq:Hlincomb} by an appropriate power of $z$). Let $d$ be the
maximum of all $a_j^{(i)}$'s and the absolute values of exponents of
$z$ appearing in monomials with non-zero coefficient in
the polynomials $p_i(z)$, $i=0,1,\dots,N$. Then, according to
Lemma~\ref{lem:aiodd} with $r=r_{i_0}$, $a_j=a_j^{(i_0)}$,
$j=1,2,\dots,r_{i_0}$, the coefficient of
$$
z^{a_1^{(i_0)}3^{2rd}+a_2^{(i_0)}3^{2(r-1)d}+\dots+a_r^{(i_0)}3^{2d}}
$$
is $1$ in $H_{a_1^{(i_0)},a_2^{(i_0)},\dots,a_{r_{i_0}}^{(i_0)}}(z)$,
while it is zero in series 
$z^eH_{a_1^{(i_0)},a_2^{(i_0)},\dots,a_{r_{i_0}}^{(i_0)}}(z)$, where
$e$ is a non-zero integer with $\vert e\vert\le d$, and in all other series
$z^eH_{a_1^{(i)},a_2^{(i)},\dots,a_{r_i}^{(i)}}(z)$,
$i=1,\dots,i_0-1,i_0+1,\dots,N$, where $e$ is a (not necessarily
non-zero) integer with $\vert e\vert\le d$.
This contradiction to \eqref{eq:Hlincomb} establishes
the assertion of the corollary.
\end{proof}

In view of \eqref{eq:HH}, the same result holds for the series
$\widetilde H_{a_1,a_2,\dots,a_r}(z)$.

\begin{corollary} \label{cor:Htildeind}
The series $\widetilde H_{a_1,a_2,\dots,a_r}(z)$, 
with all $a_i$\!'s not divisible by~$3$, together
with the series $1$ are
linearly independent over $(\Z/3\Z)[z]$, and consequently as
well over $(\Z/3^\ga\Z)[z]$ for an arbitrary positive integer
$\ga$, and over $\Z[z]$.
\end{corollary}

The reader should recall
from \eqref{eq:Psipoteven} and \eqref{eq:Psipotodd} 
that even powers of $\Psi(z)$ can be
expanded in the series $\widetilde H_{a_1,a_2,\dots,a_s}(z)$,
while odd powers can be expanded in the series
$\Psi(z)\widetilde H_{a_1,a_2,\dots,a_s}(z)$. The next
lemma shows that these two families of series are linearly independent
from each other.

\begin{lemma} \label{lem:evenodd}
Let $\ga$ be a positive integer.
If
\begin{equation} \label{eq:Psirel}
\sum _{j=0} ^{N}a_{2j}(z)\Psi^{2j}(z)
=\sum _{j=1} ^{N}a_{2j-1}(z)\Psi^{2j-1}(z)
\end{equation}
for polynomials $a_i(z)$ in $z$ over $\Z/3^\ga\Z$, then
both sides in \eqref{eq:Psirel} must be zero.
\end{lemma}

\begin{proof}
We use the expansions \eqref{eq:Psipoteven} and \eqref{eq:Psipotodd}. 
This leads to a relation of
the form
\begin{equation} \label{eq:PsiH}
\sum _{} ^{}c_{a_1,a_2,\dots,a_s}(z)
\widetilde H_{a_1,a_2,\dots,a_s}(z)=
\Psi(z)\sum _{} ^{}d_{a_1,a_2,\dots,a_s}(z)
\widetilde H_{a_1,a_2,\dots,a_s}(z),
\end{equation}
for certain polynomials $c_{a_1,a_2,\dots,a_s}(z)$ and
$d_{a_1,a_2,h\dots,a_s}(z)$ in $z$. We consider this relation modulo~$3$.
By \eqref{eq:HH}, we then have
\begin{multline*}
\sum _{} ^{}c_{a_1,a_2,\dots,a_s}(z)
H_{a_1,a_2,\dots,a_s}\left(\frac {z} {(1+z)^2}\right)\\
=
\Psi(z)\sum _{} ^{}d_{a_1,a_2,\dots,a_s}(z)
H_{a_1,a_2,\dots,a_s}\left(\frac {z} {(1+z)^2}\right)\quad 
\text {modulo }3.
\end{multline*}
Next we apply the square root to both sides of
\eqref{eq:PsiEq}, to obtain
$$
\Psi(z)=(1+z)^{-1/2}\quad \text {modulo }3.
$$
Substitution of this congruence in the congruence above then yields
\begin{multline} \label{eq:HpsiH}
\sum _{} ^{}c_{a_1,a_2,\dots,a_s}(z)
H_{a_1,a_2,\dots,a_s}\left(\frac {z} {(1+z)^2}\right)\\
=
(1+z)^{-1/2}\sum _{} ^{}d_{a_1,a_2,\dots,a_s}(z)
H_{a_1,a_2,\dots,a_s}\left(\frac {z} {(1+z)^2}\right)\quad 
\text {modulo }3.
\end{multline}
Now we expand both sides as Laurent series in $1+z$, allowing only a
finite number of positive powers of $1+z$. In order to do so, we
observe that 
$$
\frac {z^k} {(1+z)^{2k}}=\sum _{\ell=0} ^{k}(-1)^\ell \binom k\ell
(1+z)^{-k-\ell}.
$$
This is used in each term of the expansions corresponding to
$H_{a_1,a_2,\dots,a_s}(z/(1+z)^2)$. As a consequence, we see that the
left-hand side of \eqref{eq:HpsiH} is a (formal) Laurent series in $1+z$
with only finitely many positive powers of $1+z$, while the
right-hand side is a Laurent series in $1+z$ of the same type, but
multiplied by $(1+z)^{-1/2}$. This contradiction completes
the proof of the lemma.
\end{proof}

We may now combine 
Corollary~\ref{cor:Htildeind} and Lemma~\ref{lem:evenodd}.

\begin{theorem} \label{thm:Htildeind}
The series $\widetilde H_{a_1,a_2,\dots,a_r}(z)$ and
$\Psi(z)\widetilde H_{a_1,a_2,\dots,a_r}(z)$, 
with all $a_i$\!'s not divisible by~$3$, together
with the series $1$ are
linearly independent over $(\Z/3\Z)[z]$, and consequently as
well over $(\Z/3^\ga\Z)[z]$ for an arbitrary positive integer
$\ga$, and over $\Z[z]$.
\end{theorem}

In order to show that each 
$\widetilde H_{b_1,b_2,\dots,b_s}(z)$
can be expressed as a linear combination over $(\Z/{3^\ga}\Z)[z,(1+z)^{-1}]$
of the series $1$ and series of the form
$\tilde H_{a_1,a_2,\dots,a_r}(z)$, where none of the $a_i$'s is
divisible by~$3$ (see Lemma~\ref{lem:Hbi} below),
we need two auxiliary identities, which are the subject of the
following two lemmas.

\begin{lemma} \label{lem:2ids}
Let $n$ and $k$ be positive integers. Then
\begin{multline} \label{eq:id0}
\left(\frac {z^{3^k}(1+z^{3^k})} {1+z^{3^{k+1}}}\right)^{3n}
=\sum _{b\ge0} ^{}3^{b}\binom {-n}b
\left(\frac {z^{3^{k+1}}(1+z^{3^{k+1}})}
{1+z^{3^{k+2}}}\right)^{n+b}\\
+\underset{b\ge0}{\sum _{a\ge1} ^{}}
\sum _{s_1,s_2,\dots,t_1,t_2,\dots\ge0} ^{}
3^{a+b+\V s+\V t}\binom na\binom {-n}b
\left(\prod _{j\ge1} ^{}\binom {s_{j-1}}{s_j}\binom
{-s_{j-1}}{t_j}\right)\\
\cdot
\left(\frac {z^{3^k}(1+z^{3^k})} {1+z^{3^{k+1}}}\right)^{3(\V
s+n+b)+\V t+a},
\end{multline}
where, by convention, $s_0=n+b$, and we write $\V s$ for
$s_1+s_2+\cdots$, with an analogous meaning for $\V t$.
\end{lemma}
\begin{proof}
First we argue that
the sum in \eqref{eq:id0} is well-defined as a formal power series. 
Namely, given a positive integer $n+b$,
non-zero summands in \eqref{eq:id0} arise only for
$n+b=s_0\ge s_1\ge s_2\ge \cdots$. Hence, 
if we concentrate on vectors $s$ and $t$ for which $\V s+\V t=N$,
with $N$ a fixed positive integer,
then there is only a finite number of vectors $s$ for which non-zero
summands exist. In particular, we have $s_i=0$ for $i>N$. In its
turn, this entails $t_i=0$ for $i>N$ if one wants to have a
non-vanishing summand. Therefore there are as well only finitely
many possibilities for vectors $t$ with $\V t\le N$. 

Now, in order to prove the identity in \eqref{eq:id0},
we start with the expansion
\begin{align} 
\notag
\left(\frac {z^{3^k}(1+z^{3^k})} {1+z^{3^{k+1}}}\right)^{3n}
&=z^{3^{k+1}n}\frac {(1+z^{3^{k+1}})^n} 
{(1+z^{3^{k+2}})^n}
\frac {\left(1+3z^{3^k}\frac {1+z^{3^k}}
{1+z^{3^{k+1}}}\right)^n} 
{\left(1+3z^{3^{k+1}}\frac {1+z^{3^{k+1}}}
{1+z^{3^{k+2}}}\right)^n}\\
&=\sum _{a,b\ge0} ^{}3^{a+b}\binom na\binom {-n}b
\left(\frac {z^{3^k}(1+z^{3^k})} {1+z^{3^{k+1}}}\right)^a
\left(\frac {z^{3^{k+1}}(1+z^{3^{k+1}})}
{1+z^{3^{k+2}}}\right)^{n+b}.
\label{eq:id1}
\end{align}
The part of the double sum consisting of the terms with $a=0$
directly yields the first sum on the right-hand side of
\eqref{eq:id0}. If $a\ge1$, we want ``to get rid of" the
power with exponent $n+b$. In order to do so, we observe that, for an
arbitrary positive integer $m$, we have
\begin{multline*} 
\left(\frac {z^{3^{k+1}}(1+z^{3^{k+1}})}
{1+z^{3^{k+2}}}\right)^{m}
=z^{3^{k+1}m}\frac {(1+z^{3^k})^{3m}} {(1+z^{3^{k+1}})^{3m}} 
\frac {\left(1+3z^{3^{k+1}}\frac {1+z^{3^{k+1}}}
{1+z^{3^{k+2}}}\right)^{m}} 
{\left(1+3z^{3^{k}}\frac {1+z^{3^{k}}}
{1+z^{3^{k+1}}}\right)^{m}}\\
=\sum _{s_1,t_1\ge0} ^{}3^{s_1+t_1}\binom m{s_1}\binom {-m}{t_1}
\left(\frac {z^{3^{k}}(1+z^{3^{k}})}
{1+z^{3^{k+1}}}\right)^{3m+t_1}
\left(\frac {z^{3^{k+1}}(1+z^{3^{k+1}})}
{1+z^{3^{k+2}}}\right)^{s_1}.
\end{multline*}
This relation is now iterated, to get
\begin{multline} \label{eq:k+1->k}
\left(\frac {z^{3^{k+1}}(1+z^{3^{k+1}})}
{1+z^{3^{k+2}}}\right)^{m}\\
=\sum _{s_1,s_2,\dots,t_1,t_2,\dots\ge0} ^{}
3^{\V s+\V t}\left(\prod _{j\ge1} ^{}\binom {s_{j-1}}{s_j}\binom
{-s_{j-1}}{t_j}\right)
\left(\frac {z^{3^k}(1+z^{3^k})} {1+z^{3^{k+1}}}\right)^{3m+3\V
s+\V t},
\end{multline}
where, by convention, $s_0=m$.
Finally, if this identity with $m=n+b$ is substituted in 
\eqref{eq:id1} in the terms with $a\ge1$, then we obtain the
right-hand side of \eqref{eq:id0}.
\end{proof}

\begin{remark}
What Lemma~\ref{lem:2ids} affords is an expansion of
a power of 
\begin{equation} \label{eq:3k} 
z^{3^{k+1}}(1+z^{3^{k+1}})/(1+z^{3^{k+2}})
\end{equation}
with exponent divisible by~$3$
in terms which either have {\it coefficients} with higher
divisibility by~$3$ or are a power of \eqref{eq:3k}
with {\it exponent\/} of
smaller $3$-adic valuation than the original exponent 
(see the term for $b=0$ in the first sum on the right-hand 
side of \eqref{eq:id0}).
\end{remark}

\begin{lemma} \label{lem:1+z}
For all non-negative integers $j$ and positive integers $\al$ and
$\be$, we have
$$
\frac {1} {(1+z^{3^j})^\al}=\Pol_{j,\al,\be}\left(z,(1+z)^{-1}\right)
\quad \text {\em modulo }3^\be,
$$
where $\Pol_{j,\al,\be}\left(z,(1+z)^{-1}\right)$ is a polynomial in $z$ and
$(1+z)^{-1}$ with integer coefficients.
\end{lemma}

\begin{proof} We perform an induction with respect to $j+\be$.
Clearly, for $\be=1$ and $j=0$, there is nothing to prove.

For the induction step, we write
\begin{align*}
\frac {1} {(1+z^{3^j})^\al}&=
\frac {1} {(1+z^{3^{j-1}})^{3\al}}
+\frac {1} {(1+z^{3^j})^\al}
-\frac {1} {(1+z^{3^{j-1}})^{3\al}}\\
&=
\frac {1} {(1+z^{3^{j-1}})^{3\al}}
+\frac {1} {(1+z^{3^{j-1}})^{3\al}(1+z^{3^j})^\al}
\left((1+z^{3^{j-1}})^{3\al}-(1+z^{3^j})^\al\right)\\
&=
\frac {1} {(1+z^{3^{j-1}})^{3\al}}
+\frac {1} {(1+z^{3^{j-1}})^{3\al}(1+z^{3^j})^\al}\\
&\kern4cm
\times
\left((1+3z^{3^{j-1}}+3z^{2\cdot 3^{j-1}}+z^{3^{j}})^{\al}
-(1+z^{3^j})^\al\right)\\
&=
\frac {1} {(1+z^{3^{j-1}})^{3\al}}
+\frac {1} {(1+z^{3^{j-1}})^{3\al}(1+z^{3^j})^\al}\\
&\kern4cm
\times
\sum _{\ell=1} ^{\al}\binom\al\ell 
3^\ell\left(z^{3^{j-1}}+z^{2\cdot 3^{j-1}}\right)^{\ell}
\left(1+z^{3^j}\right)^{\al-\ell}\\
&=
\frac {1} {(1+z^{3^{j-1}})^{3\al}}
+
\sum _{\ell=1} ^{\al}\binom\al\ell 
\frac {3^\ell z^{\ell \cdot 3^{j-1}}} 
{(1+z^{3^{j-1}})^{3\al-\ell}(1+z^{3^j})^\ell}.
\end{align*}
Now the induction hypothesis can be applied, and yields
\begin{multline*}
\frac {1} {(1+z^{3^j})^\al}=
\Pol_{j-1,3\al,\be}\left(z,(1+z)^{-1}\right)\\
+
\sum _{\ell=1} ^{\al}\binom\al\ell 
{3^\ell z^{\ell \cdot 3^{j-1}}} 
\Pol^{3\al-\ell}_{j-1,3\al-\ell,\be-\ell}\left(z,(1+z)^{-1}\right)
\Pol^{\ell}_{j,\ell,\be-\ell}\left(z,(1+z)^{-1}\right)\\
\text {modulo }3^\be.
\end{multline*}
This completes the induction.
\end{proof}

We are now ready for the proof of the announced expansion result.

%\begin{lemma} \label{lem:Hbi}
%For positive integers $b_1,b_2,\dots,b_s$ 
%and integers $\be_1,\be_2,\dots,\be_s$, the series
%$\widetilde H_{b_1,b_2,\dots,b_s}^{\be_1,\be_2,\dots,\be_s}(z)$, when considered
%modulo a given $3$-power $3^\ga$,
%can be expressed as a linear combination
%over $\Z[z^{1/3^e},(1+z^{1/3^e})^{-1}]$ 
%{\em(}for a suitable integer $e${\em)}
%of the series $1$ and series of the form
%$\widetilde H_{a_1,a_2,\dots,a_r}(z)$, where none of the $a_i$'s is divisible by
%$3$.
%Moreover, in the above expansion of the series 
%$\widetilde H_{b_1,b_2,\dots,b_s}(z)=
%\widetilde H_{b_1,b_2,\dots,b_s}^{0,0,\dots,0}(z)$
%we have $e=0$; that is, in that case all coefficients are
%in $\Z[z,(1+z)^{-1}]$.
%\end{lemma}
\begin{lemma} \label{lem:Hbi}
For positive integers $b_1,b_2,\dots,b_s$ and $\ga$, the series
$\widetilde H_{b_1,b_2,\dots,b_s}(z)$, 
can be expressed as a linear combination
over $(\Z/3^\ga\Z)[z,(1+z)^{-1}]$ 
of the series $1$ and series of the form
$\widetilde H_{a_1,a_2,\dots,a_r}(z)$, 
where none of the $a_i$'s is divisible by~$3$.
\end{lemma}

\begin{remark}
What the proof below actually does is to find an expansion of
$\widetilde H_{b_1,b_2,\dots,b_s}(z)$ over $\Z_3[z,(1+z)^{-1}]$,
where $\Z_3$ denotes the ring of $3$-adic integers.
As this point of view will not play an explicit role in the sequel,
we shall not pursue this aspect any further.
\end{remark}

\begin{proof}[Proof of Lemma~\em\ref{lem:Hbi}]
%\allowdisplaybreaks
With each term
$$c_{b_1,b_2,\dots,b_s}
\widetilde H_{b_1,b_2,\dots,b_s}(z),$$
$c_{b_1,b_2,\dots,b_s}$ being some integer,
we associate the quadruple $(s,v,i,t)$, where
$$
v=v_3(c_{b_1,b_2,\dots,b_s}),
$$
$i$ is the maximal index such that  
$b_i$ is divisible by~$3$, and
$t=v_3(b_i)$.
We describe an algorithmic procedure for expressing 
$\widetilde H_{b_1,b_2,\dots,b_s}(z)$ 
in terms of series 
$\widetilde H_{a_1,a_2,\dots,a_r}(z)$
with associated quadruples less than the quadruple associated with 
$\widetilde H_{b_1,b_2,\dots,b_s}(z)$, according to
the following total order of quadruples:
we define $(s_1,v_1,i_1,t_1)\prec(s_2,v_2,i_2,t_2)$ if, and only if,
\begin{align*} 
%\label{eq:rel1}
s_1&<s_2,\\
%\label{eq:rel2}
\text {or }s_1&=s_2 \text { and }v_1>v_2,\\
\text {or }s_1&=s_2,\ v_1=v_2, \text { and }i_1<i_2,\\
%\label{eq:rel3}
\text {or }s_1&=s_2,\ v_1=v_2,\ i_1=i_2,\text { and }t_1<t_2.
\end{align*}

Our algorithmic procedure consists of four recurrence relations,
\eqref{eq:Rek1}--\eqref{eq:Rek4} below. These are based on
Hou's reduction idea in \cite{HouQAA}.

Let $h$ be an integer between $1$ and $s$.
Furthermore, we assume that $b_h$ is divisible by~$3$ and that
all $b_i$'s with $i>h$ are not divisible by~$3$. We write
$b_h=3b'_h$. Then,
from the definition of the series
$\widetilde H_{b_1,b_2,\dots,b_s}(z)$,
and by Lemma~\ref{lem:2ids}, we have
\begin{align}
\notag
\widetilde H&_{b_1,b_2,\dots,b_s}(z)
= \sum _{k_1>\dots>k_{s-1}>k_s\ge 0} ^{}
\left(\underset{j\ne h}{\prod _{j=1} ^{s}}
\left(\frac {z^{3^{k_j}}(1+z^{3^{k_j}})}
{1+z^{3^{k_j+1}}}\right)^{b_j}\right)
\left(\frac {z^{3^{k_h}}(1+z^{3^{k_h}})}
{1+z^{3^{k_h+1}}}\right)^{3b'_h}\\[2mm]
\notag
&= 
\sum _{b\ge0} ^{}3^{b}\binom {-b'_h}b
\sum _{k_1>\dots>k_{s-1}>k_s\ge 0} ^{}
\left(\underset{j\ne h}{\prod _{j=1} ^{s}}
\left(\frac {z^{3^{k_j}}(1+z^{3^{k_j}})}
{1+z^{3^{k_j+1}}}\right)^{b_j}\right)
\left(\frac {z^{3^{k_h+1}}(1+z^{3^{k_h+1}})}
{1+z^{3^{k_h+2}}}\right)^{b'_h+b}\\
\notag
&\kern1cm
+\underset{b\ge0}{\sum _{a\ge1} ^{}}
\sum _{s_1,s_2,\dots,t_1,t_2,\dots\ge0} ^{}
3^{a+b+\V s+\V t}\binom {b'_h}a\binom {-b'_h}b
\left(\prod _{j\ge1} ^{}\binom {s_{j-1}}{s_j}\binom
{-s_{j-1}}{t_j}\right)\\
&\kern1.3cm
\cdot
\sum _{k_1>\dots>k_{s-1}>k_s\ge 0} ^{}
\left(\underset{j\ne h}{\prod _{j=1} ^{s}}
\left(\frac {z^{3^{k_j}}(1+z^{3^{k_j}})}
{1+z^{3^{k_j+1}}}\right)^{b_j}\right)
\left(\frac {z^{3^{k_h}}(1+z^{3^{k_h}})} {1+z^{3^{k_h+1}}}\right)^{3(\V
s+b'_h+b)+\V t+a}.
\label{eq:Hred1}
\end{align}

Now we distinguish four cases. First, let $1<h<s$.
In the first of the above sums on the right-hand side,
let $k'_h=k_h+1$ be a new summation index. We observe that
we have
\begin{multline*} %\label{eq:Mengen1}
\{(k_1,\dots,k_{h-1},k_h+1,k_{h+1},\dots,k_s):
k_1>\dots>k_{h-1}>k_h>k_{h+1}>\dots>k_s\ge0\}
\kern.8cm
\\=
\Big(
\{(k_1,\dots,k_{h-1},k'_h,k_{h+1},\dots,k_s):
k_1>\dots>k_{h-1}>k'_h>k_{h+1}>\dots>k_s\ge0\}
\kern.9cm\\
\dot\cup
\{(k_1,\dots,k_{h-1},k_{h-1},k_{h+1},\dots,k_s):
k_1>\dots>k_{h-1}>k_{h+1}>\dots>k_s\ge0\}
\Big)
\\
\big\backslash
\{(k_1,\dots,k_{h-1},k_{h+1}+1,k_{h+1},\dots,k_s):
k_1>\dots>k_{h-1}>k_{h+1}>\dots>k_s\ge0\},
\end{multline*}
where the union is a disjoint union, and the last set is
entirely contained in that disjoint union.
If we apply this observation to the (new) index set of the first
sum, then, from \eqref{eq:Hred1}, we obtain
\begin{align*}
\notag
\widetilde H&_{b_1,b_2,\dots,b_s}(z)
= \sum _{b\ge0} ^{}3^{b}\binom {-b'_h}b
\left(\widetilde H_{b_1,b_2,\dots,b_{h-1},b'_h+b,b_{h+1},\dots,b_s}(z)
\vphantom{\left(\underset{j\ne h}{\prod _{j=1} ^{s}}
\left(\frac {z^{3^{k_j}}(1+z^{3^{k_j}})}
{1+z^{3^{k_j+1}}}\right)^{b_j}\right)}
\right.\\
\notag
&\kern7cm
+\widetilde H_{b_1,b_2,\dots,b_{h-1}+b'_h+b,b_{h+1},\dots,b_s}(z)
\\
\notag
&\left.
-
\sum _{k_1>\dots>k_{s-1}>k_s\ge 0} ^{}
\left(\underset{j\ne h}{\prod _{j=1} ^{s}}
\left(\frac {z^{3^{k_j}}(1+z^{3^{k_j}})}
{1+z^{3^{k_j+1}}}\right)^{b_j}\right)
\left(\frac {z^{3^{k_{h+1}+1}}(1+z^{3^{k_{h+1}+1}})}
{1+z^{3^{k_{h+1}+2}}}\right)^{b'_h+b}
\right)\\
\notag
&\kern1cm
+\underset{b\ge0}{\sum _{a\ge1} ^{}}
\sum _{s_1,s_2,\dots,t_1,t_2,\dots\ge0} ^{}
3^{a+b+\V s+\V t}\binom {b'_h}a\binom {-b'_h}b
\left(\prod _{j\ge1} ^{}\binom {s_{j-1}}{s_j}\binom
{-s_{j-1}}{t_j}\right)\\
&\kern5cm
\cdot
\widetilde H_{b_1,b_2,\dots,b_{h-1},3(\V
s+b'_h+b)+\V t+a,b_{h+1},\dots,b_s}(z).
%\label{eq:Rek1}
\end{align*}
To the term with exponent $b'_h+b$ we apply the identity
\eqref{eq:k+1->k}. Then the above relation becomes
\begin{align}
\notag
\widetilde H&_{b_1,b_2,\dots,b_s}(z)
= \sum _{b\ge0} ^{}3^{b}\binom {-b'_h}b
\left(\widetilde H_{b_1,b_2,\dots,b_{h-1},b'_h+b,b_{h+1},\dots,b_s}(z)
\vphantom{{\prod _{j=1} ^{s}}}
\right.\\
\notag
&\kern7cm
+\widetilde H_{b_1,b_2,\dots,b_{h-1}+b'_h+b,b_{h+1},\dots,b_s}(z)
\\
\notag
&
-
\sum _{u_1,u_2,\dots,v_1,v_2,\dots\ge0} ^{}
3^{\V u+\V v}\left(\prod _{j\ge1} ^{}\binom {u_{j-1}}{u_j}\binom
{-u_{j-1}}{v_j}\right)\\
\notag
&\kern6.5cm
\left.\vphantom{{\prod _{j=1} ^{s}}}
\cdot \widetilde H_{b_1,b_2,\dots,b_{h-1},
3(b'_h+b+\V u)+\V v+b_{h+1},\dots,b_s}(z)
\right)\\
\notag
&\kern1cm
+\underset{b\ge0}{\sum _{a\ge1} ^{}}
\sum _{s_1,s_2,\dots,t_1,t_2,\dots\ge0} ^{}
3^{a+b+\V s+\V t}\binom {b'_h}a\binom {-b'_h}b
\left(\prod _{j\ge1} ^{}\binom {s_{j-1}}{s_j}\binom
{-s_{j-1}}{t_j}\right)\\
&\kern5cm
\cdot
\widetilde H_{b_1,b_2,\dots,b_{h-1},3(\V
s+b'_h+b)+\V t+a,b_{h+1},\dots,b_s}(z).
\label{eq:Rek1}
\end{align}
It should be observed that the quadruples associated with the
terms on the right-hand side of \eqref{eq:Rek1} 
are indeed less than the quadruple associated with
$\widetilde H_{b_1,b_2,\dots,b_s}(z)$
in the order $\prec$.

Next, we consider the case where $1<h=s$. Reasoning in a 
way similar to the case leading to \eqref{eq:Rek1},
we obtain the recurrence relation
\begin{align}
\notag
\widetilde H&_{b_1,b_2,\dots,b_s}(z)
= \sum _{b\ge0} ^{}3^{b}\binom {-b'_s}b
\bigg(\widetilde H_{b_1,b_2,\dots,b_{s-1},b'_s+b}(z)\\
\notag
&\kern4cm
+\widetilde H_{b_1,b_2,\dots,b_{s-1}+b'_s+b}(z)
-\left(\frac {z(1+z)} {1+z^3}\right)^{b'_s+b}
\widetilde H_{b_1,b_2,\dots,b_{s-1}}(z)\bigg)\\
\notag
&\kern1cm
+\underset{b\ge0}{\sum _{a\ge1} ^{}}
\sum _{s_1,s_2,\dots,t_1,t_2,\dots\ge0} ^{}
3^{a+b+\V s+\V t}\binom {b'_s}a\binom {-b'_s}b
\left(\prod _{j\ge1} ^{}\binom {s_{j-1}}{s_j}\binom
{-s_{j-1}}{t_j}\right)\\
&\kern5cm
\cdot
\widetilde H_{b_1,b_2,\dots,b_{s-1},3(\V
s+b'_s+b)+\V t+a}(z).
\label{eq:Rek2}
\end{align}
Again, the quadruples associated with the
terms on the right-hand side of \eqref{eq:Rek2} 
are less than the quadruple associated with
$\widetilde H_{b_1,b_2,\dots,b_s}(z)$
in the order $\prec$.

Now let $1=h<s$. In this case, we obtain the recurrence relation
\begin{align}
\notag
\widetilde H&_{b_1,b_2,\dots,b_s}(z)
= \sum _{b\ge0} ^{}3^{b}\binom {-b'_1}b
\left(\widetilde H_{b'_1+b,b_{2},\dots,b_s}(z)
\vphantom{{\prod _{j=1} ^{s}}}
\right.\\
\notag
&
-
\sum _{u_1,u_2,\dots,v_1,v_2,\dots\ge0} ^{}
3^{\V u+\V v}\left(\prod _{j\ge1} ^{}\binom {u_{j-1}}{u_j}\binom
{-u_{j-1}}{v_j}\right)%\\
%\notag
%&\kern6.5cm
\left.\vphantom{{\prod _{j=1} ^{s}}}
%\cdot 
\widetilde H_{3(b'_1+b+\V u)+\V v+b_{2},\dots,b_s}(z)
\right)\\
\notag
&\kern1cm
+\underset{b\ge0}{\sum _{a\ge1} ^{}}
\sum _{s_1,s_2,\dots,t_1,t_2,\dots\ge0} ^{}
3^{a+b+\V s+\V t}\binom {b'_1}a\binom {-b'_1}b
\left(\prod _{j\ge1} ^{}\binom {s_{j-1}}{s_j}\binom
{-s_{j-1}}{t_j}\right)\\
&\kern5cm
\cdot
\widetilde H_{3(\V s+b'_1+b)+\V t+a,b_{2},\dots,b_s}(z).
\label{eq:Rek3}
\end{align}

Finally, in the degenerate case $1=h=s$, summation over $k$
on both sides of \eqref{eq:id0} with $n=b'_1$ directly yields
\begin{multline} \label{eq:Rek4}
\widetilde H_{b_1}(z)
=\sum _{b\ge0} ^{}3^{b}\binom {-b'_1}b
\left(\widetilde H_{b'_1+b}(z)
-\left(\frac {z(1+z)}
{1+z^{3}}\right)^{b'_1+b}\right)\\
+\underset{b\ge0}{\sum _{a\ge1} ^{}}
\sum _{s_1,s_2,\dots,t_1,t_2,\dots\ge0} ^{}
3^{a+b+\V s+\V t}\binom {b'_1}a\binom {-b'_1}b
\left(\prod _{j\ge1} ^{}\binom {s_{j-1}}{s_j}\binom
{-s_{j-1}}{t_j}\right)\\
\cdot \widetilde H_{3(\V s+b'_1+b)+\V t+a}(z).
\end{multline}

It is clear that, if we recursively apply \eqref{eq:Rek1}--\eqref{eq:Rek4} to a
given series 
$\widetilde H_{b_1,b_2,\dots,b_s}(z)$, 
and use $\widetilde H_\emptyset=1$ as an initial condition,
we will eventually arrive at a linear combination of $1$,
powers of $z(1+z)/(1+z^3)$, and series
$\widetilde H_{a_1,a_2,\dots,a_r}(z)$
with all $a_i$'s being not divisible by~$3$, where the coefficients are 
polynomials in $z$ and $(1+z^3)^{-1}$. In view of Lemma~\ref{lem:1+z},
this completes the proof of the lemma.
\end{proof}

%\end{comment}

How the results of this section are implemented to extract coefficients
of concrete powers of $z$ from powers of $\Psi(z)$ modulo a given 
$3$-power is explained in more detail in the appendix.
%\begin{comment}
As an illustration, the coefficients of $\Psi^3(z)$ and of
$\Psi^5(z)$ are worked out explicitly modulo~$9$ and modulo~$27$, see
Propositions~\ref{prop:Psi-3} and \ref{prop:Psi-5}.
%\end{comment}
\begin{comment}
As an illustration, the coefficients of $\Psi^3(z)$ and of
$\Psi^5(z)$ are worked out explicitly modulo~$9$ and modulo~$27$ in 
the appendix of \cite{KrMuAE}.
\end{comment}

\section{Minimal polynomials for $\Psi(z)$}
\label{sec:Psi}

Here we consider polynomial relations of 
the Cantor-like formal power series $\Psi(z)$ defined in
\eqref{eq:Psidef}, when coefficients are reduced modulo powers of~$3$.
%This series is the principal character in the method for determining
%congruences of recursive sequences modulo $3$-powers
%which we describe in Section~\ref{sec:method}. 
The series is transcendental over
$\mathbb Z[z]$ (see Theorem~\ref{conj:1} below). However, if the
coefficients of $\Psi(z)$ are considered modulo a $3$-power $3^\ga$,
then $\Psi(z)$ obeys a polynomial relation with coefficients that are
Laurent polynomials in $z$ and $1+z$. 
The focus of this section is on what can be said 
concerning such polynomial relations, and, in particular, about those of
minimal length. 

\medskip
Here and in the sequel,
given integral power series (or Laurent series) $f(z)$ and $g(z)$,
we write 
$$f(z)=g(z)~\text {modulo}~3^\ga$$ 
to mean that the coefficients
of $z^i$ in $f(z)$ and $g(z)$ agree modulo~$3^\ga$ for all $i$.

We say that a polynomial $A(z,t)$ in 
$z$ and $t$ is {\it minimal for the modulus
$3^\ga$}, if it is monic (as a polynomial in $t$), 
has coefficients which are Laurent polynomials in $z$ and $1+z$, satisfies
$A(z,\Psi(z))=0$~modulo~$3^\ga$, and there is no monic polynomial $B(z,t)$
whose coefficients are Laurent polynomials in $z$ and $1+z$, whose
$t$-degree is less than that of $A(z,t)$, and which satisfies
$B(z,\Psi(z))=0$~modulo~$3^\ga$.
Minimal polynomials are not unique; see Remark~\ref{rem:min} below.

Our results concerning minimal polynomials are analogous to the
corresponding ones in \cite[Sec.~2]{KaKMAA}, albeit more difficult to prove.
We start by providing a lower bound on degrees of minimal polynomials.
We strongly believe that this lower bound is actually sharp; see
Conjecture~\ref{conj:1A} below.
In order to formulate the result,
we let $v_3(\al)$ denote the $3$-adic valuation of the
integer $\al$, that is, the maximal exponent $e$ such that $3^e$
divides $\al$.

\begin{theorem} \label{conj:1}
The degree of a minimal polynomial for the modulus $3^\ga$, $\ga\ge1$,
is $2d$, where $d$ satisfies
$d+v_3(d!)\ge\ga$. In particular, the series $\Psi(z)$ is transcendental
over $\Z[z]$.
\end{theorem}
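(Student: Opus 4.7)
The plan is to establish the degree lower bound by reducing to monic even-only polynomials and then extracting from the annihilation condition a system of congruences whose finite-difference inversion pins down the $3$-adic valuation of the leading coefficient. First I would decompose $A(z,t) = A_{\text{ev}}(z,t^2) + t\, A_{\text{od}}(z,t^2)$ and multiply through by a suitably high power of $(1+z)$ to clear Laurent denominators, so that Lemma~\ref{lem:evenodd} forces both summands to vanish modulo $3^\ga$ individually. Since the leading $t$-term of $A$ lies in one of the two summands, one of $A_{\text{ev}}(z,t^2)$, $tA_{\text{od}}(z,t^2)$ is a monic polynomial in $t$ of the same degree as $A$; hence it suffices to bound $d^*$ for monic even-only annihilators $A(z,t) = \sum_{j=0}^{d^*} a_{2j}(z)\, t^{2j}$ with $a_{2d^*}(z)=1$. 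I set $c_j(z) := a_{2j}(z)(1+z)^{-j}$, so that $c_{d^*}(z) = (1+z)^{-d^*}$ is a unit in $\Z[z,(1+z)^{-1}]/3^\ga$.

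Next, I would use $\Psi^{2j}(z) = (1+z)^{-j}\prod_{k\ge0}(1+3R(k))^j$ and expand by the binomial theorem. The coefficient of the canonical series $\widetilde H_{\underbrace{1,\dots,1}_r}(z)$ in $\Psi^{2j}(z)$ then equals $3^r j^r/(1+z)^j$ up to corrections of $3$-adic valuation $\ge r+2$: these come from expansion terms $\widetilde H_{3^{k_1},\dots,3^{k_r}}(z)$ with some $k_i\ge1$ (which reduce to $\widetilde H_{\underbrace{1,\dots,1}_r}$ via Lemma~\ref{lem:Hbi} but carry $3$-factors $3^{\sum 3^{k_i}} \ge 3^{r+2}$) and from the ``sub-main'' branches in the recurrences \eqref{eq:Rek1}--\eqref{eq:Rek4} (each adding at least one extra power of~$3$). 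By Corollary~\ref{cor:Htildeind}, vanishing of the coefficient of $\widetilde H_{\underbrace{1,\dots,1}_r}(z)$ in $A(z,\Psi(z))$ modulo $3^\ga$ then yields, for every $r = 0, 1, 2, \dots$,
$$S_r(z) \;:=\; \sum_{j=0}^{d^*} c_j(z)\, j^r \;\equiv\; 0 \pmod{3^{\ga-r}}.$$

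For the finite-difference inversion, I set $b_i(z) := \sum_j c_j(z)\binom{j}{i}$, so that $b_{d^*}(z) = c_{d^*}(z)$ is a unit, and apply Newton's formula $i!\,b_i(z) = \sum_{k=0}^i (-1)^{i-k}\binom{i}{k}\, S_k(z)$ together with the estimate $v_3(\binom{i}{k}S_k) \ge \ga - k \ge \ga - i$ (minimum at $k=i$) to conclude $v_3(i!\,b_i) \ge \ga - i$, hence $v_3(b_i(z)) \ge \ga - i - v_3(i!)$. Taking $i = d^*$ forces $\ga - d^* - v_3(d^*!) \le 0$, i.e., $d^* + v_3(d^*!) \ge \ga$, so $d^* \ge d$ as required. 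For transcendence, since $d \to \infty$ as $\ga\to\infty$, no fixed-degree monic polynomial over $\Z[z,(1+z)^{-1}]$ annihilates $\Psi(z)$ modulo every $3^\ga$; any hypothetical polynomial relation $Q(z,\Psi(z))=0$ with $Q\in\Z[z,t]$ would, after passing to the minimal polynomial of~$\Psi(z)$ over $\Q(z)[t]$ and handling the $3$-adic denominators, produce exactly such a fixed-degree monic annihilator valid modulo every $3^\ga$, a contradiction. Hence $\Psi(z)$ is transcendental over~$\Z[z]$.

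The hardest part will be the second paragraph: verifying that \emph{all} non-leading contributions to the coefficient of $\widetilde H_{\underbrace{1,\dots,1}_r}(z)$ in $\Psi^{2j}(z)$—arising from Lemma~\ref{lem:Hbi}, from cross-reductions of multi-index $\widetilde H$-series, and from Frobenius-type identities such as $R(k)^3 \equiv R(k+1) \pmod 3$—indeed have $3$-adic valuation at least $r+2$. Without this bookkeeping claim, neither the clean triangular system $S_r \equiv 0 \pmod{3^{\ga-r}}$ nor the Newton inversion yielding $v_3(b_i) \ge \ga - i - v_3(i!)$ would go through as stated.
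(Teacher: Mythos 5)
Your overall strategy is the same as the paper's --- reduce to monic even polynomials via Lemma~\ref{lem:evenodd}, expand in the linearly independent series $\widetilde H_{1,\dots,1}(z)$ (Corollary~\ref{cor:Htildeind}), and extract a finite-difference/factorial divisibility statement --- but the pivotal step in your second and third paragraphs contains a genuine gap: the congruences $S_r(z)=\sum_{j=0}^{d^*}c_j(z)\,j^r\equiv0\pmod{3^{\ga-r}}$ do not follow from your error estimate, and in fact they are false. Knowing that the corrections to the coefficient of $\widetilde H_{1,\dots,1}(z)$ (with $r$ ones) have $3$-adic valuation at least $r+2$ only yields $3^rS_r\equiv-(\text{corrections})\pmod{3^\ga}$, hence $v_3(S_r)\ge\min(2,\ga-r)$; it does not make the corrections vanish modulo $3^\ga$. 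Concretely, take the minimal polynomial $A_1(z,t)$ for the modulus $81$ from Proposition~\ref{prop:minpol} and write $u=1/(1+z)$. Expanding in powers of $t^2$ gives $a_6=1$, $a_4=-3u$, $a_2=-6u^2$, $a_0=8u^3+27zu^5$, whence $c_3=u^3$, $c_2=-3u^3$, $c_1=-6u^3$, $c_0=8u^3+27zu^5$ and
$$S_0=c_0+c_1+c_2+c_3=27zu^5\not\equiv0\pmod{3^4}.$$
The constant term of $A_1(z,\Psi(z))$ vanishes modulo $81$ only because this $S_0$ is cancelled by the correction $-27\bigl(\sum_jc_j\tbinom j3\bigr)\tfrac{z(1+z)}{1+z^3}=-27zu^5$ arising from the reduction of $\widetilde H_3(z)$ via \eqref{eq:Rek4}. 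Consequently your Newton-inversion bound $v_3(b_i)\ge\ga-i-v_3(i!)$ also fails (here $b_0=S_0$ has valuation $3<4$), and since even the single instance $i=d^*$ that you need rests on $v_3(S_k)\ge\ga-k$ for all $k\le d^*$, the chain of deductions collapses.

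The underlying difficulty is that the corrections couple the equations for different $r$: a term $\widetilde H_{b_1,\dots,b_s}(z)$ with some $b_i$ divisible by $3$ contributes, after Lemma~\ref{lem:Hbi}, to $\widetilde H_{1,\dots,1}(z)$ with fewer ones, and its coefficient $\prod_i3^{b_i}\binom j{b_i}$ re-enters the system as a combination of the $S_k$ with $k$ up to $\sum_ib_i$. The paper circumvents exactly this by expanding $A(z,t)$ in powers of $t^2-\frac1{1+z}$ rather than of $t^2$: the basis element $\bigl(\Psi^2(z)-\frac1{1+z}\bigr)^b$ already carries the difference operator $(E-I)^b$ inside it, Lemma~\ref{lem:diff} then gives divisibility of the coefficient of $\widetilde H_{1,\dots,1}(z)$ by $3^ab!$ together with the strictly higher divisibility $3^{a+1}b!$ for the off-diagonal terms, and a double induction on the $3$-divisibility of the coefficients $A_a$ absorbs the cross-contamination. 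To salvage your power-sum formulation you would have to run an analogous bootstrapping induction in which partial information on the $S_k$ is fed back into the error terms; as written, the triangular system is asserted rather than proved, and the assertion is refuted by the example above.
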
 

For the proof of this theorem, we need the following 
auxiliary result.

\begin{lemma} \label{lem:diff}
Let $a,b,c_1,c_2, \dots, c_d$ be positive integers with
$a>b$. Then
\begin{equation} \label{eq:diff} 
\sum_{k=0}^b (-1)^{b-k}\binom bk
\prod _{j=1} ^{a}3^{c_j}\binom k{c_j}
\end{equation}
is divisible by $3^a b!$. Moreover, 
if at least one of the $c_j$'s is at least~$3$, then
\eqref{eq:diff} is divisible by $3^{a+1}b!$.
\end{lemma}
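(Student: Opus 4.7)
My plan is to recast the sum as a coefficient extraction and then derive the $3$-adic divisibility from a simple combinatorial inequality. Setting $P(y):=\prod_{j=1}^a(1+3y_j)$, the binomial theorem combined with the identity $[y_1^{c_1}\cdots y_a^{c_a}]P^k=\prod_{j=1}^a 3^{c_j}\binom{k}{c_j}$ produces
\[
S:=\sum_{k=0}^b(-1)^{b-k}\binom{b}{k}\prod_{j=1}^a 3^{c_j}\binom{k}{c_j}
=\bigl[y_1^{c_1}\cdots y_a^{c_a}\bigr]\bigl(P(y)-1\bigr)^b.
\]
Since $P(y)-1=\sum_{\emptyset\neq I\subseteq[a]}3^{|I|}y^I$ (with $y^I:=\prod_{i\in I}y_i$), every monomial contributing to the coefficient of $y_1^{c_1}\cdots y_a^{c_a}$ carries exactly the factor $3^C$, where $C:=c_1+\cdots+c_a$. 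Writing $S=3^C N$ and grouping the $b$-long sequences of non-empty subsets appearing in the expansion by their multiplicities $n_I$ yields
\[
N=\sum_{(n_I)}\frac{b!}{\prod_{\emptyset\neq I\subseteq[a]}n_I!},
\]
the sum being over tuples $(n_I)$ of non-negative integers with $\sum_I n_I=b$ and $\sum_{I\ni j}n_I=c_j$ for every $j\in[a]$.

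For a valid tuple I introduce $s:=|\{I:n_I\geq1\}|$. Legendre's formula, $v_3(n!)=(n-s_3(n))/2$, together with $s_3(n)\geq1$ for $n\geq1$, gives $v_3(n!)\leq(n-1)/2$, so
\[
\sum_I v_3(n_I!)\leq\sum_{I:\,n_I\geq1}\frac{n_I-1}{2}=\frac{b-s}{2}.
\]
The key inequality is $s\geq a+b-C$. It follows from $(n_I-1)(|I|-1)\geq 0$, which rewrites as $n_I|I|\geq n_I+|I|-1$; summing over used $I$ gives $C\geq b+\bigl(\sum_{n_I\geq1}|I|\bigr)-s$, and since $c_j\geq 1$ for every $j$ the union $\bigcup_{n_I\geq1}I$ equals all of $[a]$, so $\sum_{n_I\geq1}|I|\geq a$. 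Consequently $\sum_I v_3(n_I!)\leq(C-a)/2$, and every positive-integer summand of $N$ has $3$-adic valuation at least $v_3(b!)-(C-a)/2$; the same bound therefore holds for $N$ itself.

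Assembling these estimates yields
\[
v_3(S)=C+v_3(N)\geq C+v_3(b!)-\frac{C-a}{2}=\frac{C+a}{2}+v_3(b!)\geq a+v_3(b!),
\]
establishing the first claim. For the refined statement, the presence of some $c_{j_0}\geq 3$ forces $C-a=\sum_i(c_i-1)\geq c_{j_0}-1\geq 2$, whence $(C+a)/2\geq a+1$ and the identical argument delivers $v_3(S)\geq a+1+v_3(b!)$. The main obstacle is the combinatorial inequality $s\geq a+b-C$; once that is in hand, everything else is routine $3$-adic bookkeeping.
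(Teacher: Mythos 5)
Your argument is correct, and it takes a genuinely different route from the paper's. The paper works univariately with difference operators: it rewrites the sum as $(E-I)^b\prod_{j=1}^a 3^{c_j}\binom y{c_j}\big\vert_{y=0}$, pulls out the scalar $\prod_j(-3)^{c_j}/c_j!$, extracts a factor $b!$ from $\De^b$ applied to the integer polynomial $\prod_j(-y)_{c_j}$, and finishes with $c_j-v_3(c_j!)\ge1$ (resp.\ $\ge2$ for $c_j\ge3$). You instead realise the sum as the coefficient of $y_1^{c_1}\cdots y_a^{c_a}$ in $\bigl(\prod_j(1+3y_j)-1\bigr)^b$, which immediately gives the clean factorisation $S=3^CN$ with $N$ an integer (a sum of multinomial coefficients), and then bound $v_3(N)$ from below via Legendre's formula combined with the counting inequality $s\ge a+b-C$. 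I checked that inequality and the surrounding bookkeeping; they are sound (note that, like the paper, you never actually use the hypothesis $a>b$). Your route is more elementary and self-contained, and it makes the divisibility of $S$ by $3^{c_1+\dots+c_a}$ transparent; the paper's is shorter and yields the lower bound $\sum_j\bigl(c_j-v_3(c_j!)\bigr)+v_3(b!)$, which is at least as sharp as your $\frac{C+a}{2}+v_3(b!)$ (the two coincide when every $c_j$ is a power of $3$, which is the case in the application). Incidentally, your bound already forces the extra factor of $3$ as soon as some $c_j\ge2$, since $\frac{C+a}{2}\ge a+\frac12$ then rounds up to $a+1$ for the integer $v_3(S)-v_3(b!)$.

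One caveat, which applies verbatim to the paper's own proof as well: what you establish is the $3$-adic inequality $v_3(S)\ge a+v_3(b!)$ (resp.\ $a+1+v_3(b!)$), not divisibility by the integer $3^ab!$ at primes other than $3$. The literal statement in fact fails at such primes: for $a=3$, $b=2$, $c_1=c_2=c_3=2$ the sum equals $729$, which is not divisible by $b!=2$. Since only the $3$-adic valuation is ever used downstream (cf.\ the proof of Theorem~\ref{conj:1}), the lemma has to be read $3$-adically, and your proof delivers exactly that.
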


\begin{proof}
We need to recall some facts from classical difference
operator calculus (cf.\ \cite{RoKOAA}). We denote by
$E$ the shift operator defined by $Ef(y)=f(y+1)$, and
by $\De$ the forward difference operator defined by $\De
f(y)=f(y+1)-f(y)$. The reader should observe that
$\De=E-I$, where $I$ stands for the identity
operator. In terms of these operators, we may rewrite
the expression in \eqref{eq:diff} as
\begin{align} \notag
\sum_{k=0}^b (-1)^{b-k}\binom bk
E^k\prod _{j=1} ^{a}3^{c_j}\binom y{c_j}\bigg\vert_{y=0}
&=(E-I)^b 
\prod _{j=1} ^{a}3^{c_j}\binom y{c_j}\bigg\vert_{y=0}\\
&=\left(
\prod _{j=1} ^{a}\frac {(-3)^{c_j}} {{c_j}!}\right)\De^b 
\prod _{j=1} ^{a}(-y)_{c_j}\bigg\vert_{y=0},
\label{eq:E-b}
\end{align}
where the Pochhammer symbol is defined by
$(\alpha)_m=\alpha(\alpha+1) \cdots(\alpha+m-1)$ for $m\ge1$, and
$(\alpha)_0=1$,
The second product over $j$ is a polynomial in $y$ with integer
coefficients. It is well-known from difference calculus that
the constant term of the $b$-fold application of the operator $\De$ 
on such a polynomial 
is divisible by $b!$. For the sake of self-containedness, we 
provide the simple argument here: let $p(y)$ be a polynomial in~$y$
with integer coefficients. Then we may expand $p(y)$ in Pochhammer
symbols,
$$
p(y)=\sum_{m\ge0}\al_m (-y)_m,
$$
where the coefficients $\al_m$ are all integers. Since
$E(-y)_m=-m\cdot(-y)_{m-1}$, we infer
$$
E^bp(y)\Big\vert_{y=0}
=(-1)^b\sum_{m\ge0}\al_m \frac {m!} {(m-b)!}(-y)_{m-b}\Big\vert_{y=0}
=(-1)^b\al_b b!,
$$
which is visibly divisible by $b!$.

If we use what we know in \eqref{eq:E-b}, together with the
easily verified fact that 
$$v_3(3^\al)\ge \begin{cases} v_3(\al!)+1,&\text{if $\al\ge1$,}\\
v_3(\al!)+2,&\text{if $\al\ge3$,}\\
\end{cases}
$$
then the assertion of the lemma follows immediately.
\end{proof}

\begin{proof}[Proof of Theorem~\em\ref{conj:1}]
The first observation is that Lemma~\ref{lem:evenodd}
implies that minimal polynomials must have even degree, and,
moreover, that they can be chosen as even polynomials in $t$, that is,
as polynomials in $t^2$.

So, for a contradiction, 
let us suppose that $A(z,t)$ is the minimal polynomial for the
modulus $3^\ga$, that it is even, 
and that it has degree $2d$, where $d+v_3(d!)<\ga$.

Since $A(z,t)$ is even in~$t$. 
it does not matter whether we consider $A(z,t)$ as a polynomial
in $t^2$ (and~$z$ and $(1+z)^{-1}$) 
or as a polynomial in $\left(t^2-\frac {1} {1+z}\right)$
(and~$z$ and $(1+z)^{-1}$). We take the latter point of view.

We now expand a power $\left(\Psi^{2}(z)-\frac {1} {1+z}\right)^b$,
$1\le b\le d$, in terms
of the series $\widetilde H_{a_1,a_2,\dots,a_s}(z)$ (see
\eqref{eq:tHdef} for the definition) using \eqref{eq:Psi-2A}
and the short notation \eqref{eq:R}{:}
\begin{align} \notag
\left(\Psi^{2}(z)-\frac {1} {1+z}\right)^b
&=\frac {1} {(1+z)^b}\left(\sum_{s\ge1}
3^s\widetilde H_{\underbrace{\scriptstyle 1,1,\dots,1}_{s\text{
      times}}}(z)\right) ^b\\
\notag
&=\frac {1} {(1+z)^b}\left(\sum_{s\ge1}
3^s\sum_{k_1>\dots>k_{s}\ge0}
R(k_1)R(k_2)\cdots R(k_{s})
\right) ^b\\
\notag
&=\frac {1} {(1+z)^b}\left(\bigg(\prod_{j\ge0}
(1+3R(j))\bigg)-1
\right) ^b\\
\notag
&=\frac {1} {(1+z)^b}
\sum_{k=0}^b (-1)^{b-k}\binom bk
\prod _{j\ge0} ^{}
(1+3R(j))^k\\
&=\frac {1} {(1+z)^b}\sum _{s\ge1}\sum_{a_1,\dots,a_s\ge1}
c(b;a_1,a_2,\dots,a_s)\,
\widetilde H_{a_1,a_2,\dots,a_s}(z),
\label{eq:Psi2d} 
\end{align}
where the $c(b;a_1,a_2,\dots,a_s)$'s are certain combinatorial
(and, hence, integer) coefficients. 
Not all of the $\widetilde H$-series which occur
with non-zero coefficient $c(b;a_1,a_2,\dots,a_s)$ will have the
property that their indices $a_j$ are not divisible by $3$.
So, in a second step, one will have to apply the reduction procedure
described in Lemma~\ref{lem:Hbi} to arrive at an expansion of the form
\begin{equation} \label{eq:Hbexp2} 
\left(\Psi^{2}(z)-\frac {1} {1+z}\right)^b
=\frac {1} {(1+z)^b}
\sum _{s\ge1}\underset{3\nmid a_1,\dots,3\nmid a_s}{\sum_{a_1,\dots,a_s\ge1}}
\widetilde c(b;a_1,a_2,\dots,a_s)
\widetilde H_{a_1,a_2,\dots,a_s}(z),
\end{equation}
where the $\widetilde c(b;a_1,a_2,\dots,a_s)$'s are certain modified
coefficients. It should be observed
(cf.\ Corollary~\ref{cor:Htildeind}) that the 
$\widetilde H$-series on the right-hand
side of \eqref{eq:Hbexp2} are linearly independent over 
$(\Z/3^\ga\Z)[z,(1+z)^{-1}]$.

We are interested in the coefficient $\widetilde c(b;1,1,\dots,1)$
with $a$ occurrences of $1$. We claim that
\begin{equation} \label{eq:coef111} 
\widetilde c(b;\underbrace {1,1,\dots,1}_{a\text{ times}})=
\begin{cases} 
3^{a+1}\,b!\,N_1,&\text{if $a<b$,}\\
3^b\,b!+3^{b+v_3(b!)+1}N_2,&\text{if $a=b$,}\\
3^a\,b!\,N_3,&\text{if $a>b$,}
\end{cases}
\end{equation}
where $N_1,N_2,N_3$ are integers (depending on $a$ and $b$).

In order to see this, we must examine how terms $\widetilde
H_{1,\dots,1}(z)$ with a non-zero coefficient $\widetilde
c(b;1,1,\dots,1)$ in the expansion \eqref{eq:Hbexp2} arise from
\eqref{eq:Psi2d}.
From the proof of Lemma~\ref{lem:Hbi}, we see that such terms
come from terms
$$
c(b;3^{d_1},3^{d_2},\dots,3^{d_a})\,
\widetilde H_{3^{d_1},3^{d_2},\dots,3^{d_a}}(z),
$$
in \eqref{eq:Psi2d} with non-zero coefficients $c(\dots)$.
(To be correct, there are also contributions to $\widetilde
c(b;1,1,\dots,1)$ from other terms, but these will have a
higher $3$-divisibility than those discussed here.
We shall not mention them any further. Suffice it to point out that
they are subsumed in the terms $N_4(b;d_1,d_2,\dots,d_a)$ below.)
If these observations are put together, one obtains 
\begin{equation} \label{eq:tcc} 
\widetilde c(b;\underbrace{1,1,\dots,1}_{a\text{ times}})
=\sum_{d_1,\dots,d_a\ge0}
c(b;3^{d_1},3^{d_2},\dots,3^{d_a})\,
\big(1+3N_4(b;d_1,d_2,\dots,d_a)\big),
\end{equation}
where the $N_4(b;d_1,d_2,\dots,d_a)$'s are integers.

We must now compute the coefficient
$c(b;3^{d_1},3^{d_2},\dots,3^{d_a})$. 
As we already said, this is a combinatorial coefficient arising
in the expansion \eqref{eq:Psi2d}. Recalling the definition 
\eqref{eq:tHdef} of the $\widetilde H$-series, 
one sees that this coefficient equals the
coefficient of 
$$
R(0)^{3^{d_1}}
R(1)^{3^{d_2}}
\cdots
R(a-1)^{3^{d_a}}
$$
in the line just above \eqref{eq:Psi2d}. An easy extraction yields
\begin{equation*} %\label{} 
c(b;3^{d_1},3^{d_2},\dots,3^{d_a})
=\sum_{k=0}^b (-1)^{b-k}\binom bk
\prod _{j=1}^a3^{3^{d_j}}\binom k{3^{d_j}}.
\end{equation*}
By Lemma~\ref{lem:diff} we know that this expression is divisible
by $3^a\,b!$, and, if one of the $d_j$'s is positive, even 
by $3^{a+1}\,b!$. If this is used in \eqref{eq:tcc}, then
our claim \eqref{eq:coef111} follows without much effort, 
the term $3^b\,b!$ (occurring in \eqref{eq:coef111} 
in the case where $a=b$) being contributed by
$$
c(b;\underbrace{1,1,\dots,1}_{b\text{ times}})=3^b\,b!.
$$

We are now in the position to complete the argument.
Let us return to the minimal polynomial $A(z,t)$ for the modulus
$3^\ga$. We expand it in powers of $\left(t^2-\frac {1} {1+z}\right)$,
\begin{equation} \label{eq:A(z,t)} 
A(z,t)=
\sum_{b=0}^{d}A_b\left(t^2-\frac {1} {1+z}\right)^b,
\end{equation}
where $A_d=1$.
(The reader should recall that minimal polynomials are monic by
definition.)
We substitute $\Psi(z)$ for $t$ and claim that
$A(z,\Psi(z))$ cannot vanish modulo $3^\ga$. 
In order to see this, we perform
an induction on the index $b$ in \eqref{eq:A(z,t)}.
%What the exact claim is that will be proven by this induction will
%be made precise in just a moment.
%
%We begin with $b=1$. We consider the coefficient of $\widetilde
%H_1(z)$ in $\left(t^2-\frac {1} {1+z}\right)$. By \eqref{eq:coef111}
%with $a=b=1$, we know that it equals $3\cdot 1!(1+3N_2)=3(1+3N_2)$, where
%$N_2$ is an integer. On the other hand, again using \eqref{eq:coef111},
%the coefficient of $\widetilde
%H_1(z)$ in $\left(t^2-\frac {1} {1+z}\right)^a$, $a\ge2$, is divisible by
%$3^{a}\cdot 1!=3^a$, which is at least $9$. Recalling that the
%series $\widetilde H_{a_1,a_2,\dots,a_r}(z)$ with all $a_i$\!'s not
%divisible by~$3$ together with $1$ are linearly independent, we 
%infer that the coefficient $A_1$ must be (at least) divisible by $3$;
%otherwise the term $\widetilde H_1(z)$ would ``survive" with a
%non-zero coefficient in \eqref{eq:A(z,t)}.
The claim that we are going to prove by this induction is 
that the coefficient
$A_a$ is divisible by $3^{b+v_3(b!)-a-v_3(a!)}$ for all $a$ with 
$1\le a\le b$.
This is trivially true for $b=1$, since in that case the only
possible $a$ is $a=1$, in which case the condition is void.

We proceed to the induction step.
In the following, we shall use the short notation $\widetilde
H_{1^r}(z)$ for $\widetilde
H_{1,1,\dots,1}(z)$ with $r$ occurrences of $1$.
We consider the coefficient of $\widetilde H_{1^b}(z)$
in $\left(t^2-\frac {1} {1+z}\right)^{b}$. Let us denote it by $C$. 
According to
\eqref{eq:coef111}, it equals $3^{b}\,b!+3^{b+v_3(b!)+1}N_2$, where
$N_2$ is some integer. In particular, it is divisible by $3^{b+v_3(b!)}$,
but not by a higher power of~$3$. 
By using \eqref{eq:coef111} again (this time with $a$ and $b$
interchanged), in combination with
the induction hypothesis, we see that the coefficient of 
$\widetilde H_{1^b}(z)$
in the term $A_a\left(\Psi^2(z)-\frac {1} {1+z}\right)^{a}$, $a<b$, is
divisible by
$3^{b+v_3(a!)}3^{b+v_3(b!)-a-v_3(a!)}=3^{2b-a+v_3(b!)}\ge 3^{b+v_3(b!)+1}$. 
This is a higher divisibility by~$3$ than that of the coefficient $C$. 
Moreover, by another application of
\eqref{eq:coef111}, the coefficient of 
$\widetilde H_{1^b}(z)$
in the term $A_a\left(\Psi^2(z)-\frac {1} {1+z}\right)^{a}$, $a>b$, is
divisible by 
\begin{equation} \label{eq:3b+1} 
3^{b+1+v_3(a!)}\ge 3^{b+v_3((b+1)!)+1}\ge
3^{b+v_3(b!)+1},
\end{equation}
which is again a
higher $3$-divisibility than that of the coefficient~$C$.
So none of these terms can cancel $C$, unless $A_b$ is divisible
by~$3$. Assuming the latter, one considers the coefficient of $\widetilde
H_{1^{b-1}}$ in $A_{b-1}\left(\Psi^2(z)-\frac {1} {1+z}\right)^{b-1}$
and compares its $3$-divisibility to the $3$-divisibility of 
the corresponding coefficients in the other terms. The conclusion
is that also $A_{b-1}$ must be divisible by another $3$, that is
(the reader should recall the induction hypothesis),
it must be divisible by $3^{b+v_3(b!)-(b-1)-v_3((b-1)!)+1}$.
One repeats this same argument with the coefficient of $\widetilde
H_{1^{a}}$ in $A_{a}\left(\Psi^2(z)-\frac {1} {1+z}\right)^{a}$,
$a=b-2,\dots,2,1$, to conclude that $A_a$ must be divisible by
$3^{b+v_3(b!)-a-v_3(a!)+1}$, $a=1,2,\dots,b$. 

Now one returns to the term
$A_{b}\left(\Psi^2(z)-\frac {1} {1+z}\right)^{b}$.
So far, we know that $A_b$ must be divisible by~$3$, and that
the coefficient of $\widetilde
H_{1^{b}}$ in $\left(\Psi^2(z)-\frac {1} {1+z}\right)^{b}$ is
divisible by $3^{b+v_3(b!)}$, but not by a higher power of~$3$.
If we repeat the arguments of the previous paragraph, but take
into account that they led to an ``improvement" of the induction
hypothesis, then one is led to infer that $A_b$ must
have an even higher $3$-divisibility, entailing analogous
higher $3$-divisibility of the coefficients $A_a$ with $a<b$,
until one reaches the conclusion that $A_b$ must be divisible
by $3^{b+1+v_3((b+1)!)-b-v_3(b!)}$ (cf.\ the middle term in
\eqref{eq:3b+1}), and consequently $A_a$ by
$3^{b+1+v_3((b+1)!)-a-v_3(a!)}$, $a=1,2,\dots,b-1$.
This is exactly the induction hypothesis with $b$ replaced by $b+1$.

When we use what we have just proven for $b=d$, then 
a contradiction arises. Namely, if $A_a$ is divisible by
$3^{d+v_3(d!)-a-v_3(a!)}$, $a=1,2,\dots,d$, then the above
arguments would imply that $A_d$ must be divisible by (at least)~$3$.
This, however, is impossible since $A_d=1$.

We have thus shown that, indeed, a minimal polynomial for the
modulus $3^\ga$, where $d+v_3(d!)<\ga$ must have degree larger than
$2d$. This completes the proof of the theorem.
\end{proof}

Based on the observations in Proposition~\ref{prop:minpol}
later in this section, as well as 
Theorem~\ref{conj:1}, we propose the following conjecture.

\begin{conjecture} \label{conj:1A}
The degree of a minimal polynomial for the modulus $3^\ga$, $\ga\ge1$,
is $2d$, where $d$ is the least positive integer such that 
$d+v_3(d!)\ge\ga$. 
\end{conjecture}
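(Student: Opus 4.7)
My plan is to argue by contradiction, supposing that a minimal polynomial $A(z,t)$ of degree $2d$ exists with $d+v_3(d!)<\gamma$, and showing that its leading coefficient must be divisible by~$3$, contradicting monicity.

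The first step is to reduce to a convenient normal form. By Lemma~\ref{lem:evenodd}, any minimal polynomial must be even in~$t$ (odd and even power contributions cannot cancel modulo~$3^\gamma$), so I can write
\begin{equation*}
A(z,t)=\sum_{b=0}^{d}A_b\left(t^{2}-\tfrac{1}{1+z}\right)^{b},\qquad A_d=1,
\end{equation*}
with $A_b\in(\Z/3^\gamma\Z)[z,(1+z)^{-1}]$. The choice of the expansion point $\frac{1}{1+z}$ is motivated by the congruence $\Psi^{2}(z)\equiv\frac{1}{1+z}\pmod{3}$ from \eqref{eq:PsiEq}.

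The second, technical step is to compute the expansion of $\left(\Psi^{2}(z)-\tfrac{1}{1+z}\right)^{b}$ in the basis of $\widetilde H$-series (normalized so that all indices $a_i$ are coprime to~$3$, as in Corollary~\ref{cor:Htildeind}). Using Lemma~\ref{lem:Psi-2A} and the shorthand $R(k)$ from \eqref{eq:R}, a product expansion gives
\begin{equation*}
\left(\Psi^{2}(z)-\tfrac{1}{1+z}\right)^{b}
=\frac{1}{(1+z)^{b}}\sum_{k=0}^{b}(-1)^{b-k}\binom{b}{k}\prod_{j\ge0}\bigl(1+3R(j)\bigr)^{k}.
\end{equation*}
Expanding the product yields $\widetilde H$-series with indices that are powers of~$3$; after applying the reduction of Lemma~\ref{lem:Hbi}, the resulting coefficient $\widetilde c(b;1,1,\dots,1)$ of $\widetilde H_{1^{a}}(z)$ can be extracted. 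The crucial computation uses Lemma~\ref{lem:diff}: the coefficient $c(b;3^{d_1},\dots,3^{d_a})$ equals $\sum_{k}(-1)^{b-k}\binom{b}{k}\prod_{j}3^{3^{d_j}}\binom{k}{3^{d_j}}$, which is divisible by $3^{a}b!$, and by $3^{a+1}b!$ whenever some $d_j\ge1$. This yields the key divisibility statement
\begin{equation*}
\widetilde c(b;\underbrace{1,\dots,1}_{a})=\begin{cases}3^{a+1}b!\,N_1,&a<b,\\[1mm]3^{b}b!+3^{b+v_3(b!)+1}N_2,&a=b,\\[1mm]3^{a}b!\,N_3,&a>b,\end{cases}
\end{equation*}
for some integers $N_1,N_2,N_3$. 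The essential point is that the diagonal coefficient ($a=b$) has $3$-adic valuation exactly $b+v_3(b!)$, while off-diagonal coefficients have strictly higher valuation.

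The final step is an induction on $b=1,2,\dots,d$ showing that $A_a$ must be divisible by $3^{b+v_3(b!)-a-v_3(a!)}$ for all $1\le a\le b$. At each stage $b$, I examine the coefficient of $\widetilde H_{1^{b}}(z)$ on both sides of $A(z,\Psi(z))\equiv 0\pmod{3^{\gamma}}$; by the diagonal/off-diagonal dichotomy above and the linear independence provided by Corollary~\ref{cor:Htildeind}, the contribution of $A_b$ dominates (has lowest $3$-valuation) unless $A_b$ itself carries an extra factor of~$3$. Cascading this observation down to the coefficients of $\widetilde H_{1^{a}}(z)$ for $a=b-1,b-2,\dots,1$ improves the divisibility of each $A_a$, and feeding the improvement back into the $\widetilde H_{1^{b}}(z)$ analysis bumps the required divisibility of $A_b$ by one more unit, advancing the induction. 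Setting $b=d$ at the end forces $A_d$ to be divisible by $3^{d+v_3(d!)+1-d-v_3(d!)}=3$, contradicting $A_d=1$. Since $\Psi(z)$ algebraic over $\Z[z]$ would give a minimal polynomial of some fixed degree $2d$ valid modulo every $3^\gamma$, which is impossible as $\gamma\to\infty$, transcendence follows.

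The main obstacle will be bookkeeping in the induction: the off-diagonal terms contribute with strictly higher $3$-valuation only by a narrow margin, so I must verify at each step that the minimum of $b+1+v_3(a!)$ over $a>b$ (namely $b+1+v_3((b+1)!)$) indeed exceeds $b+v_3(b!)$, which reduces to the identity $v_3((b+1)!)\ge v_3(b!)$, clear, but the precise accounting must be threaded carefully to ensure the induction hypothesis is exactly what is needed at the next stage.
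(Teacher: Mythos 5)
Your argument reproduces, essentially verbatim, the paper's proof of Theorem~\ref{conj:1}: expanding a hypothetical monic annihilator in powers of $\left(t^2-\frac{1}{1+z}\right)$, using Lemma~\ref{lem:diff} to pin down the exact $3$-adic valuation $b+v_3(b!)$ of the diagonal coefficient $\widetilde c(b;1,\dots,1)$, and running the cascading induction to force $3\mid A_d$. That establishes only the \emph{lower bound}: no monic polynomial of degree $2d$ with $d+v_3(d!)<\ga$ can annihilate $\Psi(z)$ modulo $3^\ga$. But the statement you are asked to prove is Conjecture~\ref{conj:1A}, which asserts that the minimal degree \emph{equals} $2d_0$ for the \emph{least} $d_0$ with $d_0+v_3(d_0!)\ge\ga$. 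Since $d\mapsto d+v_3(d!)$ is weakly increasing, every $d\ge d_0$ also satisfies the inequality, so the lower bound alone leaves open the possibility that the true minimal degree is $2(d_0+1)$, $2(d_0+2)$, etc. The missing half is the \emph{construction}, for every $\ga$, of a monic polynomial of degree exactly $2d_0$ with coefficients in $\Z[z,(1+z)^{-1}]$ annihilating $\Psi(z)$ modulo $3^\ga$; your proposal contains no step addressing this.

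This is not a bookkeeping issue but the genuinely open part of the problem: the authors themselves reduce the conjecture (Remark~\ref{rem:1}, Items (2) and (3)) to exhibiting polynomials $A_\de(z,t)$ of degree $2\cdot3^\de$ with $A_\de(z,\Psi(z))=0$ modulo $3^{(3^{\de+1}-1)/2}$, describe a recursive procedure based on cubing $A_m$ and dividing out a power of $3$, and explicitly state that they are unable to show this always works (it is verified only for small cases, cf.\ Proposition~\ref{prop:minpol}). So what you have written is a correct proof of Theorem~\ref{conj:1}, not of Conjecture~\ref{conj:1A}; to close the gap you would need to supply the existence construction, which remains unproven in the paper.
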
 

\begin{remarknu} \label{rem:1}
(1) Given the ternary expansion of $d$, say
$$d=d_0+d_1\cdot 3+d_2\cdot
9+\cdots+d_r\cdot 3^r, \quad 0\le \ga_i\le1,$$ 
we have, by the well-known formula of
Legendre \cite[p.~10]{LegeAA}, 
\begin{align}\notag
v_3(d!)&=\sum _{\ell=1} ^{\infty}\fl{\frac {d} {3^\ell}}=
\sum _{\ell=1} ^{\infty}\fl{\sum _{i=0} ^{r}d_i 3^{i-\ell}}=
\sum _{\ell=1} ^{\infty}\sum _{i=\ell} ^{r}d_i 3^{i-\ell}\\
\label{eq:Leg}
&=
\sum _{i=1} ^{r}\sum _{\ell=1} ^{i}d_i 3^{i-\ell}=
\sum _{i=1} ^{r}\frac {d_i} {2}\left(3^{i}-1\right)=
\frac {1} {2}(d-s(d)),
\end{align}
where $s(d)$ denotes the sum of digits of $d$ in its ternary
expansion.
Consequently, an equivalent way of phrasing Theorem~\ref{conj:1}
is to say that the degree of a minimal\break polynomial for the 
modulus $3^\ga$ is $2d$, where $d$ is the least 
positive integer with\break $\frac {1} {2}(3d-s(d))\ge\ga$.

\medskip
(2)
We claim that, in order to establish Conjecture~\ref{conj:1A}, 
it suffices to prove the conjecture
for $\ga=\frac {1}
{2}(3^{\de+1}-1)$, $\de=0,1,\dots$. If we take into account
the remark in (1), this means that it is
sufficient to prove that, for each $\de\ge0$, there is
a polynomial $A_\de(z,t)$ of degree $2\cdot 3^{\de}$ such that
\begin{equation} \label{eq:min2ga}
A_\de(z,\Psi(z))=0\quad \text {modulo }3^{(3^{\de+1}-1)/2}.
\end{equation}
For, arguing by induction, let us suppose that 
we have already constructed $A_0(z,t),\break
A_1(z,t),
\dots,A_{m}(z,t)$ satisfying \eqref{eq:min2ga}.
Let
$$\al=\al_0+\al_1\cdot 3+\al_2\cdot
9+\cdots+\al_{m}\cdot 3^{m}, \quad 0\le \al_i\le2,$$ 
be the ternary expansion of the positive integer
$\al$. In this situation, we have
\begin{equation} \label{eq:betaind}
\prod _{\de=0} ^{m}A_{\de}^{\al_{\de}}(z,\Psi(z))=0\quad \text {modulo }
\prod _{\de=0} ^{m}3^{\al_{\de}(3^{\de+1}-1)/2}=
3^{(3\al-s(\al))/2}.
\end{equation}
On the other hand, the degree of the left-hand side of
\eqref{eq:betaind} as a polynomial in $\Psi(z)$ is
$\sum _{\de=0} ^{m}\al_{\de} \cdot2\cdot3^\de=2\al$.

Let us put these observations together.
In view of \eqref{eq:Leg},
our lower bound assumption says that the degree of a minimal
polynomial for the modulus $3^\ga$ cannot be smaller than the
least even integer, $2d^{(\ga)}$ say, 
for which $\frac {1} {2}(3d^{(\ga)}-s(d^{(\ga)}))\ge\ga$. 
If we take into account that
the quantity $\frac {1} {2}(3\al-s(\al))$, as a function in $\al$, is weakly monotone
increasing in $\al$, then \eqref{eq:betaind} tells us that,
as long as $d^{(\ga)}\le 2+2\cdot3+2\cdot9+\dots+2\cdot3^{m}
=3^{m+1}-1$, 
we have found a monic
polynomial of degree $2d^{(\ga)}$, $B_\ga(z,t)$ say, for which
$B_\ga(z,\Psi(z))=0$ modulo~$3^\ga$, namely the left-hand side
of \eqref{eq:betaind} with $\al$ replaced by $d^{(\ga)}$, to wit
$$
B_\ga(z,t)=\prod _{\de=1} ^{m}A_{\de}^{d^{(\ga)}_\de}(z,t),
$$
where $d^{(\ga)}=d^{(\ga)}_0+d^{(\ga)}_1\cdot 3+d^{(\ga)}_2\cdot
9+\cdots+d^{(\ga)}_m\cdot 3^m$ is the ternary expansion of 
$d^{(\ga)}$. Hence, it must necessarily be a minimal polynomial
for the modulus $3^\ga$.

Since $\frac {1} {2}\big(3(3^{m+1}-1)-s(3^{m+1}-1)\big)
=\frac {1} {2}(3^{m+2}-3-2(m+1))$, we have thus found minimal
polynomials for all moduli $3^\ga$ with $\ga\le \frac {1}
{2}(3^{m+2}-2m-5)$. 
Now we should note that the
quantity $\frac {1} {2}(3\al-s(\al))$ makes a jump from 
$\frac {1}{2}(3^{m+2}-2m-5)$ to $\frac {1} {2}(3^{m+2}-1)$
when we move from $\al=3^{m+1}-1$ to $\al=3^{m+1}$.
%(the reader should 
%recall that it suffices to consider even $\al$). 
If we take $A_m^3(z,t)$, which has degree $2\cdot 3^{m+1}$,
then, by \eqref{eq:min2ga}, 
we also have a minimal polynomial for the modulus 
$\left(3^{(3^{m+1}-1)/2}\right)^3=3^{(3^{m+2}-3)/2}$
and, in view of the preceding remark, as well for all moduli $3^\ga$
with $\ga$ between 
$\frac {1}{2}(3^{m+2}-2m-5)$ and $\frac {1} {2}(3^{m+2}-3)$.

So, indeed, the first modulus for which we do not have a minimal
polynomial is the modulus $3^{(3^{m+2}-1)/2}$. This is the role
which $A_{m+1}(z,t)$ (see \eqref{eq:min2ga} with $m+1$ in place of 
$\de$) would have to play.

\medskip
(3)
The arguments in Item~(2) show at the same time that, supposing that we have
already constructed $A_0(z,t),A_1(z,t),\dots,A_m(z,t)$, the polynomial 
$A^3_m(z,t)$ is a very close ``approximation" to the polynomial
$A_{m+1}(z,t)$ that we are actually looking for next,
which is only ``off" by a factor of~$3$.
This can be used to recursively compute polynomials $A_\de(z,t)$
satisfying \eqref{eq:min2ga}, by expanding $A_m(z,\Psi(z))$ in terms
of the series $\widetilde H_{a_1,a_2,\dots,a_s}(z)$ with all
$a_i$'s not divisible by~$3$, as discussed
in the previous section (see in particular \eqref{eq:Psi-2A} and
Lemma~\ref{lem:Hbi}), dividing the result through by
$3^{(3^{m+1}-1)/2}$, and considering the quotient, say
$$
3^{-(3^{m+1}-1)/2}A_m(z,\Psi(z))=\sum_{s\ge1}\sum_{a_1,\dots,a_s\ge1}
c(a_1,a_2,\dots,a_s;z)
\widetilde H_{a_1,a_2,\dots,a_s}(z),
$$
where the coefficients $c(a_1,a_2,\dots,a_s;z)$ are
necessarily in $\Z[z,(1+z)^{-1}]$, modulo~$3$. Then one computes
\begin{multline} \label{eq:Am2}
\left(3^{-(3^{m+1}-1)/2}A_m(z,\Psi(z))\right)^3
=\sum_{s\ge1}\sum_{a_1,\dots,a_s\ge1}
c_{a_1,a_2,\dots,a_s}^3(z)
\widetilde H_{a_1,a_2,\dots,a_s}^3(z)
\quad \text{modulo 3},
\end{multline}
uses Lemma~\ref{lem:Hbi} again to perform the expansion
$$
\left(\widetilde H_{a_1,a_2,\dots,a_s}(z)\right)^3=
\widetilde H_{a_1,a_2,\dots,a_s}(z)+\text{other terms}
\quad \text{modulo 3},
$$
substitutes this back into \eqref{eq:Am2}, and expresses whatever
remains in terms of powers of $A_i(z,\Psi(z))$'s, with $i=0,1,\dots,m$.
We are not able to actually show that this always works, but
it does for small $m$. The corresponding results are listed in
Proposition~\ref{prop:minpol} below.
\end{remarknu}

\begin{proposition} \label{prop:minpol}
Minimal polynomials for the moduli
$3,9,27,81,243,729,2187,\dots,3^{13}$ are
\begin{alignat*}2 %\label{eq:minpol}
&\,\ A_0(z,t):=t^2-\tfrac {1} {1+z}&&\text {\em modulo 3},\\
&
A_0^2(z,t)
&&\text {\em modulo 9},\\
&
A_0^3(z,t)
&&\text {\em modulo 27},\\
&
A_1(z,t):=\left(t^2-\tfrac {1} {1+z}\right)^3-\tfrac {9} {(1+z)^2}
\left(t^2-\tfrac {1} {1+z}\right)+\tfrac {27z} {(1+z)^5}
\quad 
&&\text {\em modulo 81},\\
&
A_0(z,t)A_1(z,t)
&&\text {\em modulo 243},\\
&
A_0^2(z,t)A_1(z,t)
&&\text {\em modulo 729},\\
&
A_1^2(z,t)
&&\text {\em modulo 2189},\\
&
A_1^2(z,t)
&&\text {\em modulo $3^8$},\\
&
A_0(z,t)A_1^2(z,t)
&&\text {\em modulo $3^9$},\\
&
A_0^2(z,t)A_1^2(z,t)
&&\text {\em modulo $3^{10}$},\\
&
A_1^3(z,t)
&&\text {\em modulo $3^{11}$},\\
&
A_1^3(z,t)
&&\text {\em modulo $3^{12}$},\\
&
A_2(z,t):=A_1^3(z,t)-\tfrac {3^8} {(1+z)^6}A_1(z,t)\\
&\kern2cm
+\tfrac {3^{10}z} {(1+z)^{9}}A_0^2(z,t)
-\tfrac {3^{11}z(1+z^2)} {(1+z)^{12}}A_0(z,t)
+\tfrac {3^{12}z^4} {(1+z)^{17}}\quad 
&&\text {\em modulo $3^{13}$}
.
\end{alignat*}
\end{proposition}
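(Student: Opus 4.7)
The plan splits the proposition into a lower bound and a vanishing verification. The lower bound on the $t$-degree of any minimal polynomial for the modulus $3^\ga$ is supplied by Theorem~\ref{conj:1}: via the Legendre-style identity \eqref{eq:Leg} it reduces to finding the least positive integer $d$ with $\tfrac{1}{2}(3d-s(d))\ge\ga$. A short case-by-case inspection gives $d=1,2,3,3,4,5,6,6,7,8,9,9,9$ for $\ga=1,2,\dots,13$, and these values are exactly half the $t$-degrees of the polynomials $A_0$, $A_0^2$, $A_0^3$, $A_1$, $A_0A_1$, $A_0^2A_1$, $A_1^2$, $A_1^2$, $A_0A_1^2$, $A_0^2A_1^2$, $A_1^3$, $A_1^3$, $A_2$ displayed in the proposition. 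Consequently, once each listed polynomial $A$ is shown to satisfy $A(z,\Psi(z))=0$ modulo the claimed $3$-power, minimality follows automatically.

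For the vanishing I would exploit multiplicativity. The base case $A_0(z,\Psi(z))\equiv 0\pmod 3$ is precisely \eqref{eq:PsiEq}, and vanishing modulo a $3$-power is stable under products: any $A_0^{\al_0}A_1^{\al_1}A_2^{\al_2}$ vanishes modulo the product of the moduli to which its factors vanish. Consequently only three substantive vanishing statements need to be established, namely
\[
A_0(z,\Psi(z))\equiv 0\ (\mathrm{mod}\ 3),\quad A_1(z,\Psi(z))\equiv 0\ (\mathrm{mod}\ 3^4),\quad A_2(z,\Psi(z))\equiv 0\ (\mathrm{mod}\ 3^{13}),
\]
of which the first has already been done. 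All other entries in the list are obtained from these three by taking appropriate products.

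For $A_1$ and $A_2$ the natural route is the inductive construction outlined in item~(3) of Remark~\ref{rem:1}. Given $A_m$ with $A_m(z,\Psi(z))\equiv 0\pmod{3^{(3^{m+1}-1)/2}}$, I would expand $\Psi^2(z)-1/(1+z)$ via \eqref{eq:Psi-2A}, then use the reduction recurrences \eqref{eq:Rek1}--\eqref{eq:Rek4} from Lemma~\ref{lem:Hbi} to rewrite $A_m^3(z,\Psi(z))$ as a $\Z[z,(1+z)^{-1}]$-linear combination of $1$, the series $\widetilde H_{a_1,\dots,a_s}(z)$ and their $\Psi$-multiples, with all $a_i$ coprime to $3$. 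By Theorem~\ref{thm:Htildeind} this representation is unique, so after dividing the relation by $3^{3(3^{m+1}-1)/2}$ and reducing modulo~$3$ one can read off, coefficient by coefficient, a correction built from $A_0,A_1,\dots,A_m$ (each of $\Psi$-degree strictly less than $2\cdot 3^{m+1}$) that, moved to the other side, upgrades the modulus to $3^{(3^{m+2}-1)/2}$ and produces $A_{m+1}$. Carrying this out for $m=0$ and $m=1$ should reproduce exactly the polynomials $A_1$ and $A_2$ displayed in the proposition.

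The principal obstacle is computational rather than conceptual: verifying $A_2(z,\Psi(z))\equiv 0\pmod{3^{13}}$ requires expanding a degree-$18$ polynomial in $\Psi(z)$ through Lemma~\ref{lem:Hbi} while tracking $3$-adic valuations across a large but finite list of coefficients. Every reduction step is mechanical, and all $3$-divisibilities are visibly controlled by the binomial factors $\binom{b'_h}{a}$, $\binom{-b'_h}{b}$ and the explicit powers $3^{a+b+\V s+\V t}$ in \eqref{eq:Rek1}--\eqref{eq:Rek4}; nevertheless the bookkeeping is heavy, and I would execute this part in the accompanying \emph{Mathematica} files referred to in the Note at the end of the introduction, rather than by hand.
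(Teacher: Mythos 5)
Your proposal is correct and follows essentially the same route as the paper's proof: minimality from the degree bound of Theorem~\ref{conj:1}, vanishing of the composite entries by multiplicativity of the moduli, and construction/verification of $A_1$ (and, in principle, $A_2$) via the procedure of Item~(3) of Remark~\ref{rem:1} together with the $\widetilde H$-reductions of Lemma~\ref{lem:Hbi}. The paper likewise carries out only the $A_0$ and $A_1$ computations explicitly and leaves the laborious $A_2$ verification to the reader, so your plan matches it step for step.
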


\begin{proof}
To begin with, the reader should recall from Theorem~\ref{conj:1}
that minimal polynomials always have even degree.

Then, considering \eqref{eq:Psi-2A}, we see that $A_0(z,t)$ is indeed a
minimal polynomial for the modulus~$3$. 

Continuing, we follow the procedure outlined in Item~(3) of
Remark~\ref{rem:1}. By \eqref{eq:Psi-2A}, we have
\begin{equation} \label{eq:A0} 
3^{-1}A_0(z,\Psi(z))=\frac {1} {1+z}\widetilde H_1(z)
\quad \text{modulo 3},
\end{equation}
whence
\begin{align*}
3^{-3}A_0^3(z,\Psi(z))&=\frac {1} {(1+z)^3}\widetilde H_1^3(z)
\quad \text{modulo 3}\\
&=\frac {1} {(1+z)^3}\left(\widetilde H_1(z)-\frac {z(1+z)} {1+z^3}\right)
\quad \text{modulo 3}.
\end{align*}
Putting this together with \eqref{eq:A0} and the simple observation that
$$
\frac {z(1+z)} {1+z^3}=\frac {z} {(1+z)^2}
\quad \text{modulo 3},
$$ 
it follows immediately that 
$A_1(z,\Psi(z))$ vanishes indeed modulo~$3^4$.
The fact that $A_1(z,t)$ is {\it minimal\/} for the modulus $3^4$
follows from the degree bound in Theorem~\ref{conj:1}.
This bound also
implies the assertions about minimal polynomials for the moduli
$9,27,3^5,3^6,\dots,3^{12}$.

The reasoning for $A_2(z,t)$ is analogous, albeit more laborious.
We leave the details to the reader. 
\end{proof}

\begin{remarknu}\label{rem:min}
Minimal polynomials are highly non-unique: for example, 
the polynomial
$$
\left(t^2-\frac {1} {1+z}\right)^2+3\left(t^2-\frac {1} {1+z}\right)
$$
is obviously also a minimal polynomial for the modulus $9$.
\end{remarknu}

In the remainder of this section, we show that the derivative of our
basic series $\Psi(z)$, when reduced modulo a given power of~$3$, 
can be expressed as a multiple of $\Psi(z)$,
where the multiplicative factor is a polynomial in $z$ and
$(1+z)^{-1}$. This is one of the crucial facts which
make the method described in Section~\ref{sec:method} work.

By straightforward differentiation, we obtain
%\begin{equation} \label{eq:Psi'} 
%\frac {d} {dz}\left(\Psi(\ep z^\ga)\right)
%=\ep\ga \Psi(\ep z^\ga)\sum_{j=0}^{\infty} \frac {3^j 
%z^{\ga 3^j-1}} {1+\ep z^{\ga 3^j}}
%\end{equation}
\begin{equation} \label{eq:Psi'} 
\Psi'(z)
= \Psi(z)\sum_{j=0}^{\infty} \frac {3^j 
z^{3^j-1}} {1+z^{3^j}}.
\end{equation}
Modulo a
given $3$-power $3^\be$, the sum on the right-hand side of \eqref{eq:Psi'}
is finite. Hence, by Lemma~\ref{lem:1+z}, our claim follows.

\section{The method}
\label{sec:method}

We consider a (formal) differential equation of the form
\eqref{eq:diffeq}, satisfied by the integral power series $F(z)$,
In this
situation, we propose the following algorithmic approach to
determining $F(z)$ modulo a $3$-power $3^{3^\al}$,
for some positive integer $\al$. 
Let us fix $\ep\in\{1,-1\}$ and a positive integer $\ga$. 
We make the Ansatz
\begin{equation} \label{eq:Ansatz}
F(z)=\sum _{i=0} ^{2\cdot 3^{\al}-1}a_i(z)\Psi^i(\ep z^\ga)\quad 
\text {modulo }3^{3^\al},
\end{equation}
where $\Psi(z)$ is given by \eqref{eq:Psidef},
and where the $a_i(z)$'s
are (at this point) undetermined elements of
$\Z[z,z^{-1},(1+\ep z^\ga)^{-1}]$.
Now we substitute \eqref{eq:Ansatz} into \eqref{eq:diffeq}, and
we shall gradually determine approximations $a_{i,\be}(z)$ to $a_i(z)$ such that
\eqref{eq:diffeq} holds modulo $3^\be$, for $\be=1,2,\dots,3^\al$. 
To start the procedure, we consider the differential equation
\eqref{eq:diffeq} modulo~$3$, with
\begin{equation} \label{eq:Ansatz1}
F(z)=\sum _{i=0} ^{2\cdot 3^{\al}-1}a_{i,1}(z)\Psi^i(\ep z^\ga)\quad \text {modulo
}3.
\end{equation}
We have 
$$\frac {d} {dz}\left(\Psi(\ep z^\ga)\right)
=\frac {\ep\ga z^{\ga-1}} {1+\ep z^\ga}\Psi(\ep z^\ga)
\quad \text {modulo }3.$$ 
Consequently, we see that
the left-hand side of \eqref{eq:diffeq} is a polynomial in
$\Psi(\ep z^\ga)$ with coefficients in 
$\Z[z,z^{-1},(1+\ep z^\ga)^{-1}]$. 
We reduce powers $\Psi^k(\ep z^\ga)$ with $k\ge2\cdot 3^{\al}$ using
the relation 
\begin{equation} \label{eq:PsiRel}
\left(\Psi^{2}(\ep z^\ga)-\frac {1} {1+\ep z^\ga}\right)^{3^\al}=0\quad \text {modulo
}3^{3^\al},
\end{equation}
which is implied by the minimal polynomial for the
modulus~$3$ given in Proposition~\ref{prop:minpol}.\footnote{Actually, 
if we would like to obtain an optimal result, 
we should use the relation implied by
the minimal polynomial for the modulus $3^{3^\al}$ in the sense of 
Section~\ref{sec:Psi}. However, since we have no general formula available
for such a minimal polynomial (cf.\ Items~(2) and (3) of 
Remark~\ref{rem:1} in that section),
and since we wish to prove results for arbitrary moduli, we choose instead
powers of the minimal polynomial for the modulus~$3$ as the best compromise.
In principle, it may happen that there exists a polynomial in
$\Psi(\ep z^\ga)$
with coefficients in $\Z[z,z^{-1},(1+\ep z^\ga)^{-1}]$, which is identical
with $F(z)$ after reduction of its coefficients modulo $3^{3^\al}$,
but the Ansatz \eqref{eq:Ansatz} combined with the
reduction \eqref{eq:PsiRel} fails because it is too restrictive. However,
in all the cases that we treat, this
obstruction does not occur. Moreover, once we are successful using this
(potentially problematic) Ansatz, then the result can be easily converted into
an optimal one by reducing the obtained polynomial further using
the relation implied by the actual minimal polynomial for the modulus
$3^{3^\al}$.}
Since, at this point, we are only interested in finding a solution to
\eqref{eq:diffeq} modulo~$3$, Relation~\eqref{eq:PsiRel} simplifies to
\begin{equation} \label{eq:PsiRel2}
\Psi^{2\cdot 3^{\al}}(\ep z^\ga)
-\frac {1} {(1+\ep z^\ga)^{3^{\al}}}=0\quad \text {modulo
}3.
\end{equation}
Now we compare coefficients of powers $\Psi^k(\ep z^\ga)$,
$k=0,1,\dots,2\cdot 3^{\al}-1$. This yields a
system of $2\cdot 3^{\al}$ (differential) equations (modulo~$3$)
for unknown functions $a_{i,1}(z)$ in
$\Z[z,z^{-1},(1+\ep z^\ga)^{-1}]$, $i=0,1,\dots,
2\cdot 3^{\al}-1$,
which may or may not have a solution. 

Provided we have already found 
functions $a_{i,\be}(z)$ in
$\Z[z,z^{-1},(1+\ep z^\ga)^{-1}]$, $i=0,1,\dots,\break
2\cdot 3^{\al}-1$, 
for some $\be$ with $1\le \be\le 3^{\al}-1$, such that
\begin{equation} \label{eq:Ansatz2}
F(z)=\sum _{i=0} ^{2\cdot 3^{\al}-1}a_{i,\be}(z)\Psi^i(\ep z^\ga)
\end{equation}
solves \eqref{eq:diffeq} modulo~$3^\be$, we put 
\begin{equation} \label{eq:Ansatz2a}
a_{i,\be+1}(z):=a_{i,\be}(z)+3^{\be}b_{i,\be+1}(z),\quad 
i=0,1,\dots,2\cdot 3^{\al}-1,
\end{equation}
where the $b_{i,\be+1}(z)$'s are (at this point) undetermined 
elements in $\Z[z,z^{-1},(1+\ep z^\ga)^{-1}]$. Next we substitute
$$
F(z)=\sum _{i=0} ^{2\cdot 3^{\al}-1}a_{i,\be+1}(z)\Psi^i(\ep z^\ga)
$$
in \eqref{eq:diffeq}. Using \eqref{eq:Psi'} and Lemma~\ref{lem:1+z},
we see that derivatives of $\Psi(\ep z^\ga)$ can be expressed as multiples of
$\Psi(z)$, where multiplicative factors are polynomials in $z$ and
$(1+\ep z^\ga)^{-1}$. Consequently, we may
expand the left-hand side of \eqref{eq:diffeq} 
as a polynomial in $\Psi(\ep z^\ga)$ with
coefficients in $\Z[z,z^{-1},(1+\ep z^\ga)^{-1}]$. Subsequently,
we apply again the reduction
using relation \eqref{eq:PsiRel}. By comparing coefficients of powers
$\Psi^k(\ep z^\ga)$, $k=0,1,\dots,2\cdot 3^{\al}-1$,
we obtain a
system of $2\cdot 3^{\al}$ (differential) equations (modulo~$3^{\be+1}$)
for the unknown functions $b_{i,\be+1}(z)$,
$i=0,1,\dots,2\cdot 3^{\al}-1$,
which may or may not have a solution. If we manage to push this 
procedure through until $\be=3^\al-1$, then,
setting $a_i(z)=a_{i,3^\al}(z)$, $i=0,1,\dots,2\cdot 3^{\al}-1$,
the series $F(z)$ as given in \eqref{eq:Ansatz} is a solution to
\eqref{eq:diffeq} modulo~$3^{3^\al}$, as required.

We should point out that, in order for the method to be meaningful,
we need to assume that the differential equation \eqref{eq:diffeq},
when considered modulo an arbitrary $3$-power $3^e$,
determines the solution $F(z)$ {\it uniquely} modulo~$3^e$.
Otherwise, our method could produce several different ``solutions,"
and it might be difficult to decide which of them would actually 
correspond to the series $F(z)$ we are interested in.
Concerning this point, see also the discussion in Remark~\ref{rem:2}
in Section~\ref{sec:gen}, as well as 
Section~\ref{sec:Apery}.

\section{A sample application}
\label{sec:sample}

In this section, we demonstrate how to apply the method from the
previous section, by considering the sequence of
``almost central" binomial coefficients $\left(\binom
{2n+2}n\right)_{n\ge0}$. To get started, we need a
functional equation for the generating function
$A(z)=\sum_{n\ge0}\binom {2n+2}nz^n$.
It is not difficult to see that $A(z)$ satisfies the
equation 
\begin{equation} \label{eq:AEF}
z^2A^2(z)+A(z)-\frac {1} {1-4z}=0.
\end{equation}
We fix $\al$,
choose $\ep=-1$ and $\ga=1$ in the method from Section~\ref{sec:method},
and make the Ansatz 
\begin{equation*} %\label{eq:AAnsatz}
A(z)=\sum _{i=0} ^{2\cdot 3^{\al}-1}a_i(z)\Psi^i(-z)\quad 
\text {modulo }3^{3^\al},
\end{equation*}
where $\Psi(z)$ is given by \eqref{eq:Psidef},
and where the $a_i(z)$'s
are (at this point) undetermined Laurent polynomials in $z$ and
$1-z$.

The reader should recall that the idea is to gradually determine
approximations $a_{i,\be}(z)$ to $a_i(z)$ such that
\begin{equation} \label{eq:Abe} 
A_\be(z):=\sum _{i=0} ^{2\cdot 3^{\al}-1}a_{i,\be}(z)\Psi^i(-z)
\end{equation}
solves \eqref{eq:AEF} modulo $3^\be$, for $\be=1,2,\dots,3^\al$. 

For the ``base step" (that is, for $\be=1$), we claim that
$$
A_1(z)=\frac {1} {z^2}-\frac {1} {z^2}(1+z)(1-z)^{\frac {1}
  {2}(3^\al-1)}\Psi^{3^\al}(-z)
$$
solves \eqref{eq:AEF} modulo~3. Indeed, substitution of $A_1(z)$
for $A(z)$ on the left-hand side of \eqref{eq:AEF} yields
the expression
\begin{multline*}
\frac {1} {z^2}\left(1-2(1+z)(1-z)^{\frac {1}
  {2}(3^\al-1)}\Psi^{3^\al}(-z)+
(1+z)^2(1-z)^{3^\al-1}\Psi^{2\cdot 3^\al}(-z)\right)\\
+\frac {1} {z^2}-\frac {1} {z^2}(1+z)(1-z)^{\frac {1}
  {2}(3^\al-1)}\Psi^{3^\al}(-z)-\frac {1} {1-z}
\quad \text{modulo }3.
\end{multline*}
Now one uses the relation \eqref{eq:PsiRel2} (with $\ep=-1$ and
$\ga=1$) to ``get rid of" $\Psi^{2\cdot 3^\al}(-z)$. This leads
to the expression
\begin{multline*}
\frac {1} {z^2}\left(1-2(1+z)(1-z)^{\frac {1}
  {2}(3^\al-1)}\Psi^{3^\al}(-z)+
(1+z)^2(1-z)^{-1}\right)\\
+\frac {1} {z^2}-\frac {1} {z^2}(1+z)(1-z)^{\frac {1}
  {2}(3^\al-1)}\Psi^{3^\al}(-z)-\frac {1} {1-z}\\
=\frac {3} {z^2(1-z)}
-\frac {3} {z^2}(1+z)(1-z)^{\frac {1}
  {2}(3^\al-1)}\Psi^{3^\al}(-z)
\quad \text{modulo }3,
\end{multline*}
which is indeed $0$ modulo~3.

We proceed to the ``iteration step."
Supposing that $A_\be(z)$ as given in \eqref{eq:Abe}
solves \eqref{eq:AEF} modulo $3^\be$, we put 
\begin{equation*} %\label{eq:Ansatz2a}
a_{i,\be+1}(z):=a_{i,\be}(z)+3^{\be}b_{i,\be+1}(z),\quad 
i=0,1,\dots,2\cdot 3^{\al}-1,
\end{equation*}
as in \eqref{eq:Ansatz2a},
where the $b_{i,\be+1}(z)$'s are (at this point) undetermined 
Laurent polynomials in $z$ and $1-z$. Next we substitute
$$
\sum _{i=0} ^{2\cdot 3^{\al}-1}a_{i,\be+1}(z)\Psi^i(-z)
$$
in \eqref{eq:AEF} and consider the result modulo~$3^{\be+1}$. This 
leads to the congruence
\begin{multline*}
z^2A^2_\be(z)+A_\be(z)-\frac {1} {1-4z}
+3^\be \sum _{i=0} ^{2\cdot 3^{\al}-1}b_{i,\be+1}(z)\Psi^i(-z)\\
+2\cdot3^\be z^2\sum _{i=0} ^{2\cdot 3^{\al}-1}\sum _{j=0} ^{2\cdot 3^{\al}-1}
a_{i,\be}(z)b_{j,\be+1}(z)\Psi^{i+j}(-z)
=0
\quad 
\text {modulo }3^{\be+1}.
\end{multline*}
By our assumption on $A_{\be}(z)$, we may divide by $3^\be$, so that
we obtain the congruence
\begin{multline} \label{eq:sampleGl}
\sum _{i=0} ^{2\cdot 3^{\al}-1}\text{Rat}_i(z)\Psi^i(-z)
+ \sum _{i=0} ^{2\cdot 3^{\al}-1}b_{i,\be+1}(z)\Psi^i(-z)\\
-z^2 \sum _{i=0} ^{2\cdot 3^{\al}-1}\sum _{j=0} ^{2\cdot 3^{\al}-1}
a_{i,\be}(z)b_{j,\be+1}(z)\Psi^{i+j}(-z)
=0
\quad 
\text {modulo }3,
\end{multline}
where $\text{Rat}_i(z)$, $i=0,1,\dots,2\cdot 3^{\al}-1$, are certain
Laurent polynomials in $z$ and $1-z$ with integer coefficients.
Again, we may reduce powers $\Psi^{i+j}(-z)$ with $i+j\ge 2\cdot
3^\al$ by means of \eqref{eq:PsiRel2}. 
Furthermore, by construction, we know that
$a_{0,\be}(z)=1/z^2$~modulo~3 and that
$$a_{3^\al,\be}(z)=\frac {1} {z^2}(1+z)(1-z)^{\frac {1}
  {2}(3^\al-1)}
\quad \text{modulo }3.
$$
If we use all this in \eqref{eq:sampleGl}, then we obtain
\begin{multline*}
\sum _{i=0} ^{2\cdot 3^{\al}-1}\text{Rat}_i(z)\Psi^i(-z)
-(1+z)(1-z)^{\frac {1} {2}(3^\al-1)}\sum _{j=0} ^{3^{\al}-1}
b_{j,\be+1}(z)\Psi^{j+3^\al}(-z)\\
-\frac {1+z} {(1-z)^{\frac {1} {2}(3^\al+1)}}
\sum _{j=3^\al} ^{2\cdot 3^{\al}-1}
b_{j,\be+1}(z)\Psi^{j-3^\al}(-z)
=0
\quad 
\text {modulo }3.
\end{multline*}
Now we compare coefficients of powers of $\Psi(-z)$. 
This produces the system of congruences
$$
(1+z)(1-z)^{g_i}
b_{i,\be+1}(z)=\text {Rat}_{i+3^\al}(z)\quad 
\text {modulo }3,\quad \quad 
i=0,1,\dots,2\cdot 3^\al-1,
$$
where $g_i=\frac {1} {2}(3^\al-1)$ if $0\le i<3^\al$,
$g_i=-\frac {1} {2}(3^\al+1)$ if $3^\al\le i<2\cdot 3^\al$,
and the index $i+3^\al$ in $\text {Rat}_{i+3^\al}$ has to be
taken modulo $2\cdot 3^\al$. Clearly, this system is trivially 
uniquely solvable. Moreover, in all cases which we looked at,
the numerator of the rational function $\text{Rat}_i(z)$ is
divisible by $1+z$, $i=0,1,\dots,2\cdot 3^\al-1$. 
We have no explanation for this, yet.
%??
However, under the assumption that this is indeed the case, 
we see that
the algorithm will go through for $\be=1,2,\dots,3^\al$, and,
consequently, for all non-negative integers $\al$ 
the series $A(z)$, when coefficients are reduced modulo~$3^{3^\al}$,
can be represented as a polynomial in $\Psi(-z)$ with coefficients
that are Laurent polynomials in $z$ and $1-z$. 

We have implemented the algorithm explained above.
For $\al=1$, it produces the following result.

\begin{theorem} \label{thm:sample-27}
Let $\Psi(z)$ be given by \eqref{eq:Psidef}.
Then, we have
\begin{multline} 
\label{eq:Loessample27}
\sum _{n\ge0} ^{}\binom {2n+2}n\,z^n
=\frac{13}{z^2}
+3 \left(-\frac{z+1}{z^2}+3\frac{
   1-z-z^2}{z^2(1-z)}\right)
   \Psi(-z)\\
+\left(4
   +\frac{6}{z}-\frac{4}{z^2}\right)
   \Psi^3(-z)
+3 \left(4
   z+2-\frac{1}{z}-\frac{5}{z^2}\right)
   \Psi^5(-z)
\\
\quad 
\text {\em modulo }27.
\end{multline}
\end{theorem}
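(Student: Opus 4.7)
The plan is to apply the algorithm of Section~\ref{sec:method} directly, with the parameters already fixed in the discussion preceding the theorem: $\al=1$, $\ep=-1$, $\ga=1$, so the target modulus is $3^{3^\al}=27$, the Ansatz is a polynomial in $\Psi(-z)$ of degree at most $2\cdot 3^\al-1=5$, and the reduction relation \eqref{eq:PsiRel} specializes to $\left(\Psi^2(-z)-\tfrac{1}{1-z}\right)^3\equiv 0\pmod{27}$. Uniqueness of $A(z)$ modulo any $3$-power follows from \eqref{eq:AEF} rewritten as $A(z)=\tfrac{1}{1-4z}-z^2A^2(z)$, which determines the coefficient of $z^n$ from those of strictly smaller index; hence any polynomial in $\Psi(-z)$ satisfying \eqref{eq:AEF} modulo $27$ automatically coincides with $A(z)$ modulo $27$, and the proof reduces to producing such a polynomial.

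The computation is carried out in three rounds, $\be=1,2,3$. The base round $\be=1$ is exactly the verification performed in the excerpt: with $\al=1$ the formula for $A_1(z)$ specializes to $A_1(z)=\tfrac{1}{z^2}-\tfrac{1-z^2}{z^2}\Psi^3(-z)$, which matches the reduction of \eqref{eq:Loessample27} modulo $3$ and already satisfies \eqref{eq:AEF} modulo $3$. For the iteration $\be\mapsto\be+1$, I write $a_{i,\be+1}(z):=a_{i,\be}(z)+3^\be b_{i,\be+1}(z)$ as in \eqref{eq:Ansatz2a}, plug the updated Ansatz into the left-hand side of \eqref{eq:AEF}, reduce powers $\Psi^k(-z)$ with $k\ge 6$ using the cube relation above, divide through by $3^\be$, and equate coefficients of $\Psi^0(-z),\Psi^1(-z),\dots,\Psi^5(-z)$. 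The linear independence needed to equate coefficients is supplied by Lemma~\ref{lem:evenodd} together with Corollary~\ref{cor:Htildeind}, after expanding each power of $\Psi^2(-z)$ via Lemma~\ref{lem:Psi-2} in the series $\widetilde H_{a_1,\dots,a_s}(z)$.

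Each of the six resulting congruences takes the form $(1+z)(1-z)^{g_i}b_{i,\be+1}(z)\equiv \mathrm{Rat}_{i+3}(z)\pmod 3$, as displayed at the end of Section~\ref{sec:sample}, with $g_i=\tfrac{1}{2}(3^\al-1)=1$ if $0\le i<3$ and $g_i=-\tfrac{1}{2}(3^\al+1)=-2$ if $3\le i<6$. The linear system is trivially solvable once we know that every $\mathrm{Rat}_{i+3}(z)$ has a numerator divisible by $1+z$, so that the inverse of $(1+z)$ does not actually appear in the answer, which is required to stay inside $\Z[z,z^{-1},(1-z)^{-1}]$. Carrying out the two iteration rounds $\be=2,3$ and reading off $a_{i,3}(z)$ for $i=0,1,\dots,5$, one recovers exactly the right-hand side of \eqref{eq:Loessample27}; the coefficients of $\Psi^2(-z)$ and $\Psi^4(-z)$ collapse to zero because the even-power contributions are absorbed into the constant term $13/z^2$ and the $\Psi^0$-component of the cubed minimal-polynomial relation.

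The main obstacle is the verification, at each iteration, that the numerators $\mathrm{Rat}_{i+3}(z)$ really are divisible by $1+z$: as noted in the excerpt, this divisibility is observed empirically but not explained structurally, and it is precisely the point at which the algorithm could in principle fail to produce a closed form. For the present equation \eqref{eq:AEF}, however, this is a finite, mechanical check for the six rational functions at $\be=2$ and six more at $\be=3$, most conveniently performed with the accompanying \textit{Mathematica} notebook; once that check is done the three updates $a_{i,1}\to a_{i,2}\to a_{i,3}$ assemble into the displayed formula, and uniqueness of $A(z)$ modulo $27$ closes the proof.
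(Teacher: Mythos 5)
Your proposal follows exactly the route the paper takes: the theorem is obtained by running the algorithm of Section~\ref{sec:sample} with $\al=1$ (base step $A_1(z)=\tfrac{1}{z^2}-\tfrac{1-z^2}{z^2}\Psi^3(-z)$, two iteration rounds, reduction via $\bigl(\Psi^2(-z)-\tfrac{1}{1-z}\bigr)^3\equiv0$, and the empirically checked divisibility of the numerators $\mathrm{Rat}_i(z)$ by $1+z$), the paper itself offering no more than ``we have implemented the algorithm\dots for $\al=1$ it produces the following result.'' Your explicit uniqueness argument via the recursion $A(z)=\tfrac{1}{1-4z}-z^2A^2(z)$ is a welcome addition that the paper leaves implicit, and the only cosmetic quibble is that your parenthetical ``explanation'' for the vanishing of the $\Psi^2$- and $\Psi^4$-coefficients is not a real argument --- but none is needed, since those coefficients are simply read off from the computation.
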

The congruence results for $\Psi^3(z)$ and $\Psi^5(z)$, which one can
obtain by following the recipe provided in the appendix, 
%\begin{comment}
and which are
presented explicitly in Propositions~\ref{prop:Psi-3} and \ref{prop:Psi-5},
%\end{comment}
\begin{comment}
and which are
presented explicitly in the appendix of \cite{KrMuAE},
\end{comment}
could now be used to find explicit criteria when $\binom
{2n+2}n$ is congruent to $0,1,2,\dots,26$ modulo~27.\footnote{In
  principle, such results could also be obtained via
the extensions of Lucas' theorem given in \cite{DaWeAA}
or \cite[Theorem~1]{GranAA},
which provide a means for determining the congruence class of
a binomial coefficient modulo a given prime power.}
Since a display of such results is rather lengthy, we omit them
here, also in most of the subsequent sections. As we already said
in the Introduction, we confine ourselves to providing only one
illustration of coefficient extraction, namely for the case of free
subgroup numbers for $PSL_2(\Z)$; see Section~\ref{sec:Free}.

\section{A general theorem on functional--differential
equations of quadratic type}
\label{sec:gen}

The purpose of this section is to show that the method from 
Section~\ref{sec:method} 
always applies to a certain class of functional-differential
equations. The characteristic feature of the equations in this class
is that, when reduced modulo~3, they become quadratic
equations. Since there are numerous combinatorial objects
whose generating functions satisfy quadratic equations, the main
theorem of this section has a wide range of applications.
These will be discussed in
Sections~\ref{sec:Motzkin}--\ref{sec:Eulerian}.

\begin{theorem} \label{thm:general}
Let $\Psi(z)$ be given by \eqref{eq:Psidef}, 
and let $\al$ be some positive integer.
Furthermore, suppose that 
the formal power series $F(z)$ with integer coefficients
satisfies the functional-differential equation
\begin{equation} \label{eq:generalEF}
c_2(z)F^2(z)+c_1(z)F(z)+c_0(z)+
3\mathcal Q(z;F(z),F'(z),F''(z),\dots,F^{(s)}(z))=0,
\end{equation}
where 
\begin{enumerate} 
\item $c_2(z)=z^{e_1}(1+\ep z^{\ga})^{e_2}\
\text {\em modulo }3$, with non-negative integers $e_1,e_2$
and $\ep\in\{1,-1\}$;
\item $c_1^2(z)-c_0(z)c_2(z)=z^{2f_1}(1+\ep
  z^\ga)^{2f_2+1}\
\text {\em modulo }3$, with non-negative integers $f_1,f_2$;
\item 
$\mathcal Q$ is a polynomial with integer coefficients;
\item 
%for all positive integers $\be$,
%the formal power series $F(z)$, with coefficients reduced modulo
%$3^\be$, is uniquely determined by the reduction of
%\eqref{eq:generalEF} modulo~$3^\be$.
the equation \eqref{eq:generalEF} has a
uniquely determined formal power solution modulo~$3^{3^\al}$.
\end{enumerate}
Then $F(z)$, when coefficients are 
reduced modulo $3^{3^\al}$, 
can be expressed as a polynomial in $\Psi(\ep z^\ga)$ of the form
$$
F(z)=a_0(z)+\sum _{i=0} ^{2\cdot 3^{\al}-1}a_{i}(z)\Psi^{i}(\ep z^\ga)\quad 
\text {\em modulo }3^{3^\al},
$$ 
where the coefficients $a_{i}(z)$, $i=0,1,\dots,2\cdot 3^\al-1$, 
are Laurent polynomials in $z$ and $1+\ep z^\ga$.
\end{theorem}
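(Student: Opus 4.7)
The plan is to execute the algorithm of Section~\ref{sec:method} and verify that each successive step succeeds when the quadratic data of~\eqref{eq:generalEF} have the shape imposed by conditions (1)--(3); uniqueness of the resulting representation will be secured by hypothesis~(4).

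For the base step~($\beta=1$), reduction of~\eqref{eq:generalEF} modulo~$3$ kills the $3\mathcal Q$-term and leaves the quadratic congruence
\[c_2(z)F^2(z)+c_1(z)F(z)+c_0(z)\equiv0\pmod3.\]
By condition~(1) the coefficient $c_2(z)$ is a unit in $\Z[z,z^{-1},(1+\ep z^\ga)^{-1}]$ modulo~$3$, so the quadratic formula applies and yields
\[F(z)\equiv\frac{-c_1(z)\pm\sqrt{c_1^2(z)-c_0(z)c_2(z)}}{2c_2(z)}\pmod3.\]
Condition~(2) identifies the radicand, modulo~$3$, with $z^{2f_1}(1+\ep z^\ga)^{2f_2+1}$, and \eqref{eq:PsiEq} gives $\bigl((1+\ep z^\ga)\Psi(\ep z^\ga)\bigr)^2\equiv1+\ep z^\ga\pmod3$, so a square root of the radicand modulo~$3$ is $\pm z^{f_1}(1+\ep z^\ga)^{f_2+1}\Psi(\ep z^\ga)$. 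This provides the initial approximation $F(z)\equiv a_{0,1}(z)+a_{1,1}(z)\Psi(\ep z^\ga)\pmod3$ in the required form, with hypothesis~(4) selecting the correct sign.

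For the iteration step, assume that $F_\beta(z)=\sum_{i=0}^{2\cdot3^\alpha-1}a_{i,\beta}(z)\Psi^i(\ep z^\ga)$ already solves \eqref{eq:generalEF} modulo~$3^\beta$, and set $F_{\beta+1}=F_\beta+3^\beta G_\beta$ with $G_\beta(z)=\sum_i b_{i,\beta+1}(z)\Psi^i(\ep z^\ga)$. Since $\mathcal Q$ is polynomial, the perturbation of $3\mathcal Q(z;F_\beta,F_\beta',\dots)$ caused by adding $3^\beta G_\beta$ to $F_\beta$ is divisible by~$3^{\beta+1}$ and so disappears in the congruence. Substituting into \eqref{eq:generalEF}, using that the resulting expression is divisible by~$3^\beta$, dividing by~$3^\beta$, and reducing modulo~$3$, one is left with the linear congruence
\[R_\beta(z)+\bigl(2c_2(z)F_\beta(z)+c_1(z)\bigr)G_\beta(z)\equiv0\pmod3,\]
where $R_\beta(z):=3^{-\beta}\bigl(c_2F_\beta^2+c_1F_\beta+c_0+3\mathcal Q(z;F_\beta,F_\beta',\dots)\bigr)$ is explicitly computable. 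Squaring the Newton-type coefficient and using the base-step congruence,
\[\bigl(2c_2F_\beta+c_1\bigr)^2\equiv c_1^2-4c_0c_2\equiv c_1^2-c_0c_2\equiv z^{2f_1}(1+\ep z^\ga)^{2f_2+1}\pmod3,\]
so modulo~$3$ this coefficient equals $\pm z^{f_1}(1+\ep z^\ga)^{f_2+1}\Psi(\ep z^\ga)$. The key observation is that this element is a unit in the quotient ring $(\Z/3\Z)[z,z^{-1},(1+\ep z^\ga)^{-1}][\Psi(\ep z^\ga)]/\langle\Psi^{2\cdot3^\alpha}(\ep z^\ga)-(1+\ep z^\ga)^{-3^\alpha}\rangle$, because $\Psi(\ep z^\ga)$ itself has inverse $(1+\ep z^\ga)\Psi(\ep z^\ga)$ in that ring, by \eqref{eq:PsiEq}. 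Division of $-R_\beta$ by $2c_2F_\beta+c_1$, followed by reduction using the relation \eqref{eq:PsiRel2}, then produces $G_\beta$ as a polynomial in $\Psi(\ep z^\ga)$ of degree less than $2\cdot3^\alpha$ with coefficients which are Laurent polynomials in $z$ and $1+\ep z^\ga$, as needed.

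The main obstacle is bookkeeping rather than a deep difficulty: one must ensure that every derivative $F_\beta^{(k)}(z)$ and every piece of $R_\beta(z)$ can be rewritten inside $\Z[z,z^{-1},(1+\ep z^\ga)^{-1}][\Psi(\ep z^\ga)]$ after reduction via~\eqref{eq:PsiRel2}, with no further denominators being introduced. Formula~\eqref{eq:Psi'} together with Lemma~\ref{lem:1+z} expresses derivatives of $\Psi(\ep z^\ga)$ in this ring, and iterated use of \eqref{eq:PsiRel2} keeps the degree in $\Psi(\ep z^\ga)$ below $2\cdot3^\alpha$. Once this is verified, the iteration proceeds successfully for $\beta=1,2,\dots,3^\alpha-1$, and condition~(4) ensures that the polynomial $\sum_ia_{i,3^\alpha}(z)\Psi^i(\ep z^\ga)$ produced at the end coincides modulo~$3^{3^\alpha}$ with the (unique) power-series solution $F(z)$ of~\eqref{eq:generalEF}.
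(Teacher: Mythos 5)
Your proposal is correct and follows essentially the same route as the paper's own proof: a base step that solves the reduced quadratic using condition~(2) and the relation $\Psi^2(\ep z^\ga)\equiv(1+\ep z^\ga)^{-1}$ modulo~$3$, followed by the Hensel-type lifting of Section~\ref{sec:method}, with condition~(4) invoked at the end to identify the power-series candidate with $F(z)$. The only differences are presentational: you place the square root of the discriminant at $\Psi^{1}(\ep z^\ga)$ where the paper places it at $\Psi^{3^{\al}}(\ep z^\ga)$ (both are valid square roots in the quotient ring, and either choice turns the linearized system into an invertible index shift), and your observation that $2c_2F_\be+c_1$ is a unit because its square is the unit $z^{2f_1}(1+\ep z^\ga)^{2f_2+1}$ gives a cleaner, one-line justification of the unique solvability that the paper obtains by explicit coefficient comparison; note only that, as the paper is careful to point out, the sign cannot actually be ``selected'' at the base step --- both branches must be carried through to $\be=3^{\al}$, where exactly one yields a genuine power series.
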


\begin{remarknu} \label{rem:2}
(1) It is not sufficient to assume 
that Equation~\eqref{eq:generalEF} determines
a unique formal power series solution $F(z)$ {\it over the integers}.
%(which would be a weakening of Condition~(4)).
In order to illustrate this point, let us consider the (linear)
differential equation 
\begin{equation} \label{eq:Apery0EF} 
\left(4 z^2+25
   z-1\right) F(z)
+\left(5 z^3+44
   z^2-3 z\right)
   F'(z)
+\left(z^4+11 z^3-z^2\right)
   F''(z)+z+3=0.
\end{equation}
This equation determines a unique formal power series solution 
$F(z)=\sum_{n\ge0}F_nz^n$,
as can easily be seen by noticing that 
it is equivalent to the recurrence relation
\begin{equation} \label{eq:Apery0Rek}
(n+3)^2 F_{n+2}
-\left(11 n^2+55 n+69\right)
   F_{n+1}
-(n+2)^2
   F_n
=0\quad \text{for }n\ge0,
\end{equation}
with initial conditions $F_0=3$ and $F_1=19$.
(The alert reader will have noticed that this is the recurrence relation
\eqref{eq:Apery2Rek} for 
the Ap\'ery numbers for $\zeta(2)$ given by \eqref{eq:Apery2},
shifted by~1. In particular, the differential equation
\eqref{eq:Apery0EF} {\it is} solved by a formal power series with
integer coefficients.)
 
On the other hand, the equivalence of \eqref{eq:Apery0EF} and
\eqref{eq:Apery0Rek} also shows that the differential equation
\eqref{eq:Apery0EF} {\it does not\/} determine a unique formal power series
solution {\it modulo}~3. In fact, it is straightforward to verify that 
the series $z+(z^2+z^4)f(z^3)$ solves \eqref{eq:Apery0EF} when reduced
modulo~3, where
$f(z)$ can be an {\it arbitrary} formal power series in $z$.
Hence, in this case, our method would yield an infinite
set of candidate solutions modulo~3, and there would not
be any means inherent to the method to figure out which of them corresponds
to the unique solution $F(z)$ to \eqref{eq:Apery0EF} over the integers, 
when coefficients are reduced modulo~$3$.

\medskip
(2) In Condition (4), it is not sufficient to require uniqueness only
modulo~3, one really needs uniqueness modulo the power of~$3$ for which
one aims at finding a solution to \eqref{eq:generalEF}. 
In order to illustrate this, let us consider the functional equation
\begin{equation} \label{eq:Cat2}
z^2F^4(z)-2zF^3(z)+(2z+1)F^2(z)-2F(z)+1=0. 
\end{equation}
Observing that the left-hand side is a perfect square,
so that \eqref{eq:Cat2} can actually be rewritten as
$$
(zF^2(z)-F(z)+1)^2=0,
$$
we see that it has a unique formal power series solution over the
integers, given by the generating function $C(z)=\sum_{n\ge0}C_nz^n$ 
of Catalan numbers of Section~\ref{sec:Catalan}. This is, at the same
time, the unique solution to \eqref{eq:Cat2} modulo~3.

On the other hand, if we consider \eqref{eq:Cat2} modulo~9, and
compare constant coefficients in \eqref{eq:Cat2}, then,
writing again $F(z)=\sum_{n\ge0}F_nz^n$, we see that
this leads to the congruence
$$
(F_0-1)^2\equiv 0\pmod9,
$$
which has no unique solution. In fact, it is straightforward to verify that 
the series $C(z)+3f(z)$ solves \eqref{eq:Cat2} when reduced
modulo~9, where
$f(z)$ can be an {\it arbitrary} formal power series in $z$.
Hence again, our method would yield an infinite
set of candidate solutions modulo~9, and there would not
be any means inherent to the method to figure out which of them equals
the unique solution $F(z)$ to \eqref{eq:Apery0EF}, 
when coefficients are reduced modulo~$9$. It is obvious that in a
similar fashion one can construct examples where
uniqueness modulo~$3^\be$, say, does not imply uniqueness 
modulo~$3^{\be+1}$.
\end{remarknu}

\begin{proof}[Proof of Theorem~\em\ref{thm:general}] 
We apply the method from Section~\ref{sec:method}. As we discussed
there and in the sample application in the previous section, 
it consists of two steps, a {\it base step}, which determines
a solution to the functional-differential equation
\eqref{eq:generalEF} modulo~3, and an {\it iteration step}, which affords
solutions modulo higher and higher powers of~$3$.

In the entire proof, we always assume without loss of generality that
$c_0(z)$, $c_1(z)$, and $c_2(z)$ are already reduced modulo~3
(otherwise we could put terms which are divisible by~3 into the
polynomial $\mathcal Q(\,.\,)$).

\medskip
{\sc Base step.}
We start by
substituting the Ansatz \eqref{eq:Ansatz1} in \eqref{eq:generalEF} and
reducing the result modulo~$3$. 
In this way, we obtain 
\begin{multline*} %\label{eq:quadr2}
c_2(z)\sum _{i=0} ^{2\cdot 3^{\al}-1}\sum _{j=0} ^{2\cdot 3^{\al}-1}
a_{i,1}(z)a_{j,1}(z)\Psi^{i+j}(\ep z^\ga)\\
+c_1(z)\sum _{i=0} ^{2\cdot 3^{\al}-1}a_{i,1}(z)\Psi^i(\ep z^\ga)
+c_0(z)=0\quad \text
{modulo }3.
\end{multline*}
When $i+j\ge 2\cdot 3^\al$,
we may reduce $\Psi^{i+j}(\ep z^\ga)$ further using relation
\eqref{eq:PsiRel2} (with $z$ replaced by $\ep z^\ga$). This leads to
\begin{multline} \label{eq:Glgeneral}
c_2(z)\underset{i+j< 2\cdot 3^\al}{\sum _{0\le i,j\le 2\cdot 3^{\al}-1}}
a_{i,1}(z)a_{j,1}(z)\Psi^{i+j}(\ep z^\ga)\\
+c_2(z)\underset{i+j\ge 2\cdot 3^\al}{\sum _{0\le i,j\le 2\cdot 3^{\al}-1}}
a_{i,1}(z)a_{j,1}(z)\frac {1} {(1+\ep z^\ga)^{3^{\al}}}\Psi^{i+j-2\cdot 3^\al}(\ep z^\ga)\\
+c_1(z)\sum _{i=0} ^{2\cdot 3^{\al}-1}a_{i,1}(z)\Psi^i(\ep z^\ga)
+c_0(z)=0\quad \text
{modulo }3.
\end{multline}
Now we compare coefficients of $\Psi^i(\ep z^\ga)$, for
$i=0,1,\dots,2\cdot 3^{\al}-1$. We claim that the choice of
\begin{align} \label{eq:a0}
a_{0,1}(z)&=\frac {c_1(z)} {c_2(z)}=
z^{-e_1}(1+\ep z^{\ga})^{-e_2}{c_1(z)} ,\\[2mm]
\label{eq:a3al}
a_{3^{\al},1}(z)&=\pm\frac {(c_1^2(z)-c_0(z)c_2(z))^{1/2}} {c_2(z)}
(1+\ep z^\ga)^{3^\al/2}=
\pm z^{f_1-e_1}(1+\ep z^\ga)^{f_2-e_2+\frac {1} {2}(3^\al+1)},
\end{align}
with all other $a_{i,1}(z)$'s vanishing, provides two
solutions (depending on the sign in front of the expression for 
$a_{3^{\al},1}(z)$) modulo~$3$ to the system of congruences resulting from 
\eqref{eq:Glgeneral} in rational functions $a_{i,1}(z)$.
Namely, if we substitute this choice in
the left-hand side of \eqref{eq:Glgeneral}, then we
obtain 
\begin{multline*}
c_2(z)\frac {c_1^2(z)} {c_2^2(z)}+
2c_2(z)\frac {c_1(z)} {c_2(z)}a_{3^{\al},1}(z)
\Psi^{3^\al}(\ep z^\ga)
+c_2(z)\frac {c_1^2(z)-c_0(z)c_2(z)} {c_2^2(z)}\\
+c_1(z)\frac {c_1(z)} {c_2(z)}+c_1(z)a_{3^{\al},1}(z)
\Psi^{3^\al}(\ep z^\ga)
+c_0(z),
\end{multline*}
which indeed vanishes modulo 3.

The two solutions given by \eqref{eq:a0} and \eqref{eq:a3al} 
are different modulo~$3$ since $a_{3^\al,1}(z)\ne0$.
With both these solutions, we enter the iteration step.

\medskip
{\sc Iteration step.}
We consider
the Ansatz \eqref{eq:Ansatz2}--\eqref{eq:Ansatz2a}, where the
coefficients $a_{i,\be}(z)$ are supposed to provide a solution
$F_{\be}(z)=\sum _{i=0} ^{2\cdot 3^{\al}-1}a_{i,\be}(z)\Psi^i(\ep z^\ga)$ to
\eqref{eq:generalEF} modulo~$3^\be$. This Ansatz, substituted in
\eqref{eq:generalEF}, produces the congruence
\begin{multline*}
c_2(z)F^2_{\be}(z)+c_1(z)F_{\be}(z)+c_0(z)+
3\mathcal Q(z;F_\be(z),F'_\be(z),F''(z),\dots,F^{(s)}_\be(z))\\
+3^\be c_1(z)\sum _{i=0} ^{2\cdot 3^{\al}-1}b_{i,\be+1}(z)\Psi^i(\ep z^\ga)\\
+2\cdot3^\be c_2(z)\sum _{i=0} ^{2\cdot 3^{\al}-1}\sum _{j=0} ^{2\cdot 3^{\al}-1}
a_{i,\be}(z)b_{j,\be+1}(z)\Psi^{i+j}(\ep z^\ga)
=0
\quad 
\text {modulo }3^{\be+1}.
\end{multline*}
By our assumption on $F_{\be}(z)$, we may divide by $3^\be$.
Furthermore, by construction, we know that
$a_{0,\be}(z)=c_1(z)/c_2(z)$~modulo~3 and that
$$a_{3^\al,\be}(z)=\pm z^{f_1-e_1}(1+\ep z^\ga)^{f_2-e_2+\frac {1}
  {2}(3^\al+1)}\quad \text{modulo }3
$$
%(with appropriately chosen sign $\pm$; see the preceding discussion in
%Cases~1--3),
(the sign $\pm$ depending on the sign in \eqref{eq:a3al}),
while all other $a_{i,\be}(z)$'s vanish modulo~3. If we use these
facts and also perform the reduction \eqref{eq:PsiRel2}, 
then we arrive at the congruence
\begin{multline*}
\sum _{i=0} ^{2\cdot 3^{\al}-1}\text{Rat}_i(z)\Psi^i(\ep z^\ga)
+c_1(z)\sum _{i=0} ^{2\cdot 3^{\al}-1}b_{i,\be+1}(z)\Psi^i(\ep z^\ga)\\
+2\cdot c_2(z)\Bigg(\frac {c_1(z)} {c_2(z)}\sum _{j=0} ^{2\cdot
  3^{\al}-1}b_{j,\be+1}(z)\Psi^j(\ep z^\ga)
\kern6cm\\ 
\pm z^{f_1-e_1}(1+\ep z^\ga)^{f_2-e_2+\frac {1}
  {2}(3^\al+1)}
\sum _{j=0} ^{3^{\al}-1}
b_{j,\be+1}(z)\Psi^{j+3^\al}(\ep z^\ga)\\
\pm z^{f_1-e_1}(1+\ep z^\ga)^{f_2-e_2-\frac {1}
  {2}(3^\al-1)}
\sum _{j=3^\al} ^{2\cdot 3^{\al}-1}
b_{j,\be+1}(z)\Psi^{j- 3^\al}(\ep z^\ga)
\Bigg)
=0
\quad 
\text {modulo }3,
\end{multline*}
where $\text {Rat}_i(z)$, $i=0,1,\dots,2\cdot 3^{\al}-1$, are certain
Laurent polynomials in $z$ and $1+\ep z^\ga$ with integer coefficients.
Reducing further modulo~3, this simplifies to
\begin{multline*}
\sum _{i=0} ^{2\cdot 3^{\al}-1}\text{Rat}_i(z)\Psi^i(\ep z^\ga)
\mp z^{f_1}
(1+\ep z^\ga)^{f_2+\frac {1}
  {2}(3^\al+1)}
\sum _{j=0} ^{3^{\al}-1}
b_{j,\be+1}(z)\Psi^{j+3^\al}(\ep z^\ga)\\
\mp z^{f_1}
(1+\ep z^\ga)^{f_2-\frac {1}
  {2}(3^\al-1)}
\sum _{j=3^\al} ^{2\cdot 3^{\al}-1}
b_{j,\be+1}(z)\Psi^{j-3^\al}(\ep z^\ga)
=0
\quad 
\text {modulo }3.
\end{multline*}
Comparison of powers of $\Psi(\ep z^\ga)$ then yields the system of congruences
$$
\pm z^{f_1}(1+\ep z^\ga)^{g_i}
b_{i,\be+1}(z)=\text {Rat}_{i+3^\al}(z)\quad 
\text {modulo }3,\quad \quad 
i=0,1,\dots,2\cdot 3^\al-1,
$$
where $g_i=f_2+\frac {1} {2}(3^\al+1)$ if $0\le i<3^\al$,
$g_i=f_2-\frac {1} {2}(3^\al-1)$ if $3^\al\le i<2\cdot 3^\al$,
and the index $i+3^\al$ in $\text {Rat}_{i+3^\al}$ has to be
taken modulo $2\cdot 3^\al$.
%This system being trivially
%uniquely solvable, the uniqueness condition~(4) guarantees that this
%yields the solution $F_\be(z)$ to Equation~\eqref{eq:generalEF}, taken
%modulo~$3^\be$, which equals the uniquely determined solution $F(z)$
%of \eqref{eq:generalEF}, when coefficients of $F(z)$ are reduced
%modulo~$3^\be$. 
This system is trivially uniquely solvable. Since we had two
different solutions modulo~$3$ to start with, we now obtain two
different solutions $F_\be(z)$ to \eqref{eq:generalEF} modulo $3^{\be}$.
When we arrive at $\be=3^\al$,
the uniqueness condition~(4) guarantees that only one of the two
obtained solutions is a formal power series in $z$ (the other must
be a genuine Laurent series), 
which necessarily equals the uniquely determined solution 
of \eqref{eq:generalEF} modulo~$3^{3^\al}$. 
Thus, we have proven that
%for an arbitrary positive integer $\al$, 
the algorithm of
Section~\ref{sec:method} will produce a solution $F_{{3^\al}}(z)$ 
to \eqref{eq:generalEF} modulo $3^{3^\al}$ which is a
polynomial in $\Psi(\ep z^\ga)$ with coefficients that are 
Laurent polynomials in $z$ and $1+\ep z^\ga$, and which equals
any formal power series solution $F(z)$ to \eqref{eq:generalEF}
when coefficients are reduced modulo~$3^{3^\al}$.
\end{proof}

\section{Motzkin numbers}
\label{sec:Motzkin}

Let $M_n$ be the $n$-th {\it Motzkin number}, that is, the number of
lattice paths from $(0,0)$ to $(n,0)$ consisting of steps taken from
the set
$\{(1,0),(1,1),(1,-1)\}$ never running below the $x$-axis.
It is well-known (cf.\ \cite[Ex.~6.37]{StanBI})
that the generating function $M(z)=\sum_{n\ge0}M_n\,z^n$ is given by
\begin{equation} \label{eq:Motzkin} 
M(z)=\frac {1-z-\sqrt{1-2z-3z^2}} {2z^2},
\end{equation}
and hence satisfies the functional equation
\begin{equation} \label{eq:MotzkinEF}
z^2M^2(z)+(z-1)M(z)+1=0.
\end{equation}

\begin{theorem} \label{thm:Motzkin}
Let $\Psi(z)$ be given by \eqref{eq:Psidef}, 
and let $\al$ be some positive integer.
Then the generating function $M(z)=\sum_{n\ge0}M_n\,z^n$ 
for the Motzkin numbers,
reduced modulo $3^{3^\al}$, 
can be expressed as a polynomial in $\Psi(z^3)$ of the form
$$
M(z)=a_0(z)+\sum _{i=0} ^{2\cdot 3^{\al}-1}a_{i}(z)\Psi^{i}(z^3)\quad 
\text {\em modulo }3^{3^\al},
$$ 
where the coefficients $a_{i}(z)$, $i=0,1,\dots,2\cdot 3^\al-1$, 
are Laurent polynomials in $z$ and $1+z$.
\end{theorem}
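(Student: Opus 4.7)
The plan is to recognize that the Motzkin functional equation \eqref{eq:MotzkinEF} fits exactly the quadratic-type framework of the master theorem, Theorem~\ref{thm:general}, and hence the entire proof reduces to checking the four hypotheses and then performing one trivial change of variable to match the statement. Explicitly, \eqref{eq:MotzkinEF} is
$$
c_2(z)M^2(z)+c_1(z)M(z)+c_0(z)+3\mathcal Q(z;M(z),\dots)=0
$$
with $c_2(z)=z^2$, $c_1(z)=z-1$, $c_0(z)=1$, and $\mathcal Q\equiv 0$.

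The next step is to verify conditions (1)--(4) of Theorem~\ref{thm:general}. For~(1), $c_2(z)=z^2=z^2(1+\ep z^\ga)^0$ with $e_1=2,\ e_2=0$ for any choice of $\ep,\ga$. The discriminant, which drives the choice of parameter, is
$$
c_1^2(z)-c_0(z)c_2(z)=(z-1)^2-z^2=1-2z\equiv 1+z\pmod 3,
$$
which matches the required shape $z^{2f_1}(1+\ep z^\ga)^{2f_2+1}$ only with $\ep=1$, $\ga=1$, $f_1=f_2=0$; note that $\ga=3$ is \emph{excluded}, because $(1+z^3)^{2f_2+1}\equiv(1+z)^{3(2f_2+1)}\pmod 3$ can never produce $(1+z)^1$. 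Condition~(3) is vacuous since $\mathcal Q=0$. For condition~(4), extracting the coefficient of $z^n$ in \eqref{eq:MotzkinEF} gives an expression in which $M_n$ appears with coefficient $-1$ (a unit in any $\Z/3^e\Z$) plus a polynomial expression in $M_0,\dots,M_{n-1}$; so the recurrence uniquely determines $M_n$ modulo every $3^e$, in particular modulo~$3^{3^\al}$.

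Applying Theorem~\ref{thm:general} with $\ep=1$ and $\ga=1$ therefore yields
$$
M(z)\equiv \sum_{i=0}^{2\cdot 3^\al-1}\tilde a_i(z)\,\Psi^i(z)\pmod{3^{3^\al}},
$$
with coefficients $\tilde a_i(z)$ that are Laurent polynomials in $z$ and $1+z$. To convert this into the asserted expansion in $\Psi(z^3)$, I would invoke the elementary identity
$$
\Psi(z)=\prod_{j\ge 0}(1+z^{3^j})=(1+z)\prod_{j\ge 1}(1+z^{3^j})=(1+z)\,\Psi(z^3),
$$
read off from the product form in \eqref{eq:Psidef}. Substituting $\Psi^i(z)=(1+z)^i\Psi^i(z^3)$ term by term gives the representation in the theorem with coefficients $a_i(z):=(1+z)^i\tilde a_i(z)$, still Laurent polynomials in $z$ and $1+z$.

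There is essentially no hard step here: everything of substance is already packaged in Theorem~\ref{thm:general}. The only point that deserves care is the mismatch between the natural parameter $\ga=1$ dictated by the discriminant and the parameter $\ga=3$ appearing in the statement of Theorem~\ref{thm:Motzkin}; this is resolved cleanly by the product identity $\Psi(z)=(1+z)\Psi(z^3)$, which makes the two Ansätze interchangeable without enlarging the coefficient ring.
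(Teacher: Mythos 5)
Your proposal is correct and follows essentially the same route as the paper: both verify that \eqref{eq:MotzkinEF} satisfies the hypotheses of Theorem~\ref{thm:general} with $\ep=1$, $\ga=1$, $e_1=2$, $e_2=f_1=f_2=0$, $\mathcal Q=0$, and then pass from $\Psi(z)$ to $\Psi(z^3)$ via $\Psi(z)=(1+z)\Psi(z^3)$. Your explicit check of the uniqueness condition~(4) via the recurrence is a welcome detail that the paper leaves implicit.
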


\begin{proof} 
Choosing $\ga=1$, $c_2(z)=z^2$, $c_1(z)=z-1$, $c_0(z)=1$, we see that
the functional equation \eqref{eq:MotzkinEF} fits into the framework
of Theorem~\ref{thm:general}, since 
$$c_1^2(z)-c_0(z)c_2(z)=1+z\quad \text{modulo }3.$$
The constants $e_1,e_2,f_1,f_2$ are given by
$e_1=2$,
$e_2=0$,
$f_1=0$,
$f_2=0$,
$\ep=1$, and
we have
$\mathcal Q(\dots)=0$.
Consequently, the generating function $M(z)$, when reduced modulo
$3^{3^\al}$, can be written as a polynomial in the basic series
$\Psi(z)=(1+z)\Psi(z^3)$. In the formulation of the theorem, we
have chosen to express everything in terms of $\Psi(z^3)$, since this
leads to the more elegant expressions for the moduli 9 and 27 given below.
\end{proof}

We have implemented the algorithm explained in the above proof.
For $\al=1$, it produces the following result.

\begin{theorem} \label{thm:Motzkin-27}
Let $\Psi(z)$ be given by \eqref{eq:Psidef}.
Then, we have
\begin{multline} 
\label{eq:LoesM27}
\sum _{n\ge0} ^{}M_n\,z^n
=
13z^{-1}+14z^{-2}
+\left(9
   z+12+24z^{-1}+21z^{-2}\right)
   \Psi(z^3)\\
+\left(9 z^5+12 z^4+10
   z^3+23 z^2+25
   z+19+14z^{-1}+4z^{-2}\right)
   \Psi^3(z^3)\\
-\left(9 z^7+3 z^6+24
   z^5+30 z^4+6 z^3+21
   z^2+6
   z+3+24z^{-1}+12z^{-2}\right)
   \Psi^5(z^3)\\
\quad 
\text {\em modulo }27.
\end{multline}
\end{theorem}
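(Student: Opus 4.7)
The plan is to specialise the algorithm of Section~\ref{sec:method} and the mechanism inside the proof of Theorem~\ref{thm:general} to the Motzkin functional equation \eqref{eq:MotzkinEF} with $\alpha=1$, so that $3^{3^{\alpha}}=27$. Existence of a polynomial expression of the claimed form is already guaranteed by Theorem~\ref{thm:Motzkin}; what remains is to execute the three lifting rounds explicitly and then to repackage the output in terms of $\Psi(z^3)$ via the identity $\Psi(z)=(1+z)\Psi(z^3)$, which rewrites $a_i(z)\Psi^i(z)$ as $a_i(z)(1+z)^i\Psi^i(z^3)$.

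For the base step $\beta=1$, I would instantiate the formulas \eqref{eq:a0} and \eqref{eq:a3al} from the proof of Theorem~\ref{thm:general} with $c_2(z)=z^2$, $c_1(z)=z-1$, $c_0(z)=1$ and the constants $e_1=2$, $e_2=0$, $f_1=0$, $f_2=0$, $\epsilon=1$, $\gamma=1$. This produces two candidate solutions modulo~$3$,
\[
F_1^{\pm}(z)=\frac{z-1}{z^{2}}\pm\frac{(1+z)^{2}}{z^{2}}\,\Psi^{3}(z),
\]
of which only one is a formal power series; that branch is pinned down by matching the constant term against $M_0=1$ (equivalently, by the uniqueness condition (4) of Theorem~\ref{thm:general}).

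For the lifting step from modulo~$3^{\beta}$ to modulo~$3^{\beta+1}$ with $\beta=1,2$, I would substitute $F_{\beta+1}(z)=F_{\beta}(z)+3^{\beta}\sum_{i=0}^{5}b_{i,\beta+1}(z)\Psi^{i}(z)$ into \eqref{eq:MotzkinEF}, divide the resulting congruence through by $3^{\beta}$, reduce powers $\Psi^{k}(z)$ with $k\ge 6$ via \eqref{eq:PsiRel2} (so $\Psi^{6}(z)\equiv(1+z)^{-3}$ modulo~$3$), and compare coefficients of $\Psi^{0}(z),\Psi^{1}(z),\dots,\Psi^{5}(z)$. As shown generally at the end of the proof of Theorem~\ref{thm:general}, the resulting linear system in the six unknowns $b_{i,\beta+1}(z)$ is triangular with invertible entries of the form $\pm z^{f_1}(1+z)^{g_i}$, so it has a unique solution in $\mathbb{Z}[z,z^{-1},(1+z)^{-1}]$. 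Running this twice produces $F_3(z)$, a polynomial in $\Psi(z)$ of degree at most $5$ with coefficients Laurent polynomials in $z$ and $1+z$, which solves \eqref{eq:MotzkinEF} modulo~$27$.

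The main obstacle is the bookkeeping: intermediate coefficients accumulate negative powers of $1+z$ at each lifting, and one must apply Lemma~\ref{lem:1+z} (or direct simplification working modulo $3$ and $9$ separately) to keep their size under control and ultimately fit the result into the compact shape \eqref{eq:LoesM27}. The final cosmetic step is the substitution $\Psi^{i}(z)=(1+z)^{i}\Psi^{i}(z^3)$, which absorbs factors of $(1+z)^{i}$ into the coefficients and, combined with the relation $\Psi^{2}(z^3)\equiv(1+z^3)^{-1}$ modulo~$3$ together with its lifts, causes many of the terms in even powers of $\Psi(z^3)$ to collapse; this explains why only $\Psi^{0},\Psi^{1},\Psi^{3},\Psi^{5}$ survive in \eqref{eq:LoesM27}. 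Uniqueness of the formal power series solution to \eqref{eq:MotzkinEF} modulo~$27$ (which is immediate from the recurrence $M_{n+1}=M_{n}+\sum_{k=0}^{n-1}M_{k}M_{n-1-k}$ implied by \eqref{eq:MotzkinEF}) verifies that the expression so produced is indeed $M(z)$.
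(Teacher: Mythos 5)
Your proposal is correct and follows essentially the same route as the paper: the paper's "proof" of Theorem~\ref{thm:Motzkin-27} is precisely to run the algorithm of Section~\ref{sec:method}, as packaged in Theorem~\ref{thm:general} and instantiated in the proof of Theorem~\ref{thm:Motzkin}, for $\al=1$ (base step modulo~$3$ plus two lifting rounds), and your instantiation of the constants, the base-step solution $\frac{z-1}{z^2}+\frac{(1+z)^2}{z^2}\Psi^3(z)$, the reduction via \eqref{eq:PsiRel2}, and the final rewriting through $\Psi(z)=(1+z)\Psi(z^3)$ all match. The only caveat is that, like the paper, you do not display the actual computation, so the specific Laurent-polynomial coefficients in \eqref{eq:LoesM27} are still only certified by carrying out (or machine-checking) the two lifting rounds.
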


In the case of modulus 9, this theorem reduces to the following
result.

\begin{corollary}
\label{cor:Motzkin-9}
Let $\Psi(z)$ be given by \eqref{eq:Psidef}.
Then, we have
\begin{multline} 
\label{eq:LoesM9}
\sum _{n\ge0} ^{}M_n\,z^n
=
4z^{-1}+5z^{-2}
-\left(3+6z^{-1}+3z^{-2}
\right) \Psi(z^3)\\
+\left(3
   z^4+4 z^3+2
   z^2+1
   z+4+2z^{-1}+7z^{-2}\right)
   \Psi^3(z^3)
\quad \text {\em modulo }9.
\end{multline}
\end{corollary}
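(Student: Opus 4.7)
The natural plan is to derive Corollary~\ref{cor:Motzkin-9} directly from Theorem~\ref{thm:Motzkin-27} by reducing \eqref{eq:LoesM27} modulo~$9$, and then eliminating the resulting $\Psi^5(z^3)$ term using the minimal polynomial relation for the modulus~$9$ supplied by Proposition~\ref{prop:minpol}. Concretely, since $A_0^2(z,t)=\bigl(t^2-\tfrac{1}{1+z}\bigr)^2$ is a minimal polynomial modulo~$9$, we have, after substituting $z^3$ for $z$ and $\Psi(z^3)$ for $t$,
\begin{equation*}
\Bigl(\Psi^2(z^3)-\tfrac{1}{1+z^3}\Bigr)^2\equiv 0\pmod 9,
\end{equation*}
which, upon expansion and multiplication by $\Psi(z^3)$, yields the reduction rule
\begin{equation*}
\Psi^5(z^3)\equiv \tfrac{2}{1+z^3}\,\Psi^3(z^3)-\tfrac{1}{(1+z^3)^2}\,\Psi(z^3)\pmod 9.
\end{equation*}

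First, I would reduce the four coefficients in \eqref{eq:LoesM27} modulo~$9$: the constant term becomes $4z^{-1}+5z^{-2}$ directly, the coefficient of $\Psi(z^3)$ becomes $3+6z^{-1}+3z^{-2}$, the coefficient of $\Psi^3(z^3)$ becomes $3z^4+z^3+5z^2+7z+1+5z^{-1}+4z^{-2}$, and the coefficient of $\Psi^5(z^3)$ becomes $-3P(z)$ with $P(z)=z^6+2z^5+z^4+2z^3+z^2+2z+1+2z^{-1}+z^{-2}$. Next I would substitute the reduction rule above into the $\Psi^5(z^3)$-term. The key observation at this point is that the coefficient $-3P(z)$ already carries a factor of $3$, so the rational factors $1/(1+z^3)$ and $1/(1+z^3)^2$ only need to be understood modulo~$3$; using $(1+z^3)\equiv (1+z)^3\pmod 3$, one recognizes that $P(z)\equiv z^{-2}(1+z)^8\pmod 3$ (this is simply Lucas' theorem applied to the binomial expansion), so that $P(z)/(1+z^3)\equiv z^{-2}(1+z)^5\pmod 3$ and $P(z)/(1+z^3)^2\equiv z^{-2}(1+z)^2=1+2z^{-1}+z^{-2}\pmod 3$. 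Both are Laurent polynomials in $z$ alone, so the denominators disappear and the resulting contribution to the coefficients of $\Psi(z^3)$ and $\Psi^3(z^3)$ can be added to those already present, yielding exactly the expressions of \eqref{eq:LoesM9}.

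The main obstacle — if one can call it that — is bookkeeping: verifying that the two rational simplifications of $P(z)/(1+z^3)^k$ really do collapse to Laurent polynomials in $z$ modulo~$3$, and then reassembling the contributions with the correct signs (noting $-6\equiv 3\pmod 9$) to hit the coefficients $-(3+6z^{-1}+3z^{-2})$ and $3z^4+4z^3+2z^2+z+4+2z^{-1}+7z^{-2}$ of \eqref{eq:LoesM9}. An alternative would be to rerun the inductive procedure of Section~\ref{sec:method} from scratch with the smaller Ansatz $M(z)=\sum_{i=0}^{3}a_i(z)\Psi^i(z^3)$ suggested by the modulus~$9$ minimal polynomial (which has $t$-degree $4$), but this would essentially duplicate the work of Theorem~\ref{thm:Motzkin-27} and the reduction approach is both shorter and more transparent.
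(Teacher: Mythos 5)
Your proposal is correct and is essentially the derivation the paper intends (the paper states the corollary as an immediate reduction of Theorem~\ref{thm:Motzkin-27}, i.e.\ reduce the coefficients modulo~$9$ and eliminate the $\Psi^5(z^3)$ term via the relation $\bigl(\Psi^2(z^3)-\tfrac{1}{1+z^3}\bigr)^2\equiv 0 \pmod 9$ coming from Proposition~\ref{prop:minpol}). I checked your arithmetic --- the reduced coefficients, the identification $P(z)\equiv z^{-2}(1+z)^8 \pmod 3$, and the reassembly into $-(3+6z^{-1}+3z^{-2})$ and $3z^4+4z^3+2z^2+z+4+2z^{-1}+7z^{-2}$ --- and it all works out.
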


These results generalise
Corollary~4.10 in \cite{DeSaAA}, which is an easy consequence.

\section{Motzkin prefix numbers}
\label{sec:Motzkinpref}

Let $MP_n$ be the $n$-th {\it  Motzkin prefix number}, that is, the number of
lattice paths from $(0,0)$ consisting of $n$ steps taken from
the set
$\{(1,0),(1,1),(1,-1)\}$ never running below the $x$-axis.
It is easy to see
that the generating function $MP(z)=\sum_{n\ge0}MP_n\,z^n$ satisfies the
functional equation
\begin{equation} \label{eq:MotzkinprefEF}
z(1-3z)MP^2(z)+(1-3z)MP(z)-1=0.
\end{equation}
Namely, let again $M(z)$ denote the generating function for Motzkin
numbers (see Section~\ref{sec:Motzkin}). Then, by an elementary
combinatorial decomposition (a Motzkin prefix can be empty, it can
start with a horizontal step followed by a Motzkin prefix, it can start
with a step $(1,1)$ and never return to the $x$-axis, or start with
a step $(1,1)$, return to the $x$-axis for the first time by a step
$(1,-1)$, and then be followed by a Motzkin prefix), we have
$$
MP(z)=1+2z\,MP(z)+z^2M(z)MP(z).
$$
Combined with \eqref{eq:Motzkin}, this yields an explicit expression
for $MP(z)$, from which the functional equation
\eqref{eq:MotzkinprefEF} can be derived.

\begin{theorem} \label{thm:Motzkinpref}
Let $\Psi(z)$ be given by \eqref{eq:Psidef}, 
and let $\al$ be some positive integer.
Then the generating function $MP(z)=\sum_{n\ge0}MP_n\,z^n$ 
for the Motzkin prefix numbers,
reduced modulo $3^{3^\al}$, 
can be expressed as a polynomial in $\Psi(z)$ of the form
$$
MP(z)=a_0(z)+\sum _{i=0} ^{2\cdot 3^{\al}-1}a_{i}(z)\Psi^{i}(z)\quad 
\text {\em modulo }3^{3^\al},
$$ 
where the coefficients $a_{i}(z)$, $i=0,1,\dots,2\cdot 3^\al-1$, 
are Laurent polynomials in $z$ and $1+z$.
\end{theorem}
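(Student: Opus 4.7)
The plan is to deduce Theorem~\ref{thm:Motzkinpref} as a direct application of the master Theorem~\ref{thm:general}. I shall rewrite the functional equation \eqref{eq:MotzkinprefEF} as
$$
z\cdot MP^2(z)+MP(z)-1+3\bigl(-z^2MP^2(z)-z\,MP(z)\bigr)=0,
$$
so that in the notation of Theorem~\ref{thm:general} I have $c_2(z)=z(1-3z)$, $c_1(z)=1-3z$, $c_0(z)=-1$, and the ``remainder'' $\mathcal Q(z;MP(z))=-z^2MP^2(z)-z\,MP(z)$, which is a polynomial with integer coefficients. I shall take $\ga=1$ and $\ep=1$.

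Next I would verify the four conditions of Theorem~\ref{thm:general}. Reducing modulo~$3$ I find $c_2(z)\equiv z\pmod 3$, matching the required form $z^{e_1}(1+\ep z^\ga)^{e_2}$ with $e_1=1$, $e_2=0$; this gives (1). For (2) I compute
$$
c_1^2(z)-c_0(z)c_2(z)=(1-3z)^2+z(1-3z)\equiv 1+z\pmod 3,
$$
which matches $z^{2f_1}(1+\ep z^\ga)^{2f_2+1}$ with $f_1=f_2=0$, giving (2). Condition (3) is automatic since $\mathcal Q$ is polynomial in its arguments with integer coefficients.

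The only step that merits genuine verification is the uniqueness condition~(4). Writing $MP(z)=\sum_{n\ge0}F_nz^n$ and extracting the coefficient of $z^n$ ($n\ge1$) from \eqref{eq:MotzkinprefEF} one obtains the recursion
$$
F_n=3F_{n-1}-\sum_{k=0}^{n-1}F_kF_{n-1-k}+3\sum_{k=0}^{n-2}F_kF_{n-2-k},
$$
while the constant term of \eqref{eq:MotzkinprefEF} forces $F_0=1$. Since this recursion expresses $F_n$ as a polynomial in $F_0,F_1,\dots,F_{n-1}$ with integer coefficients, it determines the sequence $(F_n)$ uniquely over $\Z$, and a fortiori uniquely modulo every $3$-power $3^e$. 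Hence condition~(4) holds for all $\al\ge1$.

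The main (mild) obstacle is simply to confirm that the uniqueness modulo $3^{3^\al}$ really holds; as Remark~\ref{rem:2}(2) illustrates, uniqueness of an integer solution is not in itself sufficient, so the fact that the recursion for $F_n$ is linear in $F_n$ with leading coefficient $1$ is what actually does the work. Once this is checked, Theorem~\ref{thm:general} applies verbatim and delivers the claimed representation
$$
MP(z)=a_0(z)+\sum_{i=0}^{2\cdot 3^\al-1}a_i(z)\Psi^i(z)\quad\text{modulo }3^{3^\al},
$$
with $a_i(z)\in\Z[z,z^{-1},(1+z)^{-1}]$, completing the proof.
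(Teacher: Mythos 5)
Your proof is correct and follows essentially the same route as the paper: it applies Theorem~\ref{thm:general} to \eqref{eq:MotzkinprefEF} with $c_2(z)=z(1-3z)$, $c_1(z)=1-3z$, $c_0(z)=-1$, $e_1=1$, $e_2=f_1=f_2=0$, $\ep=1$, checking $c_1^2(z)-c_0(z)c_2(z)\equiv 1+z$ modulo~$3$; your explicit verification of the uniqueness condition~(4) via the recursion for $F_n$ (linear in $F_n$ with leading coefficient $1$) is a welcome addition that the paper leaves implicit. The only blemish is a notational slip: having rewritten the equation with the $3$-divisible terms absorbed into $\mathcal Q(z;MP(z))=-z^2MP^2(z)-z\,MP(z)$, you should then take $c_2(z)=z$ and $c_1(z)=1$ rather than $z(1-3z)$ and $1-3z$ (otherwise those terms are counted twice); either decomposition satisfies the hypotheses of Theorem~\ref{thm:general}, so nothing essential is affected.
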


\begin{proof} 
Choosing $\ga=1$, $c_2(z)=z(1-3z)$, $c_1(z)=1-3z$, $c_0(z)=-1$, we see that
the functional equation \eqref{eq:MotzkinprefEF} fits into the framework
of Theorem~\ref{thm:general}, since 
$$c_1^2(z)-c_0(z)c_2(z)=1+z\quad \text{modulo }3.$$
The constants $e_1,e_2,f_1,f_2$ are given by
$e_1=1$,
$e_2=0$,
$f_1=0$,
$f_2=0$,
$\ep=1$, and
we have
$\mathcal Q(\dots)=0$.
Consequently, the generating function $MP(z)$, when reduced modulo
$3^{3^\al}$, can be written as a polynomial in the basic series
$\Psi(z)$, as claimed.
\end{proof}

We have implemented the algorithm explained in the above proof.
For $\al=1$, it produces the following result.

\begin{theorem} \label{thm:Motzkinpref-27}
Let $\Psi(z)$ be given by \eqref{eq:Psidef}.
Then, we have
\begin{multline} 
\label{eq:LoesMP27}
\sum _{n\ge0} ^{}MP_n\,z^n
=
13z^{-1}
+\left(9
   z+6z^{-1}+15\right)
   \Psi(z)
-\left(6 z^2+16
   z+4z^{-1}+14\right)
   \Psi^3(z)\\
-\left(9 z^3+15
   z^2+18
   z+15z^{-1}\right)
   \Psi^5(z)
\quad 
\text {\em modulo }27.
\end{multline}
\end{theorem}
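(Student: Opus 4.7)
The plan is to apply the algorithm from Section~\ref{sec:method} with $\alpha=1$ (so that $3^{3^\alpha}=27$) and record the resulting polynomial in $\Psi(z)$. Theorem~\ref{thm:Motzkinpref} has already shown that this algorithm succeeds for the functional equation \eqref{eq:MotzkinprefEF}; the proof thus reduces to performing the two lifting steps explicitly and reading off the coefficients.

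For the base step, the formulas \eqref{eq:a0}--\eqref{eq:a3al} from the proof of Theorem~\ref{thm:general}, instantiated with $c_2(z)=z(1-3z)$, $c_1(z)=1-3z$, $c_0(z)=-1$ and $e_1=1$, $e_2=f_1=f_2=0$, $\epsilon=\gamma=1$, yield $a_{0,1}(z)=1/z$ and $a_{3,1}(z)=\pm z^{-1}(1+z)^2$ modulo~3, with all other $a_{i,1}(z)$ vanishing. The sign is fixed by the requirement that the corresponding expansion be a genuine formal power series matching $MP_0=1$; using $\Psi^2(z)\equiv (1+z)^{-1}$ modulo~3 from \eqref{eq:PsiEq} (and therefore $\Psi^3(z)\equiv(1+z)^{-1}\Psi(z)$ modulo~3), one sees that the minus sign is the correct choice. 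The iteration step is then applied twice, first to lift from modulo~3 to modulo~9 and then from modulo~9 to modulo~27. At each pass one writes $a_{i,\beta+1}(z)=a_{i,\beta}(z)+3^\beta b_{i,\beta+1}(z)$, substitutes into \eqref{eq:MotzkinprefEF}, reduces all powers $\Psi^i(z)$ with $i\ge 6$ via the relation $(\Psi^2(z)-(1+z)^{-1})^3\equiv 0$ modulo~27 (the $\alpha=1$ case of Proposition~\ref{prop:minpol}), and compares coefficients of $\Psi^i(z)$ for $i=0,1,\ldots,5$. The resulting linear system in the unknowns $b_{i,\beta+1}(z)$ is uniquely solvable in $\mathbb{Z}[z,z^{-1},(1+z)^{-1}]$, by the general analysis in the proof of Theorem~\ref{thm:general}.

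The main obstacle is the volume of polynomial arithmetic, which in practice is handled by a computer algebra system (the authors supply their \emph{Mathematica} implementation for this purpose). Conceptually, the only delicate point is whether the right-hand sides of the linear systems at each iteration step have numerators divisible by the required factors of $z$ and $(1+z)$, so that the $b_{i,\beta+1}(z)$ actually lie in $\mathbb{Z}[z,z^{-1},(1+z)^{-1}]$; this divisibility is precisely what is guaranteed in the general setting of Theorem~\ref{thm:general}. Uniqueness of $MP(z)$ as a formal power series solution to \eqref{eq:MotzkinprefEF} modulo~27 (condition~(4) of that theorem) is immediate, since $1-3z$ is a unit in $(\mathbb{Z}/27\mathbb{Z})[[z]]$ and the functional equation is therefore equivalent to an explicit recurrence for the coefficients $MP_n$. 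Consequently, the algorithm terminates after the two lifts with the polynomial displayed in \eqref{eq:LoesMP27}, which can be double-checked by direct substitution into \eqref{eq:MotzkinprefEF} and expansion of the left-hand side in the basis $\{\Psi^i(z):i=0,1,\ldots,5\}$ modulo~27.
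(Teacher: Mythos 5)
Your proposal follows exactly the route the paper takes: the paper's own justification for Theorem~\ref{thm:Motzkinpref-27} is simply that the algorithm of Section~\ref{sec:method}, as validated by Theorem~\ref{thm:general} in the proof of Theorem~\ref{thm:Motzkinpref}, is run (by computer) with $\al=1$ and produces \eqref{eq:LoesMP27}. Your account of the base step (with the correct sign choice forced by $MP_0=1$), the two lifting passes using the reduction $(\Psi^2(z)-(1+z)^{-1})^3\equiv0$ modulo~$27$, and the verification of the uniqueness condition~(4) via the coefficient recurrence is an accurate and in fact more detailed rendering of that same computation.
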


In the case of modulus 9, this theorem reduces to the following
result.

\begin{corollary}
\label{cor:Motzkinpref-9}
Let $\Psi(z)$ be given by \eqref{eq:Psidef}.
Then, we have
\begin{equation} 
\label{eq:LoesMP9}
\sum _{n\ge0} ^{}MP_n\,z^n
=
4z^{-1}
+3
   \left(1+z^{-1}\right)
   \Psi(z)
+\left(3z^2-z+7+2z^{-1}
\right)
   \Psi^3(z)
\quad \text {\em modulo }9.
\end{equation}
\end{corollary}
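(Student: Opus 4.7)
The strategy is to derive the corollary directly from Theorem~\ref{thm:Motzkinpref-27} by reducing its coefficients modulo~$9$ and then using the minimal polynomial relation of Proposition~\ref{prop:minpol} to eliminate the resulting $\Psi^5(z)$ term.

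First, I would reduce each coefficient in \eqref{eq:LoesMP27} modulo~$9$. Writing $13\equiv 4$, $(9z+6z^{-1}+15)\equiv 6+6z^{-1}$, $-(6z^2+16z+4z^{-1}+14)\equiv 3z^2+2z+4+5z^{-1}$, and $-(9z^3+15z^2+18z+15z^{-1})\equiv 3z^2+3z^{-1}$ modulo~$9$, we arrive at
\begin{equation*}
\sum_{n\ge0} MP_n\,z^n \equiv 4z^{-1} + (6+6z^{-1})\Psi(z) + (3z^2+2z+4+5z^{-1})\Psi^3(z) + (3z^2+3z^{-1})\Psi^5(z) \pmod 9.
\end{equation*}

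Next, I would invoke the minimal polynomial $A_0^2(z,t)=\left(t^2-\frac{1}{1+z}\right)^2$ for the modulus~$9$ recorded in Proposition~\ref{prop:minpol}. Expanding $A_0^2(z,\Psi(z))\equiv 0\pmod 9$ and multiplying by $\Psi(z)$ yields
\begin{equation*}
\Psi^5(z) \equiv \frac{2}{1+z}\Psi^3(z) - \frac{1}{(1+z)^2}\Psi(z) \pmod 9.
\end{equation*}

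Substituting this relation into the previous expression produces rational coefficients with $(1+z)^{-1}$ and $(1+z)^{-2}$ factors, and the remaining task is to verify that all such factors cancel modulo~$9$. The key algebraic observation is the factorisation $1+z^3=(1+z)(1-z+z^2)$ over~$\Z$, so that $3z^2+3z^{-1}=3z^{-1}(1+z)(1-z+z^2)$. This yields $(3z^2+3z^{-1})/(1+z)=3z^{-1}(1-z+z^2)$ exactly, and for the second fraction one computes $-3z^{-1}(1-z+z^2)+(3+3z^{-1})(1+z)=-9$, giving $-(3z^2+3z^{-1})/(1+z)^2 \equiv -3-3z^{-1} \pmod 9$ in the ring $\Z[z,z^{-1},(1+z)^{-1}]$. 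Collecting terms, the $\Psi(z)$ coefficient becomes $(6+6z^{-1})+(-3-3z^{-1})=3(1+z^{-1})$ and the $\Psi^3(z)$ coefficient becomes $(3z^2+2z+4+5z^{-1})+(6z-6+6z^{-1})\equiv 3z^2-z+7+2z^{-1}\pmod 9$, precisely as stated. The only real obstacle is the careful bookkeeping of the rational-function simplifications, ensuring that every $(1+z)^{-1}$ factor disappears modulo~$9$ after the substitution.
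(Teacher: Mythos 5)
Your proposal is correct and follows exactly the route the paper intends: the paper offers no separate argument for the corollary beyond the remark that the modulus-$27$ result ``reduces to'' it, and your reduction of the coefficients of \eqref{eq:LoesMP27} modulo~$9$ followed by elimination of $\Psi^5(z)$ via the minimal polynomial $A_0^2(z,t)$ from Proposition~\ref{prop:minpol} is precisely that reduction, carried out correctly (I checked the cancellation of the $(1+z)^{-1}$ factors and the final coefficients $3(1+z^{-1})$ and $3z^2-z+7+2z^{-1}$).
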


These results generalise
Corollary~4.11 in \cite{DeSaAA}, which is an easy consequence.

\section{Riordan numbers}
\label{sec:Riordan}

Let $R_n$ be the $n$-th {\it Riordan number}, that is, the number of
lattice paths from $(0,0)$ to $(n,0)$ consisting of steps taken from
the set
$\{(1,0),(1,1),(1,-1)\}$ never running below the $x$-axis, and where
steps $(1,0)$ are not allowed on the $x$-axis.
It is easy to see
that the generating function $R(z)=\sum_{n\ge0}R_n\,z^n$ satisfies the
functional equation
\begin{equation} \label{eq:RiordanEF}
z(1+z)R^2(z)-(z+1)R(z)+1=0.
\end{equation}
Namely, let again $M(z)$ denote the generating function for Motzkin
numbers (see Section~\ref{sec:Motzkin}). Then, by an elementary
combinatorial decomposition (a path as above can be empty, or it may start with
a step $(1,1)$, return to the $x$-axis for the first time by a step
$(1,-1)$, and then be followed by a path of the above type), we have
$$
R(z)=1+z^2M(z)R(z).
$$
Combined with \eqref{eq:Motzkin}, this yields an explicit expression
for $R(z)$, from which the functional equation
\eqref{eq:RiordanEF} can be derived.

\begin{theorem} \label{thm:Riordan}
Let $\Psi(z)$ be given by \eqref{eq:Psidef}, 
and let $\al$ be some positive integer.
Then the generating function $R(z)=\sum_{n\ge0}R_n\,z^n$ 
for the Riordan numbers,
reduced modulo $3^{3^\al}$, 
can be expressed as a polynomial in $\Psi(z^3)$ of the form
$$
R(z)=a_0(z)+\sum _{i=0} ^{2\cdot 3^{\al}-1}a_{i}(z)\Psi^{i}(z^3)\quad 
\text {\em modulo }3^{3^\al},
$$ 
where the coefficients $a_{i}(z)$, $i=0,1,\dots,2\cdot 3^\al-1$, 
are Laurent polynomials in $z$ and $1+z$.
\end{theorem}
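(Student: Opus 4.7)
The plan is to recognize that the Riordan equation \eqref{eq:RiordanEF} falls within the scope of the master theorem, Theorem~\ref{thm:general}, and so the result is essentially an application of that theorem followed by a cosmetic rewriting.

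Concretely, I would read off the data $c_2(z)=z(1+z)$, $c_1(z)=-(z+1)$, $c_0(z)=1$, with $\mathcal Q(\dots)=0$. The verification of the hypotheses of Theorem~\ref{thm:general} then boils down to two small computations. First, $c_2(z)=z(1+z)$ is already in the form $z^{e_1}(1+\ep z^\ga)^{e_2}$ with $\ep=1$, $\ga=1$, $e_1=e_2=1$. Second, one computes
\begin{equation*}
c_1^2(z)-c_0(z)c_2(z)=(z+1)^2-z(1+z)=(1+z)\bigl((z+1)-z\bigr)=1+z,
\end{equation*}
which is $z^{2f_1}(1+\ep z^\ga)^{2f_2+1}$ with $f_1=f_2=0$. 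So conditions~(1)--(3) of Theorem~\ref{thm:general} are satisfied.

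For condition~(4), I would observe that \eqref{eq:RiordanEF} determines a unique formal power series solution over $\Z$ (hence a unique solution modulo any $3^{3^\al}$): extracting the coefficient of $z^n$ gives $R_0=1$ and a recurrence in which $R_n$ appears with coefficient $-1$ and is determined by $R_0,R_1,\dots,R_{n-1}$, since $z(1+z)R^2(z)$ contributes only lower-indexed $R_k$'s to the coefficient of $z^n$. Thus Theorem~\ref{thm:general} applies and yields a representation of $R(z)$ modulo $3^{3^\al}$ as a polynomial in $\Psi(z)$ with coefficients that are Laurent polynomials in $z$ and $1+z$.

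Finally, to obtain the form stated in the theorem (a polynomial in $\Psi(z^3)$ rather than $\Psi(z)$), I would invoke the product identity
\begin{equation*}
\Psi(z)=\prod_{j\ge 0}(1+z^{3^j})=(1+z)\prod_{j\ge 0}(1+z^{3^{j+1}})=(1+z)\,\Psi(z^3),
\end{equation*}
which immediately converts any polynomial expression in $\Psi(z)$ with Laurent-polynomial coefficients in $z$ and $1+z$ into one in $\Psi(z^3)$ of the same shape. No step in this plan is really a serious obstacle; the only point requiring mild care is the uniqueness verification for condition~(4), which, as in the Motzkin case of Theorem~\ref{thm:Motzkin}, is essentially free because the quadratic equation lets one read off $R_n$ from $R_0,\dots,R_{n-1}$.
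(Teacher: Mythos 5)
Your proposal is correct and follows essentially the same route as the paper: identify $c_2(z)=z(1+z)$, $c_1(z)=-(z+1)$, $c_0(z)=1$, check $c_1^2(z)-c_0(z)c_2(z)=1+z$, apply Theorem~\ref{thm:general}, and convert to $\Psi(z^3)$ via $\Psi(z)=(1+z)\Psi(z^3)$. The only (welcome) addition is that you verify the uniqueness condition~(4) explicitly, which the paper leaves implicit.
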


\begin{proof} 
Choosing $\ga=1$, $c_2(z)=z(1+z)$, $c_1(z)=-z-1$, $c_0(z)=1$, we see that
the functional equation \eqref{eq:RiordanEF} fits into the framework
of Theorem~\ref{thm:general}, since 
$$c_1^2(z)-c_0(z)c_2(z)=1+z\quad \text{modulo }3.$$
The constants $e_1,e_2,f_1,f_2$ are given by
$e_1=1$,
$e_2=1$,
$f_1=0$,
$f_2=0$,
$\ep=1$, and
we have
$\mathcal Q(\dots)=0$.
Consequently, the generating function $R(z)$, when reduced modulo
$3^{3^\al}$, can be written as a polynomial in the basic series
$\Psi(z)=(1+z)\Psi(z^3)$. In the formulation of the theorem, we
have chosen to express everything in terms of $\Psi(z^3)$, since this
leads to the more elegant expressions for the moduli 9 and 27 given below.
\end{proof}

We have implemented the algorithm explained in the above proof.
For $\al=1$, it produces the following result.

\begin{theorem} \label{thm:Riordan-27}
Let $\Psi(z)$ be given by \eqref{eq:Psidef}.
Then, we have
\begin{multline} 
\label{eq:LoesR27}
\sum _{n\ge0} ^{}R_n\,z^n
=14z^{-1}
+\left(9+21z^{-1}
   \right) \Psi(z)
+\left(9 z^2+30
   z+25+4z^{-1}\right)
   \Psi^3(z)\\
+\left(18 z^2+24
   z+21+15z^{-1}\right)
   \Psi^5(z)
\quad 
\text {\em modulo }27.
\end{multline}
\end{theorem}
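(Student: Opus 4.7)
The plan is to derive the congruence \eqref{eq:LoesR27} by running the algorithm of Section~\ref{sec:method} on the Riordan functional equation \eqref{eq:RiordanEF} with $\al=1$, $\ep=1$, $\ga=1$, whose successful termination (hence existence of the representation) is guaranteed by Theorem~\ref{thm:Riordan}. The remaining work is to push the iteration through $\be=1,2,3$ and to verify the final answer.

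For the base step ($\be=1$), the recipe in the proof of Theorem~\ref{thm:general} applies with $c_2(z)=z(1+z)$, $c_1(z)=-(1+z)$, $c_0(z)=1$, giving $c_1^2(z)-c_0(z)c_2(z)\equiv 1+z\pmod 3$. Following \eqref{eq:a0}--\eqref{eq:a3al}, I would set
\[
a_{0,1}(z)=-\tfrac{1}{z},\qquad a_{3,1}(z)=\pm\tfrac{1+z}{z},
\]
all other $a_{i,1}(z)$ being zero, and fix the sign~$+$ so that the resulting Laurent series has no $z^{-1}$ term (equivalently, so that it gives a genuine formal power series rather than a Laurent series with nontrivial principal part), ensuring agreement with the known constant term $R_0=1$.

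For each iteration step $\be\mapsto\be+1$, I would put $a_{i,\be+1}(z):=a_{i,\be}(z)+3^\be b_{i,\be+1}(z)$ as in \eqref{eq:Ansatz2a}, substitute into \eqref{eq:RiordanEF}, divide by $3^\be$ (justified because $R_\be(z)$ already solves \eqref{eq:RiordanEF} modulo $3^\be$), and reduce every power $\Psi^k(z)$ with $k\ge 6$ by means of
\[
\Bigl(\Psi^2(z)-\tfrac{1}{1+z}\Bigr)^{3}\equiv 0\pmod{27},
\]
which is the cube relation implied by Proposition~\ref{prop:minpol}. Comparing coefficients of $1,\Psi(z),\dots,\Psi^5(z)$ yields the triangular $2\cdot 3^\al=6$-fold system for the $b_{i,\be+1}(z)$ described at the end of the proof of Theorem~\ref{thm:general}, which has a unique solution in $\Z[z,z^{-1},(1+z)^{-1}]/3\Z$. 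After two such iterations one reads off the Laurent polynomials $a_{i,3}(z)$ which, after cosmetic simplification, give precisely \eqref{eq:LoesR27} (the freedom to display the result in $\Psi(z)$ rather than in $\Psi(z^3)=\Psi(z)/(1+z)\bmod 3$, as originally used in Theorem~\ref{thm:Riordan}, is just a rewriting using $\Psi(z)=(1+z)\Psi(z^3)$). Finally I would close the argument with a direct check: substitute the right-hand side of \eqref{eq:LoesR27} into \eqref{eq:RiordanEF}, expand $z(1+z)R^2(z)$ (which involves $\Psi$-powers up to degree $10$), reduce by the cube relation, and verify that the total vanishes modulo~$27$; uniqueness of the formal power series solution of \eqref{eq:RiordanEF} modulo each $3^k$ (obvious from the recursion obtainable from \eqref{eq:RiordanEF} on the coefficients of $R(z)$) then identifies the output with $\sum_{n\ge0}R_n z^n$ modulo~$27$.

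The only obstacle is computational volume: the Ansatz has six unknown coefficient functions, squaring it produces up to eleven $\Psi$-powers, and each reduction via the cube relation spawns many terms in $z$ and $(1+z)^{-1}$ whose coefficients must be collected carefully. There is no conceptual difficulty, since the linear system arising at every stage of the iteration is triangular by Theorem~\ref{thm:general} and is therefore automatically solvable; the computation is carried out in the accompanying \textsl{Mathematica} notebook referenced in the Note following the Introduction.
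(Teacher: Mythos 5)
Your proposal is correct and follows exactly the route the paper takes: Theorem~\ref{thm:Riordan-27} is obtained there by running the algorithm of Section~\ref{sec:method} (in the form guaranteed by Theorems~\ref{thm:general} and \ref{thm:Riordan}) for $\al=1$, and the paper's ``proof'' is precisely the statement that this implementation produces \eqref{eq:LoesR27}. Your base-step data $a_{0,1}(z)=-1/z$, $a_{3,1}(z)=(1+z)/z$ and the sign selection via the absence of a $z^{-1}$ term match the general recipe \eqref{eq:a0}--\eqref{eq:a3al} and reduce correctly modulo~$3$ to the displayed coefficients, so the remaining content is indeed only the machine computation.
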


In the case of modulus 9, this theorem reduces to the following
result.

\begin{corollary}
\label{cor:Riordan-9}
Let $\Psi(z)$ be given by \eqref{eq:Psidef}.
Then, we have
\begin{equation} 
\label{eq:LoesR9}
\sum _{n\ge0} ^{}R_n\,z^n
=-4z^{-1}
-3z^{-1}\Psi(z)+\left(3
   z+10+7z^{-1}\right)
   \Psi^3(z)
\quad \text {\em modulo }9.
\end{equation}
\end{corollary}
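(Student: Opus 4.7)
The plan is to deduce the modulo-$9$ formula directly from the modulo-$27$ formula in Theorem~\ref{thm:Riordan-27} by (i) reducing coefficients and (ii) applying the minimal polynomial relation for $\Psi$ modulo~$9$ supplied by Proposition~\ref{prop:minpol} in order to eliminate the $\Psi^5$-term.

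First I would reduce the rational coefficients occurring in the right-hand side of \eqref{eq:LoesR27} modulo~$9$. This kills the pure $(1+z^3)$-free contribution of the $\Psi$-coefficient, produces coefficients for $\Psi^3$ and $\Psi^5$ whose $3$-adic valuation is zero, and already matches the constant term $14z^{-1}\equiv -4z^{-1}$ of the target formula. However, a $\Psi^5$-contribution still remains, so we must use a minimal polynomial identity in order to rewrite $\Psi^5$ in terms of $\Psi$ and $\Psi^3$.

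Next I would invoke Proposition~\ref{prop:minpol}, according to which $A_0^2(z,\Psi(z))\equiv 0\pmod 9$, i.e.\
\begin{equation*}
\left(\Psi^2(z)-\tfrac{1}{1+z}\right)^2\equiv 0\pmod 9.
\end{equation*}
Expanding and multiplying by $\Psi(z)$ gives
\begin{equation*}
\Psi^5(z)\equiv \tfrac{2}{1+z}\,\Psi^3(z)-\tfrac{1}{(1+z)^2}\,\Psi(z)\pmod 9,
\end{equation*}
and the same identity holds with $z$ replaced by $z^3$, since $\Psi$ is a formal power series. Substituting this into the modulo-$9$ reduction of \eqref{eq:LoesR27} eliminates the $\Psi^5$ term and produces a linear combination of $1$, $\Psi$, and $\Psi^3$ whose coefficients are rational functions with denominators only powers of $(1+z)$ (and possibly $(1+z^3)$).

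Finally, if denominators involving $(1+z^3)$ appear, I would apply Lemma~\ref{lem:1+z} (with $j=1$, $\be=2$) to replace $(1+z^3)^{-1}$ and $(1+z^3)^{-2}$ modulo~$9$ by explicit polynomials in $z$ and $(1+z)^{-1}$, after which the coefficients of $1$, $\Psi$, and $\Psi^3$ can be collected and compared term-by-term with the right-hand side of \eqref{eq:LoesR9}. Since the series $1$, $\Psi(z)$ and $\Psi^3(z)$ are linearly independent over $(\Z/9\Z)[z,(1+z)^{-1}]$ by (a consequence of) Theorem~\ref{thm:Htildeind}, the matching of coefficients is unambiguous. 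The only real obstacle is bookkeeping: the computation is entirely routine, the result must be verified by straightforward, if somewhat tedious, rational-function arithmetic modulo~$9$.
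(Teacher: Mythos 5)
Your proposal is correct and is essentially the paper's own (implicit) argument: the corollary follows from Theorem~\ref{thm:Riordan-27} by reducing all coefficients modulo~$9$ and eliminating the $\Psi^5$-term via $\Psi^{4}(z)\equiv \frac {2} {1+z}\Psi^{2}(z)-\frac {1} {(1+z)^2}$ modulo~$9$, which is exactly the relation $A_0^2(z,\Psi(z))\equiv0$ from Proposition~\ref{prop:minpol}. Two of your side remarks are wrong, although neither is load-bearing. First, the reduced $\Psi^5$-coefficient is $6z+3+6z^{-1}$, which has $3$-adic valuation $1$, not $0$; this divisibility by $3$ is precisely what makes all denominators cancel in the end, because $2z^2+z+2\equiv 2(1+z)^2$ modulo~$3$, so no appeal to Lemma~\ref{lem:1+z} and no $(1+z^3)$-denominators ever arise (nor is any substitution $z\mapsto z^3$ needed, since Theorem~\ref{thm:Riordan-27} is already stated in terms of $\Psi(z)$). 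Second, the series $1$, $\Psi(z)$, $\Psi^3(z)$ are \emph{not} linearly independent over $(\Z/9\Z)[z,(1+z)^{-1}]$: by \eqref{eq:PsiEq} one has $3(1+z)\Psi^3(z)\equiv 3\Psi(z)$ modulo~$9$, so the representation is not unique and ``unambiguous matching of coefficients'' is not available; this does not matter, because the computation produces literally the displayed right-hand side, and one verifies the identity by direct comparison rather than by any independence argument.
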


These results generalise
Corollary~4.12 in \cite{DeSaAA}, which is an easy consequence.

\section{Central trinomial coefficients}
\label{sec:trinomial}

Let $T_n$ be the $n$-th {\it central trinomial coefficient}, that is, the
coefficient of $t^n$ in\break $(1+t+t^2)^n$.
Already Euler knew 
that the generating function $T(z)=\sum_{n\ge0}T_n\,z^n$
equals $1/\sqrt{1-2z-3z^2}$ (cf.\ \cite[solution to 
Exercise~6.42]{StanBI}). Consequently, $T(z)$ satisfies the
functional equation
\begin{equation} \label{eq:trinomialEF}
(1-2z-3z^2)T^2(z)-1=0.
\end{equation}

\begin{theorem} \label{thm:trinomial}
Let $\Psi(z)$ be given by \eqref{eq:Psidef}, 
and let $\al$ be some positive integer.
Then the generating function $T(z)=\sum_{n\ge0}T_n\,z^n$ 
for the trinomial coefficients,
reduced modulo $3^{3^\al}$, 
can be expressed as a polynomial in $\Psi(z^3)$ of the form
$$
T(z)=a_0(z)+\sum _{i=0} ^{2\cdot 3^{\al}-1}a_{i}(z)\Psi^{i}(z^3)\quad 
\text {\em modulo }3^{3^\al},
$$ 
where the coefficients $a_{i}(z)$, $i=0,1,\dots,2\cdot 3^\al-1$, 
are Laurent polynomials in $z$ and $1+z$.
\end{theorem}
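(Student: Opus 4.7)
The plan is to apply the master theorem (Theorem~\ref{thm:general}) to the functional equation \eqref{eq:trinomialEF}, in exactly the same spirit as the proofs of Theorems~\ref{thm:Motzkin}, \ref{thm:Motzkinpref}, and~\ref{thm:Riordan}. First I would recast \eqref{eq:trinomialEF} in the form of \eqref{eq:generalEF} by siphoning the term $-3z^2T^2(z)$ into the error polynomial: set
$$
c_2(z)=1-2z,\quad c_1(z)=0,\quad c_0(z)=-1,\quad
\mathcal Q\bigl(z;T(z),T'(z),\dots\bigr)=-z^2T^2(z),
$$
which is visibly a polynomial in $T(z)$ with integer coefficients.

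Next I would verify Conditions~(1)--(3) of Theorem~\ref{thm:general}. Reducing modulo~$3$ gives $c_2(z)\equiv 1+z$ and
$$
c_1^2(z)-c_0(z)c_2(z)=1-2z\equiv 1+z\quad\text{modulo }3,
$$
so choosing $\ep=1$, $\ga=1$, $e_1=f_1=0$, $e_2=1$, $f_2=0$ matches the required factorisations. For Condition~(4), uniqueness of the formal power series solution of \eqref{eq:trinomialEF} modulo $3^{3^\al}$ follows at once from comparing coefficients of $z^n$: writing $T(z)=\sum_n T_nz^n$ with $T_0\equiv 1$, the coefficient of $z^n$ (for $n\ge 1$) yields a congruence of the shape $2T_0T_n\equiv(\text{polynomial in }T_0,\dots,T_{n-1})$ modulo $3^{3^\al}$, and since $2T_0$ is a unit modulo any power of $3$, this determines $T_n$ uniquely. (The other branch $T_0\equiv -1$ produces $-T(z)$ and is ruled out by the initial condition $T_0=1$; in the language of the proof of Theorem~\ref{thm:general}, this is precisely the second ``ghost'' solution that fails to be a formal power series in the final round of the iteration.)

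Applying Theorem~\ref{thm:general} therefore yields an expansion
$$
T(z)=a_0(z)+\sum_{i=0}^{2\cdot 3^\al-1}a_i(z)\Psi^i(z)\quad\text{modulo }3^{3^\al},
$$
with coefficients $a_i(z)\in\mathbb Z[z,z^{-1},(1+z)^{-1}]$. To reach the form stated in the theorem, I would invoke the elementary identity
$$
\Psi(z)=\prod_{j\ge0}(1+z^{3^j})=(1+z)\prod_{j\ge0}(1+z^{3^{j+1}})=(1+z)\,\Psi(z^3),
$$
which converts any polynomial in $\Psi(z)$ with coefficients in $\mathbb Z[z,z^{-1},(1+z)^{-1}]$ into a polynomial in $\Psi(z^3)$ of the same degree with coefficients of the same shape; as in the Motzkin and Riordan cases, this reformulation is preferred because it produces noticeably more compact explicit congruences modulo $9$ and~$27$.

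Since all the heavy lifting is already performed inside Theorem~\ref{thm:general}, there is no genuine obstacle here; the only step that requires thought is the bookkeeping for Condition~(4), but this is immediate from the structure of the quadratic relation because the leading coefficient $2T_0$ is a $3$-adic unit. The proof is thus essentially a dictionary translation between \eqref{eq:trinomialEF} and the hypotheses of the master theorem.
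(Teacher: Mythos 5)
Your proof is correct and follows essentially the same route as the paper: the paper simply keeps $c_2(z)=1-2z-3z^2$ with $\mathcal Q=0$ (noting in the proof of Theorem~\ref{thm:general} that terms divisible by~$3$ may be absorbed into $\mathcal Q$ without loss of generality), whereas you move $-3z^2T^2(z)$ into $\mathcal Q$ and take $c_2(z)=1-2z$; the two choices are interchangeable and lead to the same constants $e_1=f_1=f_2=0$, $e_2=1$, $\ep=1$, $\ga=1$. Your explicit check of Condition~(4) via the unit $2T_0$, together with the observation that the second branch $T_0\equiv-1$ gives $-T(z)$ and corresponds to the discarded solution in the iteration, is a detail the paper leaves implicit, and the passage from $\Psi(z)$ to $\Psi(z^3)$ via $\Psi(z)=(1+z)\Psi(z^3)$ is exactly the paper's final step.
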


\begin{proof} 
Choosing $\ga=1$, $c_2(z)=1-2z-3z^2$, $c_1(z)=0$, $c_0(z)=-1$, we see that
the functional equation \eqref{eq:trinomialEF} fits into the framework
of Theorem~\ref{thm:general}, since 
$$c_1^2(z)-c_0(z)c_2(z)=1+z\quad \text{modulo }3.$$
The constants $e_1,e_2,f_1,f_2$ are given by
$e_1=0$,
$e_2=1$,
$f_1=0$,
$f_2=0$,
$\ep=1$, and
we have
$\mathcal Q(\dots)=0$.
Consequently, the generating function $T(z)$, when reduced modulo
$3^{3^\al}$, can be written as a polynomial in the basic series
$\Psi(z)=(1+z)\Psi(z^3)$. In the formulation of the theorem, we
have chosen to express everything in terms of $\Psi(z^3)$, since this
leads to the more elegant expressions for the moduli 9 and 27 given below.
\end{proof}

We have implemented the algorithm explained in the above proof.
For $\al=1$, it produces the following result.

\begin{theorem} \label{thm:trinomial-27}
Let $\Psi(z)$ be given by \eqref{eq:Psidef}.
Then, we have
\begin{multline} 
\label{eq:LoesT27}
\sum _{n\ge0} ^{}T_n\,z^n
=
-\left(9 z^2+24 z+15\right)
   \Psi(z^3)
+\left(15 z^5+25 z^4+4
   z^3+12 z^2+10 z+19\right)
   \Psi^3(z^3)\\
+\left(9 z^8+6 z^7+6
   z^6+9 z^5+21 z^4+3 z^3+15
   z+24\right) \Psi^5(z^3)
\quad 
\text {\em modulo }27.
\end{multline}
\end{theorem}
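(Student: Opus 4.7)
The approach is to specialize the algorithm of Section~\ref{sec:method} to the trinomial functional equation~\eqref{eq:trinomialEF}, with the parameter choices $\al = 1$, $\ep = 1$, $\ga = 1$ identified in the proof of Theorem~\ref{thm:trinomial}, and then carry out three successive approximation steps (to moduli $3$, $9$, and $27$, since $3^{3^\al}=27$). Throughout the computation I would work in the single variable $\Psi(z)$, and only at the end convert to $\Psi(z^3)$ via the identity $\Psi(z) = (1+z)\Psi(z^3)$. Concretely, I make the Ansatz $T(z) \equiv \sum_{i=0}^{5} \widetilde a_i(z) \Psi^i(z)$ modulo $27$, with $\widetilde a_i(z)\in\Z[z,z^{-1},(1+z)^{-1}]$, and iteratively pin down the $\widetilde a_i(z)$ modulo $3$, $9$, $27$.

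For the base step the formulas \eqref{eq:a0} and \eqref{eq:a3al} from the proof of Theorem~\ref{thm:general}, evaluated with $e_1 = 0$, $e_2 = 1$, $f_1 = 0$, $f_2 = 0$, give the two candidate mod-$3$ solutions $\widetilde a_{0,1}(z) = c_1(z)/c_2(z) = 0$ and $\widetilde a_{3,1}(z) = \pm(1+z)$, all other coefficients vanishing. The initial value $T_0 = 1$ fixes the sign as $+$. For each of the two iteration steps $\be \to \be + 1$ with $\be \in \{1,2\}$, I substitute the refined Ansatz $\widetilde a_{i,\be+1}(z) = \widetilde a_{i,\be}(z) + 3^\be \widetilde b_{i,\be+1}(z)$ into \eqref{eq:trinomialEF}, reduce powers $\Psi^k(z)$ with $k\ge 6$ via the minimal-polynomial identity $(\Psi^2(z) - (1+z)^{-1})^3 \equiv 0$ modulo $27$ supplied by Proposition~\ref{prop:minpol}, divide through by $3^\be$, and read off the corrections $\widetilde b_{i,\be+1}(z)$ by comparing coefficients of $\Psi^i(z)$ for $0 \le i \le 5$. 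Since only $\widetilde a_{0,\be}(z)$ and $\widetilde a_{3,\be}(z)$ are non-zero modulo~$3$, the cross terms $\widetilde a_{i,\be}(z)\widetilde b_{j,\be+1}(z)$ contribute only shifts by $0$ and by $3$ in the $\Psi$-exponent, so the resulting linear system is block-diagonal and uniquely solvable in $\Z[z,z^{-1},(1+z)^{-1}]$; unique solvability at the level of the Ansatz is underwritten by Theorem~\ref{thm:Htildeind}.

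The main obstacle is purely computational rather than conceptual: the repeated cross-multiplications against the coefficients $1 - 2z$ and $-3z^2$ of $T^2(z)$, followed by the $\Psi$-power reduction, produce Laurent expressions in $z$ and $(1+z)$ of rapidly growing complexity over the three iteration levels. A noteworthy point, visible only at the end of the calculation, is that upon the substitution $\Psi^k(z) = (1+z)^k\Psi^k(z^3)$ every negative power of $1+z$ accumulated during the iterations cancels against a matching positive power from the binomial expansion of $(1+z)^k$, leaving coefficients lying cleanly in $\Z[z,z^{-1}]$. Performing the three iterations and this final rewriting, all modulo~$27$, produces precisely the polynomial displayed in~\eqref{eq:LoesT27}; the explicit calculation is carried out in the accompanying \textit{Mathematica} notebook referenced in the note of the introduction.
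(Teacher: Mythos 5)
Your proposal is correct and follows essentially the same route as the paper: the paper's proof of Theorem~\ref{thm:trinomial-27} consists precisely of running the algorithm of Section~\ref{sec:method}, in the form guaranteed by Theorem~\ref{thm:general} with the parameters $\ga=1$, $\ep=1$, $e_1=f_1=f_2=0$, $e_2=1$ identified in the proof of Theorem~\ref{thm:trinomial}, for $\al=1$, with the final rewriting via $\Psi(z)=(1+z)\Psi(z^3)$. Your base-step coefficients $\widetilde a_{0,1}=0$ and $\widetilde a_{3,1}=\pm(1+z)$ match \eqref{eq:a0}--\eqref{eq:a3al}, and resolving the sign by the constant term $T_0=1$ is a harmless shortcut for the paper's device of carrying both branches until the uniqueness condition singles out the power-series solution.
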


In the case of modulus 9, this theorem reduces to the following
result.

\begin{corollary}
\label{cor:trinomial-9}
Let $\Psi(z)$ be given by \eqref{eq:Psidef}.
Then, we have
\begin{equation} 
\label{eq:LoesT9}
\sum _{n\ge0} ^{}T_n\,z^n
=
-3 (z+1) \Psi(z^3)+
\left(-3z^5+z^4+7 z^3+3 z^2+4 z+4
\right)
   \Psi^3(z^3)
\quad \text {\em modulo }9.
\end{equation}
\end{corollary}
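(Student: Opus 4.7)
The plan is to deduce this corollary directly from Theorem~\ref{thm:trinomial-27} by reducing coefficients modulo~9 and using the minimal polynomial relation to eliminate the term $\Psi^5(z^3)$. By Proposition~\ref{prop:minpol}, the polynomial $A_0^2(z,t)=\left(t^2-\tfrac{1}{1+z}\right)^2$ is a minimal polynomial for $\Psi(z)$ modulo~9, which gives
$$\Psi^4(z)\equiv \frac{2}{1+z}\Psi^2(z)-\frac{1}{(1+z)^2}\pmod 9.$$
Substituting $z^3$ for $z$ and multiplying by $\Psi(z^3)$ yields
$$\Psi^5(z^3)\equiv \frac{2}{1+z^3}\Psi^3(z^3)-\frac{1}{(1+z^3)^2}\Psi(z^3)\pmod 9,$$
which is the reduction rule I will apply.

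Next, I would reduce the coefficients of $\Psi(z^3)$, $\Psi^3(z^3)$, $\Psi^5(z^3)$ in Theorem~\ref{thm:trinomial-27} modulo~9. The coefficient of $\Psi(z^3)$ becomes $-(24z+15)\equiv 3(z+1)\pmod 9$, and that of $\Psi^3(z^3)$ becomes $6z^5+7z^4+4z^3+3z^2+z+1$. The coefficient of $\Psi^5(z^3)$ reduces to $6z^7+6z^6+3z^4+3z^3+6z+6 = 3(z+1)(2z^6+z^3+2)$, and the only delicate point is to verify that after applying the reduction rule above this produces a Laurent polynomial in $z$ and $1+z$ (equivalently, that $(1+z^3)^2$ divides the result modulo~9). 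This follows from the identity $2z^6+z^3+2 = 2(1+z^3)^2-3z^3$, which implies
$$3(z+1)(2z^6+z^3+2)\equiv 6(z+1)(1+z^3)^2\pmod 9,$$
so the division by $(1+z^3)^2$ in the reduction rule cancels cleanly.

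Putting everything together, the $\Psi^5(z^3)$ term contributes $6(z+1)(1+z^3)\cdot\Psi^3(z^3)/(\tfrac12)\cdot\ldots$, more precisely $2\cdot 6(z+1)(1+z^3)\Psi^3(z^3)-6(z+1)\Psi(z^3)\pmod 9$. Adding, the $\Psi(z^3)$ coefficient becomes $3(z+1)-6(z+1)=-3(z+1)$, and the $\Psi^3(z^3)$ coefficient becomes
$$6z^5+7z^4+4z^3+3z^2+z+1+3(z+1)(1+z^3)\equiv -3z^5+z^4+7z^3+3z^2+4z+4\pmod 9,$$
which matches the claimed expression. The only non-routine step is the factorization witnessing divisibility by $(1+z^3)^2$; once that is in hand, the rest is a direct substitution and simplification.
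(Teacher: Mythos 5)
Your proposal is correct and is exactly the reduction the paper intends: the paper states that Theorem~\ref{thm:trinomial-27} ``reduces to'' the corollary modulo~$9$, and your argument supplies precisely that reduction, using the minimal-polynomial relation $\left(\Psi^2(z)-\tfrac{1}{1+z}\right)^2\equiv0\pmod9$ from Proposition~\ref{prop:minpol} to eliminate $\Psi^5(z^3)$. I checked the arithmetic (the factorisation $6z^7+6z^6+3z^4+3z^3+6z+6\equiv 6(z+1)(1+z^3)^2\pmod 9$ and the resulting coefficients $-3(z+1)$ and $-3z^5+z^4+7z^3+3z^2+4z+4$) and it all works out.
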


These results generalise
Corollary~4.9 in \cite{DeSaAA}, which is an easy consequence.

\section{Central binomial coefficients and their sums}
\label{sec:centbin}

In this section we consider the sequence $\left(\binom
{2n}n\right)_{n\ge0}$ of {\it central binomial numbers}. It follows from
the binomial theorem that the corresponding generating function
$CB(z)$ is given by
$$
CB(z):=\sum_{n\ge0}\binom {2n}n\,z^n=\frac {1} {\sqrt{1-4z}},
$$
and hence satisfies the functional equation
\begin{equation} \label{eq:centbinEF}
(1-4z)CB^2(z)-1=0.
\end{equation}

\begin{theorem} \label{thm:centbin}
Let $\Psi(z)$ be given by \eqref{eq:Psidef}, 
and let $\al$ be some positive integer.
Then the generating function $CB(z)=\sum_{n\ge0}\binom {2n}n\,z^n$ 
for the central binomial numbers,
reduced modulo $3^{3^\al}$, 
can be expressed as a polynomial in $\Psi(-z)$ of the form
$$
CB(z)=a_0(z)+\sum _{i=0} ^{2\cdot 3^{\al}-1}a_{i}(z)\Psi^{i}(-z)\quad 
\text {\em modulo }3^{3^\al},
$$ 
where the coefficients $a_{i}(z)$, $i=0,1,\dots,2\cdot 3^\al-1$, 
are Laurent polynomials in $z$ and $1-z$.
\end{theorem}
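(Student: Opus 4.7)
The plan is simply to verify the hypotheses of the master theorem, Theorem~\ref{thm:general}, and apply it to the functional equation \eqref{eq:centbinEF}. Setting
$$c_2(z)=1-4z,\qquad c_1(z)=0,\qquad c_0(z)=-1,\qquad \mathcal Q\equiv0,$$
equation \eqref{eq:centbinEF} is exactly of the form \eqref{eq:generalEF}. The choice $\ep=-1$ and $\ga=1$ gives the basic series $\Psi(-z)$, which matches the assertion we are trying to prove.

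Next I would check the two algebraic conditions. Since $-4\equiv-1\pmod 3$, one has $c_2(z)\equiv 1-z=(1+\ep z^\ga)^1\pmod 3$, verifying condition~(1) with $e_1=0$ and $e_2=1$. Moreover,
$$c_1^2(z)-c_0(z)c_2(z)=1-4z\equiv 1-z=(1+\ep z^\ga)^{2\cdot0+1}\pmod 3,$$
verifying condition~(2) with $f_1=0$ and $f_2=0$. Condition~(3) is trivial, as $\mathcal Q=0$ is certainly a polynomial with integer coefficients.

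The only hypothesis that needs a short separate argument is uniqueness, condition~(4). Since the constant term of $1-4z$ is $1$, this series is invertible in $(\Z/3^{3^\al}\Z)[[z]]$, so the equation $(1-4z)CB^2(z)=1$ uniquely determines $CB^2(z)$ modulo $3^{3^\al}$. To pass from $CB^2(z)$ to $CB(z)$, suppose $G(z)$ is any formal power series with $G(0)=1$ and $G(z)^2\equiv CB^2(z)\pmod{3^{3^\al}}$. Then $(CB(z)-G(z))(CB(z)+G(z))\equiv 0\pmod{3^{3^\al}}$, and the second factor has constant term $2$, which is a unit modulo every power of~$3$; consequently $G(z)\equiv CB(z)\pmod{3^{3^\al}}$. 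Hence the formal power series solution of \eqref{eq:centbinEF} is unique modulo $3^{3^\al}$.

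With all four hypotheses in place, Theorem~\ref{thm:general} yields the desired expansion. There is really no hard step here: the main (very minor) obstacle is the uniqueness check, which is resolved by the elementary observation that $CB(z)$ has constant term $1$ and that squares in $1+z(\Z/3^{3^\al}\Z)[[z]]$ have unique square roots with constant term~$1$.
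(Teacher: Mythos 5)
Your handling of conditions (1)--(3) is exactly the paper's proof of Theorem~\ref{thm:centbin}: the same choices $c_2(z)=1-4z$, $c_1(z)=0$, $c_0(z)=-1$, $\ep=-1$, $\ga=1$, the same constants $e_1=0$, $e_2=1$, $f_1=f_2=0$, and $\mathcal Q=0$, followed by an appeal to Theorem~\ref{thm:general}. In that respect the two arguments coincide; the paper's proof contains nothing beyond this.

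Where you go beyond the paper is the explicit check of condition (4), and there your argument does not establish what it claims. You prove that $CB(z)$ is the unique solution of $(1-4z)F^2(z)=1$ modulo $3^{3^\al}$ \emph{among power series with constant term $1$}, but condition (4) asks for uniqueness among all formal power series solutions, and since \eqref{eq:centbinEF} involves $F$ only through $F^2$, the series $-CB(z)$ is also a formal power series solution modulo $3^{3^\al}$ (its constant term is $-1$, whose square is $1$). So the solution is genuinely not unique in the literal sense of condition (4), and your final sentence overstates what the square-root computation gives. This sign ambiguity is precisely the ambiguity between the two candidates $\pm a_{3^\al,1}(z)$ produced in the base step of the proof of Theorem~\ref{thm:general}; in the $c_1=0$ cases (central binomial, trinomial, Delannoy) both candidates are honest power series, so the Laurent-versus-power-series dichotomy invoked at the end of that proof does not discriminate between them either, and one must instead select the candidate by matching the constant term (or the first few coefficients) against the actual sequence. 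Your normalization $G(0)=1$ is the right device for making that selection, but it should be presented as such, not as a verification that condition (4) holds verbatim. (To be fair, the paper's own proof of Theorem~\ref{thm:centbin} is silent on this point, so your attempt is at least as careful as the source.)
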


\begin{proof} 
Choosing $\ga=1$, $c_2(z)=1-4z$, $c_1(z)=0$, $c_0(z)=-1$, we see that
the functional equation \eqref{eq:centbinEF} fits into the framework
of Theorem~\ref{thm:general}, since 
$$c_1^2(z)-c_0(z)c_2(z)=1-z\quad \text{modulo }3.$$
The constants $e_1,e_2,f_1,f_2$ are given by
$e_1=0$,
$e_2=1$,
$f_1=0$,
$f_2=0$,
$\ep=-1$, and
we have
$\mathcal Q(\dots)=0$.
Consequently, the generating function $CB(z)$, when reduced modulo
$3^{3^\al}$, can be written as a polynomial in the basic series
$\Psi(-z)$, as claimed.
\end{proof}

We have implemented the algorithm explained in the above proof.
For $\al=1$, it produces the following result.

\begin{theorem} \label{thm:centbin-27}
Let $\Psi(z)$ be given by \eqref{eq:Psidef}.
Then, we have
\begin{multline} 
\label{eq:LoesCB27}
\sum _{n\ge0} ^{}\binom {2n}n\,z^n
=
\left(9\frac{
   1+z}{1-z}+3\right) \Psi(-z)
-(4z+8)
   \Psi^3(-z)\\
-\left(12z^2+12z+3
\right)
   \Psi^5(-z)\quad 
\text {\em modulo }27.
\end{multline}
\end{theorem}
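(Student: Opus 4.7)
The existence of an expansion of the stated shape is guaranteed by Theorem~\ref{thm:centbin} with $\alpha=1$. What remains is to determine the six coefficient functions $a_0(z),\dots,a_5(z)$, which we know a priori are Laurent polynomials in $z$ and $1-z$. The plan is to execute the iterative algorithm of Section~\ref{sec:method}, lifting the solution from modulus~$3$ to modulus~$9$ and then to modulus~$27$, following the template given in the proof of Theorem~\ref{thm:general}.

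For the base step (modulus~$3$), formulas \eqref{eq:a0} and \eqref{eq:a3al} from the proof of Theorem~\ref{thm:general}, specialised to the parameters $e_1=f_1=f_2=0$, $e_2=1$, $\ep=-1$, $\ga=1$, $\al=1$, give
\begin{equation*}
a_{0,1}(z)=c_1(z)/c_2(z)=0,\qquad a_{3,1}(z)=\pm(1-z),
\end{equation*}
and $a_{i,1}(z)\equiv 0$ for $i\in\{1,2,4,5\}$. The sign is fixed by the normalisation $CB(0)=1$: since $\Psi(0)=1$, the ``$+$'' choice yields $CB(0)=1\cdot 1=1$. As a sanity check, $\Psi^2(-z)\equiv(1-z)^{-1}\pmod 3$ by \eqref{eq:PsiEq}, so $(1-4z)\bigl((1-z)\Psi^3(-z)\bigr)^2-1\equiv (1-z)^3(1-z)^{-3}-1\equiv 0\pmod 3$.

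For the iteration I would perform the two lifts $\be=1\to 2$ and $\be=2\to 3$. In each lift, writing $a_{i,\be+1}(z)=a_{i,\be}(z)+3^\be b_{i,\be+1}(z)$ and substituting into $(1-4z)CB^2(z)-1\equiv0\pmod{3^{\be+1}}$, one divides by $3^\be$ and reduces all powers of $\Psi(-z)$ beyond $\Psi^5(-z)$ using the minimal-polynomial relation $\bigl(\Psi^2(-z)-(1-z)^{-1}\bigr)^3\equiv 0\pmod{27}$ (Proposition~\ref{prop:minpol} combined with \eqref{eq:PsiRel}). Comparing coefficients of $\Psi^i(-z)$ for $i=0,\dots,5$ yields a linear system modulo~$3$ in the six unknowns $b_{i,\be+1}(z)$; by the analysis at the end of the proof of Theorem~\ref{thm:general}, each equation isolates a single unknown with an invertible coefficient $\pm(1-z)^{g_i}$, so the system is trivially solvable. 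Executing the two lifts produces the six coefficient functions displayed in~\eqref{eq:LoesCB27}.

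The only real obstacle is computational bookkeeping: one must carefully track the reduction
\begin{equation*}
\Psi^6(-z)\equiv\frac{3}{1-z}\Psi^4(-z)-\frac{3}{(1-z)^2}\Psi^2(-z)+\frac{1}{(1-z)^3}\pmod{27}
\end{equation*}
and its consequences for $\Psi^8(-z)$ and $\Psi^{10}(-z)$, and then correctly expand the resulting rational functions in $z$ and $(1-z)^{-1}$. As a cross-check (and indeed a viable alternative to running the iteration), once the answer is known one may substitute the right-hand side of \eqref{eq:LoesCB27} directly into $(1-4z)F^2(z)-1$, apply the above reductions to eliminate $\Psi^{2k}(-z)$ for $k\ge 3$, and verify that the resulting expression vanishes modulo~$27$; uniqueness (condition (4) of Theorem~\ref{thm:general}, satisfied here since \eqref{eq:centbinEF} determines $CB(z)$ uniquely modulo every $3$-power) then identifies the polynomial with $CB(z)\pmod{27}$.
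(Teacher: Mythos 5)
Your proposal is correct and follows essentially the same route as the paper: the paper's entire proof of this theorem is to run the algorithm of Section~\ref{sec:method} (in the form guaranteed by Theorems~\ref{thm:general} and~\ref{thm:centbin}) with $\al=1$, and your base-step data $a_{0,1}(z)=0$, $a_{3,1}(z)=1-z$ together with the reduction relation for $\Psi^6(-z)$ coming from $\bigl(\Psi^2(-z)-(1-z)^{-1}\bigr)^3\equiv0\pmod{27}$ are exactly the ingredients of that computation. The direct-substitution cross-check you add at the end (with the constant term fixing the sign/branch) is a sensible safeguard but not a genuinely different method.
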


In the case of modulus 9, this theorem reduces to the following
result.

\begin{corollary}
\label{cor:centbin-9}
Let $\Psi(z)$ be given by \eqref{eq:Psidef}.
Then, we have
\begin{equation} 
\label{eq:LoesCB9}
\sum _{n\ge0} ^{}\binom {2n}n\,z^n
=
-3 \Psi(-z)+(2z+4
) \Psi^3(-z)
\quad \text {\em modulo }9.
\end{equation}
\end{corollary}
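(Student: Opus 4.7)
The plan is to derive Corollary~\ref{cor:centbin-9} by reducing the modulo~$27$ expression of Theorem~\ref{thm:centbin-27} modulo~$9$, and then collapsing the resulting $\Psi^5(-z)$ term via the minimal polynomial for the modulus~$9$. No fresh application of the algorithm of Section~\ref{sec:method} is needed: Theorem~\ref{thm:centbin-27} already delivers a formula valid modulo~$27$, and the corollary is its image in~$\Z/9\Z$.

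First I would reduce each coefficient in \eqref{eq:LoesCB27} modulo~$9$. The coefficient $9\frac{1+z}{1-z}+3$ collapses to $3$; the coefficient $-(4z+8)$ is unchanged; and the coefficient $-(12z^2+12z+3)$ becomes $-3(z^2+z+1)$. This produces the intermediate congruence
\begin{equation*}
CB(z)\equiv 3\,\Psi(-z)-(4z+8)\Psi^3(-z)-3(z^2+z+1)\Psi^5(-z)\quad\text{modulo }9.
\end{equation*}

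Next, to kill the $\Psi^5(-z)$ term I would apply the relation implied by the minimal polynomial $A_0^2(z,t)$ from Proposition~\ref{prop:minpol} (with $z$ replaced by $-z$), namely $\bigl(\Psi^2(-z)-\tfrac{1}{1-z}\bigr)^2\equiv 0$ modulo~$9$. Expanding this relation yields $\Psi^4(-z)\equiv \tfrac{2}{1-z}\Psi^2(-z)-\tfrac{1}{(1-z)^2}$ modulo~$9$, and multiplying by $\Psi(-z)$ gives
\begin{equation*}
\Psi^5(-z)\equiv \frac{2}{1-z}\Psi^3(-z)-\frac{1}{(1-z)^2}\Psi(-z)\quad\text{modulo }9.
\end{equation*}
Substituting this into the intermediate congruence is immediate, but the substitution introduces factors of $1/(1-z)$ and $1/(1-z)^2$ that must cancel in order to yield a formula with genuine polynomial coefficients, since the target expression in \eqref{eq:LoesCB9} has no denominators of this form.

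The remaining step is therefore a short polynomial-division computation. Using the expansion $1+z+z^2=(1-z)^2-3(1-z)+3$, one finds
\begin{equation*}
\frac{3(1+z+z^2)}{1-z}\equiv 3-3z\quad\text{and}\quad \frac{3(1+z+z^2)}{(1-z)^2}\equiv 3\quad\text{modulo }9,
\end{equation*}
so the $\Psi^3(-z)$-coefficient becomes $-(4z+8)+(3-3z)=-(7z+5)\equiv 2z+4$, while the $\Psi(-z)$-coefficient becomes $3+3=6\equiv -3$, both modulo~$9$. Collecting yields precisely the right-hand side of \eqref{eq:LoesCB9}. There is no genuine obstacle in this argument; the only delicate point is verifying that the denominators $(1-z)$ and $(1-z)^2$ cancel, which is exactly what the identity $1+z+z^2=(1-z)^2-3(1-z)+3$ combined with a reduction modulo~$9$ guarantees.
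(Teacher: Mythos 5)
Your proposal is correct and follows the paper's own route: the corollary is obtained precisely by reducing Theorem~\ref{thm:centbin-27} modulo~$9$ and collapsing the $\Psi^5(-z)$ term via the modulus-$9$ relation $\bigl(\Psi^2(-z)-\tfrac{1}{1-z}\bigr)^2\equiv0$ coming from Proposition~\ref{prop:minpol}. Your coefficient computations check out; the only tiny imprecision is that the $\Psi^3(-z)$-contribution is $-2\cdot\tfrac{3(1+z+z^2)}{1-z}\equiv-6+6z$ rather than $3-3z$, but these agree modulo~$9$ (since $3-3z$ is divisible by~$3$), so the final result is unaffected.
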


These results generalise
Theorem~4.3 in \cite{DeSaAA}, which is an easy consequence.

\medskip
We next turn to sums of central binomial coefficients.
From \eqref{eq:centbinEF} it follows directly that
\begin{equation} \label{eq:sumcentbinEF}
(1-4z)(1-z)^2SCB^2(z)-1=0,
\end{equation}
where
$$
SCB(z):=\sum_{n\ge0}z^n\sum_{k=0}^n\binom {2k}k.
$$

\begin{theorem} \label{thm:sumcentbin}
Let $\Psi(z)$ be given by \eqref{eq:Psidef}, 
and let $\al$ be some positive integer.
Then the generating function $SCB(z)=\sum_{n\ge0}z^n\sum_{k=0}^n\binom {2k}k$ 
for sums of central binomial numbers,
reduced modulo $3^{3^\al}$, 
can be expressed as a polynomial in $\Psi(-z)$ of the form
$$
SCB(z)=a_0(z)+\sum _{i=0} ^{2\cdot 3^{\al}-1}a_{i}(z)\Psi^{i}(-z)\quad 
\text {\em modulo }3^{3^\al},
$$ 
where the coefficients $a_{i}(z)$, $i=0,1,\dots,2\cdot 3^\al-1$, 
are Laurent polynomials in $z$ and $1-z$.
\end{theorem}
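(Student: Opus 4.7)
The plan is to apply Theorem~\ref{thm:general} directly to the functional equation \eqref{eq:sumcentbinEF}, choosing parameters $\gamma=1$ and $\ep=-1$, so that the basic series will be $\Psi(-z)$ and coefficients will be Laurent polynomials in $z$ and $1-z$. The first step is simply to read off $c_2(z)=(1-4z)(1-z)^2$, $c_1(z)=0$, $c_0(z)=-1$ and to check each of the four hypotheses of Theorem~\ref{thm:general}.

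Conditions (1) and (2) are routine verifications modulo~$3$: since $1-4z\equiv 1-z\pmod 3$, we have
$$c_2(z)\equiv (1-z)^3=z^{0}(1+\ep z)^{3}\pmod 3,$$
so $e_1=0$ and $e_2=3$; and likewise
$$c_1^2(z)-c_0(z)c_2(z)=(1-4z)(1-z)^2\equiv(1-z)^{3}=z^{0}(1+\ep z)^{3}\pmod 3,$$
so $f_1=0$ and $f_2=1$. Condition (3) is trivial, with $\mathcal Q\equiv 0$.

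For condition (4) I would argue as follows. The coefficient of $SCB^{2}$ in \eqref{eq:sumcentbinEF}, evaluated at $z=0$, equals $1$, which is a unit; hence the partial derivative of the left-hand side of \eqref{eq:sumcentbinEF} with respect to $SCB$, namely $2(1-4z)(1-z)^{2}SCB(z)$, has value $2\cdot SCB(0)=\pm 2$ at $z=0$, which is a unit modulo any power of~$3$. A standard Hensel-style coefficient-by-coefficient induction then shows that, once one fixes the initial value $SCB(0)=1$ (forced by the combinatorial definition), the functional equation determines all higher coefficients uniquely modulo $3^{3^{\al}}$. (Equivalently, of the two formal Laurent-series solutions of the quadratic equation, exactly one is a power series starting with $1$, and this solution lifts uniquely from each $3^{\be}$ to $3^{\be+1}$.)

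With all four hypotheses of Theorem~\ref{thm:general} verified, its conclusion yields exactly the claimed representation
$$SCB(z)=a_0(z)+\sum_{i=0}^{2\cdot 3^{\al}-1}a_i(z)\Psi^{i}(-z)\quad\text{modulo }3^{3^{\al}},$$
with the $a_i(z)\in\Z[z,z^{-1},(1-z)^{-1}]$. I do not foresee a substantive obstacle: since the functional equation is already of the quadratic form \eqref{eq:generalEF} with $\mathcal Q=0$, the work reduces entirely to unpacking the definitions and performing the parameter matching above. The only mildly delicate point is the Hensel-type uniqueness check for condition (4), which in view of the cautionary examples in Remark~\ref{rem:2} one should not simply take for granted, but here it is settled by the observation that the $SCB$-derivative of the defining polynomial is a unit at $z=0$ mod~$3$.
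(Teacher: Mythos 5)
Your proposal is correct and follows essentially the same route as the paper: one reads off $c_2(z)=(1-4z)(1-z)^2$, $c_1(z)=0$, $c_0(z)=-1$, notes that $c_1^2(z)-c_0(z)c_2(z)=(1-z)^3$ modulo~$3$, and applies Theorem~\ref{thm:general} with exactly the parameters $e_1=0$, $e_2=3$, $f_1=0$, $f_2=1$, $\ep=-1$, $\mathcal Q=0$. Your explicit Hensel-type verification of the uniqueness hypothesis (pinning down the branch by $SCB(0)=1$ and the unit derivative $2c_2(z)SCB(z)$ at $z=0$) is a welcome addition that the paper's own proof leaves implicit.
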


\begin{proof} 
Choosing $\ga=1$, $c_2(z)=(1-4z)(1-z)^2$, $c_1(z)=0$, $c_0(z)=-1$, we see that
the functional equation \eqref{eq:sumcentbinEF} fits into the framework
of Theorem~\ref{thm:general}, since 
$$c_1^2(z)-c_0(z)c_2(z)=(1-z)^3\quad \text{modulo }3.$$
The constants $e_1,e_2,f_1,f_2$ are given by
$e_1=0$,
$e_2=3$,
$f_1=0$,
$f_2=1$,
$\ep=-1$, and
we have
$\mathcal Q(\dots)=0$.
Consequently, the generating function $SCB(z)$, when reduced modulo
$3^{3^\al}$, can be written as a polynomial in the basic series
$\Psi(-z)=(1-z)\Psi(-z^3)$. In the formulation of the theorem, we
have chosen to express everything in terms of $\Psi(-z^3)$, since this
leads to the more elegant expressions for the moduli 9 and 27 given below.
\end{proof}

We have implemented the algorithm explained in the above proof.
For $\al=1$, it produces the following result.

\begin{theorem} \label{thm:sumcentbin-27}
Let $\Psi(z)$ be given by \eqref{eq:Psidef}.
Then, we have
\begin{multline} 
\label{eq:LoesSCB27}
\sum _{n\ge0} ^{}z^n\sum_{k=0}^n\binom {2k}k
=
+\left(3+9\frac{
   z+1}{1-z}\right) \Psi(-z^3)
+\left(-4 z^3+12 z+19\right)
   \Psi^3(-z^3)\\
+\left(15 z^6+9
   z^5+15 z^3+18 z^2+24\right)
   \Psi^5(-z^3)
\quad \text {\em modulo }27.
\end{multline}
\end{theorem}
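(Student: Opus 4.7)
The plan is to execute the algorithm of Section~\ref{sec:method} explicitly with $\ep=-1$, $\ga=1$ and $\al=1$, which produces a representation of $SCB(z)$ modulo $3^{3}=27$ as a polynomial of degree~$5$ in $\Psi(-z)$ with coefficients in $\Z[z,z^{-1},(1-z)^{-1}]$; the compact form~\eqref{eq:LoesSCB27} is then obtained from it by substituting $\Psi(-z)=(1-z)\Psi(-z^3)$ and collecting terms. That a representation of this shape exists is already guaranteed by Theorem~\ref{thm:sumcentbin}, itself an application of the master theorem~\ref{thm:general}; what Theorem~\ref{thm:sumcentbin-27} adds is the explicit shape of the coefficients.

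The base step modulo~$3$ is handled by \eqref{eq:a0} and \eqref{eq:a3al} from the proof of Theorem~\ref{thm:general}. Reading off the parameters from the proof of Theorem~\ref{thm:sumcentbin}, namely $e_1=0$, $e_2=3$, $f_1=0$, $f_2=1$, one obtains $a_{3,1}(z)\equiv\pm1$ and $a_{i,1}(z)\equiv 0$ for $i\ne 3$, modulo~$3$; the sign is fixed by the condition $SCB(0)=1$, and this same condition, combined with the fact that the coefficient of $F(z)^2$ in~\eqref{eq:sumcentbinEF} is a unit at $z=0$, verifies the uniqueness hypothesis~(4) of Theorem~\ref{thm:general}. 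The two subsequent iteration steps $\be=1\mapsto 2$ and $\be=2\mapsto 3$ follow the same pattern as in the proof of Theorem~\ref{thm:general}: substitute the Ansatz~\eqref{eq:Ansatz2a} into~\eqref{eq:sumcentbinEF}, reduce powers $\Psi^i(-z)$ with $i\ge 6$ by means of $\Psi^{6}(-z)\equiv (1-z)^{-3}$~modulo~$3$ (a special case of Proposition~\ref{prop:minpol}), divide through by~$3^\be$, and compare coefficients of $\Psi^i(-z)$ for $i=0,1,\dots,5$. At each level this yields a triangular linear system modulo~$3$ whose diagonal entries are non-zero multiples of powers of $1-z$, hence the system has a unique solution inside $\Z[z,z^{-1},(1-z)^{-1}]$.

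The main obstacle is not theoretical but computational: the rational expressions balloon at each iteration, and after the third step one must consolidate them and pass from $\Psi(-z)$ to $\Psi(-z^3)$ in order to recognise the compact right-hand side of~\eqref{eq:LoesSCB27}. This bookkeeping is what the accompanying {\sl Mathematica} notebook is designed to handle, and the printed formula may be viewed as the read-off of its output. As an alternative \emph{a~posteriori} verification, one may substitute the right-hand side of~\eqref{eq:LoesSCB27} directly into~\eqref{eq:sumcentbinEF}, reduce using $\Psi^{6}(-z)\equiv (1-z)^{-3}$~modulo~$3$ (equivalently, $\Psi^{6}(-z^3)\equiv (1-z^3)^{-3}$~modulo~$3$), and confirm that the result vanishes modulo~$27$.
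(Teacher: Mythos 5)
Your proposal matches the paper's approach exactly: the paper's entire ``proof'' of Theorem~\ref{thm:sumcentbin-27} is the statement that the algorithm of Section~\ref{sec:method}, in the form guaranteed by Theorems~\ref{thm:general} and~\ref{thm:sumcentbin} (with $\ep=-1$, $\ga=1$, $\al=1$), was implemented and run, and your account of the base step, the two iteration steps with the reduction \eqref{eq:PsiRel2}, and the final rewriting via $\Psi(-z)=(1-z)\Psi(-z^3)$ is a faithful expansion of that. One small quibble: $SCB(0)=1$ does not literally establish hypothesis~(4) of Theorem~\ref{thm:general} (the equation $(1-4z)(1-z)^2F^2=1$ has the two power-series solutions $\pm SCB(z)$ modulo~$27$); rather, as in the paper's own proof of Theorem~\ref{thm:general}, it serves to select the correct one of the two branches the algorithm produces.
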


In the case of modulus 9, this theorem reduces to the following
result.

\begin{corollary}
\label{cor:sumcentbin-9}
Let $\Psi(z)$ be given by \eqref{eq:Psidef}.
Then, we have
\begin{equation} 
\label{eq:LoesSCB9}
\sum _{n\ge0} ^{}z^n\sum_{k=0}^n\binom {2k}k
=
-3 \Psi(-z^3) + \left(4 + 3 z + 2 z^3\right) \Psi^3(-z^3)
\quad \text {\em modulo }9.
\end{equation}
\end{corollary}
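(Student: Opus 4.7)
The plan is to deduce Corollary~\ref{cor:sumcentbin-9} directly from Theorem~\ref{thm:sumcentbin-27} by first reducing the coefficients in~\eqref{eq:LoesSCB27} modulo~$9$, and then using the minimal polynomial $A_0^2(w,t)=\left(t^2-\frac{1}{1+w}\right)^2$ from Proposition~\ref{prop:minpol} to eliminate the residual $\Psi^5(-z^3)$ contribution. The coefficient reduction is immediate: the correction term $9(z+1)/(1-z)$ in the $\Psi(-z^3)$-coefficient vanishes, the $\Psi^3(-z^3)$-coefficient becomes $5z^3+3z+1$, and the $\Psi^5(-z^3)$-coefficient reduces to $6(z^6+z^3+1)$, all modulo~$9$.

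Next, I will derive the reduction rule for $\Psi^5$ modulo~$9$. From Proposition~\ref{prop:minpol} we have $\left(\Psi^2(w)-\frac{1}{1+w}\right)^2\equiv 0$ modulo~$9$, which expands to $\Psi^4(w)\equiv \frac{2\Psi^2(w)}{1+w}-\frac{1}{(1+w)^2}$ modulo~$9$, and multiplying by $\Psi(w)$ yields
$$\Psi^5(w)\equiv \frac{2\Psi^3(w)}{1+w}-\frac{\Psi(w)}{(1+w)^2}\quad \text{modulo }9.$$
Substituting $w=-z^3$ and multiplying by $6(z^6+z^3+1)$ converts the $\Psi^5(-z^3)$-term into a combination of $\Psi(-z^3)$ and $\Psi^3(-z^3)$ with apparent denominators $(1-z^3)$ and $(1-z^3)^2$.

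The third step is to verify that these denominators cancel. Since every resulting summand already carries a factor of $3$, the rational prefactor only needs to be known modulo~$3$. Here the Frobenius-type identity $(1-z)^3\equiv 1-z^3$ modulo~$3$, combined with the factorisation $(1+z+z^2)(1-z)=1-z^3$, yields
$$\frac{z^6+z^3+1}{1-z^3}\equiv 1-z^3\quad \text{and}\quad \frac{z^6+z^3+1}{(1-z^3)^2}\equiv 1\quad \text{modulo }3,$$
so after the outer multiplication by $3$ the denominators disappear modulo~$9$. Collecting like powers of $\Psi(-z^3)$ and observing that $6\equiv -3$ modulo~$9$ then produces the stated identity $-3\Psi(-z^3)+(4+3z+2z^3)\Psi^3(-z^3)$.

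The main obstacle is the verification of these mod-$3$ rational simplifications in the third step, since this is what guarantees that the final answer is polynomial in~$z$ rather than lying in the larger ring $\Z[z,(1-z^3)^{-1}]$; the remaining steps amount to routine bookkeeping.
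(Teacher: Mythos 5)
Your proposal is correct and follows the route the paper intends (the paper offers no explicit proof, merely stating that Theorem~\ref{thm:sumcentbin-27} ``reduces to'' the corollary modulo~$9$): one reduces the coefficients of \eqref{eq:LoesSCB27} modulo~$9$ and eliminates the $\Psi^5(-z^3)$ term, whose coefficient $6(z^6+z^3+1)$ is divisible by~$3$, via the relation coming from Proposition~\ref{prop:minpol} (equivalently \eqref{eq:PsiEq}), the Frobenius identity $(1-z^3)^3\equiv 1-z^9$ modulo~$3$ clearing the denominators exactly as you describe. I verified the arithmetic: the contribution is $(3-3z^3)\Psi^3(-z^3)-6\Psi(-z^3)$, which combined with $3\Psi(-z^3)+(5z^3+3z+1)\Psi^3(-z^3)$ gives precisely $-3\Psi(-z^3)+(4+3z+2z^3)\Psi^3(-z^3)$ modulo~$9$.
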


These results generalise
Theorem~5.7 in \cite{DeSaAA}, which is an easy consequence.

\section{Catalan numbers}
\label{sec:Catalan}

Let $C_n=\frac {1} {n+1}\binom {2n}n$ be the $n$-th {\it Catalan
number}.
It is well-known (cf.\ \cite[equation below (6.18) with
$k=2$]{StanBI}) 
that the generating function $C(z)=\sum_{n\ge0}C_n\,z^n$
satisfies the functional equation
\begin{equation} \label{eq:CatalanEF}
zC^2(z)-C(z)+1=0.
\end{equation}

\begin{theorem} \label{thm:Catalan}
Let $\Psi(z)$ be given by \eqref{eq:Psidef}, 
and let $\al$ be some positive integer.
Then the generating function $C(z)=\sum_{n\ge0}C_nz^n$ 
for the Catalan numbers,
reduced modulo $3^{3^\al}$, 
can be expressed as a polynomial in $\Psi(-z)$ of the form
$$
C(z)=a_0(z)+\sum _{i=0} ^{2\cdot 3^{\al}-1}a_{i}(z)\Psi^{i}(-z)\quad 
\text {\em modulo }3^{3^\al},
$$ 
where the coefficients $a_{i}(z)$, $i=0,1,\dots,2\cdot 3^\al-1$, 
are Laurent polynomials in $z$ and $1-z$.
\end{theorem}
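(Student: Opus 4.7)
The approach is to verify that the quadratic functional equation \eqref{eq:CatalanEF} falls squarely into the framework of the master theorem, Theorem~\ref{thm:general}, with parameters $\ga=1$, $\ep=-1$, and $\mathcal{Q}\equiv 0$. Reading off the coefficients, $c_2(z)=z$, $c_1(z)=-1$, $c_0(z)=1$. Condition~(1) holds trivially with $e_1=1$, $e_2=0$. For condition~(2) I would compute
\[
c_1^2(z)-c_0(z)c_2(z)=1-z,
\]
which already has the required form $z^{2f_1}(1-z)^{2f_2+1}$ modulo~$3$ with $f_1=f_2=0$. Condition~(3) is vacuous since $\mathcal{Q}\equiv 0$.

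The only assumption of Theorem~\ref{thm:general} that still requires a separate check is the uniqueness condition~(4). To verify it, I would extract from \eqref{eq:CatalanEF} the recursion obtained by comparing coefficients of $z^n$ on both sides: the constant term forces $C_0=1$, while the coefficient of $z^{n+1}$ for $n\ge 0$ yields $C_{n+1}=\sum_{k=0}^{n}C_k C_{n-k}$. This recursion expresses each $C_n$ as an integer polynomial in $C_0,C_1,\dots,C_{n-1}$, so reduction modulo any power of~$3$ commutes with the recursion, and this determines the formal power series solution uniquely modulo $3^{3^\al}$. (This is precisely the sort of uniqueness whose failure is cautioned against in Remark~\ref{rem:2}, but here it is immediate.)

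With conditions (1)--(4) all in place, Theorem~\ref{thm:general} directly yields the desired representation of $C(z)$ modulo $3^{3^\al}$ as a polynomial of degree at most $2\cdot 3^\al-1$ in $\Psi(-z)$, with coefficients that are Laurent polynomials in $z$ and $1+\ep z^\ga = 1-z$. I do not anticipate any genuine obstacle: the Catalan case is essentially the cleanest instance of the master theorem, with all four auxiliary constants $(e_1,e_2,f_1,f_2)$ taking their smallest possible values, and the uniqueness check is immediate from the classical Catalan recursion. The only nontrivial work in practice would be running the algorithm described in the proof of Theorem~\ref{thm:general} to extract the explicit coefficients $a_i(z)$ for small $\al$, analogous to the computations carried out in Sections~\ref{sec:Motzkin}--\ref{sec:centbin}.
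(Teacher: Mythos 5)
Your proposal is correct and follows essentially the same route as the paper: the paper's proof likewise just reads off $\ga=1$, $\ep=-1$, $c_2(z)=z$, $c_1(z)=-1$, $c_0(z)=1$, computes $c_1^2(z)-c_0(z)c_2(z)=1-z$ with $e_1=1$, $e_2=f_1=f_2=0$ and $\mathcal Q\equiv0$, and invokes Theorem~\ref{thm:general}. Your explicit verification of the uniqueness condition~(4) via the recursion $C_{n+1}=\sum_{k=0}^{n}C_kC_{n-k}$ is a welcome addition that the paper leaves implicit (and is indeed the right way to see why the quadratic equation \eqref{eq:CatalanEF}, unlike its square \eqref{eq:Cat2} discussed in Remark~\ref{rem:2}, poses no problem).
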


\begin{proof} 
Choosing $\ga=1$, $c_2(z)=z$, $c_1(z)=-1$, $c_0(z)=1$, we see that
the functional equation \eqref{eq:CatalanEF} fits into the framework
of Theorem~\ref{thm:general}, since 
$$c_1^2(z)-c_0(z)c_2(z)=1-z\quad \text{modulo }3.$$
The constants $e_1,e_2,f_1,f_2$ are given by
$e_1=1$,
$e_2=0$,
$f_1=0$,
$f_2=0$,
$\ep=-1$, and
we have
$\mathcal Q(\dots)=0$.
Consequently, the generating function $C(z)$, when reduced modulo
$3^{3^\al}$, can be written as a polynomial in the basic series
$\Psi(-z)$, as claimed.
\end{proof}

We have implemented the algorithm explained in the above proof.
For $\al=1$, it produces the following result.

\begin{theorem} \label{thm:Catalan-27}
Let $\Psi(z)$ be given by \eqref{eq:Psidef}.
Then, we have
\begin{multline} 
\label{eq:LoesC27}
\sum _{n\ge0} ^{}C_n\,z^n
=-13z^{-1}
-3
   \left(4+2z^{-1}\right)
   \Psi(-z)
+\left(-8
   z-14+4z^{-1}\right)
   \Psi^3(-z)\\
+3 \left(z^2-6
   z+9-4z^{-1}\right)
   \Psi^5(-z)
\quad 
\text {\em modulo }27.
\end{multline}
\end{theorem}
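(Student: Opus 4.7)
The plan is to specialise the algorithm of Section~\ref{sec:method} to the Catalan functional equation \eqref{eq:CatalanEF} with the parameters already identified in the proof of Theorem~\ref{thm:Catalan}, namely $\ep=-1$, $\ga=1$, and $\al=1$ (so that $3^{3^\al}=27$), and carry out two iteration steps of the induction on $\be$.

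For the base step ($\be=1$), I would invoke the explicit formulas \eqref{eq:a0} and \eqref{eq:a3al} proved inside Theorem~\ref{thm:general}, which with $(e_1,e_2,f_1,f_2)=(1,0,0,0)$ yield
\[
a_{0,1}(z)=-\tfrac{1}{z},\qquad a_{3,1}(z)=\pm\,\tfrac{1}{z}(1-z)^{2},
\]
all other $a_{i,1}(z)$ being zero. The correct sign is fixed by demanding that $a_{0,1}(z)+a_{3,1}(z)\Psi^3(-z)$ be a power series (rather than a genuine Laurent series): using $\Psi^2(-z)\equiv (1-z)^{-1}\pmod 3$ to rewrite $\Psi^3(-z)$, the pole in $z^{-1}$ cancels only for the plus sign, which is also consistent with the uniqueness clause of Theorem~\ref{thm:general}.

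For the iteration steps ($\be\mapsto\be+1$ with $\be=1,2$), I would write $a_{i,\be+1}(z)=a_{i,\be}(z)+3^{\be}\,b_{i,\be+1}(z)$ for $i=0,1,\dots,5$, substitute into \eqref{eq:CatalanEF}, and reduce powers $\Psi^{k}(-z)$ with $k\ge 6$ using \eqref{eq:PsiRel} (or \eqref{eq:PsiRel2} at the first modulo-$3$ reduction). Dividing through by $3^{\be}$ and comparing coefficients of $\Psi^{i}(-z)$ produces, exactly as in the closing portion of the proof of Theorem~\ref{thm:general}, a linear system of the shape
\[
\pm z^{f_1}(1-z)^{g_i}\,b_{i,\be+1}(z)\equiv \text{Rat}_{i+3}(z)\pmod 3,\qquad i=0,1,\dots,5,
\]
with $g_i\in\{2,-1\}$, so each $b_{i,\be+1}(z)$ is uniquely determined in $\Z[z,z^{-1},(1-z)^{-1}]$. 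Iterating once gives the solution modulo $9$; iterating a second time gives the solution modulo $27$, which after collecting terms and simplifying must coincide with the right-hand side of \eqref{eq:LoesC27}.

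I expect no conceptual obstacle: Theorem~\ref{thm:general} guarantees in advance that the algorithm succeeds and that the answer is unique modulo $27$, because the Catalan recursion $C_{n+1}=\sum_{k=0}^{n}C_kC_{n-k}$ determines $C(z)$ uniquely over $\Z$ (hence modulo every $3^e$), so hypothesis~(4) is met. The only real work is the symbolic bookkeeping involved in the two iteration steps, which is mechanical and is handled by the accompanying \emph{Mathematica} file; the proof reduces to verifying that the expressions exhibited in \eqref{eq:LoesC27} satisfy \eqref{eq:CatalanEF} modulo~$27$ after the reductions \eqref{eq:PsiRel}, which is a finite check.
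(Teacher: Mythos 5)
Your proposal is correct and follows essentially the same route as the paper: the paper's own justification of Theorem~\ref{thm:Catalan-27} is precisely that one runs the algorithm of Section~\ref{sec:method}, in the form guaranteed by Theorem~\ref{thm:general} with the parameters $(\ep,\ga,e_1,e_2,f_1,f_2)=(-1,1,1,0,0,0)$ fixed in the proof of Theorem~\ref{thm:Catalan}, for $\al=1$. Your early fixing of the sign in $a_{3,1}(z)$ by the power-series requirement modulo~$3$ is a harmless (and valid) shortcut relative to the paper's practice of carrying both branches until the final step, and the remaining work is indeed the mechanical two-step iteration plus a finite verification.
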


In the case of modulus 9, this theorem reduces to the following
result.

\begin{corollary}
\label{cor:Catalan-9}
Let $\Psi(z)$ be given by \eqref{eq:Psidef}.
Then, we have
\begin{equation} 
\label{eq:LoesC9}
\sum _{n\ge0} ^{}C_n\,z^n
=-4z^{-1}
+3
   \left(1-z^{-1}\right)
   \Psi(-z)
+\left(4 z-2-2z^{-1}\right)
   \Psi^3(-z)
\quad \text {\em modulo }9.
\end{equation}
\end{corollary}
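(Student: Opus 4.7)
The plan is to obtain the mod-9 formula by reducing the mod-27 formula of Theorem~\ref{thm:Catalan-27} modulo~9, after first eliminating the power $\Psi^5(-z)$ (which does not appear in the target) in favour of $\Psi(-z)$ and $\Psi^3(-z)$.

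For the elimination step, I would appeal to Proposition~\ref{prop:minpol}, which states that $A_0^2(z,t)=\left(t^2-\tfrac{1}{1+z}\right)^2$ is a minimal polynomial for the modulus~$9$. Expanding this relation at $t=\Psi(z)$ gives $\Psi^4(z)\equiv\tfrac{2}{1+z}\Psi^2(z)-\tfrac{1}{(1+z)^2}\pmod{9}$, and multiplying by $\Psi(z)$ and substituting $z\mapsto -z$ yields the key identity
$$
\Psi^5(-z)\equiv\tfrac{2}{1-z}\Psi^3(-z)-\tfrac{1}{(1-z)^2}\Psi(-z)\pmod{9}.
$$
I would then substitute this into the $\Psi^5(-z)$ summand of Theorem~\ref{thm:Catalan-27}. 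Inside the coefficient $3(z^2-6z+9-4z^{-1})$, the terms $-18z$ and $27$ vanish modulo~9, leaving $3(z^2-z^{-1})$; the factorization $z^2-z^{-1}=-z^{-1}(1-z)(1+z+z^2)$ collapses $(z^2-z^{-1})/(1-z)$ to a polynomial and $(z^2-z^{-1})/(1-z)^2$ to $-z^{-1}(1+z+z^2)/(1-z)$, which is a Laurent polynomial in $z$ and $(1-z)^{-1}$ as required by the statement.

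Collecting terms and reducing integer coefficients modulo~9, the constant simplifies from $-13z^{-1}$ to $-4z^{-1}$, the coefficient of $\Psi(-z)$ becomes $(-3+3z^{-1})+\tfrac{3z^{-1}+3+3z}{1-z}$, and the coefficient of $\Psi^3(-z)$ becomes $(z+4+4z^{-1})+(3z^{-1}+3+3z)=4z+7+7z^{-1}\equiv 4z-2-2z^{-1}\pmod 9$. A short common-denominator calculation shows that the $\Psi(-z)$-coefficient reduces to $3(1-z^{-1})$ modulo~9, matching the corollary. The whole argument is routine bookkeeping, and the only point requiring a little care is to cancel exact factors of $(1-z)$ \emph{before} reducing integer coefficients modulo~9, so that the denominators remain Laurent-polynomial in $1-z$; no serious obstacle arises.
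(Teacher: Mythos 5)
Your proposal is correct and is essentially the paper's own (implicit) argument: the corollary is stated as the reduction of Theorem~\ref{thm:Catalan-27} modulo~$9$, and the intended elimination of $\Psi^5(-z)$ is precisely via the minimal polynomial $A_0^2(z,t)$ for the modulus $9$ from Proposition~\ref{prop:minpol}, exactly as you do. Your bookkeeping checks out (note only that the numerator $-3+6z+6z^{-1}$ in the $\Psi(-z)$-coefficient is not exactly divisible by $1-z$; the reduction to $3(1-z^{-1})$ holds because the discrepancy is $9$ times an integral Laurent series, which is consistent with the paper's coefficientwise notion of congruence).
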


These results generalise
Theorem~5.2 in \cite{DeSaAA}, which is an easy consequence.

\section{Central Delannoy numbers}
\label{sec:Delannoy}

Let $D_n$ be the $n$-th {\it central Delannoy number}, that is, the
the number of
lattice paths from $(0,0)$ to $(n,n)$ consisting of steps taken from
the set $\{(1,0),(0,1),(1,1)\}$.
It is well-known (see \cite[Ex.~6.3.8]{StanBI})
that the generating function $D(z)=\sum_{n\ge0}D_n\,z^n$
equals $1/\sqrt{1-6z+z^2}$.
Consequently, $D(z)$ satisfies the
functional equation
\begin{equation} \label{eq:DelannoyEF}
(1-6z+z^2)D^2(z)-1=0.
\end{equation}

\begin{theorem} \label{thm:Delannoy}
Let $\Psi(z)$ be given by \eqref{eq:Psidef}, 
and let $\al$ be some positive integer.
Then the generating function $D(z)=\sum_{n\ge0}D_n\,z^n$ 
for the central Delannoy numbers,
reduced modulo $3^{3^\al}$, 
can be expressed as a polynomial in $\Psi(z^2)$ of the form
$$
D(z)=a_0(z)+\sum _{i=0} ^{2\cdot 3^{\al}-1}a_{i}(z)\Psi^{i}(z^2)\quad 
\text {\em modulo }3^{3^\al},
$$ 
where the coefficients $a_{i}(z)$, $i=0,1,\dots,2\cdot 3^\al-1$, 
are Laurent polynomials in $z$ and $1+z^2$.
\end{theorem}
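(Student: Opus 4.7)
The plan is to apply the master theorem (Theorem~\ref{thm:general}) directly to the functional equation \eqref{eq:DelannoyEF}, which is already in the quadratic form required. I would set $c_2(z) = 1 - 6z + z^2$, $c_1(z) = 0$, $c_0(z) = -1$, and take $\mathcal{Q} \equiv 0$, so that conditions (3) and the polynomial structure of \eqref{eq:generalEF} hold trivially. The substantive work is to verify conditions (1), (2), and (4).

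For condition (1), reducing modulo~$3$ gives $c_2(z) \equiv 1 + z^2 \pmod{3}$, which matches the template $z^{e_1}(1+\ep z^{\ga})^{e_2}$ with $\ep = 1$, $\ga = 2$, $e_1 = 0$, $e_2 = 1$. For condition (2), I would compute $c_1^2(z) - c_0(z) c_2(z) = 1 - 6z + z^2 \equiv 1 + z^2 \pmod{3}$, and recognise this as $z^{2f_1}(1 + z^2)^{2f_2 + 1}$ with $f_1 = f_2 = 0$. These identifications match the required form, and the value $\ep z^{\ga} = z^2$ coincides with the basic series variable appearing in the statement of Theorem~\ref{thm:Delannoy}.

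For condition (4), I would extract the coefficient of $z^n$ in \eqref{eq:DelannoyEF} to obtain, for $n \ge 1$, a recurrence of the form $2 D_0 D_n = R_n(D_0, D_1, \dots, D_{n-1})$, where $R_n$ is a polynomial with integer coefficients. Since the combinatorial normalisation fixes $D_0 = 1$ and $2$ is a unit modulo $3^{3^{\al}}$, this recurrence uniquely determines every subsequent $D_n$ modulo $3^{3^{\al}}$. The expected obstacle is precisely here: the equation \eqref{eq:DelannoyEF} admits a sign ambiguity $D(z) \mapsto -D(z)$ over the integers, which translates into two solutions modulo~$3^{3^\al}$ distinguished only by $D_0 \equiv \pm 1$. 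One must point out that the ambiguity is resolved by the initial condition, so the stated uniqueness hypothesis of Theorem~\ref{thm:general} really does apply to the specific power series $D(z)$ one cares about.

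With all four hypotheses in place, Theorem~\ref{thm:general} delivers the conclusion verbatim: $D(z)$ reduced modulo $3^{3^\al}$ is a polynomial in $\Psi(\ep z^\ga) = \Psi(z^2)$ whose coefficients are Laurent polynomials in $z$ and $1 + \ep z^\ga = 1 + z^2$. Thus the argument is really just a mechanical verification of the parameters $(\ep, \ga, e_1, e_2, f_1, f_2) = (1, 2, 0, 1, 0, 0)$ for the master theorem, entirely parallel to the proofs of Theorems~\ref{thm:Motzkin}, \ref{thm:Riordan}, \ref{thm:trinomial}, \ref{thm:centbin}, and \ref{thm:Catalan} given earlier in the paper.
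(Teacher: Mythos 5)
Your proposal is correct and follows exactly the paper's own route: the paper's proof of Theorem~\ref{thm:Delannoy} is precisely the mechanical verification of Theorem~\ref{thm:general} with $\ga=2$, $c_2(z)=1-6z+z^2$, $c_1(z)=0$, $c_0(z)=-1$, $\mathcal Q=0$, and $(\ep,e_1,e_2,f_1,f_2)=(1,0,1,0,0)$, identical to your parameter choices. Your additional remarks on condition~(4) --- the recurrence $2D_0D_n=R_n(D_0,\dots,D_{n-1})$ and the resolution of the $D(z)\mapsto -D(z)$ sign ambiguity via $D_0=1$ --- go slightly beyond what the paper writes down (its proof does not address condition~(4) explicitly), and are a sound supplement rather than a deviation.
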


\begin{proof} 
Choosing $\ga=2$, $c_2(z)=1-6z+z^2$, $c_1(z)=0$, $c_0(z)=-1$, we see that
the functional equation \eqref{eq:DelannoyEF} fits into the framework
of Theorem~\ref{thm:general}, since 
$$c_1^2(z)-c_0(z)c_2(z)=1+z^2\quad \text{modulo }3.$$
The constants $e_1,e_2,f_1,f_2$ are given by
$e_1=0$,
$e_2=1$,
$f_1=0$,
$f_2=0$,
$\ep=1$, and
we have
$\mathcal Q(\dots)=0$.
Consequently, the generating function $D(z)$, when reduced modulo
$3^{3^\al}$, can be written as a polynomial in the basic series
$\Psi(z^2)$, as claimed.
\end{proof}

We have implemented the algorithm explained in the above proof.
For $\al=1$, it produces the following result.

\begin{theorem} \label{thm:Delannoy-27}
Let $\Psi(z)$ be given by \eqref{eq:Psidef}.
Then, we have
\begin{multline} 
\label{eq:LoesD27}
\sum _{n\ge0} ^{}D_n\,z^n
=
\left(3+9\frac{
   (z+2)^2}{1+z^2}\right)
   \Psi(z^2)
+\left(19 z^2+3 z+19\right)
   \Psi^3(z^2)\\
-\left(3 z^4+9 z^3+6
   z^2+9 z+3\right) \Psi^5(z^2)
\quad 
\text {\em modulo }27.
\end{multline}
\end{theorem}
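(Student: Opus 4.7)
The plan is to recognize the functional equation \eqref{eq:DelannoyEF} as an instance of the quadratic family treated by Theorem~\ref{thm:general} and simply read off the parameters. Writing the equation as $c_2(z)D^2(z)+c_1(z)D(z)+c_0(z)=0$, I would set $c_2(z)=1-6z+z^2$, $c_1(z)=0$, $c_0(z)=-1$, and take $\mathcal{Q}\equiv 0$ so that no differential terms appear. The substitution variables in the Ansatz are fixed by choosing $\ep=1$ and $\ga=2$.

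Next I would verify the two structural congruences required by Theorem~\ref{thm:general}. First, reducing $c_2(z)$ modulo $3$ gives $c_2(z)\equiv 1+z^2\pmod 3$, which matches $z^{e_1}(1+\ep z^\ga)^{e_2}$ with $e_1=0$, $e_2=1$. Second, computing the discriminant-like quantity yields
\begin{equation*}
c_1^2(z)-c_0(z)c_2(z)=1-6z+z^2\equiv 1+z^2\pmod 3,
\end{equation*}
which matches $z^{2f_1}(1+\ep z^\ga)^{2f_2+1}$ with $f_1=f_2=0$. Both conditions are satisfied with the same choice of $\ep$ and $\ga$, as required.

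The remaining hypothesis to dispatch is the uniqueness of the formal power series solution modulo $3^{3^\al}$. Because $c_2(0)=1$ is a unit modulo every $3$-power, the equation $c_2(z)D^2(z)=1$ can be solved for $D_n$ in terms of $D_0,\dots,D_{n-1}$ coefficient by coefficient once $D_0$ is fixed; and $D_0=1$ is forced by the combinatorial definition (equivalently, by the fact that the only other candidate is $-D(z)$, whose constant term is $-1$). Thus the equation determines $D(z)$ uniquely modulo $3^{3^\al}$, and Remark~\ref{rem:2}'s warnings do not bite here since the unit-constant-term of $c_2$ gives modular uniqueness essentially for free.

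With all four hypotheses of Theorem~\ref{thm:general} established, the conclusion of that theorem delivers the desired representation of $D(z)$ modulo $3^{3^\al}$ as a polynomial in $\Psi(\ep z^\ga)=\Psi(z^2)$ of degree less than $2\cdot 3^\al$ with coefficients in $\mathbb{Z}[z,z^{-1},(1+z^2)^{-1}]$. I do not expect any real obstacle: the heavy lifting is contained entirely inside Theorem~\ref{thm:general}, and the present proof reduces to a routine verification of its four hypotheses. The only place where a bit of care is needed is the uniqueness check, but, as noted, the unit-constant-term argument settles it immediately.
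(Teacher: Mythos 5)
There is a genuine gap: what you have written is a proof of Theorem~\ref{thm:Delannoy} (the qualitative statement that $D(z)$ modulo $3^{3^\al}$ \emph{can} be written as a polynomial in $\Psi(z^2)$ with coefficients in $\mathbb{Z}[z,z^{-1},(1+z^2)^{-1}]$), not of Theorem~\ref{thm:Delannoy-27}. The latter asserts a \emph{specific} identity modulo $27$, with the concrete coefficients $3+9(z+2)^2/(1+z^2)$, $19z^2+3z+19$ and $-(3z^4+9z^3+6z^2+9z+3)$, and nothing in your argument produces or verifies these. Theorem~\ref{thm:general} is an existence result; it tells you the Ansatz will succeed, but the actual content of Theorem~\ref{thm:Delannoy-27} is the output of running the base step and the two iteration steps ($\be=1,2,3$ for $\al=1$) of the algorithm of Section~\ref{sec:method}, which is exactly how the paper obtains it (``We have implemented the algorithm\dots For $\al=1$, it produces the following result''). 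To close the gap you must either carry out those three steps explicitly, or verify directly that the displayed right-hand side $X(z)$ satisfies $(1-6z+z^2)X^2(z)-1\equiv 0 \pmod{27}$ after reducing powers $\Psi^k(z^2)$ with $k\ge 6$ via $\bigl(\Psi^2(z^2)-\tfrac{1}{1+z^2}\bigr)^3\equiv 0 \pmod{27}$, and then pin down that this solution (rather than its negative) is the one with $D_0=1$.

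A secondary remark on your uniqueness check: as literally stated, condition (4) of Theorem~\ref{thm:general} asks for a \emph{unique} formal power series solution modulo $3^{3^\al}$, and for an equation of the form $c_2(z)F^2(z)-1=0$ both $D(z)$ and $-D(z)$ are power series solutions modulo any $3$-power, so the condition cannot be verified in the naive sense. What is actually used in the proof of Theorem~\ref{thm:general} is that the algorithm ends with (at most) two candidates and that the intended solution is singled out; your observation that $D_0=1$ is forced is the right way to resolve the sign ambiguity, but you should say explicitly that you are selecting the solution with constant term $1$ rather than claiming outright uniqueness. This point is minor compared with the missing computation of the coefficients.
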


In the case of modulus 9, this theorem reduces to the following
result.

\begin{corollary}
\label{cor:Delannoy-9}
Let $\Psi(z)$ be given by \eqref{eq:Psidef}.
Then, we have
\begin{equation} 
\label{eq:LoesD9}
\sum _{n\ge0} ^{}D_n\,z^n
=-3 \Psi(z^2)
+\left(4 z^2+3 z+4\right)
   \Psi^3(z^2)
\quad \text {\em modulo }9.
\end{equation}
\end{corollary}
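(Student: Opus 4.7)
The plan is to deduce this corollary directly from Theorem~\ref{thm:Delannoy-27} by reducing the modulus from $27$ to $9$, after using the appropriate minimal-polynomial relation to rewrite $\Psi^5(z^2)$ in terms of lower powers of $\Psi(z^2)$. Since the coefficient of $\Psi^5(z^2)$ in \eqref{eq:LoesD27} is divisible by~$3$, it suffices to have a congruence for $\Psi^5(z^2)$ that holds modulo~$9$ (any ``$\pmod 9$''--level reduction of $\Psi^5$ is amplified to modulo~$27$ when multiplied by the coefficient of $\Psi^5$, but we are only asserting a mod-$9$ identity in the end, so only a mod-$3$ reduction of that coefficient $\times$ a mod-$3$ relation will matter for the $\Psi^{\le 3}$ terms that survive).

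First, I would invoke Proposition~\ref{prop:minpol} for the modulus~$9$, which gives $A_0^2(z,t)\equiv 0\pmod 9$ with $A_0(z,t)=t^2-(1+z)^{-1}$. Specialising $t=\Psi(z^2)$ (equivalently, replacing $z$ by $z^2$), this yields the key identity
\begin{equation*}
\Psi^4(z^2)\;\equiv\;\frac{2\,\Psi^2(z^2)}{1+z^2}\;-\;\frac{1}{(1+z^2)^2}\pmod 9,
\end{equation*}
and therefore, upon multiplying by $\Psi(z^2)$,
\begin{equation*}
\Psi^5(z^2)\;\equiv\;\frac{2\,\Psi^3(z^2)}{1+z^2}\;-\;\frac{\Psi(z^2)}{(1+z^2)^2}\pmod 9.
\end{equation*}

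Next, I would reduce the coefficients in \eqref{eq:LoesD27} modulo~$9$: the coefficient of $\Psi(z^2)$ collapses to the constant~$3$ (the $9(z+2)^2/(1+z^2)$ term vanishes mod~$9$), the coefficient of $\Psi^3(z^2)$ becomes $z^2+3z+1$, and the coefficient of $\Psi^5(z^2)$ becomes $-3(z^2+1)^2$. Substituting the above expression for $\Psi^5(z^2)$, the $\Psi^5$--term contributes
\begin{equation*}
-3(z^2+1)^2\!\left(\tfrac{2\,\Psi^3(z^2)}{1+z^2}-\tfrac{\Psi(z^2)}{(1+z^2)^2}\right)
\;\equiv\;3(z^2+1)\Psi^3(z^2)+3\Psi(z^2)\pmod 9,
\end{equation*}
using $-6\equiv 3\pmod 9$. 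Collecting terms yields $6\Psi(z^2)+(4z^2+3z+4)\Psi^3(z^2)\equiv -3\Psi(z^2)+(4z^2+3z+4)\Psi^3(z^2)\pmod 9$, which is exactly \eqref{eq:LoesD9}.

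The only step that requires any thought is the book-keeping: one must check that the coefficient of $\Psi^5$ in \eqref{eq:LoesD27}, after reduction mod~$9$, is indeed divisible by $(1+z^2)^2$ (it equals $-3(1+z^2)^2$), so that the denominators introduced by the $\Psi^5$--reduction cancel and the final coefficients are genuine polynomials in $z$ (as they must be, by the shape dictated by Theorem~\ref{thm:Delannoy}). Everything else is routine arithmetic in $\mathbb{Z}/9\mathbb{Z}$.
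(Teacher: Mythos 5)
Your proposal is correct, and it fleshes out exactly what the paper leaves implicit when it states that Theorem~\ref{thm:Delannoy-27} ``reduces to'' Corollary~\ref{cor:Delannoy-9}: reduce the coefficients modulo~$9$ and eliminate $\Psi^5(z^2)$ via the modulus-$9$ minimal polynomial $A_0^2(z^2,t)$, i.e.\ $\bigl(\Psi^2(z^2)-\tfrac1{1+z^2}\bigr)^2\equiv0\pmod 9$. Your arithmetic checks out and recovers \eqref{eq:LoesD9} exactly (note that the mod-$9$ representation as a polynomial in $\Psi(z^2)$ is not unique, but your choice of reduction lands on the paper's stated form).
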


These results generalise the part on central Delannoy numbers in
Theorem~5.15 of \cite{DeSaAA}, which is an easy consequence.

\section{Schr\"oder numbers}
\label{sec:Schroeder}

Let $S_n$ be the $n$-th (large) {\it Schr\"oder
number}, that is, the number of
lattice paths from $(0,0)$ to $(2n,0)$ consisting of steps taken from
the set
$\{(2,0),(1,1),(1,-1)\}$ never running below the $x$-axis.
It is well-known (cf.\ \cite[Second Problem on page~178]{StanBI})
that the generating function $S(z)=\sum_{n\ge0}S_n\,z^n$ is given by
$$S(z)=\frac {1-z-\sqrt{1-6z+z^2}} {2z},$$ 
and hence satisfies the
functional equation
\begin{equation} \label{eq:SchroederEF}
zS^2(z)+(z-1)S(z)+1=0.
\end{equation}

\begin{theorem} \label{thm:Schroeder}
Let $\Psi(z)$ be given by \eqref{eq:Psidef}, 
and let $\al$ be some positive integer.
Then the generating function $S(z)=\sum_{n\ge0}S_nz^n$ 
for the Schr\"oder numbers,
reduced modulo $3^{3^\al}$, 
can be expressed as a polynomial in $\Psi(z^2)$ of the form
$$
S(z)=a_0(z)+\sum _{i=0} ^{2\cdot 3^{\al}-1}a_{i}(z)\Psi^{i}(z^2)\quad 
\text {\em modulo }3^{3^\al},
$$ 
where the coefficients $a_{i}(z)$, $i=0,1,\dots,2\cdot 3^\al-1$, 
are Laurent polynomials in $z$ and $1+z^2$.
\end{theorem}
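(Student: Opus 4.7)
The plan is to recognise that the functional equation \eqref{eq:SchroederEF} has precisely the quadratic form required by Theorem~\ref{thm:general}, and simply to verify its hypotheses. Setting $c_2(z)=z$, $c_1(z)=z-1$, $c_0(z)=1$, and $\mathcal Q(\dots)\equiv 0$, I would compute the discriminant-like quantity
$$c_1^2(z)-c_0(z)c_2(z)=(z-1)^2-z=z^2-3z+1=1+z^2\quad\text{modulo }3.$$
This exhibits the factor $1+\ep z^\ga$ with $\ep=1$ and $\ga=2$, while $c_2(z)=z$ already has the required form $z^{e_1}(1+z^2)^{e_2}$ with $e_1=1$ and $e_2=0$. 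Reading off the remaining exponents gives $f_1=0$ and $f_2=0$, so hypotheses~(1)--(3) of Theorem~\ref{thm:general} are met.

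For hypothesis~(4), I would note that comparing coefficients of $z^n$ in \eqref{eq:SchroederEF} gives $S_0=1$ together with the recurrence
$$S_n=S_{n-1}+\sum_{i+j=n-1}S_iS_j,\qquad n\ge1,$$
which determines the sequence $(S_n)_{n\ge0}$ uniquely as an integer sequence, and hence also uniquely when reduced modulo any power of~$3$. The crucial point is that $c_2(z)=z$ has vanishing constant term, so the quadratic contribution to the recurrence never interferes with $S_n$ itself, and one solves for $S_n$ directly in terms of $S_0,\dots,S_{n-1}$ with no $3$-adic ambiguity. This is the same mechanism already exploited for Motzkin, Riordan, and Catalan numbers.

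Theorem~\ref{thm:general} then applies directly, yielding that $S(z)$, reduced modulo $3^{3^\al}$, is a polynomial in $\Psi(\ep z^\ga)=\Psi(z^2)$ of degree less than $2\cdot 3^\al$ whose coefficients are Laurent polynomials in $z$ and $1+z^2$. This is exactly the assertion of Theorem~\ref{thm:Schroeder}. No step poses any real obstacle: the proof is an entirely routine bookkeeping verification, parallel to the proofs of Theorems~\ref{thm:Motzkin}, \ref{thm:Riordan}, \ref{thm:trinomial}, and \ref{thm:Catalan} given above, and the only minor point of attention is keeping $\ga=2$ straight, which (unlike the $\ga=1$ cases) means the basic series naturally appears as $\Psi(z^2)$ without the need to substitute the identity $\Psi(z)=(1+z)\Psi(z^3)$.
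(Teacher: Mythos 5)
Your proposal is correct and follows essentially the same route as the paper: the same choices $\ga=2$, $\ep=1$, $c_2(z)=z$, $c_1(z)=z-1$, $c_0(z)=1$, the same computation $c_1^2(z)-c_0(z)c_2(z)=1+z^2$ modulo $3$, and the same reading-off of $e_1=1$, $e_2=0$, $f_1=f_2=0$ before invoking Theorem~\ref{thm:general}. Your explicit check of the uniqueness hypothesis~(4) via the recurrence $S_n=S_{n-1}+\sum_{i+j=n-1}S_iS_j$ is a small (and welcome) addition that the paper leaves implicit.
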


\begin{proof} 
Choosing $\ga=2$, $c_2(z)=z$, $c_1(z)=z-1$, $c_0(z)=1$, we see that
the functional equation \eqref{eq:SchroederEF} fits into the framework
of Theorem~\ref{thm:general}, since 
$$c_1^2(z)-c_0(z)c_2(z)=1+z^2\quad \text{modulo }3.$$
The constants $e_1,e_2,f_1,f_2$ are given by
$e_1=1$,
$e_2=0$,
$f_1=0$,
$f_2=0$,
$\ep=1$, and
we have
$\mathcal Q(\dots)=0$.
Consequently, the generating function $S(z)$, when reduced modulo
$3^{3^\al}$, can be written as a polynomial in the basic series
$\Psi(z^2)$, as claimed.
\end{proof}

We have implemented the algorithm explained in the above proof.
For $\al=1$, it produces the following result.

\begin{theorem} \label{thm:Schroeder-27}
Let $\Psi(z)$ be given by \eqref{eq:Psidef}.
Then, we have
\begin{multline} 
\label{eq:LoesS27}
\sum _{n\ge0} ^{}S_n\,z^n
=
13+14z^{-1}
+\left(21
   z+18+21z^{-1}\right)
   \Psi(z^2)\\
+\left(4 z^3+15 z^2+17
   z+15+4z^{-1}\right)
   \Psi^3(z^2)\\
+\left(15 z^5+9
   z^4+18 z^3+18 z^2+18
   z+9+15z^{-1}\right)
   \Psi^5(z^2)
\quad 
\text {\em modulo }27.
\end{multline}
\end{theorem}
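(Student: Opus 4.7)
The plan is to execute the iterative algorithm of Section~\ref{sec:method}, specialised to the parameters $(\ep,\ga,\al)=(1,2,1)$ of Theorem~\ref{thm:Schroeder}, and to read off the explicit polynomial in $\Psi(z^2)$ produced modulo $3^{3}=27$. That this algorithm terminates with a polynomial of degree at most $5$ in $\Psi(z^2)$ with coefficients in $\Z[z,z^{-1},(1+z^2)^{-1}]$ was already established via Theorem~\ref{thm:general}; what remains here is to exhibit the concrete output.

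For the base step $\be=1$, formulas \eqref{eq:a0} and \eqref{eq:a3al} from the proof of Theorem~\ref{thm:general}, specialised to $(e_1,e_2,f_1,f_2)=(1,0,0,0)$, yield
$$a_{0,1}(z)=\frac{z-1}{z}\quad\text{and}\quad a_{3,1}(z)=\pm\frac{(1+z^2)^2}{z}\quad\text{modulo }3,$$
with all other $a_{i,1}(z)\equiv 0$. Exactly one of the two sign choices corresponds to the Schr\"oder generating function $S(z)$, the other to a genuine Laurent series; the correct sign is pinned down by matching the expansion $S(z)=1+2z+6z^2+22z^3+\cdots$ at lowest order.

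For each of the iteration rounds $\be=2,3$, I would set $a_{i,\be}(z)=a_{i,\be-1}(z)+3^{\be-1}b_{i,\be}(z)$, substitute into \eqref{eq:SchroederEF}, reduce products $\Psi^k(z^2)$ with $k\ge 6$ via the relation \eqref{eq:PsiRel} at $\al=1$ (equivalently, via the minimal polynomial $A_0^3(z,t)$ of Proposition~\ref{prop:minpol}), and compare coefficients of $\Psi^i(z^2)$ for $i=0,1,\dots,5$. As shown in the proof of Theorem~\ref{thm:general}, this produces a linear system modulo~$3$ in the six unknowns $b_{0,\be},\dots,b_{5,\be}$ that is uniquely solvable over $\Z[z,z^{-1},(1+z^2)^{-1}]$, and the global uniqueness of $S(z)$ as a formal power-series solution of \eqref{eq:SchroederEF} selects the correct sign branch once and for all.

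The main obstacle is purely computational: reducing modulo~$27$ forces one to retain the $s=1$ and $s=2$ terms in the expansion of $\Psi^2(z^2)$ furnished by Lemma~\ref{lem:Psi-2}, and subsequently to re-express all the resulting auxiliary series $\widetilde H_{a_1,\ldots,a_r}$ via Lemma~\ref{lem:Hbi}. The bookkeeping is extensive, which is why the actual execution is delegated to the accompanying \emph{Mathematica} implementation, whose output is precisely \eqref{eq:LoesS27}. A direct a posteriori check is equally feasible: substitute the proposed right-hand side of \eqref{eq:LoesS27} into \eqref{eq:SchroederEF}, apply \eqref{eq:PsiRel} to eliminate powers $\Psi^k(z^2)$ with $k\ge 6$, and verify that the resulting Laurent-polynomial-valued expression vanishes modulo~$27$.
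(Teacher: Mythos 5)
Your proposal follows essentially the same route as the paper: the structural result is Theorem~\ref{thm:Schroeder} via Theorem~\ref{thm:general} with $(\ep,\ga)=(1,2)$ and $(e_1,e_2,f_1,f_2)=(1,0,0,0)$, and the explicit mod-$27$ expression is then obtained by running the implemented algorithm (base step from \eqref{eq:a0}--\eqref{eq:a3al}, iteration for $\be=2,3$ with reduction via \eqref{eq:PsiRel}), exactly as the paper does. One small correction: Lemma~\ref{lem:Psi-2} and Lemma~\ref{lem:Hbi} are not needed to produce \eqref{eq:LoesS27} itself (they enter only for coefficient extraction from powers of $\Psi$); the algorithm here only requires the reduction relation \eqref{eq:PsiRel}, since $\mathcal Q=0$ for the Schr\"oder equation.
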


In the case of modulus 9, this theorem reduces to the following
result.

\begin{corollary}
\label{cor:Schroeder-9}
Let $\Psi(z)$ be given by \eqref{eq:Psidef}.
Then, we have
\begin{multline} 
\label{eq:LoesS9}
\sum _{n\ge0} ^{}S_n\,z^n
=4+5z^{-1}
-3
   \left(z+z^{-1}\right)
   \Psi(z^2)
-\left(2 z^3+3 z^2+4
   z+3+2z^{-1}\right)
   \Psi^3(z^2)\\
\quad \text {\em modulo }9.
\end{multline}
\end{corollary}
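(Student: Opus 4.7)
The plan is to deduce the mod-$9$ formula directly from the mod-$27$ formula \eqref{eq:LoesS27} of Theorem~\ref{thm:Schroeder-27}, by first reducing all coefficients modulo~$9$ and then using the minimal polynomial for the modulus~$9$ (given in Proposition~\ref{prop:minpol}) to eliminate the power $\Psi^5(z^2)$ in favour of $\Psi^3(z^2)$ and $\Psi(z^2)$.

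First I would reduce the three non-constant coefficient polynomials in \eqref{eq:LoesS27} modulo~$9$: the coefficient of $\Psi(z^2)$ becomes $3(z+z^{-1})$, the coefficient of $\Psi^3(z^2)$ becomes $4z^3+6z^2+8z+6+4z^{-1}$, and the coefficient of $\Psi^5(z^2)$ becomes $6(z^5+z^{-1})$, while the constant terms $13+14z^{-1}$ reduce to $4+5z^{-1}$. Next, from Proposition~\ref{prop:minpol} we have $\bigl(\Psi^2(z)-\tfrac{1}{1+z}\bigr)^2\equiv 0\pmod 9$, and multiplying by $\Psi(z)$ after expansion yields
\begin{equation*}
\Psi^5(z)\equiv \frac{2}{1+z}\Psi^3(z)-\frac{1}{(1+z)^2}\Psi(z)\pmod 9.
\end{equation*}
Substituting $z\mapsto z^2$, I would use this to rewrite the $\Psi^5(z^2)$ term.

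The substitution produces two rational factors, namely $\frac{z^5+z^{-1}}{1+z^2}$ and $\frac{z^5+z^{-1}}{(1+z^2)^2}$. Using the factorization $z^5+z^{-1}=z^{-1}(z^2+1)(z^4-z^2+1)$, the first simplifies to the Laurent polynomial $z^3-z+z^{-1}$, while for the second the identity $z^4-z^2+1=(1+z^2)^2-3z^2$ gives
\begin{equation*}
\frac{z^5+z^{-1}}{(1+z^2)^2}=z+z^{-1}-\frac{3z}{1+z^2}.
\end{equation*}
Since the $\Psi^5$ coefficient carries a factor of~$6$, the term $\frac{3z}{1+z^2}$ is multiplied by a multiple of~$9$ and hence disappears modulo~$9$. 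Collecting contributions to the $\Psi^3(z^2)$ coefficient gives $(4z^3+6z^2+8z+6+4z^{-1})+3(z^3-z+z^{-1})=7z^3+6z^2+5z+6+7z^{-1}$, which modulo~$9$ equals $-(2z^3+3z^2+4z+3+2z^{-1})$; and collecting contributions to the $\Psi(z^2)$ coefficient gives $3(z+z^{-1})-6(z+z^{-1})=-3(z+z^{-1})$ modulo~$9$. This matches \eqref{eq:LoesS9} exactly.

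The only potential obstacle is bookkeeping in the rational-function simplifications, in particular verifying that $1+z^2$ really does divide $z^5+z^{-1}$ in $\mathbb{Z}[z,z^{-1}]$ (so that the Laurent polynomial form of the coefficients is preserved), and that the ``leftover'' term $\frac{3z}{1+z^2}$ from the $(1+z^2)^2$ denominator is killed by the prefactor of~$6$ modulo~$9$. Both of these checks are routine, so the derivation proceeds without further difficulty.
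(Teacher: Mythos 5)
Your proposal is correct, and it is essentially the reduction the paper has in mind: the paper gives no explicit proof beyond the phrase ``this theorem reduces to,'' and the intended route is exactly yours --- reduce the coefficients of \eqref{eq:LoesS27} modulo~$9$ and then eliminate $\Psi^5(z^2)$ via the relation $\Psi^5(z)\equiv \frac{2}{1+z}\Psi^3(z)-\frac{1}{(1+z)^2}\Psi(z)$ coming from the minimal polynomial $A_0^2(z,t)$ for the modulus~$9$ in Proposition~\ref{prop:minpol}. Your arithmetic checks out (in particular the factorisation $z^5+z^{-1}=z^{-1}(1+z^2)(z^4-z^2+1)$ and the observation that the leftover $\frac{3z}{1+z^2}$ is killed by the prefactor~$6$), so nothing is missing.
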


Schr\"oder numbers have not been considered in \cite{DeSaAA}.

\section{Hex tree numbers}
\label{sec:hex tree}

Let $H_n$ be the $n$-th {\it hex tree number}, that is, 
the number of planar rooted trees where each vertex may have a left, a
middle, or a right descendant, but never a left {\it and\/} middle
descendant, and never a middle {\it and\/} right descendant.
By an elementary combinatorial decomposition, it is easy to see 
that the generating function $H(z)=\sum_{n\ge0}H_n\,z^n$
satisfies the functional equation
\begin{equation} \label{eq:hex treeEF}
z^2H^2(z)+(3z-1)H(z)+1=0.
\end{equation}

\begin{theorem} \label{thm:hex tree}
Let $\Psi(z)$ be given by \eqref{eq:Psidef}, 
and let $\al$ be some positive integer.
Then the generating function $H(z)=\sum_{n\ge0}H_n\,z^n$ 
for the hex tree numbers,
reduced modulo $3^{3^\al}$, 
can be expressed as a polynomial in $\Psi(-z^2)$ of the form
$$
H(z)=a_0(z)+\sum _{i=0} ^{2\cdot 3^{\al}-1}a_{i}(z)\Psi^{i}(-z^2)\quad 
\text {\em modulo }3^{3^\al},
$$ 
where the coefficients $a_{i}(z)$, $i=0,1,\dots,2\cdot 3^\al-1$, 
are Laurent polynomials in $z$ and $1-z^2$.
\end{theorem}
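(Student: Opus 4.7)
The plan is to apply Theorem~\ref{thm:general} directly to the functional equation \eqref{eq:hex treeEF}. First I would identify the data of that equation against the template \eqref{eq:generalEF}, namely
$$c_2(z)=z^2, \qquad c_1(z)=3z-1, \qquad c_0(z)=1, \qquad \mathcal Q(\dots)=0.$$
With these choices, I would then verify the four hypotheses of Theorem~\ref{thm:general} in turn.

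For hypothesis (1), one has $c_2(z)=z^2$ modulo~$3$, so the required representation holds with $e_1=2$ and $e_2=0$ (for any choice of $\gamma$ and $\varepsilon$). For hypothesis (2), a direct computation yields
$$c_1^2(z)-c_0(z)c_2(z)=(3z-1)^2-z^2=8z^2-6z+1\equiv 1-z^2\quad\text{modulo }3,$$
and this fits the template $z^{2f_1}(1+\varepsilon z^{\gamma})^{2f_2+1}$ with the choices $\gamma=2$, $\varepsilon=-1$, $f_1=0$, $f_2=0$. Hypothesis (3) is trivial since $\mathcal Q=0$. For hypothesis (4), I would argue that \eqref{eq:hex treeEF} determines a unique formal power series solution modulo $3^{3^\alpha}$: comparing constant coefficients yields $H_0=1$, and for $n\ge1$ the coefficient of $z^n$ in \eqref{eq:hex treeEF} expresses $H_n$ as a polynomial in $H_0,\dots,H_{n-1}$ divided by the coefficient of $H_n$ in $-H(z)$, which is the unit $-1$; hence the recurrence is solvable modulo any power of~$3$.

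With all four hypotheses verified, Theorem~\ref{thm:general} yields the desired representation of $H(z)$ as a polynomial in $\Psi(\varepsilon z^{\gamma})=\Psi(-z^2)$ whose coefficients are Laurent polynomials in $z$ and $1+\varepsilon z^{\gamma}=1-z^2$, with $t$-degree less than $2\cdot 3^{\alpha}$. There is essentially no obstacle here, as the proof is purely a verification that the hypotheses of the master theorem apply; the only point that requires a moment's thought is hypothesis~(2), where one must recognise $8z^2-6z+1$ as $1-z^2$ modulo~$3$ (and hence why one is led to expand in $\Psi(-z^2)$ rather than $\Psi(z^2)$).
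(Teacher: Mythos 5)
Your proposal is correct and follows essentially the same route as the paper: one reads off $c_2(z)=z^2$, $c_1(z)=3z-1$, $c_0(z)=1$, computes $c_1^2(z)-c_0(z)c_2(z)=1-z^2$ modulo~$3$, and applies Theorem~\ref{thm:general} with $\ga=2$, $\ep=-1$, $e_1=2$, $e_2=f_1=f_2=0$, $\mathcal Q=0$. Your explicit check of the uniqueness hypothesis (4) via the recurrence $H_n=\sum_{i+j=n-2}H_iH_j+3H_{n-1}$ is a welcome addition that the paper leaves implicit.
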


\begin{proof} 
Choosing $\ga=2$, $c_2(z)=z^2$, $c_1(z)=3z-1$, $c_0(z)=1$, we see that
the functional equation \eqref{eq:hex treeEF} fits into the framework
of Theorem~\ref{thm:general}, since 
$$c_1^2(z)-c_0(z)c_2(z)=1-z^2\quad \text{modulo }3.$$
The constants $e_1,e_2,f_1,f_2$ are given by
$e_1=2$,
$e_2=0$,
$f_1=0$,
$f_2=0$,
$\ep=-1$, and
we have
$\mathcal Q(\dots)=0$.
Consequently, the generating function $H(z)$, when reduced modulo
$3^{3^\al}$, can be written as a polynomial in the basic series
$\Psi(z^2)$, as claimed.
\end{proof}

We have implemented the algorithm explained in the above proof.
For $\al=1$, it produces the following result.

\begin{theorem} \label{thm:hex tree-27}
Let $\Psi(z)$ be given by \eqref{eq:Psidef}.
Then, we have
\begin{multline} 
\label{eq:LoesH27}
\sum _{n\ge0} ^{}H_n\,z^n
=12z^{-1}+14z^{-2}
-3
   \left(4+3z^{-1}+2z^{-2}
\right) \Psi(-z^2)\\
+\left(z^2+21
   z+13+15z^{-1}+4z^{-2}\right)
   \Psi^3(-z^2)\\
+3
   \left(z^4+3 z^3+12
   z^2+21
   z+3z^{-1}+14z^{-2}\right)
   \Psi^5(-z^2)
\quad 
\text {\em modulo }27.
\end{multline}
\end{theorem}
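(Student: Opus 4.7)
The plan is to execute the algorithm of Section~\ref{sec:method} for $\alpha = 1$ with the parameters $\epsilon = -1$ and $\gamma = 2$ identified in the proof of Theorem~\ref{thm:hex tree}. The Ansatz is
$$
H(z) = \sum_{i=0}^{5} a_i(z)\,\Psi^i(-z^2) \quad \text{modulo } 27,
$$
where the coefficients $a_i(z)$, to be determined, lie in $\Z[z,z^{-1},(1-z^2)^{-1}]$. Successive approximations $a_{i,\beta}(z)$ that solve \eqref{eq:hex treeEF} modulo $3^\beta$ are built for $\beta = 1, 2, 3$.

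For the base step I would specialise formulas \eqref{eq:a0}--\eqref{eq:a3al} in the proof of Theorem~\ref{thm:general} to the values $c_2(z) = z^2$, $c_1(z) = 3z-1$, $c_0(z) = 1$, together with $e_1 = 2$, $e_2 = 0$, $f_1 = 0$, $f_2 = 0$. This produces the two candidate starting vectors
$$
a_{0,1}(z) \equiv -z^{-2}, \qquad a_{3,1}(z) \equiv \pm z^{-2}(1-z^2)^2 \quad \text{modulo } 3,
$$
with all other $a_{i,1}(z)$ zero. Each iteration step sets $a_{i,\beta+1}(z) = a_{i,\beta}(z) + 3^\beta b_{i,\beta+1}(z)$, substitutes into \eqref{eq:hex treeEF}, reduces powers $\Psi^{k}(-z^2)$ with $k \ge 6$ via \eqref{eq:PsiRel}, divides the resulting identity by $3^\beta$, and compares coefficients of $\Psi^i(-z^2)$ for $i = 0, 1, \dots, 5$. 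The resulting linear system for the $b_{i,\beta+1}(z)$ has the triangular shape derived in the proof of Theorem~\ref{thm:general} and is uniquely solvable over $\Z[z,z^{-1},(1-z^2)^{-1}]$.

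After the two iteration steps one obtains two candidates modulo $27$, corresponding to the two sign choices in the base step. Because \eqref{eq:hex treeEF} is equivalent, via coefficient comparison, to a recurrence that determines $H_n$ uniquely from the initial value $H_0 = 1$, condition~(4) of Theorem~\ref{thm:general} is satisfied, so exactly one of the two branches is a genuine formal power series and must therefore coincide with $H(z)$ modulo $27$. Matching it against the right-hand side of \eqref{eq:LoesH27} completes the verification. The principal obstacle is not conceptual but computational: at $\beta = 3$ one must reduce polynomials in $\Psi(-z^2)$ of degree up to ten and simplify rather bulky Laurent combinations in $z$ and $1-z^2$, so the actual arithmetic is best delegated to the accompanying {\sl Mathematica} implementation referred to in the introductory note.
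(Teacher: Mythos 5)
Your proposal is correct and follows exactly the route the paper takes: Theorem~\ref{thm:hex tree-27} is obtained in the paper by running the algorithm of Section~\ref{sec:method}, in the form guaranteed by Theorem~\ref{thm:general} with the parameters $\ga=2$, $c_2(z)=z^2$, $c_1(z)=3z-1$, $c_0(z)=1$, $\ep=-1$ identified in the proof of Theorem~\ref{thm:hex tree}, for $\al=1$, with the final arithmetic carried out by the accompanying implementation. Your base-step values $a_{0,1}(z)\equiv -z^{-2}$ and $a_{3,1}(z)\equiv\pm z^{-2}(1-z^2)^2$ and your verification of the uniqueness condition~(4) via the recurrence with $H_0=1$ are all as intended.
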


In the case of modulus 9, this theorem reduces to the following
result.

\begin{corollary}
\label{cor:hex tree-9}
Let $\Psi(z)$ be given by \eqref{eq:Psidef}.
Then, we have
\begin{multline} 
\label{eq:LoesH9}
\sum _{n\ge0} ^{}H_n\,z^n
=-4z^{-2}
+3
   \left(1+3z^{-1}+2z^{-2}\right)
   \Psi(-z^2)\\
+\left(4 z^2+7+7z^{-2}\right)
   \Psi^3(-z^2)
\quad \text {\em modulo }9.
\end{multline}
\end{corollary}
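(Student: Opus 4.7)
The plan is to obtain \eqref{eq:LoesH9} by reducing the right-hand side of \eqref{eq:LoesH27} modulo~$9$. Three of the four summands---namely the constant $12z^{-1}+14z^{-2}$, the $\Psi(-z^2)$-coefficient $-3(4+3z^{-1}+2z^{-2})$, and the $\Psi^3(-z^2)$-coefficient $z^2+21z+13+15z^{-1}+4z^{-2}$---are handled by simply replacing each integer coefficient by its residue modulo~$9$; this step uses no identity for~$\Psi$.

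The only nontrivial part is the $\Psi^5(-z^2)$-summand. Since its coefficient already carries an explicit factor of~$3$, it suffices to know $\Psi^5(-z^2)$ modulo~$3$. For this I would invoke the basic congruence $\Psi^2(w)\equiv(1+w)^{-1}\pmod 3$ from Proposition~\ref{prop:minpol}, with $w=-z^2$, which gives
\begin{equation*}
\Psi^5(-z^2)\equiv (1-z^2)^{-2}\,\Psi(-z^2)\pmod 3.
\end{equation*}
Substituting this into the $\Psi^5$-summand, I would expand the resulting product using $z^4=1-2(1-z^2)+(1-z^2)^2$ together with the partial-fraction identity
\begin{equation*}
z^{-2}(1-z^2)^{-2}=z^{-2}+(1-z^2)^{-1}+(1-z^2)^{-2}.
\end{equation*}
A short computation shows that the net coefficients of $(1-z^2)^{-1}$ and $(1-z^2)^{-2}$ obtained in this way are each divisible by~$3$, so once the outer factor of~$3$ is multiplied in they vanish modulo~$9$. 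Hence the $\Psi^5$-summand reduces modulo~$9$ to a genuine Laurent polynomial in $z$ times $\Psi(-z^2)$.

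Collecting this contribution with the direct $\Psi(-z^2)$-contribution from \eqref{eq:LoesH27}, and, if the resulting expression does not immediately match \eqref{eq:LoesH9}, redistributing between the basis elements $1,\,\Psi(-z^2),\,\Psi^3(-z^2)$ by means of the derived relations $3\Psi^3(-z^2)\equiv 3(1-z^2)^{-1}\Psi(-z^2)\pmod 9$ and $\Psi^5(-z^2)\equiv 2(1-z^2)^{-1}\Psi^3(-z^2)-(1-z^2)^{-2}\Psi(-z^2)\pmod 9$ (the latter coming directly from the minimal-polynomial identity $(\Psi^2(-z^2)-(1-z^2)^{-1})^2\equiv 0\pmod 9$ of Proposition~\ref{prop:minpol}), one arrives at \eqref{eq:LoesH9}. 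The main obstacle is purely computational bookkeeping: because the representation of $\sum_{n\ge 0}H_n z^n$ as a polynomial in $\Psi(-z^2)$ modulo~$9$ is not unique (the space of such representations is a module over $(\Z/9\Z)[z,(1+z^2)^{-1}]$ of rank equal to the $t$-degree of the minimal polynomial modulo~$9$), some rearrangement via the above identities will generally be required to match the specific form displayed in the statement; no new ideas are needed beyond this.
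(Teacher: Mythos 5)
Your strategy---reduce \eqref{eq:LoesH27} modulo $9$, dispose of the $\Psi^5(-z^2)$-summand via $\Psi^2(w)\equiv(1+w)^{-1}$ modulo $3$, and then normalise using the modulus-$9$ relation $\left(\Psi^2(-z^2)-(1-z^2)^{-1}\right)^2\equiv0$---is exactly the route the paper intends (the corollary is presented as a direct reduction of Theorem~\ref{thm:hex tree-27}), and every identity you quote is correct. In particular your partial-fraction bookkeeping does work: the $\Psi^5$-coefficient reduces modulo $9$ to $3(z^4+2z^{-2})$, and $3(z^4+2z^{-2})\Psi^5(-z^2)\equiv(3+6z^{-2})\Psi(-z^2)$ modulo $9$.

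The problem is the final assertion that ``one arrives at \eqref{eq:LoesH9}.'' Carrying your own plan through, the total $\Psi(-z^2)$-coefficient becomes $(6+3z^{-2})+(3+6z^{-2})\equiv0$ modulo $9$, leaving
$$
\sum_{n\ge0}H_n\,z^n=3z^{-1}+5z^{-2}+\left(z^2+3z+4+6z^{-1}+4z^{-2}\right)\Psi^3(-z^2)
\quad\text{modulo }9,
$$
whose odd part is $3z^{-1}+(3z+6z^{-1})\Psi^3(-z^2)\equiv 3z^{-1}+6z^{-1}\Psi(-z^2)$. By contrast, the only odd-power contribution in \eqref{eq:LoesH9} is $3\cdot3z^{-1}\Psi(-z^2)=9z^{-1}\Psi(-z^2)\equiv0$, so \eqref{eq:LoesH9} would force $H_n\equiv0$ modulo $9$ for every odd $n$, contradicting $H_1=3$, $H_5=543\equiv3$ and $H_7=9285\equiv6$ modulo $9$. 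Thus the displayed formula cannot be reached by any correct rearrangement: it contains a typo (a correct variant of the same shape is $3z^{-1}-4z^{-2}+3\left(1+2z^{-1}+2z^{-2}\right)\Psi(-z^2)+\left(4z^2+7+7z^{-2}\right)\Psi^3(-z^2)$). By deferring the ``purely computational bookkeeping'' you missed that the bookkeeping terminates at a formula genuinely different from the one you set out to prove; the reduction needs to be executed explicitly, and the discrepancy with the printed statement flagged.
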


These results generalise
Theorem~5.3 in \cite{DeSaAA}, which is an easy consequence.

\section{Free subgroup numbers
for lifts of the inhomogeneous modular group}
\label{sec:Free}

Let $f_\la$ be the number of free subgroups of $PSL_2(\Z)$
of index $6\la$. 
More generally, let $f_\la(m)$ denote the number of free subgroups 
of index $6m\la$ in the lift $\Gamma_m(3)$ of $PSL_2(\mathbb Z)$ defined by
$$
\Gamma_m(3) = C_{2m} \underset{C_m}{\ast} C_{3m} = \big\langle
x,y\,\big\vert\, x^{2m} = y^{3m} = 1,\, x^2 = y^3\big\rangle,
\quad m\ge1. 
$$
Clearly, we have $f_\la=f_\la(1)$.

The generating function
$F_m(z)=1+\sum_{\la \ge1}f_\la(m)z^\la$
satisfies the Riccati differential equation
(see \cite[Eq.~(18)]{MuHecke} with $\mathfrak{G} =  \Gamma_m(q)$, 
or \cite[Eq.~(8.1)]{KaKMAA})
\begin{equation} \label{eq:FreeEF}
zF_m^2(z)-(1-(6m-2)z)F_m(z)+6mz^2F_m'(z)
+1+(1-6m+5m^2)z=0.
\end{equation}

It is not hard to see that,
if $m$ is divisible by~$3$, then $v_3(f_\la(m))$ will increase with $\la$,
that is, the sequence $f_\la(m)$ is ultimately zero modulo any given
$3$-power.

\begin{theorem} \label{thm:Free}
Let $\Psi(z)$ be given by \eqref{eq:Psidef}, let $m$ be a positive
integer not divisible by~$3$,
and let $\al$ be some positive integer.
Then the generating function $F_m(z)$
for the free subgroup numbers $f_\la(m)$,
when reduced modulo $3^{3^\al}$, 
can be expressed as a polynomial in $\Psi(z^6)$ of the form
$$
F_m(z)=a_0(z)+\sum _{i=0} ^{2\cdot 3^{\al}-1}a_{i}(z)\Psi^{i}(z^6)\quad 
\text {\em modulo }3^{3^\al},
$$ 
where the coefficients $a_{i}(z)$, $i=0,1,\dots,2\cdot 3^\al-1$, 
are Laurent polynomials in $z$ and $1+z^2$.
\end{theorem}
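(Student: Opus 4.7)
The plan is to verify that the Riccati equation~\eqref{eq:FreeEF} falls within the scope of Theorem~\ref{thm:general}, and then convert the resulting expansion in $\Psi(z^2)$ into one in $\Psi(z^6)$ via the identity $\Psi(z^2)=(1+z^2)\Psi(z^6)$. I shall set $c_2(z)=z$, $c_1(z)=-1+(6m-2)z$, $c_0(z)=1+(1-6m+5m^2)z$, and $\mathcal Q(z;F_m,F_m'):=2mz^2F_m'(z)$, so that $6mz^2F_m'(z)=3\mathcal Q$ and \eqref{eq:FreeEF} takes the form required by Theorem~\ref{thm:general}.

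The verification of conditions~(1)--(3) of that theorem is a direct computation. Since $3\mid 6m$, and the hypothesis $3\nmid m$ forces $m^2\equiv 1\pmod 3$, I find modulo~$3$ that $c_2(z)\equiv z$, $c_1(z)\equiv -1+z$, and $c_0(z)\equiv 1$, so that
\begin{equation*}
c_1^2(z)-c_0(z)c_2(z)\equiv(-1+z)^2-z\equiv 1+z^2\pmod 3.
\end{equation*}
This matches conditions~(1) and~(2) with $\ep=1$, $\ga=2$, $e_1=1$, $e_2=0$, $f_1=0$, $f_2=0$; condition~(3) is immediate.

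Condition~(4), the uniqueness modulo $3^{3^\al}$, is the point that I expect to need the most care. My plan is to extract from \eqref{eq:FreeEF}, by comparing coefficients of $z^\la$, an explicit recurrence of the form
\begin{equation*}
f_\la(m)=\sum_{\mu=0}^{\la-1}f_\mu(m)f_{\la-1-\mu}(m)+(6m\la-2)f_{\la-1}(m)\quad(\la\ge 2),
\end{equation*}
together with the initial values $f_0(m)=1$ and $f_1(m)=5m^2$. This recurrence determines $F_m(z)$ uniquely over $\Z$, and therefore also modulo every power of $3$, so condition~(4) holds. Theorem~\ref{thm:general} then yields a representation of $F_m(z)$ modulo $3^{3^\al}$ as a polynomial of degree at most $2\cdot 3^\al-1$ in $\Psi(z^2)$ with coefficients that are Laurent polynomials in $z$ and $1+z^2$.

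To finish, I substitute $z\mapsto z^2$ in the identity $\Psi(z)=(1+z)\Psi(z^3)$, which is immediate from the product formula~\eqref{eq:Psidef}, to obtain $\Psi(z^2)=(1+z^2)\Psi(z^6)$. Substituting $\Psi^i(z^2)=(1+z^2)^i\Psi^i(z^6)$ into the expansion produced by Theorem~\ref{thm:general}, and absorbing the factors $(1+z^2)^i$ into the coefficients, yields an expansion in $\Psi(z^6)$ of exactly the form asserted, with coefficients still Laurent polynomials in $z$ and $1+z^2$. The single delicate spot in the argument is the modulo-$3$ reduction of $c_0$: it is precisely the cancellation $1-6m+5m^2\equiv 0\pmod 3$ that relies on $3\nmid m$, and without it the quadratic-modulo-$3$ structure required by Theorem~\ref{thm:general} would not be present, which is also consistent with the remark preceding the theorem about the behaviour of $v_3(f_\la(m))$ when $3\mid m$.
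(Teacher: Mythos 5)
Your proposal is correct and follows essentially the same route as the paper: it verifies that \eqref{eq:FreeEF} satisfies the hypotheses of Theorem~\ref{thm:general} with $\ga=2$, $\ep=1$, $e_1=1$, $e_2=f_1=f_2=0$ (the sign conventions for $c_0,c_1,c_2$ differ immaterially from the paper's), and then rewrites the resulting expansion via $\Psi(z^2)=(1+z^2)\Psi(z^6)$. Your explicit check of the uniqueness condition~(4) through the recurrence $f_\la=\sum_{\mu=0}^{\la-1}f_\mu f_{\la-1-\mu}+(6m\la-2)f_{\la-1}$ is a welcome addition that the paper leaves implicit.
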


\begin{proof} 
Choosing $\ga=2$, $c_2(z)=-z$, $c_1(z)=1-(6m-2)z$, $c_0(z)=1-(1-6m+5m^2)z$, we see that
the functional equation \eqref{eq:FreeEF} fits into the framework
of Theorem~\ref{thm:general}, since 
$$c_1^2(z)-c_0(z)c_2(z)=1+z^2\quad \text{modulo }3.$$
The constants $e_1,e_2,f_1,f_2$ are given by
$e_1=1$,
$e_2=0$,
$f_1=0$,
$f_2=0$,
$\ep=1$, and
we have
$\mathcal Q(\cdots)=6mz^2F_m'(z)$.
Consequently, the generating function $F_m(z)$, when reduced modulo
$3^{3^\al}$, can be written as a polynomial in the basic series
$\Psi(z^2)=(1+z^2)\Psi(z^6)$. In the formulation of the theorem, we
have chosen to express everything in terms of $\Psi(z^6)$, since this
leads to the more elegant expressions for the moduli 9 and 27 given below.
\end{proof}

We have implemented the algorithm explained in the above proof.
For $\al=1$, it produces the following result.

\begin{theorem} \label{thm:Free-27}
Let $\Psi(z)$ be given by \eqref{eq:Psidef},
Then, for all positive integers $m\equiv 1~\text{\em(mod~$3$)}$, we have
%\begin{multline} 
%\label{eq:Loes1}
%1 + \sum _{\la\ge1} ^{}f_\lambda(m)\,z^\lambda
%=
%1-z^{-1}-3 \frac{(z+2)
%   \left(z^2+2 z+2\right)}{z
%   \left(z^2+1\right)}
%  -\frac{9
%   }{\left(z^2+1\right)^2}\\
%+\left(3 z^3+24
%   z+21 z^{-1}\right)
%   \Psi(z^6)
%+\left(16 z^9+23 z^7+22
%   z^5+16 z^3+5
%   z+4 z^{-1}\right)
%   \Psi^3(z^6)\\
%+\left(6 z^{15}+3
%   z^{13}+15 z^{11}+12 z^9+6
%   z^7+3 z^5+6 z^3+3
%   z+15 z^{-1}\right)
%   \Psi^5(z^6)\quad 
%\text {\em modulo }27.
%\end{multline}
\begin{multline} 
\label{eq:Loes1m1}
1 + \sum _{\la\ge1} ^{}f_\lambda(m)\,z^\lambda
=
1-z^{-1}-3 \frac{(z+2)
   \left(z^2+2 z+2\right)}{z
   \left(z^2+1\right)}
  -\frac{9+3(m-1) z^4
   }{\left(z^2+1\right)^2}\\
+\left(3m z^3+(3m+21)
   z+21 z^{-1}\right)
   \Psi(z^6)\\
+\left((-2m+18) z^9+(-m^2+6m+18) z^7+(-3m+25)
   z^5\right.\kern2cm\\
\kern2cm
\left.+(m+15) z^3+(2m^2-3m+6)
   z+4 z^{-1}\right)
   \Psi^3(z^6)\\
+\left((-3m+9) z^{15}+(-3m+6)
   z^{13}+15 z^{11}+(3m+9) z^9+(3m+3)
   z^7\right.\kern2cm\\
\kern2cm\left.
+3 z^5+(-3m+9) z^3+(-3m+6)
   z+15 z^{-1}\right)
   \Psi^5(z^6)\quad 
\text {\em modulo }27.
\end{multline}
Furthermore, 
for all positive integers $m\equiv 2~\text{\em(mod $3$)}$, we have
\begin{multline} 
\label{eq:Loes1m2}
1 + \sum _{\la\ge1} ^{}f_\lambda(m)\,z^\lambda
=
1-z^{-1}+3 \frac{(z+1)
   \left(z^2+ z+2\right)}{z
   \left(z^2+1\right)}\\
  +\frac{9+18z+18z^2+9z^3+9z^4-(3m-24) z^5
   }{\left(z^2+1\right)^2}\\
+\left(-3m z^3+(-3m+21)
   z+21 z^{-1}\right)
   \Psi(z^6)\\
+\left((15m^2-4m+24) z^9+(-m^2+3m) z^7+(3m-2)
   z^5\right.\kern2cm\\
\kern2cm
\left.+(-m+15) z^3+(14m^2+18)
   z+4 z^{-1}\right)
   \Psi^3(z^6)\\
+\left((3m+9) z^{15}+(3m+6)
   z^{13}+15 z^{11}+(-3m+9) z^9+(-3m+3)
   z^7\right.\kern2cm\\
\kern2cm\left.
+3 z^5+(3m+9) z^3+(3m+6)
   z+15 z^{-1}\right)
   \Psi^5(z^6)\quad 
\text {\em modulo }27.
\end{multline}
\end{theorem}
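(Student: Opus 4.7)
The proof is an explicit application of the algorithm described in the proofs of Theorem~\ref{thm:general} and Theorem~\ref{thm:Free}, run at the parameter $\al=1$ (so the target modulus is $3^{3^1}=27$), with $m$ kept as a symbolic parameter. First, I would verify the uniqueness hypothesis~(4) of Theorem~\ref{thm:general} for the modulus~$27$: extracting coefficients in \eqref{eq:FreeEF} yields
$$f_0=1,\qquad f_1=5m^2,\qquad f_n = \sum_{k=0}^{n-1} f_k\, f_{n-1-k} + (6mn-2)\, f_{n-1} \quad(n\ge 2),$$
which determines $F_m(z)$ uniquely over~$\Z$, hence \emph{a fortiori} modulo~$27$, so the Ansatz of Section~\ref{sec:method} will produce the correct power-series solution.

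Next I would carry out the base step ($\be=1$). The formulas from the proof of Theorem~\ref{thm:general}, with $c_2(z)=z$, $c_1(z)=-(1-(6m-2)z)$, $c_0(z)=1+(1-6m+5m^2)z$, give
$$a_{0,1}(z) \equiv \frac{c_1(z)}{c_2(z)} \equiv 1 - z^{-1} \pmod 3, \qquad a_{3,1}(z) \equiv z^{-1}(1+z^2)^{2} \pmod 3,$$
with all other $a_{i,1}\equiv 0 \pmod 3$. The positive sign on $a_{3,1}$ is forced by the requirement that $F_m(z)$ be a formal power series with $f_0=1$: using $\Psi^3(z^2) \equiv \Psi(z^6) \pmod 3$ (a consequence of~\eqref{eq:PsiEq}), the $z^{-1}$-poles of $a_{0,1}$ and $a_{3,1}\,\Psi^3(z^2)$ cancel only with that choice; the other branch is excluded by uniqueness. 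I would then perform the two lifting steps $\be=1\to 2$ and $\be=2\to 3$ as in the proof of Theorem~\ref{thm:general}: substitute the Ansatz \eqref{eq:Ansatz2a} into \eqref{eq:FreeEF}, expand the derivative term $6mz^2 F_m'(z)$ by means of~\eqref{eq:Psi'} and Lemma~\ref{lem:1+z}, reduce each occurrence of $\Psi^{6}(z^2)$ via \eqref{eq:PsiRel2}, divide through by $3^{\be}$, and match coefficients of $\Psi^i(z^2)$ for $i=0,\dots,5$. At each step this produces a $6\times 6$ triangular system of congruences modulo~$3$ for the unknowns $b_{i,\be+1}(z)$, uniquely solvable in $\Z[z,z^{-1},(1+z^2)^{-1}]$ by the argument in that proof.

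The principal obstacle is bookkeeping: since $m$ is symbolic, each $a_{i,\be}(z)$ emerges as a polynomial in~$m$ whose coefficients are Laurent polynomials in $z$ and $(1+z^2)^{-1}$. At $\be=1\to 2$ the derivative contribution first injects a linear factor of~$m$; at $\be=2\to 3$ this couples with the quadratic $5m^2$ from $c_0$ and the accumulated $m$-dependence to yield expressions involving $m^2$ and $m^3$. After the final reduction modulo~$27$, I would observe that the symbolic output admits two different compact polynomial-in-$m$ descriptions according to whether $m\equiv 1$ or $m\equiv 2 \pmod 3$; the split arises because the reductions $6m\bmod 9$, $5m^2\bmod 27$, and $(5m^2-6m+1)\bmod 9$ that collapse higher powers of~$m$ behave differently in the two classes. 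These two descriptions, directly verifiable by substituting any representative of each class into \eqref{eq:FreeEF} and matching the recurrence, are precisely the expressions~\eqref{eq:Loes1m1} and~\eqref{eq:Loes1m2}. The last step is to convert the polynomial in $\Psi(z^2)$ produced by the algorithm into a polynomial in $\Psi(z^6)$ via the elementary identity $\Psi(z^2)=(1+z^2)\Psi(z^6)$, which is immediate from~\eqref{eq:Psidef}, and to collect terms into the displayed form.
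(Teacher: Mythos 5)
Your proposal is correct and follows essentially the same route as the paper: the stated proof of Theorem~\ref{thm:Free-27} is exactly to run the algorithm of Section~\ref{sec:method} in the form guaranteed by Theorems~\ref{thm:general} and~\ref{thm:Free} with $\al=1$ and $m$ symbolic, and your uniqueness check, base step (including fixing the sign of $a_{3,1}$ by cancellation of the $z^{-1}$-pole), two lifting steps, and the final rewriting via $\Psi(z^2)=(1+z^2)\Psi(z^6)$ reproduce that computation. One small caution: your closing remark that the two displayed formulas are ``directly verifiable by substituting any representative of each class'' is too weak, since the coefficients are quadratic polynomials in $m$ and agreement modulo~$27$ would have to be checked for all nine residues of $m$ modulo~$27$ within each class (or, as in your main argument, established by carrying $m$ symbolically through the algorithm).
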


In the case of modulus 9, this theorem reduces to the following
result.

\begin{corollary} \label{thm:Free-9}
Let $\Psi(z)$ be given by \eqref{eq:Psidef},
\!Then, for all positive integers $m\equiv 1~\text{\em(mod~$3$)}$, we have
\begin{multline} 
\label{eq:Loes1m1-9}
1 + \sum _{\la\ge1} ^{}f_\lambda(m)\,z^\lambda
=
1-4z^{-1}-3 \frac{z^2}{
   z^2+1}
-3z^{-1}\left(z^2+1\right)^2
   \Psi(z^6)\\
+\left(
m z^9+(m+1) z^7+7
   z^5+m
   z^3+(m+1) z+7z^{-1}\right) \Psi^3(z^6)
\quad 
\text {\em modulo }9.
\end{multline}
Furthermore, 
for all positive integers $m\equiv 2~\text{\em(mod $3$)}$, we have
\begin{multline} 
\label{eq:Loes1m2-9}
1 + \sum _{\la\ge1} ^{}f_\lambda(m)\,z^\lambda
=
1+5z^{-1}+3\frac{z^2}{z^2+1}
-3z^{-1}(1+z^2)^2
   \Psi(z^6)\\
-
(m z^9+(m-1) z^7+2
   z^5+m
   z^3+(m-1) z+2z^{-1})
   \Psi^3(z^6)\quad 
\text {\em modulo }9.
\end{multline}
\end{corollary}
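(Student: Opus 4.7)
The plan is to obtain Corollary~\ref{thm:Free-9} as a direct reduction of Theorem~\ref{thm:Free-27} modulo~9. Simply reducing the coefficients in \eqref{eq:Loes1m1} and \eqref{eq:Loes1m2} modulo~9 is the first step, but the result is not yet in the shape of \eqref{eq:Loes1m1-9} and \eqref{eq:Loes1m2-9}: it still involves $\Psi^5(z^6)$ and denominators of the form $(z^2+1)^k$ that must be rebalanced against extra contributions produced when that $\Psi^5$ term is simplified. The crucial input is Proposition~\ref{prop:minpol}, which asserts that $A_0^2(z,t) = \bigl(t^2 - 1/(1+z)\bigr)^2$ is a minimal polynomial for the modulus~9. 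Substituting $z \mapsto z^6$ and $t = \Psi(z^6)$, this yields
\begin{equation*}
\Psi^4(z^6) \equiv \frac{2\Psi^2(z^6)}{1+z^6} - \frac{1}{(1+z^6)^2} \pmod{9},
\end{equation*}
and multiplication by $\Psi(z^6)$ gives the reduction formula
\begin{equation*}
\Psi^5(z^6) \equiv \frac{2\Psi^3(z^6)}{1+z^6} - \frac{\Psi(z^6)}{(1+z^6)^2} \pmod{9},
\end{equation*}
which I would substitute into the $\Psi^5(z^6)$ term of \eqref{eq:Loes1m1} (respectively \eqref{eq:Loes1m2}), eliminating that term at the cost of introducing denominators involving $1+z^6$.

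Next, I would convert those denominators into Laurent polynomials in $z$ and $1+z^2$. From the factorisation $1+z^6 = (1+z^2)\bigl((1+z^2)^2 - 3z^2\bigr)$, a brief geometric-series expansion truncated modulo~9 yields
\begin{equation*}
\frac{1}{1+z^6} \equiv \frac{1}{(1+z^2)^3} + \frac{3z^2}{(1+z^2)^5} \pmod{9},
\end{equation*}
and, by squaring,
\begin{equation*}
\frac{1}{(1+z^6)^2} \equiv \frac{1}{(1+z^2)^6} + \frac{6z^2}{(1+z^2)^8} \pmod{9}.
\end{equation*}
These are concrete instances of the general statement of Lemma~\ref{lem:1+z}. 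Plugging them back produces an element of $\Z[z,z^{-1},(1+z^2)^{-1}]\bigl[\Psi(z^6),\Psi^3(z^6)\bigr]$ representing $F_m(z)$ modulo~9.

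The final step is to collect terms and verify that the resulting expression coincides with \eqref{eq:Loes1m1-9} or \eqref{eq:Loes1m2-9}, splitting the computation according to the residue class of $m$ modulo~3. The independence statement of Theorem~\ref{thm:Htildeind} (via Corollary~\ref{cor:Htildeind}) guarantees that, once both sides are expanded in the basic series, matching of coefficients is legitimate, so the verification reduces to a finite rational identity. I expect the main obstacle to be book-keeping rather than conceptual: the constant part, the $\Psi(z^6)$ part, and the reduced $\Psi^3(z^6)$ part must each be reorganised, with careful use of $m \pmod 3$ to simplify terms of the form $3m$, $m^2$, $3(m{-}1)z^4/(z^2{+}1)^2$, and the contributions coming from $\Psi^5$-elimination, down to the compact closed forms displayed in the corollary. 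No new ideas beyond Theorem~\ref{thm:Free-27}, the minimal polynomial from Proposition~\ref{prop:minpol}, and Lemma~\ref{lem:1+z} are needed.
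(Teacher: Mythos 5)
Your proposal is correct and follows essentially the same route as the paper, which obtains the corollary simply by reducing the mod-$27$ expressions of Theorem~\ref{thm:Free-27} modulo~$9$ and eliminating the $\Psi^5(z^6)$ term via the polynomial relation satisfied by $\Psi$, then rewriting powers of $(1+z^6)^{-1}$ as Laurent polynomials in $z$ and $(1+z^2)^{-1}$. The only (harmless) difference is that, since every coefficient of $\Psi^5(z^6)$ in \eqref{eq:Loes1m1} and \eqref{eq:Loes1m2} is already divisible by~$3$, the simpler mod-$3$ relation \eqref{eq:PsiEq} already suffices to convert $3\,(\cdots)\,\Psi^5(z^6)$ into $3\,(\cdots)\,\Psi(z^6)/(1+z^6)^2$ modulo~$9$, so invoking the full mod-$9$ minimal polynomial $A_0^2(z,t)$ from Proposition~\ref{prop:minpol} is not strictly necessary, though it leads to the same result.
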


\begin{comment}  %Das bleibt auskommentiert
In the case where $m=1$ and the modulus is only $9$, the first
assertion in the above theorem reduces to the following result
for the free subgroup numbers $f_\la=f_\la(1)$ of the inhomogeneous
modular group $PSL_2(z)$.

\begin{corollary}
\label{cor:Free-9}
Let $\Psi(z)$ be given by \eqref{eq:Psidef}.
Then, we have
\begin{multline} 
\label{eq:Loes1}
1 + \sum _{\la\ge1} ^{}f_\lambda\,z^\lambda
=
1-4z^{-1}-3 \frac{z^2}{
   z^2+1}
-3z^{-1}(z^2+1)^2
   \Psi(z^6)\\
+z^{-1}(z^2+1)^4(z^2-2)
   \Psi^3(z^6)
\quad \text {\em modulo }9.
\end{multline}
\end{corollary}

\begin{proof}
One reduces the expression on the right-hand side of \eqref{eq:Loes1m1}
using the relation \eqref{eq:PsiEq}.
\end{proof}
\end{comment}

We have chosen this section to demonstrate that generating function
results such as the ones in Theorem~\ref{thm:Free-27} and
Corollary~\ref{thm:Free-9}, in combination
with the coefficient extraction procedure described in the
\begin{comment}
beginning of the
\end{comment}
appendix, lead naturally to an explicit description of the congruence
classes modulo any given $3$-power 
of the elements of the sequence that we are considering.
We illustrate this point with the sequence
of free subgroup numbers $f_\la=f_\la(1)$ of the inhomogeneous
modular group $PSL_2(z)$.
In the statement of the result below, we use notation from
word calculus: by $S^*$, where $S$ is a set of letters, we
mean the set of all words with letters from $S$. By a
slight misuse of notation, the symbol $a^*$, where $a$ is
a letter, will denote the set $\{\ep,a,aa,aaa,\dots\}$,
with $\ep$ standing for the empty word. Finally, if we write
$S_1S_2$, where $S_1$ and $S_2$ are two sets of words, then we
mean the set of all concatenations $w_1w_2$ with $w_1\in S_1$
and $w_2\in S_2$. Since our words represent ternary strings, 
we make the convention that $0$'s at the beginning of a word
may be added or deleted at will. (The point of view which we
shall adopt later in the statements of Theorem~\ref{thm:Free27},
Proposition~\ref{prop:Psi-3}, Corollary~\ref{cor:Psi-3},
Proposition~\ref{prop:Psi-5}, and Corollary~\ref{cor:Psi-5}
is to consider these strings as left-infinite strings, where
one has added infinitely many $0$'s on the left of a finite
word.)

\begin{theorem} \label{thm:Free9}
We have:
\begin{enumerate} 
\item $f_\la\equiv 1~(\text{\em mod }9)$, if, and only if, 
the $3$-adic expansion of $n$ is an element of
\begin{multline*}
\kern2cm
\{0\}\cup
\bigcup_{k\ge0}\big(00^*22^*\big)^{3k}0^*10^*00\cup
\bigcup_{k\ge0}\big(00^*22^*\big)^{3k+2}0^*10\\
\cup
\bigcup_{k\ge0}\big(22^*00^*\big)^{3k+2}2^*12\cup
\bigcup_{k\ge0}\big(22^*00^*\big)^{3k}2^*12^*22;
\end{multline*}
\item $f_\la\equiv 4~(\text{\em mod }9)$, if, and only if, 
the $3$-adic expansion of $n$ is an element of
\begin{multline*}
\kern2cm
\bigcup_{k\ge0}\big(00^*22^*\big)^{3k+2}0^*10^*00\cup
\bigcup_{k\ge0}\big(00^*22^*\big)^{3k+1}0^*10\\
\cup
\bigcup_{k\ge0}\big(22^*00^*\big)^{3k+1}2^*12\cup
\bigcup_{k\ge0}\big(22^*00^*\big)^{3k+2}2^*12^*22;
\end{multline*}
\item $f_\la\equiv 7~(\text{\em mod }9)$, if, and only if, 
the $3$-adic expansion of $n$ is an element of
\begin{multline*}
\kern2cm
\{10\}\cup
\bigcup_{k\ge0}\big(00^*22^*\big)^{3k+1}0^*10^*00\cup
\bigcup_{k\ge0}\big(00^*22^*\big)^{3k}0^*10\\
\cup
\bigcup_{k\ge0}\big(22^*00^*\big)^{3k}2^*12\cup
\bigcup_{k\ge0}\big(22^*00^*\big)^{3k+1}2^*12^*22;
\end{multline*}
\item $f_\la\equiv 2~(\text{\em mod }9)$, if, and only if, 
the $3$-adic expansion of $n$ is an element of
$$
\kern2cm
\bigcup_{k\ge0}\big(00^*22^*\big)^{3k+2}0^*01\cup
\bigcup_{k\ge0}\big(00^*22^*\big)^{3k+1}1;
$$
\item $f_\la\equiv 5~(\text{\em mod }9)$, if, and only if, 
the $3$-adic expansion of $n$ is an element of
$$
\kern2cm
\bigcup_{k\ge0}\big(00^*22^*\big)^{3k}0^*01\cup
\bigcup_{k\ge0}\big(00^*22^*\big)^{3k+2}1;
$$
\item $f_\la\equiv 8~(\text{\em mod }9)$, if, and only if, 
the $3$-adic expansion of $n$ is an element of
$$
\kern2cm
\bigcup_{k\ge0}\big(00^*22^*\big)^{3k+1}0^*01\cup
\bigcup_{k\ge0}\big(00^*22^*\big)^{3k+3}1;
$$
\item $f_\la\equiv 3~(\text{\em mod }9)$, if, and only if, 
$\la\equiv 0~\text{\em(mod~$4$)}$ or
the $3$-adic expansion of $n$ is an element of
$$
\{0,2\}^*11\{0,2\}^*1,
$$
\item $f_\la\equiv 6~(\text{\em mod }9)$, if, and only if, 
$\la\equiv 2~\text{\em(mod~$4$)}$ or
the $3$-adic expansion of $n$ is an element of
$$
\kern1cm
\{0,2\}^*1022^*\cup
\{0,2\}^*1200^*\cup
\{0,2\}^*11\{0,2\}^*100^*\cup
\{0,2\}^*11\{0,2\}^*122^*.
$$
\item In all other cases, this coefficient is divisible by $9$.
\end{enumerate}
\end{theorem}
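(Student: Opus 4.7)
The plan is to read off each $f_\la$ directly from the right-hand side of \eqref{eq:Loes1m1-9} in the special case $m=1$, namely from
\[
1 - 4z^{-1} - \tfrac{3z^{2}}{z^{2}+1} - 3z^{-1}(z^{2}+1)^{2}\Psi(z^{6}) + \bigl(z^{9}+2z^{7}+7z^{5}+z^{3}+2z+7z^{-1}\bigr)\Psi^{3}(z^{6}) \pmod{9}.
\]
I would treat each of the three $z$-dependent summands in turn and then recombine them, keeping track of the residue of $\la$ modulo the relevant small modulus at each stage.

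First, the purely rational piece $-3z^{2}/(z^{2}+1)\equiv \sum_{k\ge 1}(-1)^{k}\,3z^{2k}\pmod 9$ contributes the residues $6$ at $\la\equiv 2\pmod 4$ and $3$ at $\la\equiv 0\pmod 4$; this produces the ``$\la\equiv 0,2\pmod 4$'' alternatives of items~(7) and~(8). Second, because $\Psi(z^{6})$ is supported exactly on those exponents $6n$ whose quotient $n$ has a ternary expansion using only the digits $0$ and~$1$, and because $-3z^{-1}(z^{2}+1)^{2}\equiv 6z^{-1}+3z+6z^{3}\pmod 9$, this summand contributes $6$, $3$ or $6$ to $f_{\la}$ according as $(\la+1)/6$, $(\la-1)/6$ or $(\la-3)/6$ is such a $\{0,1\}$-ternary integer. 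Translating back through the carry-free digit substitution $0\mapsto 0$, $1\mapsto 2$ induced by multiplication by~$2$ (and then by the extra factor of~$3$ coming from the substitution $z\mapsto z^{6}$), one obtains the remaining $\{0,2\}^{*}$-shaped families of ternary words that appear in items~(7) and~(8).

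Third and most substantively, the term $(z^{9}+2z^{7}+7z^{5}+z^{3}+2z+7z^{-1})\Psi^{3}(z^{6})$ is only nonzero at odd exponents, and for each odd $\la$ exactly two of the six monomials in the prefactor contribute, namely those whose own exponent is congruent to $\la$ modulo~$6$. Invoking Proposition~\ref{prop:Psi-3}, which gives the coefficient of $z^{n}$ in $\Psi^{3}(z)\pmod 9$ by an explicit ternary-digit criterion on $n$, I would sum the two resulting contributions and rewrite the conditions on $(\la-r)/6$ as conditions on the ternary expansion of $\la$ via the same $0\mapsto 0$, $1\mapsto 2$ dictionary, together with the shift by~$r$ encoded in the prefactor. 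This produces, for each of the six nonzero residue classes modulo~9, a regular language of ternary words built from the blocks $0^{*}22^{*}$ and $2^{*}00^{*}$, with the outer exponent $k$ recording the residue modulo~$3$ of the number of blocks; these are precisely the sets displayed in items~(1)--(6). Finally, the catch-all item~(9) follows because every $\la$ not matched above has contributions equal to $0\pmod 9$ from all three summands.

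The main obstacle is the combinatorial bookkeeping in the third step: one must check that the two monomials from the prefactor that match a given $\la\bmod 6$ always add (modulo~$9$) to precisely the right residue among $\{1,2,4,5,7,8\}$, and that the induced word-regular descriptions partition the support cleanly among (1)--(6) without overlap or omission. This reduces to a finite case analysis once Proposition~\ref{prop:Psi-3} is available and the digit-substitution dictionary between the ternary expansions of $n$ and of $6n+r$ has been set up carefully; the verifications themselves are routine.
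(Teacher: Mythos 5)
Your proposal follows essentially the same route as the paper, which likewise proves this theorem by a lengthy case analysis combining Corollary~\ref{thm:Free-9} with $m=1$ and the explicit mod-$9$ description of the coefficients of $\Psi^3(z)$ (for modulus~$9$ the relevant reference is Corollary~\ref{cor:Psi-3} rather than Proposition~\ref{prop:Psi-3}, which is the mod-$27$ version). Your term-by-term decomposition and the observation that exactly two monomials of the prefactor contribute to each odd residue class modulo~$6$ are precisely the bookkeeping the paper leaves implicit.
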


\begin{proof}
One performs a (lengthy) case analysis using 
Corollary~\ref{thm:Free-9} with $m=1$ in combination with 
the descriptions
of the coefficients of $\Psi^3(z)$ modulo~$9$, which one obtains by
carrying out the approach sketched in Section~\ref{sec:extr}
(see in particular the paragraphs after \eqref{eq:Psipoteven}
and \eqref{eq:Psipotodd}) and the appendix, the resulting congruences being
explicitly displayed 
%\begin{comment}
in Corollary~\ref{cor:Psi-3}.
%\end{comment}
\begin{comment}
in the appendix of \cite{KrMuAE}.
\end{comment}
\end{proof}

The next corollary corrects Theorem~3(i) in \cite{MuPu}.

\begin{corollary} \label{cor:1}
We have:
\begin{enumerate} 
\item $f_\la\equiv-1~(\text{\em mod }3)$ if, and only if, 
the $3$-adic expansion of $\la$ is an element of
$\{0,2\}^*1$;
\item $f_\la\equiv1~(\text{\em mod }3)$ if, and only if, 
the $3$-adic expansion of $\la$ is an element of
$$
\{0,2\}^*100^*\cup \{0,2\}^*122^*;
$$
%$\la=3^b+3^{b+1}c$,
%for some integer $b\ge1$,
%or $\la=3c-1$, where
%$c$ is a Cantor number;
\item for all other $\la$, we have $f_\la\equiv0~(\text{\em mod }3)$.
\end{enumerate}
\end{corollary}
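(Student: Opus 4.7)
The plan is to reduce the modulo-$9$ generating function identity of Corollary~\ref{thm:Free-9} (for $m=1$) modulo~$3$ and then read off the ternary expansions of the exponents from the resulting simple expression.

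First I would specialise~\eqref{eq:Loes1m1-9} to $m=1$, discard terms divisible by~$3$, and invoke the fundamental congruence $\Psi^2(z^6)\equiv(1+z^6)^{-1}\pmod 3$ from \eqref{eq:PsiEq} to replace $\Psi^3(z^6)$ by $\Psi(z^6)/(1+z^6)$. A short polynomial simplification collapses the coefficient of $\Psi^3(z^6)$ into $z^{-1}(1+z^2)^2(1+z^6)$, yielding
\begin{equation*}
\sum_{\la\ge1}f_\la z^\la\equiv -z^{-1}+z^{-1}(1+z^2)^2\Psi(z^6)\pmod 3.
\end{equation*}

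Expanding $(1+z^2)^2=1+2z^2+z^4$ then gives, for each $\la\ge1$,
\begin{equation*}
f_\la\equiv [z^{\la+1}]\Psi(z^6)+2\,[z^{\la-1}]\Psi(z^6)+[z^{\la-3}]\Psi(z^6)\pmod 3.
\end{equation*}
By \eqref{eq:Psidef}, $[z^m]\Psi(z^6)$ equals~$1$ when $m=6\mu$ with $\mu$ in the set $S$ of non-negative integers whose ternary expansion uses only the digits $0$ and~$1$, and~$0$ otherwise. Since the three shifts $\la+1$, $\la-1$, $\la-3$ fall in pairwise distinct residue classes modulo~$6$, at most one of these three contributions can be non-zero for any given $\la$; in particular $f_\la\equiv0\pmod 3$ whenever $\la\not\equiv1,3,5\pmod 6$, which settles assertion~(3).

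It remains to translate the three surviving cases into conditions on the ternary expansion of~$\la$. The easy case is $\la\equiv1\pmod 6$: writing $\la=6k+1$ with $k\in S$, the map $k\mapsto 6k$ doubles each ternary digit of~$k$ (no carries, since those digits lie in $\{0,1\}$) and appends a trailing~$0$, so adding~$1$ places $\la$'s ternary expansion exactly in $\{0,2\}^{*}1$, contributing $f_\la\equiv 2\equiv-1\pmod 3$ as in assertion~(1). For $\la\equiv3\pmod 6$, writing $\la=3(2k+1)$ with $k\in S$ shows that the ternary expansion of $2k+1$ lies in $\{0,2\}^{*}10^{*}$, and the final factor of~$3$ appends one more~$0$, putting $\la$'s expansion into $\{0,2\}^{*}100^{*}$. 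For $\la\equiv5\pmod 6$, writing $\la+1=6k$ with $k\in S\setminus\{0\}$, I would track the borrow chain in the ternary subtraction $6k-1$: all trailing~$0$s of~$6k$ turn into~$2$s, and the first non-zero digit encountered (which is necessarily a~$2$, being of the form $2d_i$ with $d_i=1$) is reduced to~$1$, placing $\la$'s expansion in $\{0,2\}^{*}122^{*}$. These last two cases contribute $f_\la\equiv1\pmod 3$, giving assertion~(2). The most delicate step is the borrow-chain bookkeeping in the $\la\equiv 5$ case; however, once it is laid out it also makes transparent that each of the three maps $k\mapsto 6k+1$, $k\mapsto 3(2k+1)$, $k\mapsto 6k-1$ is a bijection between~$S$ (respectively $S\setminus\{0\}$) and the associated regular language, so the converse inclusions come for free.
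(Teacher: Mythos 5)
Your proof is correct. The paper gives no explicit argument for Corollary~\ref{cor:1}; it is presented as a consequence of the mod-$9$ classification in Theorem~\ref{thm:Free9}, which itself is obtained by a lengthy case analysis combining Corollary~\ref{thm:Free-9} with the explicit description of the coefficients of $\Psi^3(z)$ modulo~$9$ from the appendix (Corollary~\ref{cor:Psi-3}). You take a genuinely shorter route: reducing \eqref{eq:Loes1m1-9} with $m=1$ modulo~$3$ and invoking $\Psi^2(z^6)\equiv(1+z^6)^{-1}\pmod 3$ from \eqref{eq:PsiEq} collapses $\Psi^3(z^6)$ to $\Psi(z^6)/(1+z^6)$; the factor $1+z^6$ then cancels against the coefficient polynomial $z^{-1}(1+z^2)^2(1+z^6)$, and the coefficients of $\Psi(z^6)$ itself are immediate from the definition \eqref{eq:Psidef}, so none of the appendix machinery is needed. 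The price is that this bypasses the finer mod-$9$ information that the paper's route produces along the way. Two small remarks: the claim that vanishing for $\la\not\equiv1,3,5\pmod6$ already ``settles assertion~(3)'' is premature, since assertion~(3) also covers odd $\la$ for which the relevant quotient $(\la-1)/6$, $(\la-3)/6$, or $(\la+1)/6$ has a ternary digit~$2$; your closing observation that each of the three maps $k\mapsto 6k+1$, $k\mapsto 3(2k+1)$, $k\mapsto 6k-1$ is a bijection from $S$ (resp.\ $S\setminus\{0\}$) onto the corresponding regular language does close this gap, and the borrow-chain bookkeeping in the $\la\equiv5\pmod 6$ case is carried out correctly (the trailing zeros of $6k$ become $2$'s and the first doubled digit, necessarily a $2$, drops to a $1$, yielding exactly $\{0,2\}^*122^*$).
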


\begin{remark}
The conditions in Items~(1) and (2) imply that the $3$-adic expansion of
$\la$ always contains exactly one digit $1$.
In Item~(1), this $1$ is the first
digit from the right. 
In Item~(2), this $1$ is not the right-most digit,
and it has only $0$'s or only $2$'s to its right.
\end{remark}

With some additional effort, by using Theorem~\ref{thm:Free-27} with $m=1$
and the coefficient congruences modulo~27 for $\Psi^3(z)$ and $\Psi^5(z)$
worked out in
\begin{comment}
the appendix of \cite{KrMuAE},
\end{comment}
%\begin{comment}
Propositions~\ref{prop:Psi-3} and \ref{prop:Psi-5},
%\end{comment}
one is also able to refine the result in Theorem~\ref{thm:Free9}
to a description of the congruence classes of $f_\la$ modulo~27.
The outcome is lengthy. As an illustration, we provide the
explicit characterisation of all $\la$ for which $f_\la\equiv1$~(mod~27).
For the statement of the result, we need to introduce some statistics
on strings consisting of $0$'s, $1$'s, and $2$'s. 
Given a string $s=\dots s_2s_1s_0$, we define:
\begin{align*}
\EEstring(s)&=\#(\text{maximal strings of $2$'s in $s$}),\\
\Iso_3(s)&=\#(\text{isolated $0$'s and $2$'s in $s$, excluding $s_0,s_1,s_2$}),\\
\#022(s)&=\#(\text{occurrences of the substring $022$ in $s$}),
\end{align*}
and similarly for $\#200(s)$, $\#201(s)$, $\#21(s)$, $\#021(s)$. 
Here, by an isolated letter, we mean a letter $a$ whose neighbours are
different from $a$. 
We use again the word notation explained before Theorem~\ref{thm:Free9}.
In particular, we consider all strings as infinite strings with
infinitely many $0$'s added to the left, so that $\#022(.)$ also
counts an occurrence of $22$ at the beginning of a(n originally finite)
string.

\begin{theorem} \label{thm:Free27}
We have
$f_\la\equiv 1~(\text{\em mod }27)$, if, and only if, 
the $3$-adic expansion of $n$ is an element of
$\{0,2\}^*0100$  and satisfies
$\EEstring(s)\equiv0~(\text{\em mod }3)$ and
$$\tfrac {2\EEstring(s)} {3}-\Iso_3(s)+\#022(s)+\#200(s)
+\#201(s)
\equiv0~(\text{\em mod }3),
$$
or of\/ 
$\{0,2\}^*2100$ and satisfies
$\EEstring(s)\equiv0~(\text{\em mod }3)$ and
$$\tfrac {2\EEstring(s)} {3}-\Iso_3(s)+\#022(s)+\#200(s)
\equiv1~(\text{\em mod }3),
$$
or of\/ $\{0,2\}^*10^*0000$ and satisfies
$\EEstring(s)\equiv0~(\text{\em mod }3)$ and
$$\tfrac {2\EEstring(s)} {3}-\Iso_3(s)-\#21(s)+\#022(s)+\#200(s)
+\#201(s)
\equiv2~(\text{\em mod }3),
$$
or of\/
$\{0,2\}^*1000$ and satisfies
$\EEstring(s)\equiv0~(\text{\em mod }3)$ and
$$\tfrac {2\EEstring(s)} {3}-\Iso_3(s)-\#21(s)+\#022(s)+\#200(s)
+\#201(s)
\equiv1~(\text{\em mod }3),
$$
or of\/
$\{0,2\}^*2210$ and satisfies
$\EEstring(s)\equiv2~(\text{\em mod }3)$ and
$$\tfrac {2\EEstring(s)-1} {3}-\Iso_3(s)+\#022(s)+\#200(s)
\equiv0~(\text{\em mod }3),
$$
or of\/
$\{0,2\}^*0210$ and satisfies
$\EEstring(s)\equiv2~(\text{\em mod }3)$ and
$$\tfrac {2\EEstring(s)-1} {3}-\Iso_3(s)+\#022(s)+\#200(s)
\equiv1~(\text{\em mod }3),
$$
or of\/
$\{0,2\}^*010$ and satisfies
$\EEstring(s)\equiv2~(\text{\em mod }3)$ and
$$\tfrac {2\EEstring(s)-1} {3}-\Iso_3(s)+\#022(s)+\#200(s)
\equiv2~(\text{\em mod }3),
$$
or of\/
$\{0,2\}^*0010$ and satisfies
$\EEstring(s)\equiv2~(\text{\em mod }3)$ and
$$\tfrac {2\EEstring(s)-1} {3}-\Iso_3(s)+\#022(s)+\#200(s)
\equiv0~(\text{\em mod }3),
$$
or of\/
$\{0,2\}^*0012$ and satisfies
$\EEstring(s)\equiv0~(\text{\em mod }3)$ and
$$\tfrac {2\EEstring(s)-3} {3}-\Iso_3(s)+\#022(s)+\#200(s)
\equiv0~(\text{\em mod }3),
$$
or of\/
$\{0,2\}^*0212$ and satisfies
$\EEstring(s)\equiv1~(\text{\em mod }3)$ and
$$\tfrac {2\EEstring(s)-2} {3}-\Iso_3(s)+\#022(s)+\#200(s)
\equiv0~(\text{\em mod }3),
$$
or of\/
$\{0,2\}^*2012$ and satisfies
$\EEstring(s)\equiv0~(\text{\em mod }3)$ and
$$\tfrac {2\EEstring(s)-3} {3}-\Iso_3(s)+\#022(s)+\#200(s)
\equiv1~(\text{\em mod }3),
$$
or of\/
$\{0,2\}^*2212$ and satisfies
$\Estring(s)\equiv1~(\text{\em mod }3)$ and
$$\tfrac {2\Estring(s)-2} {3}-\Iso_3(s)+\#022(s)+\#200(s)
\equiv1~(\text{\em mod }3),
$$
or of\/
$\{0,2\}^*0122$ and satisfies
$\EEstring(s)\equiv1~(\text{\em mod }3)$ and
$$\tfrac {2\EEstring(s)-2} {3}-\Iso_3(s)+\#022(s)+\#200(s)
\equiv0~(\text{\em mod }3),
$$
or of\/
$\{0,2\}^*2122$ and satisfies
$\EEstring(s)\equiv2~(\text{\em mod }3)$ and
$$\tfrac {2\EEstring(s)-4} {3}-\Iso_3(s)+\#022(s)+\#200(s)
-\#021(s)
\equiv2~(\text{\em mod }3),
$$
or of\/
$\{0,2\}^*01222$ and satisfies
$\EEstring(s)\equiv1~(\text{\em mod }3)$ and
$$\tfrac {2\EEstring(s)-2} {3}-\Iso_3(s)+\#022(s)+\#200(s)
\equiv2~(\text{\em mod }3),
$$
or of\/
$\{0,2\}^*21222$ and satisfies
$\EEstring(s)\equiv2~(\text{\em mod }3)$ and
$$\tfrac {2\EEstring(s)-1} {3}-\Iso_3(s)+\#022(s)+\#200(s)
-\#021(s)
\equiv2~(\text{\em mod }3),
$$
or of\/
$\{0,2\}^*012^*2222$ and satisfies
$\EEstring(s)\equiv1~(\text{\em mod }3)$ and
$$\tfrac {2\EEstring(s)-2} {3}-\Iso_3(s)+\#022(s)+\#200(s)
\equiv2~(\text{\em mod }3),
$$
or of\/
$\{0,2\}^*212^*2222$ and satisfies
$\EEstring(s)\equiv2~(\text{\em mod }3)$ and
$$\tfrac {2\EEstring(s)-1} {3}-\Iso_3(s)+\#022(s)+\#200(s)
-\#021(s)
\equiv2~(\text{\em mod }3).
$$
\end{theorem}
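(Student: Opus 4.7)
The plan is to substitute $m=1$ into Theorem~\ref{thm:Free-27}: since $1\equiv1~(\text{mod }3)$, formula~\eqref{eq:Loes1m1} applies and expresses $\sum_{\la\ge0}f_\la z^\la$ modulo $27$ as the sum of a Laurent polynomial in $z$ (and $(1+z^2)^{-1}$) and three terms of the shape $p_{2j+1}(z)\,\Psi^{2j+1}(z^6)$ for $j=0,1,2$, where each $p_{2j+1}(z)$ has only $O(1)$ monomials. Since the purely rational part contributes only to $f_\la$ with $\la$ in a fixed finite set, the task of describing $\{\la:f_\la\equiv 1~(\text{mod }27)\}$ reduces to evaluating
$$
[z^\la]\,p_{2j+1}(z)\Psi^{2j+1}(z^6)=\sum_i[p_{2j+1}(z)]_{z^i}\cdot[z^{(\la-i)/6}]\Psi^{2j+1}(z),
$$
for each $j\in\{0,1,2\}$, where the inner coefficient is interpreted as $0$ unless $\la\equiv i\pmod 6$.

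The first step is to record the three basic coefficient formulas. For $j=0$ the coefficient $[z^n]\Psi(z)$ is directly given by \eqref{eq:Psidef}: it equals $1$ if the ternary expansion of $n$ has all digits in $\{0,1\}$, and $0$ otherwise. For $j=1$ and $j=2$ one invokes Propositions~\ref{prop:Psi-3} and \ref{prop:Psi-5}, which express $[z^n]\Psi^3(z)$ and $[z^n]\Psi^5(z)$ modulo~$27$ as explicit integers determined by the statistics $\EEstring$, $\Iso_3$, $\#022$, $\#200$, $\#201$, $\#21$, $\#021$ on the ternary expansion of~$n$. Writing $n=(\la-i)/6$ and using that multiplication of the argument by $z^6=z^{2\cdot 3}$ simply shifts digits in base~$3$ (and appends a final digit pair determined by $i\bmod 6$), these statistics of $n$ translate directly into the same statistics on the ternary expansion of $\la$, shifted and possibly corrected on the two lowest digits. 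This is how the base-$3$ string patterns such as $\{0,2\}^*0100$, $\{0,2\}^*2210$, etc., arise: each such pattern records both the residue of $\la$ modulo $6$ (to see which monomials of $p_{2j+1}(z)$ actually contribute) and the necessary alignment of $\la/6$ with the support of $\Psi^{2j+1}(z)$.

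Next I would carry out the combination. For each admissible pattern of the last few ternary digits of $\la$, a bounded set of monomials from $p_1,p_3,p_5$ is active; using the formulas above, their contributions to $f_\la\bmod 27$ sum to a fixed integer plus a $\Z/3\Z$-valued linear combination of the statistics $\EEstring(s),\Iso_3(s),\#022(s),\#200(s),\#201(s),\#21(s),\#021(s)$ of the base-$3$ string $s$ of $\la$. Setting this sum equal to $1\pmod{27}$ produces, for each prefix pattern, precisely one linear congruence in these statistics modulo~$3$, together with a congruence on $\EEstring(s)$ modulo~$3$ governing which branch of the $\Psi^3$/$\Psi^5$ formulas applies. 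This is exactly the format of the eighteen conditions listed in the statement.

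The main obstacle is therefore not conceptual but combinatorial bookkeeping: one must enumerate the possible configurations of the last five or six base-$3$ digits of $\la$, determine for each which monomials of $p_1,p_3,p_5$ are active (checking residues mod~$6$ and carries from the coefficient of $z^i$), and then carefully add the resulting expressions drawn from Propositions~\ref{prop:Psi-3} and \ref{prop:Psi-5} --- watching in particular where the statistics on $s$ differ from those on the shifted string $(\la-i)/6$ on their low-order digits, because that is precisely where the linear shifts in the congruences (such as the $\tfrac{2\EEstring(s)-k}{3}$ offsets) come from. Once this bookkeeping is completed and the redundant cases collapsed, the eighteen disjoint patterns displayed in Theorem~\ref{thm:Free27} remain, and the proof is complete.
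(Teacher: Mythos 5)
Your overall strategy --- substituting $m=1$ into Theorem~\ref{thm:Free-27} to obtain \eqref{eq:Loes1m1} and then extracting the coefficient of $z^\la$ from each term $a_i(z)\Psi^{i}(z^6)$ by means of the explicit coefficient descriptions in Propositions~\ref{prop:Psi-3} and \ref{prop:Psi-5} --- is exactly the route the paper takes (the paper itself only sketches this and leaves the case analysis unwritten). However, two of your preparatory reductions are false as stated and would corrupt the bookkeeping. First, the ``purely rational part'' of \eqref{eq:Loes1m1} is \emph{not} supported on a finite set of exponents: for $m=1$ it contains $-3(z+2)(z^2+2z+2)/\bigl(z(z^2+1)\bigr)$ and $-9/(z^2+1)^2$, and the power-series expansions of $(1+z^2)^{-1}$ and $(1+z^2)^{-2}$ have infinitely many nonzero coefficients. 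Modulo $27$ these contribute an eventually periodic sequence in $\la$ (period dividing $12$), not an eventually zero one; this is precisely the source of the conditions ``$\la\equiv 0$ or $2~(\text{mod }4)$'' already visible in items (7) and (8) of Theorem~\ref{thm:Free9}. Omitting this contribution shifts the residue class of $f_\la$ by a nonzero periodic amount and yields the wrong characterisation.

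Second, the passage from the ternary expansion of $n=(\la-i)/6$ to that of $\la$ is not ``simply a digit shift''. Replacing $z$ by $z^6$ multiplies exponents by $6=2\cdot3$; the factor $3$ is a shift, but the factor $2$ doubles every ternary digit and therefore produces carries at (and propagating beyond) every position where the strings of Propositions~\ref{prop:Psi-3} and \ref{prop:Psi-5} carry a digit $2$, such as in the patterns $\{0,1\}^*02\{0,1\}^*0$. These carries, combined with the addition of the offset $i$, are exactly what convert the $\{0,1\}$-patterns with sporadic $2$'s of the propositions into the $\{0,2\}$-patterns with a single $1$ appearing in the statement, and they account both for the low-order digit blocks ($0100$, $2210$, $0012$, $212^*2222$, \dots) and for the additive offsets in the numerators $\tfrac{2\EEstring(s)-k}{3}$. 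Your plan concedes a ``correction on the two lowest digits'', but the carry propagation is not confined to a bounded suffix (it runs through maximal runs of equal digits), so this translation must be carried out explicitly rather than dismissed as a shift. With these two repairs, your proposal coincides with the paper's intended (but unwritten) proof.
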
 

\section{Central Eulerian numbers}
\label{sec:Eulerian}

The {\it Eulerian number} $A(n,k)$ is defined as the number of
permutations of $\{1,2,\dots,n\}$ with exactly $k-1$ descents.
It is well-known that
\begin{equation} \label{eq:Ank}
A(n,k)=\sum _{j=0} ^{k}(-1)^{k-j}\binom {n+1}{k-j}j^n;
\end{equation}
see \cite[Eq.~(1.37)]{StanAP} (one has to multiply 
both sides of that equation by $(1-x)^{d+1}$ and compare coefficients
of powers of $x$).
In this section, we analyse the behaviour of {\it central Eulerian
numbers}, that is, the numbers $A(2n,n)=A(2n,n+1)$ and $A(2n-1,n)$,
modulo powers of~$3$. Theorems~\ref{thm:Euler1} and
\ref{thm:Euler2} below say that, again,
the generating functions for central Eulerian numbers, when
coefficients are reduced modulo a given power of~$3$, can be written
in terms of a polynomial in the basic series $\Psi(z)$. 
However, the arguments establishing this claim are much more
elaborate. The principal reason for this is that 
it is (likely) not possible to find a {\it single}
differential equation of the form \eqref{eq:diffeq}, which we could
use as starting point for the application of our method. Rather,
we have to first
reduce the central Eulerian numbers partially modulo the relevant
$3$-power, and {\it only} then do we succeed to show 
that the generating function for the
reduced numbers satisfies a functional equation of the type
\eqref{eq:generalEF} so that Theorem~\ref{thm:general} can be
applied. In order to accomplish this, we need a number of auxiliary
results.

\begin{lemma} \label{lem:E1}
For any positive integer $S$, we have the
{\em(}differential\/{\em)} operator equation
\begin{equation} \label{eq:E1}
\left((x+1)\frac {d} {dx}\right)^S=\sum _{m=1} ^{S}\frac {1} {m!}
\left(\sum _{k=1} ^{m}(-1)^{m-k}\binom mk k^S\right)
(x+1)^m\frac {d^m} {dx^m}.
\end{equation}
\end{lemma}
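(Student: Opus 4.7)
My plan is to prove the identity by induction on $S$, after first recognizing the inner coefficient
$$
\frac{1}{m!}\sum_{k=1}^{m}(-1)^{m-k}\binom{m}{k}k^S
$$
as the Stirling number of the second kind $S(S,m)$, via the well-known explicit formula (cf.\ \cite[Eq.~(1.94b)]{StanAP} or the inclusion--exclusion count of surjections from an $S$-set to an $m$-set). Writing $\theta:=(x+1)\frac{d}{dx}$ and $D:=\frac{d}{dx}$, the assertion then becomes the classical operator identity
$$
\theta^S=\sum_{m=1}^{S}S(S,m)\,(x+1)^m D^m.
$$

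The base case $S=1$ is immediate since $S(1,1)=1$. For the inductive step, assuming the identity at $S$, I multiply on the left by $\theta$ and use the commutation rule
$$
D\cdot(x+1)^mD^m=m(x+1)^{m-1}D^m+(x+1)^m D^{m+1},
$$
which after multiplication by $(x+1)$ on the left gives
$$
\theta\cdot(x+1)^mD^m=m(x+1)^mD^m+(x+1)^{m+1}D^{m+1}.
$$
Substituting and splitting the sum, one obtains
$$
\theta^{S+1}=\sum_{m\ge1}\bigl(m\,S(S,m)+S(S,m-1)\bigr)(x+1)^mD^m.
$$
The recurrence $S(S+1,m)=m\,S(S,m)+S(S,m-1)$ for Stirling numbers of the second kind then closes the induction and yields the identity at $S+1$.

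There is essentially no obstacle here; the only thing to watch is matching up the index ranges (the $m=1$ term comes only from the shifted sum, the $m=S+1$ term only from the unshifted one, and the convention $S(S,0)=0$ for $S\ge1$ ensures the boundary cases behave correctly). Since the coefficient identification with $S(S,m)$ is a standard computation (inclusion--exclusion on surjections, or equivalently applying the finite difference operator $\Delta^m$ to $y^S$ at $y=0$), the proof reduces entirely to the short commutator calculation and one application of the Stirling recurrence.
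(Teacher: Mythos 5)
Your proof is correct and follows essentially the same route as the paper: induction on $S$ driven by the commutation rule $\theta\cdot(x+1)^mD^m=m(x+1)^mD^m+(x+1)^{m+1}D^{m+1}$. The only cosmetic difference is that you identify the coefficients as Stirling numbers of the second kind and invoke their standard recurrence, whereas the paper re-derives that recurrence in place via binomial-coefficient manipulations and a short difference-calculus argument for the leading term.
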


\begin{proof}We proceed by induction on $S$. Clearly,
Equation~\eqref{eq:E1} holds for $S=1$. For the induction step,
we compute
\begin{align}
\notag
\left((x+1)\frac {d} {dx}\right)^{S+1}&=
(x+1)\frac {d} {dx}
\sum _{m=1} ^{S}\frac {1} {m!}
\left(\sum _{k=1} ^{m}(-1)^{m-k}\binom mk k^S\right)
(x+1)^m\frac {d^m} {dx^m}\\
\notag
&=
\sum _{m=1} ^{S}\frac {1} {m!}
\left(\sum _{k=1} ^{m}(-1)^{m-k}\binom mk k^S\right)
m(x+1)^m\frac {d^m} {dx^m}\\
\notag
&\kern2cm
+\sum _{m=1} ^{S}\frac {1} {m!}
\left(\sum _{k=1} ^{m}(-1)^{m-k}\binom mk k^S\right)
(x+1)^{m+1}\frac {d^{m+1}} {dx^{m+1}}\\
\notag
&=
\sum _{m=1} ^{S}\frac {1} {m!}
\left(\sum _{k=1} ^{m}(-1)^{m-k}
m\left(\binom mk-\binom{m-1}k\right)k^S\right)
(x+1)^m\frac {d^m} {dx^m}\\
\notag
&\kern2cm
+\frac {1} {S!}
\left(\sum _{k=1} ^{S}(-1)^{S-k}\binom Sk k^S\right)
(x+1)^{S+1}\frac {d^{S+1}} {dx^{S+1}}\\
\notag
&=
\sum _{m=1} ^{S}\frac {1} {m!}
\left(\sum _{k=1} ^{m}(-1)^{m-k}
m\binom{m-1}{k-1}k^S\right)
(x+1)^m\frac {d^m} {dx^m}\\
\notag
&\kern2cm
+\frac {1} {S!}
\left(\sum _{k=1} ^{S}(-1)^{S-k}\binom Sk k^S\right)
(x+1)^{S+1}\frac {d^{S+1}} {dx^{S+1}}\\
\notag
&=\sum _{m=1} ^{S}\frac {1} {m!}
\left(\sum _{k=1} ^{m}(-1)^{m-k}
\binom{m}{k}k^{S+1}\right)
(x+1)^m\frac {d^m} {dx^m}\\
&\kern2cm
+\frac {1} {S!}
\left(\sum _{k=1} ^{S}(-1)^{S-k}\binom Sk k^S\right)
(x+1)^{S+1}\frac {d^{S+1}} {dx^{S+1}}.
\label{eq:DeSk}
\end{align}
At this point we use again 
the shift operator $E$ and
the forward difference operator $\De$ defined 
in the proof of Lemma~\ref{lem:diff}. The symbol $I$ stands for
the identity operator.
In terms of these operators, we may write
\begin{align*}
\frac {1} {S!}
\left(\sum _{k=1} ^{S}(-1)^{S-k}\binom Sk k^S\right)
&=
\frac {1} {S!}
\left(\sum _{k=1} ^{S}(-1)^{S-k}\binom Sk E^k y^S\Big\vert_{y=0}\right)\\
&=\frac {1} {S!}(E-I)^S
y^S\Big\vert_{y=0}=\frac {1} {S!}\De^S
y^S\Big\vert_{y=0}=1,
\end{align*}
the last equality being due to the fact that the difference operator
$\De$ lowers the degree of the polynomial to which it is applied
by $1$, and it multiplies the leading coefficient by the degree
of the corresponding monomial. But then
$$
\frac {1} {S!}
\left(\sum _{k=1} ^{S}(-1)^{S-k}\binom Sk k^S\right)
=
\frac {1} {(S+1)!}
\left(\sum _{k=1} ^{S+1}(-1)^{S+1-k}\binom {S+1}k k^{S+1}\right),
$$
and if this is substituted back into the last line of \eqref{eq:DeSk},
then we get the required result.
\end{proof}

\begin{lemma} \label{lem:E3}
For any positive integer $M$, we have
\begin{equation} \label{eq:subs1} 
\left(-1+\sqrt{1+4z}\right)^M
-\left(-1-\sqrt{1+4z}\right)^M
=(-1)^{M+1}2^{M}\sqrt{1+4z}
\sum _{\ell\ge0} ^{}\binom {M-\ell-1}\ell z^\ell
\end{equation}
and
\begin{equation} \label{eq:subs2} 
\left(-1+\sqrt{1+4z}\right)^M
+\left(-1-\sqrt{1+4z}\right)^M
=(-1)^{M}2^{M}
\sum _{\ell\ge0} ^{}\frac {M} {M-\ell}\binom {M-\ell}\ell z^\ell.
\end{equation}
\end{lemma}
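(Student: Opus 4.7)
The plan is to exploit the fact that $\alpha:=-1+\sqrt{1+4z}$ and $\beta:=-1-\sqrt{1+4z}$ are the two roots of the quadratic $t^2+2t-4z=0$, so each of them individually obeys the recurrence $t^{M+1}=-2\,t^M+4z\,t^{M-1}$. Consequently, the sequences
$$
U_M:=\alpha^M-\beta^M\quad \text{and}\quad V_M:=\alpha^M+\beta^M
$$
both satisfy the common second-order linear recurrence
$$
X_{M+1}=-2X_M+4z\,X_{M-1},
$$
with initial data $U_1=2\sqrt{1+4z}$, $U_2=-4\sqrt{1+4z}$ and $V_1=-2$, $V_2=4+8z$. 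The left-hand sides of \eqref{eq:subs1} and \eqref{eq:subs2} are precisely $U_M$ and $V_M$, so it suffices to show that the right-hand sides obey the same recurrence with the same two initial values.

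To handle \eqref{eq:subs1}, I would introduce $f_M(z):=\sum_{\ell\ge0}\binom{M-\ell-1}{\ell}z^\ell$ and substitute the ansatz $U_M=(-1)^{M+1}2^M\sqrt{1+4z}\,f_M(z)$ into the recurrence above. After cancelling the common factor $(-1)^M2^{M+1}\sqrt{1+4z}$, the recurrence for $U_M$ becomes exactly
$$
f_{M+1}(z)=f_M(z)+z\,f_{M-1}(z),
$$
which is Pascal's rule $\binom{M-\ell}{\ell}=\binom{M-\ell-1}{\ell}+\binom{M-\ell-1}{\ell-1}$ applied coefficientwise. Combined with the base cases $f_1=1$ and $f_2=1$, this settles \eqref{eq:subs1}. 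The argument for \eqref{eq:subs2} is entirely analogous: writing $g_M(z):=\sum_{\ell\ge0}\frac{M}{M-\ell}\binom{M-\ell}{\ell}z^\ell$ and putting $V_M=(-1)^M2^M g_M(z)$, the recurrence for $V_M$ reduces to
$$
g_{M+1}(z)=g_M(z)+z\,g_{M-1}(z).
$$
Comparing the coefficient of $z^\ell$ on both sides, and clearing the factorial denominators, this in turn boils down to the identity
$$
(M+1)(M-\ell)!=(M-\ell-1)!\bigl[M(M-2\ell+1)+\ell(M-1)\bigr],
$$
which is a routine algebraic verification: the bracket equals $(M+1)(M-\ell)$.

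The only minor subtlety, which I would address at the outset, is that $g_0$ contains the indeterminate expression $\tfrac{0}{0}\binom{0}{0}$; hence the induction should be launched from $M=1,2$ (where everything is unambiguous), rather than from $M=0,1$. Similarly, one should restrict the defining sum of $g_M$ to $0\le\ell\le\lfloor M/2\rfloor$, on which $M-\ell>0$ and $M-2\ell\ge0$, so that $\tfrac{M}{M-\ell}\binom{M-\ell}{\ell}=\tfrac{M\,(M-\ell-1)!}{\ell!\,(M-2\ell)!}$ is a well-defined positive integer; for $\ell$ outside this range $\binom{M-\ell}{\ell}=0$ and the term contributes nothing. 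With this convention fixed, the recurrences and base cases described above yield the claimed identities, and there is no real obstacle---the entire proof is a compact instance of the classical Binet/Lucas strategy for closed-form expansions of sequences defined by a quadratic characteristic equation.
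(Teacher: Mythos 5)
Your proof is correct, but it takes a genuinely different route from the paper's. The paper proves \eqref{eq:subs1} by brute-force expansion: it applies the binomial theorem to $(-1\pm\sqrt{1+4z})^M$, isolates the odd powers of $\sqrt{1+4z}$, expands $(1+4z)^j$ again, and then evaluates the resulting inner sum $\sum_j\binom{M}{2j+1}\binom{j}{\ell}$ as a ${}_2F_1$-series summable by Gau{\ss}' theorem; the proof of \eqref{eq:subs2} is declared ``completely analogous'' and left to the reader. You instead note that $\alpha=-1+\sqrt{1+4z}$ and $\beta=-1-\sqrt{1+4z}$ are the roots of $t^2+2t-4z=0$, so that $\alpha^M\mp\beta^M$ satisfies the three-term recurrence $X_{M+1}=-2X_M+4zX_{M-1}$, and you verify that the claimed closed forms satisfy the same recurrence and initial conditions via Pascal's rule (for \eqref{eq:subs1}) and the Lucas-polynomial identity $\tfrac{M+1}{M+1-\ell}\binom{M+1-\ell}{\ell}=\tfrac{M}{M-\ell}\binom{M-\ell}{\ell}+\tfrac{M-1}{M-\ell}\binom{M-\ell}{\ell-1}$ (for \eqref{eq:subs2}) --- I checked that your bracket computation $M(M-2\ell+1)+\ell(M-1)=(M+1)(M-\ell)$ is right, as are all four initial values $U_1,U_2,V_1,V_2$. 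Your approach buys elementarity (no hypergeometric summation) and uniformity (both identities fall to the same two-line induction), at the cost of being a verification of a guessed answer rather than a derivation; the paper's coefficient-extraction route produces the formula without knowing it in advance, which matters little here since the Chebyshev forms are classical. Your handling of the boundary conventions --- launching the induction at $M=1,2$ and restricting to $0\le\ell\le\lfloor M/2\rfloor$ so that $\tfrac{M}{M-\ell}\binom{M-\ell}{\ell}$ is well defined --- disposes of the only real subtlety.
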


\begin{remark}
The sums over $\ell$ on the right-hand sides of \eqref{eq:subs1} and
\eqref{eq:subs2} are essentially Chebyshev polynomials of the second
and first kind, respectively (cf.\ \cite{RivlAA}). 
It is easy to see (and well-known) that
they are polynomials in $z$ with integer coefficients.
\end{remark}

\begin{proof}[Proof of Lemma~{\em\ref{lem:E3}}]
We start with the proof of \eqref{eq:subs1}.
According to the binomial theorem, the left-hand side can be written as
\begin{align*} %\label{}
\left(-1+\sqrt{1+4z}\right)^M&
-\left(-1-\sqrt{1+4z}\right)^M\\
&=
(-1)^ M\sum _{j=0} ^{M}\binom Mj\left((-1)^j(1+4z)^{j/2}-(1+4z)^{j/2}\right)\\
&=
-2(-1)^ M\sum _{j\ge0} ^{}\binom M{2j+1}(1+4z)^{j+\frac {1} {2}}\\
&=
-2(-1)^ M\sqrt{1+4z}\sum _{j\ge0} ^{}
\sum _{\ell=0} ^{j}\binom M{2j+1}\binom j\ell 4^\ell z^\ell.
\end{align*}
Using the standard hypergeometric notation
$$
{}_p F_q\!\left[\begin{matrix} a_1,\dots,a_p\\
b_1,\dots,b_q\end{matrix}; z\right]=\sum _{m=0} ^{\infty}\frac
{\po{a_1}{m}\cdots\po{a_p}{m}} {m!\,\po{b_1}{m}\cdots\po{b_q}{m}}
z^m ,
$$
where, as before, the Pochhammer symbol is defined by
$(\alpha)_m=\alpha(\alpha+1) \cdots(\alpha+m-1)$ for $m\ge1$, and
$(\alpha)_0=1$, we have
$$
\sum _{j \ge0} ^{}\binom M{2j+1}\binom j\ell=
\binom M{2\ell+1}
{}_2 F_1\!\left[\begin{matrix} \frac {1} {2}+\ell-\frac {M} {2},1+\ell-\frac {M} {2}\\
\frac {3} {2}+\ell\end{matrix}; 1\right].
$$
This $_2F_1$-series can be summed by means of Gau{\ss}' summation
formula 
(see \cite[(1.7.6); Appendix (III.3)]{SlatAC})
\begin{equation*} %\label{eq:Gausz}
{} _{2} F _{1} \!\left [ \begin{matrix} { a, b}\\ { c}\end{matrix} ; {\displaystyle
   1}\right ]  = \frac {\Gamma ( c)\,\Gamma( c-a-b)} {\Gamma( c-a)\,
  \Gamma( c-b)},
\end{equation*}
where $\Gamma(\,.\,)$ denotes the gamma function.
After some simplification, this leads to
$$
\sum _{j \ge0} ^{}\binom M{2j+1}\binom j\ell=
2^{M-2\ell-1}\binom {M-\ell-1}\ell.
$$

The proof of \eqref{eq:subs2} is completely analogous, and is
left to the reader.
\end{proof}

\begin{proposition} \label{lem:E2}
For any positive integer $s$, we have
\begin{equation} \label{eq:E2}
\sum _{n\ge0} ^{}z^n
\sum _{j=0} ^{n+1}(-1)^{n+1-j}\binom {2n+1}{n+1-j}j^{2s}
=\frac {1} {2}\left(1+\sqrt{1+4z}\right)
\left(1+p_s^{(1)}(z)\right),
\end{equation}
where $p_s^{(1)}(z)$ is a polynomial in $z$ with integer coefficients
all of which are divisible by~$3$, and which satisfies $p_s^{(1)}(0)=0$.
An explicit expression for $p_s^{(1)}(z)$ can be derived from
\eqref{eq:q1} and \eqref{eq:p1q1}.
\end{proposition}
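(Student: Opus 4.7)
The plan is to swap the order of summation in the double sum on the left-hand side of~\eqref{eq:E2}, pulling the factor $j^{2s}$ out of the $n$-sum and writing it as $\sum_{j\ge 1}j^{2s}b_j(z)$ with $b_j(z):=\sum_{n\ge j-1}(-1)^{n+1-j}\binom{2n+1}{n+1-j}z^n$. Reindexing via $k=n+1-j$ and invoking the standard ballot-number generating function $\sum_{k\ge 0}\binom{2k+a}{k}z^k=(1-4z)^{-1/2}\bigl((1-\sqrt{1-4z})/(2z)\bigr)^a$ with $z\mapsto -z$ and $a=2j-1$ gives $b_j(z)=(-1+\sqrt{1+4z})^{2j-1}/(2^{2j-1}z^j\sqrt{1+4z})$ for $j\ge 1$. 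Introducing $v:=-1+\sqrt{1+4z}$, so that $v+1=\sqrt{1+4z}$ and $v(v+2)=4z$, together with $w:=v/(v+2)$, puts this in the compact form $b_j(z)=2w^{j-1}/((v+1)(v+2))$.

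Next I would sum over $j$ with weight $j^{2s}$ via the operator identity of Lemma~\ref{lem:E1} (in the equivalent form $(w\,d/dw)^{2s}=\sum_{m=1}^{2s}S(2s,m)\,w^m(d/dw)^m$, where $S(2s,m)$ is the Stirling number of the second kind): applied to $\sum_{j\ge 1}w^j=w/(1-w)$ and followed by the substitutions $w=v/(v+2)$, $1-w=2/(v+2)$, this collapses the left-hand side of~\eqref{eq:E2} to $\tfrac{v+2}{v+1}\cdot P(v/2)/v$, where $P(x):=\sum_{m=1}^{2s}S(2s,m)\,m!\,x^m$. Using Lemma~\ref{lem:E3} to decompose each power $v^{m-1}$ canonically in the basis $\{1,v\}$ over $\mathbb Z[[z]]$ (modulo $v^2+2v-4z$), the right-hand side takes the form $\mathcal A_s(z)+\mathcal B_s(z)\,v$ with $\mathcal A_s,\mathcal B_s$ built from the polynomials $P_M,Q_M$ of~\eqref{eq:subs2} and~\eqref{eq:subs1}. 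Matching with the claimed form $\tfrac{v+2}{2}(1+p_s^{(1)}(z))=(1+p_s^{(1)})+\tfrac{v}{2}(1+p_s^{(1)})$ forces $\mathcal A_s=2\mathcal B_s$, which after simplification is equivalent to the single identity $v\cdot P(-(v+2)/2)=(v+2)\cdot P(v/2)$. I would prove the latter via the classical closed form $P(x)=(1+x)^{2s}A_{2s}(x/(1+x))$ (where $A_{2s}$ is the Eulerian polynomial): specialising at $x=v/2$ gives $P(v/2)=((v+2)/2)^{2s}A_{2s}(w)$, while at $x=-(v+2)/2$ (for which $1+x=-v/2$ and $x/(1+x)=1/w$) it gives $P(-(v+2)/2)=(v/2)^{2s}A_{2s}(1/w)$; the palindromic functional equation $A_{2s}(1/w)=A_{2s}(w)/w^{2s+1}$ then produces the identity at once.

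Once the key vanishing is established, reading off the ``$Q$-part'' gives the explicit formula $1+p_s^{(1)}(z)=\tfrac{1}{2}\sum_{m=1}^{2s}(-1)^m S(2s,m)\,m!\,Q_{m-1}(z)$, from which integrality and $p_s^{(1)}(0)=0$ both follow from $Q_0(z)=0$, $Q_M(0)=1$ for $M\ge 1$, and the evaluation $\sum_{m=1}^{2s}(-1)^m S(2s,m)\,m!=(-1)^{2s}=1$ obtained by substituting $x=-1$ in $x^n=\sum_m S(n,m)\,x(x-1)\cdots(x-m+1)$. The main obstacle is the divisibility by~$3$ of every non-constant coefficient of $p_s^{(1)}(z)$. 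Re-packaging the above as $1+p_s^{(1)}(z)=C(v)/((v+1)\cdot 2^{2s-1})$, with $C(v):=\sum_{k=0}^{2s-1}A(2s,k+1)\,v^k(v+2)^{2s-1-k}$ (whose divisibility by $v+1$ follows from the Eulerian palindrome $A(2s,k)=A(2s,2s+1-k)$, and whose $\sigma$-antisymmetry under $\sigma:v\mapsto-v-2$ then makes $C(v)/(v+1)$ a polynomial in $z$), the required divisibility reduces to the congruence $C(v)\equiv 2(v+1)\pmod 3$ in $\mathbb Z[v]$, which I have verified for $s=1,2,3$. Proving this congruence in general is the principal remaining step, which I would approach via Lucas-type congruences for the Eulerian numbers $A(2s,k)$ modulo~$3$; the explicit form of $p_s^{(1)}(z)$ emerging from this analysis will coincide with the formula implied by the auxiliary equations~\eqref{eq:q1} and~\eqref{eq:p1q1} referred to in the statement.
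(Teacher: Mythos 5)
Your derivation of the closed form on the right-hand side of \eqref{eq:E2} is correct and takes a genuinely different route from the paper's: you interchange the two summations and invoke the ballot-number generating function, whereas the paper represents the binomial coefficient by a contour integral, applies the operator identity of Lemma~\ref{lem:E1} under the integral sign, and evaluates by residues after the substitution $t=u/(1+u)^2$. Both roads lead to the same rational expression in $v=-1+\sqrt{1+4z}$ (the paper's $u$ is your $v/2$), and your verification that this expression is invariant under the conjugation $v\mapsto -v-2$, via the identity $P(x)=(1+x)^{2s}A_{2s}(x/(1+x))$ and the palindromicity of the Eulerian polynomial, is a legitimate substitute for the paper's proof (by difference-operator calculus) that $q_s^{(1)}$ defined in \eqref{eq:q1} is an odd polynomial. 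The integrality argument and the evaluation $p_s^{(1)}(0)=0$ are also sound, modulo spelling out that the factor $m!$ with $m\ge2$ absorbs the denominator $2$.

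There is, however, a genuine gap exactly where you flag one: the divisibility by~$3$ of all coefficients of $p_s^{(1)}(z)$ is only \emph{verified} for $s=1,2,3$, and the proposed general argument (``Lucas-type congruences for the Eulerian numbers $A(2s,k)$ modulo~$3$'') is not carried out. This is not a cosmetic omission, since the $3$-divisibility is precisely the property of $p_s^{(1)}$ that the rest of the paper needs: it is what makes the perturbation $\mathcal Q$ in the proof of Theorem~\ref{thm:Euler1} an integer polynomial, so that Theorem~\ref{thm:general} applies. The paper settles the point by showing $q_s^{(1)}(2u+1)=1+2u$ modulo~$3$ through a trisection of the binomial theorem. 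But you do not need Eulerian-number congruences at all: your own explicit formula $1+p_s^{(1)}(z)=\tfrac12\sum_{m\ge 2}(-1)^m S(2s,m)\,m!\,Q_{m-1}(z)$ yields the claim in two lines, because $m!/2\equiv 0\pmod 3$ for every $m\ge 3$, so modulo~$3$ only the term $m=2$ survives; it equals $S(2s,2)\,Q_1(z)=2^{2s-1}-1\equiv 1\pmod 3$, whence $1+p_s^{(1)}(z)\equiv 1\pmod 3$ coefficientwise. I recommend replacing the detour through $C(v)$ and the unproved congruence $C(v)\equiv 2(v+1)\pmod 3$ by this observation; as written, the proposal leaves the central arithmetic assertion of the proposition unproved.
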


\begin{proof}
We write the binomial coefficient in terms of a contour
integral, 
$$
(-1)^{n+1-j}\binom {2n+1}{n+1-j}=\frac {1} {2\pi i}
\int _{C} ^{}\frac {(1-t)^{2n+1}} {t^{n+2-j}}\,dt,
$$
where $C$ is the circle of radius $1/2$ (say) about the origin,
encircling the origin in positive direction. Moreover, we express
$j^{2s}$ as 
$$j^{2s}=\left((x+1)\frac {d} {dx}\right)^{2s}(x+1)^j\bigg\vert_{x=0}.$$
Using these substitutions, the 
series on the left-hand side of \eqref{eq:E2} becomes
\begin{align*}
\sum _{n\ge0} ^{}z^n&
\sum _{j=0} ^{n+1}(-1)^{n+1-j}\binom {2n+1}{n+1-j}j^{2s}\\
&=\frac {1} {2\pi i}
\int _{C} ^{}
\sum _{n\ge0} ^{}z^n
\sum _{j=0} ^{n+1}
\frac {(1-t)^{2n+1}} {t^{n+2-j}}
\left((x+1)\frac {d} {dx}\right)^{2s}(x+1)^j\bigg\vert_{x=0}
\,dt\\
&=\frac {1} {2\pi i}
\int _{C} ^{}\frac {1-t} {t^2}
\sum _{n\ge0} ^{}\left(\frac {(1-t)^2z} {t}\right)^n
\left((x+1)\frac {d} {dx}\right)^{2s}
\sum _{j=0} ^{n+1}
\big(t(x+1)\big)^j\bigg\vert_{x=0}
\,dt\\
&=\frac {1} {2\pi i}
\int _{C} ^{}\frac {1-t} {t^2\left(1-\frac {(1-t)^2z} {t}\right)}
\left((x+1)\frac {d} {dx}\right)^{2s}
\frac {1} {1-(x+1)t}
\bigg\vert_{x=0}
\,dt.
\end{align*}
Here, in order to perform the summation over $n$, we had to assume
that $z$ is sufficiently small, say $\vert z\vert<1/9$, which we shall
do for the moment.

In the last integral, we do the substitution $t=u/(1+u)^2$. 
After some simplification, this leads to
\begin{align*}
\sum _{n\ge0} ^{}z^n&
\sum _{j=0} ^{n+1}(-1)^{n+1-j}\binom {2n+1}{n+1-j}j^{2s}\\
&=\frac {1} {2\pi i}
\int _{C} ^{}\frac {1+u} {u
\left(u^2+u-{z} \right)}
\left((x+1)\frac {d} {dx}\right)^{2s}
\frac {1} {1+u-(x+1)u}
\bigg\vert_{x=0}
\,du\\
&=\frac {1} {2\pi i}
\int _{C} ^{}\frac {1+u} {u
\left(u^2+u-{z} \right)}
\sum _{m=1} ^{2s}\frac {1} {m!}
\left(\sum _{k=1} ^{m}(-1)^{m-k}\binom mk k^{2s}\right)\\
&\kern5cm
\cdot\left(
(x+1)^m\frac {d^m} {dx^m}
\frac {1} {1+u-(x+1)u}\right)
\bigg\vert_{x=0}
\,du\\
&=\frac {1} {2\pi i}
\int _{C} ^{}\frac {1+u} {u^2+u-{z} }
\sum _{m=1} ^{2s}
\left(\sum _{k=1} ^{m}(-1)^{m-k}\binom mk k^{2s}\right)u^{m-1}
\,du,
\end{align*}
where we have used Lemma~\ref{lem:E1} to obtain the next-to-last line.

The integral can easily be evaluated using the residue theorem.
Inside the contour $C$, there is only one singularity, namely
a simple pole at $u=-\frac {1} {2}+\frac {1} {2}\sqrt{1+4z}$.
(The other pole, at $u=-\frac {1} {2}-\frac {1} {2}\sqrt{1+4z}$,
lies outside the contour $C$.)
Hence, we obtain
\begin{multline} \label{eq:res}
\sum _{n\ge0} ^{}z^n
\sum _{j=0} ^{n+1}(-1)^{n+1-j}\binom {2n+1}{n+1-j}j^{2s}\\
=
\frac {1+\sqrt{1+4z}} {2\sqrt{1+4z}}
\bigg(\sum _{m=1} ^{2s}
\left(\sum _{k=1} ^{m}(-1)^{m-k}\binom mk k^{2s}\right)u^{m-1}
\bigg)\bigg\vert_{u=-\frac {1} {2}+\frac {1} {2}\sqrt{1+4z}}.
\end{multline}
By analytic continuation, we can now get rid of the restriction
$\vert z\vert<1/9$, and, by the transfer principle between analytic
and formal power series, we may argue that Identity~\eqref{eq:res} 
holds on the level of formal power series.

Let us write the polynomial in $u$ given by the sum over $m$ on the
right-hand side of \eqref{eq:res} as a polynomial in $2u+1$, say
\begin{equation} \label{eq:q1}
q_s^{(1)}(2u+1):=\sum _{m=1} ^{2s}
\left(\sum _{k=1} ^{m}(-1)^{m-k}\binom mk k^{2s}\right)u^{m-1}.
\end{equation}
We claim that $q_s^{(1)}(\,.\,)$ is an odd polynomial, that is,
$q_s^{(1)}(-2u-1)=-q_s^{(1)}(2u+1)$. In order to see this, we use again
classical difference operator calculus. As before, we denote by
$E$ the shift operator and
by $\De$ the forward difference operator (see the proof of
Lemma~\ref{lem:E1}). Furthermore,
we let $\Na$ be the backward difference operator 
defined by $\Na f(y)=f(y-1)-f(y)$. The reader should observe that
$\De=E-I$ and $\Na=E^{-1}-I$, where $I$ stands for the identity
operator.
In terms of these operators, we may rewrite $q_s^{(1)}(2u+1)$ as
\begin{align*} %\label{eq:q1A}
q_s^{(1)}(2u+1)&=\sum _{m=1} ^{2s}
\sum _{k=1} ^{m}(-1)^{m-k}\binom mk E^k
y^{2s}\bigg\vert_{y=0}u^{m-1}\\
&=\sum _{m=1} ^{2s}
\De^m y^{2s}\bigg\vert_{y=0}u^{m-1}\\
&=\De(I-u\De)^{-1}(I-(u\De)^{2s}) y^{2s}\bigg\vert_{y=0}\\
&=\De(I-u\De)^{-1} y^{2s}\bigg\vert_{y=0},
\end{align*}
where the last line is due to the standard fact that $\De^a y^b=0$
for all non-negative integers $a$ and $b$ with $a>b$.
Consequently, for $q_s^{(1)}(-2u-1)$ we obtain
\begin{align*} %\label{}
q_s^{(1)}(-2u-1)
&=\De(I+(u+1)\De)^{-1} y^{2s}\bigg\vert_{y=0}\\
&=\De(E+u\De)^{-1} y^{2s}\bigg\vert_{y=0}\\
&=-\Na(I-u\Na)^{-1} y^{2s}\bigg\vert_{y=0}\\
&=-\Na(I-u\Na)^{-1}(I-(u\Na)^{2s}) y^{2s}\bigg\vert_{y=0}\\
&=-\sum _{m=1} ^{2s}
u^{m-1}\Na^m y^{2s}\bigg\vert_{y=0}\\
&=-\sum _{m=1} ^{2s}
\sum _{k=1} ^{m}(-1)^{m-k}\binom mk E^{-k}
y^{2s}\bigg\vert_{y=0}u^{m-1}\\
&=-\sum _{m=1} ^{2s}
\sum _{k=1} ^{m}(-1)^{m-k}\binom mk (-k)^{2s}
u^{m-1}
=-q_s^{(1)}(-2u-1),
\end{align*}
as we claimed.
If we recall that \eqref{eq:res} implies
\begin{equation} \label{eq:p1q1} 
1+p_s^{(1)}(z)=\frac {1}
{\sqrt{1+4z}}q_s^{(1)}\!\left(\sqrt{1+4z}\right),
\end{equation}
and take into account that, under the substitution $u=-\frac {1}
{2}+\frac {1} {2}\sqrt{1+4z}$, we have $2u+1=\sqrt{1+4z}$, 
this establishes all claims of the lemma except for the special form
of the polynomial $p_s^{(1)}(z)$.

First we prove that $p_s^{(1)}(0)=0$. To see this, we put $z=0$
in \eqref{eq:p1q1}, and we set $u=0$ in \eqref{eq:q1}, to get
$$
1+p_s^{(1)}(0)=q_s^{(1)}(1)=1,
$$
so that indeed $p_s^{(1)}(0)=0$.

Next we prove that $p_s^{(1)}(z)$ is a polynomial in $z$ with integer
coefficients. We use again the relation between $p_s^{(1)}(\,.\,)$ and
$q_s^{(1)}(\,.\,)$ given in \eqref{eq:p1q1}. For our purposes, it
turns out that it is more convenient to work with an anti-symmetrised
version of $q_s^{(1)}(2u+1)$. Namely, since we already know that
$q_s^{(1)}(-2u-1)=-q_s^{(1)}(2u+1)$, we may replace $q_s^{(1)}(2u+1)$
by $\frac {1} {2}\left(q_s^{(1)}(2u+1)-q_s^{(1)}(-2u-1)\right)$.
In view of \eqref{eq:q1}, this means that
\begin{equation*} %\label{eq:q1}
q_s^{(1)}(2u+1)=\frac {1} {2}\sum _{m=1} ^{2s}
\left(\sum _{k=1} ^{m}(-1)^{m-k}\binom mk
k^{2s}\right)\left(u^{m-1}-(-1-u)^{m-1}\right).
\end{equation*}
If this is used in \eqref{eq:p1q1}, then we obtain
\begin{align*} %\label{}
1+p_s^{(1)}(z)&=\frac {1} {2\sqrt{1+4z}}
\sum _{m=1} ^{2s}
\left(\sum _{k=1} ^{m}(-1)^{m-k}\binom mk
k^{2s}\right)\\
&\kern3cm
\cdot
\left(\left(-\tfrac {1} {2}+\tfrac {1}
{2}\sqrt{1+4z}\right)^{m-1}
-\left(-\tfrac {1} {2}-\tfrac {1} {2}\sqrt{1+4z}\right)^{m-1}\right)\\
&=\frac {1} {2^{m}\sqrt{1+4z}}
\sum _{m=2} ^{2s}
\left(\sum _{k=1} ^{m}(-1)^{m-k}\binom mk
k^{2s}\right)\\
&\kern3cm
\cdot
\left(\left(-1+\sqrt{1+4z}\right)^{m-1}
-\left(-1-\sqrt{1+4z}\right)^{m-1}\right).
\end{align*}
The reader should note that we were free to start the summation over
$m$ at $m=2$ in the last expression, since the bracketed factor
vanishes for $m=1$.
Use of Lemma~\ref{lem:E3} then transforms this identity into
\begin{align*} %\label{}
1+p_s^{(1)}(z)
&=\frac {(-1)^{m}} {2}
\sum _{m=2} ^{2s}
\left(\sum _{k=1} ^{m}(-1)^{m-k}\binom mk
k^{2s}\right)
\sum _{\ell\ge0} ^{}\binom {m-\ell-2}\ell z^\ell.
\end{align*}
The right-hand side is ``almost" a polynomial with integer
coefficients; only a denominator of $2$ remains. This denominator
is taken care of by the sum over $k$; namely we have
\begin{align*}
\sum _{k=1} ^{m}(-1)^{m-k}\binom mk
k^{2s}&\equiv
\sum _{k\ge0} ^{}(-1)^{m-2k-1}\binom m{2k+1}
\pmod2\\
&\equiv
(-1)^{m-1}\sum _{k\ge0} ^{}\binom m{2k+1}
\pmod2\\
&\equiv (-1)^{m-1}2^{m-1}\equiv0\pmod2
\end{align*}
for $m\ge2$. Hence, the polynomial $p_s^{(1)}(z)$ has indeed integer
coefficients. 

The last remaining assertion is that the coefficients of
$p_s^{(1)}(z)$ are divisible by~$3$.
Once we have shown that $q_s^{(1)}(2u+1)=2u+1$~modulo~3, then,
in view of \eqref{eq:p1q1}, this last
point will also follow. Consequently, in the sequel we
consider $q_s^{(1)}(2u+1)$ with coefficients reduced modulo~$3$. 
Since, by assumption, $s$ is a positive integer, we have
\begin{align*}
q_s^{(1)}(2u+1)&=\sum _{m=1} ^{2s}u^{m-1}
\underset{k\not\equiv 0~\text {mod }3}
{\sum _{k=0} ^{m}}(-1)^{m-k}\binom mk \quad 
\text {modulo }3\\
&=-\sum _{m=1} ^{2s}u^{m-1}
{\sum _{k=0} ^{m}}(-1)^{m-k}\binom m{3k} \quad 
\text {modulo }3\\
&=\sum _{m=1} ^{2s}(-u)^{m-1}
{\sum _{k=0} ^{m}}(-1)^{3k}\binom m{3k} \quad 
\text {modulo }3.
\end{align*}
``Trisection" of the binomial theorem then yields
$$
q_s^{(1)}(2u+1)
=\frac {1} {3}\sum _{m=1} ^{2s}(-u)^{m-1}\left((1-\om)^m+(1-\om^2)^m\right)
\quad 
\text {modulo }3,
$$
where $\om$ denotes a primitive third root of unity. Since, for
$m\ge3$, we have
$$
\frac {1} {3}\left((1-\om)^m+(1-\om^2)^m\right)=
\frac {1} {3}\left(\left(\tfrac {3-i\sqrt3} {2}\right)^m+
\left(\tfrac {3+i\sqrt3} {2}\right)^m\right)\equiv0\pmod3,
$$
only the summands for $m=1$ and $m=2$ contribute in the above sum.
Thus,
$$
q_s^{(1)}(2u+1)=1+2u\quad \text {modulo }3,
$$
as required. This completes the proof of the lemma.
\end{proof}

\begin{proposition} \label{lem:E2B}
For any positive integer $s$, we have
\begin{equation} \label{eq:E2B}
\sum _{n\ge0} ^{}z^n
\sum _{j=0} ^{n}(-1)^{n-j}\binom {2n}{n-j}j^{2s-1}
=\frac {z} {\sqrt{1+4z}}
\left(1+p_s^{(2)}(z)\right),
\end{equation}
where $p_s^{(2)}(z)$ is a polynomial in $z$ with integer coefficients
all of which are divisible by~$3$, and which satisfies $p_s^{(2)}(0)=0$.
An explicit expression for $p_s^{(2)}(z)$ can be derived from
\eqref{eq:q2} and \eqref{eq:p2q2}.
\end{proposition}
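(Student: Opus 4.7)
The plan is to mirror the proof of Proposition~\ref{lem:E2} step by step, adjusting for three structural changes: the binomial coefficient is now $\binom{2n}{n-j}$ in place of $\binom{2n+1}{n+1-j}$, the exponent $2s-1$ is odd rather than even, and the symmetry of the associated auxiliary polynomial will consequently flip from odd to even. First I would represent
$$(-1)^{n-j}\binom{2n}{n-j}=\frac{1}{2\pi i}\int_C\frac{(1-t)^{2n}}{t^{n+1-j}}\,dt,$$
with $C$ a small positively oriented circle about the origin, and write $j^{2s-1}=\bigl((x+1)\tfrac{d}{dx}\bigr)^{2s-1}(x+1)^j\big|_{x=0}$. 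For $|z|$ sufficiently small, substituting these representations into the left-hand side of \eqref{eq:E2B} and interchanging summation with integration reduces the double sum to a single contour integral in $t$, to which the substitution $t=u/(1+u)^2$ is applied exactly as in the earlier proof.

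Next, Lemma~\ref{lem:E1} (with $S=2s-1$) would convert the operator expression into a polynomial in $u$, and the integral is evaluated by the residue theorem, retaining only the simple pole at $u=-\tfrac12+\tfrac12\sqrt{1+4z}$ inside the contour. After simplification this would produce an identity of the form
$$\sum_{n\ge0}z^n\sum_{j=0}^n(-1)^{n-j}\binom{2n}{n-j}j^{2s-1}=\frac{z}{\sqrt{1+4z}}\,q_s^{(2)}\bigl(\sqrt{1+4z}\bigr),$$
with
$$q_s^{(2)}(2u+1):=\sum_{m=1}^{2s-1}\left(\sum_{k=1}^m(-1)^{m-k}\binom{m}{k}k^{2s-1}\right)u^{m-1}$$
(this is the analogue \eqref{eq:q2} of \eqref{eq:q1}), and $p_s^{(2)}(z)$ is then defined through $1+p_s^{(2)}(z)=q_s^{(2)}(\sqrt{1+4z})$, which is the relation \eqref{eq:p2q2}. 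Confirming that the prefactor really collapses to the clean form $z/\sqrt{1+4z}$ will be the main obstacle: the change from $\binom{2n+1}{n+1-j}$ to $\binom{2n}{n-j}$ simultaneously lowers the power of $(1-t)$ by one and the order of the pole at $t=0$ by one, and these two effects have to combine correctly to yield the stated prefactor, rather than, say, $(1+\sqrt{1+4z})/(2\sqrt{1+4z})$ or $(\sqrt{1+4z}-1)/2$.

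Once the prefactor is pinned down, the remaining properties are verified by symmetry-flipped analogues of the arguments in Proposition~\ref{lem:E2}. Rewriting $q_s^{(2)}(2u+1)=\De(I-u\De)^{-1}y^{2s-1}|_{y=0}$ via the shift and difference operators $E$, $\De$, $\Na$, the computation from Proposition~\ref{lem:E2} carries over verbatim except that $(-k)^{2s-1}=-k^{2s-1}$ now contributes an additional sign, which combines with the intrinsic sign flip from the backward-difference manipulation to give $q_s^{(2)}(-2u-1)=+q_s^{(2)}(2u+1)$. Thus $q_s^{(2)}$ is an \emph{even} polynomial in its argument, so $q_s^{(2)}(\sqrt{1+4z})$ is a genuine polynomial in~$z$. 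Symmetrizing $q_s^{(2)}(2u+1)=\tfrac12\bigl(q_s^{(2)}(2u+1)+q_s^{(2)}(-2u-1)\bigr)$ and invoking the even-symmetry identity \eqref{eq:subs2} of Lemma~\ref{lem:E3} in place of \eqref{eq:subs1}, together with the elementary divisibility $\sum_{k=1}^m(-1)^{m-k}\binom{m}{k}\equiv0\pmod2$ for $m\ge2$, shows that $p_s^{(2)}\in\Z[z]$; evaluating at $u=0$ (equivalently $z=0$) gives $p_s^{(2)}(0)=0$; and the ``trisection of the binomial theorem'' argument, which retains only the $m=1,2$ contributions to $q_s^{(2)}(2u+1)$ modulo~$3$, combined with Fermat's little theorem $k^{2s-1}\equiv k\pmod3$ for $k\not\equiv0\pmod3$, yields $q_s^{(2)}(2u+1)\equiv1\pmod3$ and hence $p_s^{(2)}(z)\equiv0\pmod3$, completing the proof.
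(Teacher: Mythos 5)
Your proposal follows the paper's proof exactly: the paper itself declares the argument ``completely analogous'' to Proposition~\ref{lem:E2} and records only the contour integral, the residue evaluation, the definition \eqref{eq:q2}, the relation \eqref{eq:p2q2}, the even symmetry $q_s^{(2)}(-2u-1)=q_s^{(2)}(2u+1)$, and the congruence $q_s^{(2)}(2u+1)=1$ modulo~$3$ --- all of which you reproduce. The prefactor worry you flag resolves in one line: the lowered pole order leaves an extra factor of $u$ in the integrand ($u^{m}$ in place of $u^{m-1}$), which at the pole $u_0=-\tfrac12+\tfrac12\sqrt{1+4z}$ gives $u_0(1+u_0)/(2u_0+1)=z/\sqrt{1+4z}$, since $u_0^2+u_0=z$.
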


\begin{proof}
The proof is completely analogous to the proof of Proposition~\ref{lem:E2},
so we content ourselves with just stating the crucial
identities without proof.

Using the contour integration method, one obtains
\begin{multline*}
\sum _{n\ge0} ^{}z^n
\sum _{j=0} ^{n}(-1)^{n-j}\binom {2n}{n-j}j^{2s-1}\\
=\frac {1} {2\pi i}
\int _{C} ^{}\frac {1+u} {u^2+u-{z} }
\sum _{m=1} ^{2s-1}
\left(\sum _{k=1} ^{m}(-1)^{m-k}\binom mk k^{2s-1}\right)u^{m}
\,du.
\end{multline*}
Application of the residue theorem then leads to
\begin{multline*}
\sum _{n\ge0} ^{}z^n
\sum _{j=0} ^{n}(-1)^{n-j}\binom {2n}{n-j}j^{2s-1}\\
=\frac {z} {\sqrt{1+4z}}
\bigg(\sum _{m=1} ^{2s-1}
\left(\sum _{k=1} ^{m}(-1)^{m-k}\binom mk k^{2s-1}\right)u^{m-1}
\bigg)\bigg\vert_{u=-\frac {1} {2}+\frac {1} {2}\sqrt{1+4z}}.
\end{multline*}
Next one defines
\begin{equation} \label{eq:q2}
q_s^{(2)}(2u+1):=\sum _{m=1} ^{2s-1}
\left(\sum _{k=1} ^{m}(-1)^{m-k}\binom mk k^{2s-1}\right)u^{m-1},
\end{equation}
so that
\begin{equation} \label{eq:p2q2} 
1+p_s^{(2)}(z)=q_s^{(2)}\left(\sqrt{1+4z}\right).
\end{equation}
Now one proves that $q_s^{(2)}(-2u-1)=q_s^{(2)}(2u+1)$ (that is, {\it 
symmetry} of the polynomial $q_s^{(2)}(\,.\,)$), establishing that
$p_s^{(2)}(z)$ is indeed a polynomial in $z$. By definition,
it is obvious that $q_s^{(2)}(1)=1$, whence $p_s^{(2)}(0)=0$.
Furthermore, using
the {\it symmetrisation} of $q_s^{(2)}(\,.\,)$, Identity~\eqref{eq:subs2},
and Relation \eqref{eq:p2q2}, one shows that $p_s^{(2)}(z)$ has
integer coefficients. In the final step, one proves that all these
coefficients are divisible by~$3$, by demonstrating that
$$
q_s^{(2)}(2u+1)=1\quad \text{modulo }3.
$$
\end{proof}

\begin{theorem} \label{thm:Euler1}
Let $\Psi(z)$ be given by \eqref{eq:Psidef}, 
and let $\al$ be some positive integer.
Then the generating function $E^{(1)}(z)=\sum_{n\ge0}A(2n,n+1)\,z^n$ 
for central Eulerian numbers with even first index,
reduced modulo $3^{3^\al}$, 
can be expressed as a polynomial in $\Psi(z)$ of the form
$$
E^{(1)}(z)=a_0(z)+\sum _{i=0} ^{2\cdot 3^{\al}-1}a_{i}(z)\Psi^{i}(z^3)\quad 
\text {\em modulo }3^{3^\al},
$$ 
where the coefficients $a_{i}(z)$, $i=0,1,\dots,2\cdot 3^\al-1$, 
are Laurent polynomials in $z$ and $1+z$.
\end{theorem}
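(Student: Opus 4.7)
The plan is to apply Theorem~\ref{thm:general} to a quadratic functional equation satisfied by $E^{(1)}(z)$ modulo $3^{3^\alpha}$. Since, as the paper flags, no usable integer differential equation of the form \eqref{eq:diffeq} exists for $E^{(1)}(z)$, a partial $3$-adic reduction is required first, with Propositions~\ref{lem:E2} and \ref{lem:E2B} as the principal analytic inputs.

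The starting point is the formula \eqref{eq:Ank}, which for $k = n+1$ reads
$$A(2n,n+1) = \sum_{j=0}^{n+1}(-1)^{n+1-j}\binom{2n+1}{n+1-j}\,j^{2n}.$$
The obstruction is that the exponent $2n$ on $j$ varies with the outer summation index, blocking a direct application of Proposition~\ref{lem:E2}. I would split the inner sum according to $\gcd(j,3) = 1$ versus $3 \mid j$. In the first case, Euler's theorem gives $j^{2n} \equiv j^{2\bar n}$ modulo $3^{3^\alpha}$ with $\bar n = n \bmod P$ and $P = 3^{3^\alpha-1}$, freezing the exponent within each residue class of $n$ modulo~$P$. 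In the second case, $j^{2n}$ has $3$-adic valuation at least~$2n$, so the subsum contributes zero modulo $3^{3^\alpha}$ once $n \ge \lceil 3^\alpha/2\rceil$. Inclusion--exclusion then rewrites the $\gcd(j,3) = 1$ sum as a full sum minus a subsum indexed by $j = 3k$; the full sum is exactly the object of Proposition~\ref{lem:E2}, and the $j = 3k$ subsum, after substitution, acquires an overall factor $9^{\bar n}$ and is handled iteratively by the same partial-reduction strategy (the recursion terminating because of the accumulating $9$-factors), with Proposition~\ref{lem:E2B} covering any intermediate odd-exponent remnants that surface along the way.

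Assembling these pieces across residue classes of $n$ modulo~$P$, using that the polynomials $p_s^{(1)}(z)$ and $p_s^{(2)}(z)$ of Propositions~\ref{lem:E2} and \ref{lem:E2B} have $3$-divisible coefficients (so only finitely many of their corrections survive modulo $3^{3^\alpha}$), and clearing any $(1+z^{3^j})^{-\alpha}$ denominators via Lemma~\ref{lem:1+z}, one should arrive at
$$E^{(1)}(z) \equiv R_1(z) + R_2(z)\,g(z) \pmod{3^{3^\alpha}},$$
where $g(z) = \tfrac12(1+\sqrt{1+4z})$ satisfies $g^2 = g + z$, and $R_1, R_2 \in \Z[z,z^{-1},(1+z)^{-1}]$. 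This translates into the quadratic equation
$$F^2(z) - (2R_1(z) + R_2(z))\,F(z) + \bigl(R_1^2(z) + R_1(z)R_2(z) - z R_2^2(z)\bigr) = 0$$
for $F(z) = E^{(1)}(z)$ modulo $3^{3^\alpha}$, whose discriminant $c_1^2 - c_0 c_2 = 3R_1(R_1 + R_2) + R_2^2(1+z)$ reduces modulo~$3$ to $R_2^2(z)(1+z)$. Provided $R_2(z)$ reduces modulo~$3$ to a (signed) monomial $\pm z^{f_1}(1+z)^{f_2}$, this matches hypothesis~(2) of Theorem~\ref{thm:general} with $\gamma = 1$, $\epsilon = 1$; hypotheses~(1) and~(3) are immediate with $c_2 = 1$, and the uniqueness condition~(4) is satisfied because $E^{(1)}(z)$ itself is the distinguished formal power series solution (the second root $R_1 + R_2 - R_2 g$ of the same quadratic fails to be a formal power series once the cancellation structure of $R_1$ and $R_2$ is taken into account). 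Theorem~\ref{thm:general} then yields a polynomial expression for $E^{(1)}(z)$ in $\Psi(z)$ with Laurent-polynomial coefficients in $z$ and $1+z$, and $\Psi(z) = (1+z)\Psi(z^3)$ converts this to the claimed form in $\Psi(z^3)$.

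The main obstacle will be the bookkeeping in the second and third paragraphs, and especially the verification that $R_2(z)$ does reduce modulo~$3$ to the required monomial form. In the analogous treatments of the Motzkin, Riordan, Catalan and related sequences in the preceding sections the check is essentially immediate (cf.\ Corollaries~\ref{cor:Motzkin-9}, \ref{cor:Riordan-9}, \ref{cor:Catalan-9}), because modulo~$3$ all the $p_s^{(*)}(z)$-corrections vanish and the generating function collapses to a fixed algebraic function of $z$ and $\sqrt{1+4z}$ alone; in the Eulerian case the $P$-sectioned assembly and the recursive treatment of the $j = 3k$ subsum make tracking this residue delicate, but the same structural mechanism ensures that the monomial form is ultimately preserved.
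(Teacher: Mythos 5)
Your opening move is the paper's: freeze the exponent $j^{2n}$ by working in residue classes of $n$ modulo $3^{3^\al-1}$ via Euler's theorem, and feed the resulting fixed-exponent sums to Proposition~\ref{lem:E2}. But the inclusion--exclusion recursion on the $j=3k$ subsum is both unnecessary and broken. It is unnecessary because, if one picks the representative $s$ of each residue class in the shifted range $s\ge\tfrac12(\be-1)$ (as in \eqref{eq:Ans}), then for every $j$ divisible by $3$ both $j^{2n}$ and $j^{2s}$ vanish modulo $3^{\be}$, so the exponent can be replaced in the \emph{full} sum at once and no subtraction is needed. It is broken because your termination argument relies on the accumulating factor $9^{\bar n}$ with $\bar n=n\bmod P\in[0,P)$; for the class $\bar n=0$ (and for small $\bar n$) this factor is $1$ (or small) and the recursion does not reduce anything. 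The appeal to Proposition~\ref{lem:E2B} is also spurious: that result concerns the sums $\sum_j(-1)^{n-j}\binom{2n}{n-j}j^{2s-1}$ relevant to $A(2n-1,n)$, and no odd-exponent remnant of the $\binom{2n+1}{n+1-j}$ sum arises here.

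The genuine gap is the assembly step. You claim that summing the $P$-sections over the residue classes yields $E^{(1)}(z)\equiv R_1(z)+R_2(z)g(z)$ with $g=\tfrac12(1+\sqrt{1+4z})$ and $R_1,R_2\in\Z[z,z^{-1},(1+z)^{-1}]$, and then derive a single quadratic for $E^{(1)}$. But the $P$-section operator does not preserve the module $\Z[z,(1+z)^{-1}]\cdot 1+\Z[z,(1+z)^{-1}]\cdot\sqrt{1+4z}$: the section of $\sqrt{1+4z}$ along $n\equiv s\ (\text{mod }P)$ is not of the form $R_1+R_2\sqrt{1+4z}$, so while each unsectioned $E_s^{(1)}(z)=(1+p_s^{(1)}(z))\,g(z)$ has your shape, the sectioned and summed series does not. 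The explicit answer modulo $27$ (Theorem~\ref{thm:Euler1-27}) makes this visible: it involves $\Psi$, $\Psi^3$ and $\Psi^5$ with coefficients that are not a common Laurent-polynomial multiple of the corresponding coefficients in the $\Psi$-expansion of a single $g$, which is what $R_1+R_2g$ would force. The repair is to reverse the order of operations, which is what the paper does: apply Theorem~\ref{thm:general} to each fixed-$s$ equation \eqref{eq:E1eq} \emph{first}, obtaining $E_s^{(1)}(z)$ as a polynomial in $\Psi(z)$ modulo $3^{\be}$ with Laurent-polynomial coefficients, and only then extract the $3^{\be-1}$-section using $\Psi(z)=\Psi(z^{3^{\be-1}})\prod_{j=0}^{\be-2}(1+z^{3^j})$; each section is again a polynomial in $\Psi(z^{3^{\be-1}})$, hence in $\Psi(z)$, and summing over $s$ (plus a polynomial correction for the finitely many small $n$ violating $n\ge\tfrac12(\be-1)$) gives the theorem.
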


\begin{proof} 
By \eqref{eq:Ank}, we have
$$
A(2n,n+1)=\sum _{j=0} ^{n+1}(-1)^{n+1-j}\binom {2n+1}{n+1-j}j^{2n}.
$$
In order to analyse this expression modulo $3^\be$ (the reader should 
imagine that $\be=3^\al$), 
we have to distinguish between several cases, depending on the
congruence class of $n$ modulo $3^{\be-1}$. Namely, since
$\varphi(3^\be)=2\cdot 3^{\be-1}$ (with $\varphi(\,.\,)$ denoting the Euler
totient function), we have
\begin{multline} \label{eq:Ans} 
A(2n,n+1)\equiv\sum _{j=0} ^{n+1}(-1)^{n+1-j}\binom
{2n+1}{n+1-j}j^{2s}\pmod{3^\be}\\
\text{for}\quad n\equiv s~(\text{mod }3^{\be-1})
\text{ and }n,s\ge\tfrac {1} {2}(\be-1).
\end{multline}
(The condition $n,s\ge\frac {1} {2}(\be-1)$ is needed to ensure that
$j^{2n}\equiv j^{2s}\equiv0$~(mod~$3^{\be-1}$) for all $j$ which are
divisible by~$3$.)
From Proposition~\ref{lem:E2}, we know that the generating function for the
numbers on the right-hand side of the above congruence, say
$$
E_s^{(1)}(z)=
\sum _{n\ge0} ^{}z^n\sum _{j=0} ^{n+1}(-1)^{n+1-j}\binom
{2n+1}{n+1-j}j^{2s},
$$
has the form
given on the right-hand side of \eqref{eq:E2}. Consequently, it
satisfies the functional equation
\begin{equation} \label{eq:E1eq} 
(E_s^{(1)})^2(z)-\left(1+p_s^{(1)}(z)\right)E_s^{(1)}(z)-z\left(1+p_s^{(1)}(z)\right)^2=
0.
\end{equation}
If we rewrite this in the form
$$
(E_s^{(1)})^2(z)-E_s^{(1)}(z)-z
-p_s^{(1)}(z)E_s^{(1)}(z)-zp_s^{(1)}(z)\left(2+p_s^{(1)}(z)\right)=
0,
$$
then we see that it fits into the framework of Theorem~\ref{thm:general}.
Indeed, one chooses
$\ga=1$, $c_2(z)=1$, $c_1(z)=-1$, $c_0(z)=-z$, which implies that
$$c_1^2(z)-c_0(z)c_2(z)=1+z\quad \text{modulo }3,$$
so that the constants $e_1,e_2,f_1,f_2$ are given by
$e_1=0$,
$e_2=0$,
$f_1=0$,
$f_2=0$,
$\ep=1$, and
we have
$$\mathcal Q(\dots)=-\tfrac {1} {3}p_s^{(1)}(z)E_s^{(1)}(z)
-\tfrac {1} {3}zp_s^{(1)}(z)\left(2+p_s^{(1)}(z)\right).$$
This is indeed a polynomial in $z$ and $E_s^{(1)}(z)$ with integer
coefficients since, by Proposition~\ref{lem:E2}, all coefficients of the
polynomial $p_s^{(1)}(z)$ are divisible by~$3$.
Consequently, for each $s$, the generating function $E_s^{(1)}(z)$, when reduced modulo
$3^\be$, can be written as a polynomial in the basic series
$\Psi(z)$. 

However, we are actually not interested in all coefficients in
$E_s^{(1)}(z)$. By \eqref{eq:Ans}, the relevant coefficients are those
of powers $z^n$, where $n\equiv s$~(mod~$3^{\be-1}$). In other words,
for each fixed $s$, we need a particular $3^{\be-1}$-section of
$E_s^{(1)}(z)$, and these $3^{\be-1}$-sections have then to be summed to
obtain the series $E^{(1)}(z)$ modulo~$3^\be$. To be precise, we have
$$
E^{(1)}(z)=
\sum _{s=\cl{\frac {1} {2}(\be-1)}} ^{3^{\be-1}+\cl{\frac {1} {2}(\be-1)}-1}
\underset{s+3^{\be-1}\ell\ge0}
{\sum _{\ell\in\mathbb Z} ^{}}z^{s+3^{\be-1}\ell}\coef{t^{s+3^{\be-1}\ell}}E_s^{(1)}(t)
+\mathcal E(z),
$$
where $\coef{t^m}f(t)$ denotes the coefficient of $t^m$ in the series
$f(t)$,
and the polynomial $\mathcal E(z)$ represents the possible correction which takes
care of deviations from the congruence \eqref{eq:Ans} for $n$'s with
$n<\frac {1} {2}(\be-1)$. 
However, computing $3^{\be-1}$-sections of $E_s^{(1)}(z)$ is
straightforward, once we observe that
\begin{equation} \label{eq:psi3al} 
\Psi(z)=\Psi(z^{3^{\be-1}})
\prod _{j=0} ^{{\be-2}}(1+z^{3^j}).
\end{equation}
Each $3^{\be-1}$-section, with coefficients reduced by $3^\be$, 
is then a polynomial in $\Psi(z^{3^{\be-1}})$,
or, equivalently, if we use \eqref{eq:psi3al} in the other direction,
a polynomial in $\Psi(z)$.
Hence, the generating function $E^{(1)}(z)$ of central Eulerian
numbers, when coefficients are reduced modulo $3^\be$, can be
expressed as a polynomial in $\Psi(z)$, which was exactly our claim.
\end{proof}

How the procedure explained in the above proof works for modulus~$9$
is described in detail in the proof of the next result.

\begin{theorem} \label{thm:Euler1-9}
Let $\Psi(z)$ be given by \eqref{eq:Psidef}.
Then, we have
\begin{equation} 
\label{eq:LoesE19}
\sum _{n\ge0} ^{}A(2n,n+1)\,z^n
=-1
+3
   (z+1) \Psi(z)
+\frac{3 z^4+2 z^3+6
   z^2+2}{1+z}\Psi^3(z)
\quad \text {\em modulo }9.
\end{equation}
\end{theorem}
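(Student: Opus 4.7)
The plan is to carry out, by hand or by computer algebra, the construction described in the proof of Theorem~\ref{thm:Euler1} in the special case $\be=2$. In that case the partial-reduction congruence \eqref{eq:Ans} reads
$$
A(2n,n+1)\equiv\coef{t^n}E_s^{(1)}(t)\pmod 9\quad\text{whenever }n\equiv s\pmod 3\text{ and }n,s\ge 1,
$$
so modulo~$9$ the series $E^{(1)}(z)$ is assembled from the three generating functions $E_s^{(1)}(z)$ with $s=1,2,3$ by retaining the appropriate $3$-section of each and summing, with only the term $A(0,1)$ left as a residual constant adjustment~$\mathcal E(z)$.

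The first main step is to compute the polynomials $p_s^{(1)}(z)$ modulo~$9$ for $s=1,2,3$ via formulas \eqref{eq:q1} and \eqref{eq:p1q1}. A short calculation gives $p_1^{(1)}(z)=0$, and $s=2$ leads to $p_2^{(1)}(z)\equiv 3z\pmod 9$ after eliminating the auxiliary variable $u$ using $u^2+u=z$; the case $s=3$ proceeds in exactly the same fashion, starting from a polynomial in $u$ of degree~$5$. With these in hand, the quadratic functional equation \eqref{eq:E1eq} satisfies the hypotheses of Theorem~\ref{thm:general}: one chooses $\ga=1$, $\ep=1$, $c_2(z)=1$, $c_1(z)=-1$, $c_0(z)=-z$, and
$$
3\mathcal Q\bigl(z;E_s^{(1)}(z)\bigr)=-p_s^{(1)}(z)\,E_s^{(1)}(z)-z\,p_s^{(1)}(z)\bigl(2+p_s^{(1)}(z)\bigr),
$$
which is divisible by~$3$ because $p_s^{(1)}(z)$ has all its coefficients divisible by~$3$. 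The algorithm of Section~\ref{sec:method} then produces, for each $s$, an explicit expression of $E_s^{(1)}(z)$ modulo~$9$ as a polynomial in $\Psi(z)$ of degree at most~$5$ with coefficients in $\Z[z,z^{-1},(1+z)^{-1}]$.

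For each such polynomial, the factorisation $\Psi(z)=(1+z)\Psi(z^3)$ from \eqref{eq:psi3al} turns $\Psi^i(z)$ into $(1+z)^i\Psi^i(z^3)$, so the $3$-section at residue~$s$ acts only on the Laurent-polynomial coefficient in front of $\Psi^i(z^3)$ and leaves the factor $\Psi^i(z^3)$ itself untouched; this is a purely mechanical operation that can be performed via the standard $3$-section operator $f(z)\mapsto\tfrac 13\sum_{k=0}^{2}\om^{-ks}f(\om^k z)$ applied to the coefficients, with $\om$ a primitive cube root of unity. Summing the three $3$-sections, folding each $\Psi^i(z^3)$ back into $\Psi^i(z)/(1+z)^i$, reducing fully modulo~$9$, and absorbing the constant discrepancy at $n=0$ into $\mathcal E(z)$ then yields the right-hand side of \eqref{eq:LoesE19}.

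The principal obstacle is entirely computational: a priori the three polynomials produced for $E_1^{(1)},E_2^{(1)},E_3^{(1)}$ can have degree~$5$ in $\Psi(z)$ with quite intricate rational coefficients, and the striking feature of the theorem is that summing the three $3$-sections compresses everything into the short polynomial of degree~$3$ in $\Psi(z)$ displayed in \eqref{eq:LoesE19}. Verifying this collapse, along with the value of the constant correction $\mathcal E(z)$, is the kind of calculation naturally carried out by the accompanying \textsl{Mathematica} implementation referred to in the Introduction; once done, it supplies a complete proof of \eqref{eq:LoesE19}.
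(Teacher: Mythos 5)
Your proposal follows essentially the same route as the paper's proof: it splits into the three residue classes of $n$ modulo $3$, computes $p_s^{(1)}(z)$ for $s=1,2,3$ via Proposition~\ref{lem:E2} (indeed $p_1^{(1)}=0$, $p_2^{(1)}(z)=12z\equiv 3z$, $p_3^{(1)}(z)=60z+360z^2$), solves each functional equation \eqref{eq:E1eq} by the method of Section~\ref{sec:method}, extracts the appropriate $3$-section using $\Psi(z)=(1+z)\Psi(z^3)$, and sums the three pieces. This is exactly the paper's argument, so the proposal is correct.
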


\begin{proof}
We start by considering the case $n\equiv 1$~(mod~3).
Under this condition, we have
$$
A(2n,n+1)\equiv \sum _{j=0} ^{n+1}(-1)^{n+1-j}\binom
{2n+1}{n+1-j}j^{2}
\pmod 9.
$$
From Proposition~\ref{lem:E2}, we obtain
\begin{equation} \label{eq:EEF2}
E_1^{(1)}(z)=\sum _{n\ge0} ^{}z^n
\sum _{j=0} ^{n+1}(-1)^{n+1-j}\binom {2n+1}{n+1-j}j^{2}
=\frac {1} {2}\left(1+\sqrt{1+4z}\right).
\end{equation}
Consequently, by applying \eqref{eq:E1eq} with $s=1$ and
$p_1^{(1)}(z)=0$, we see that
the series $E_1^{(1)}(z)$ satisfies the functional equation 
$$
(E_s^{(1)})^2(z)-E_s^{(1)}(z)-z=
0.
$$
If we use the method from Section~\ref{sec:method} in the form of
Theorem~\ref{thm:general}, with the relation \eqref{eq:PsiRel}
replaced by \eqref{eq:PsiEq}, then we obtain
$$
E_1^{(1)}(z)=-4
+3 (z+1) \Psi(z)
+\left(5 z^2+7 z+2\right)
   \Psi^3(z)\quad 
\text{modulo }9.
$$
In this series, we are only interested in the terms involving powers
$z^n$ with $n\equiv1$~(mod~3). This $3$-section of the right-hand side
of the above congruence equals
\begin{equation} \label{eq:E11sec} 
6 z \Psi(z^3) + (  4 z^4+4 z) \Psi^3(z^3).
\end{equation}

In a similar vein, if
$n\equiv 2$~(mod~3), then
$$
A(2n,n+1)\equiv \sum _{j=0} ^{n+1}(-1)^{n+1-j}\binom
{2n+1}{n+1-j}j^{4}
\pmod 9.
$$
From Proposition~\ref{lem:E2}, we obtain
\begin{equation} \label{eq:EEF4}
E_2^{(1)}(z)=\sum _{n\ge0} ^{}z^n
\sum _{j=0} ^{n+1}(-1)^{n+1-j}\binom {2n+1}{n+1-j}j^{4}
=\frac {1} {2}\left(1+\sqrt{1+4z}\right)
(1+12z)
\end{equation}
Consequently, by applying \eqref{eq:E1eq} with $s=1$ and
$p_1^{(1)}(z)=12z$, we see that
the series $E_2^{(1)}(z)$ satisfies the functional equation 
$$
(E_s^{(1)})^2(z)-(1+12z)E_s^{(1)}(z)-z(1+12z)^2=
0.
$$
The method from Section~\ref{sec:method} then yields
$$
E_2^{(1)}(z)=
-3z-4
+3 (z+1) \Psi(z)
+\left(6 z^3+8 z^2+4 z+2\right)
   \Psi^3(z)
\quad 
\text{modulo }9.
$$
The relevant part, obtained by extracting only terms involving powers
$z^n$ with $n\equiv2$~(mod~3) is
\begin{equation} \label{eq:E12sec} 
3 z^2 \Psi(z^3) - ( z^5 + z^2) \Psi^3(z^3).
\end{equation}

Finally, if
$n\equiv 0$~(mod~3), then
$$
A(2n,n+1)\equiv \sum _{j=0} ^{n+1}(-1)^{n+1-j}\binom
{2n+1}{n+1-j}j^{6}
\pmod 9.
$$
(Although $n=0$ violates the inequality $n\ge\frac {1} {2}(\be-1)=\frac {1} {2}$ in 
\eqref{eq:Ans},
this holds indeed for {\it all\/} $n\ge0$ which are divisible by~3.)
From Proposition~\ref{lem:E2}, we obtain
\begin{equation} \label{eq:EEF6}
E_3^{(1)}(z)=\sum _{n\ge0} ^{}z^n
\sum _{j=0} ^{n+1}(-1)^{n+1-j}\binom {2n+1}{n+1-j}j^{6}
=\frac {1} {2}\left(1+\sqrt{1+4z}\right)
(1+60z+360z^2)
\end{equation}
Consequently, 
the series $E_2^{(1)}(z)$ satisfies the functional equation 
$$
(E_s^{(1)})^2(z)-(1+60z+360z^2)E_s^{(1)}(z)-z(1+60z+360z^2)^2=
0.
$$
The method from Section~\ref{sec:method} then yields
$$
E_3^{(1)}(z)=
3z-4
+3 (z+1) \Psi(z)
+\left(3 z^3+2 z^2+z+2\right)
   \Psi^3(z)
\quad 
\text{modulo }9.
$$
The relevant part, obtained by extracting only terms involving powers
$z^n$ with $n\equiv0$~(mod~3) is
\begin{equation} \label{eq:E13sec} 
-1 + 3  \Psi(z^3) + (2 + 5 z^3 + 3 z^6) \Psi^3(z^3).
\end{equation}
Summing the expressions \eqref{eq:E11sec}, \eqref{eq:E12sec},
and \eqref{eq:E13sec}, 
we arrive at \eqref{eq:LoesE19}.
\end{proof}

Applying the same procedure for the modulus~27, now involving an
analysis of 9 different cases which have to be combined in the end,
one is able to prove the following result.

\begin{theorem} \label{thm:Euler1-27}
Let $\Psi(z)$ be given by \eqref{eq:Psidef}.
Then, we have
\begin{multline} 
\label{eq:LoesE127}
\sum _{n\ge0} ^{}A(2n,n+1)\,z^n
=14
+3 \left(3 z^2-4 z+2\right)
   \Psi(z)
+\left(21 z^3+20 z^2+13
   z+23\right) \Psi^3(z)\\
+3
   \left(6 z^4+4 z^3+3
   z^2+4\right) \Psi^5(z)
\quad \text {\em modulo }27.
\end{multline}
\end{theorem}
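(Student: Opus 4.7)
The plan is to follow the same strategy as in the proof of Theorem~\ref{thm:Euler1-9}, but now with modulus $3^\be = 27$, so $\be = 3$ and the number of residue classes to be treated increases from three to $3^{\be-1} = 9$. Concretely, I would start from the congruence \eqref{eq:Ans}: for $n \equiv s \pmod 9$ with $n,s \geq 1$, one has
\[
A(2n,n+1) \equiv \sum_{j=0}^{n+1}(-1)^{n+1-j}\binom{2n+1}{n+1-j} j^{2s} \pmod{27}.
\]
For each $s \in \{1,2,\dots,9\}$, I would invoke Proposition~\ref{lem:E2} to produce the polynomial $p_s^{(1)}(z)$ (whose coefficients are divisible by $3$) and the corresponding generating function $E_s^{(1)}(z)$, then use the quadratic functional equation \eqref{eq:E1eq} as input to Theorem~\ref{thm:general} (with $c_2=1$, $c_1=-1$, $c_0=-z$, $\ep=1$, $\ga=1$, and $\mathcal Q(\dots)$ absorbing the $p_s^{(1)}$-terms as in the proof of Theorem~\ref{thm:Euler1}). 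This yields, for each~$s$, an expression
\[
E_s^{(1)}(z) \;=\; a_{s,0}(z) + \sum_{i=0}^{5} a_{s,i}(z)\,\Psi^i(z) \quad\text{modulo }27,
\]
with $a_{s,i}(z) \in \Z[z,z^{-1},(1+z)^{-1}]$.

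Next I would perform the $9$-sectioning: using \eqref{eq:psi3al} in the form $\Psi(z) = \Psi(z^9)(1+z)(1+z^3)$, each $\Psi^i(z)$ can be expanded as a polynomial in $\Psi(z^9)$ with coefficients that are Laurent polynomials in $z$ and $1+z$, and from the resulting expression one extracts the terms $z^{s+9\ell}$ for $\ell \geq 0$. Summing the nine sections
\[
\sum_{s=1}^{9}\;\sum_{\substack{\ell \geq 0 \\ s+9\ell \geq 0}} z^{s+9\ell} \coef{t^{s+9\ell}} E_s^{(1)}(t)
\]
and adding the small correction polynomial $\mathcal E(z)$ that absorbs the finitely many $n < \tfrac12(\be-1) = 1$ (here, only $n=0$, i.e.\ the constant term $A(0,1) = 1$), yields a polynomial expression in $\Psi(z^9)$. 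One then converts back to a polynomial in $\Psi(z)$ using \eqref{eq:psi3al} a second time; combining this with the minimal polynomial reduction \eqref{eq:PsiRel} (equivalently \eqref{eq:Psi-2A} modulo~$27$, i.e.\ the relation $\Psi^6(z) = (1+z)^{-3}$ modulo $3$ and its refinement modulo $27$), the expression collapses to the claimed form involving only $\Psi(z)$, $\Psi^3(z)$, and $\Psi^5(z)$.

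The main obstacle is not conceptual but computational bookkeeping: one must carry out the algorithm of Section~\ref{sec:method} simultaneously for nine different functional equations, each containing a nontrivial correction $p_s^{(1)}(z)$ that feeds into $\mathcal Q(\dots)$, and then combine the nine $9$-sections and reduce systematically using the congruence relations for powers of $\Psi$. The fact that this computation does go through — and produces the remarkably compact right-hand side of \eqref{eq:LoesE127} with only the three terms $\Psi(z)$, $\Psi^3(z)$, $\Psi^5(z)$ — is guaranteed in principle by Theorems~\ref{thm:general} and~\ref{thm:Euler1}, but the explicit coefficients have to be read off from a concrete \textsl{Mathematica} implementation of the algorithm. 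Consistency checks against the modulo-$9$ result of Theorem~\ref{thm:Euler1-9} (which the modulo-$27$ expression must reduce to) and against the first several explicit values of $A(2n,n+1)$ provide a reliable verification.
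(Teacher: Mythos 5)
Your proposal is correct and takes essentially the same route as the paper: the paper disposes of this theorem by ``applying the same procedure [as in the proof of Theorem~\ref{thm:Euler1-9}] for the modulus~27, now involving an analysis of 9 different cases which have to be combined in the end,'' which is precisely the nine-fold case analysis via Proposition~\ref{lem:E2} and Theorem~\ref{thm:general}, the $9$-sectioning through $\Psi(z)=\Psi(z^9)(1+z)(1+z^3)$ from \eqref{eq:psi3al}, and the recombination and boundary correction at $n=0$ that you describe.
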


These results generalise
Theorem~5.17 in \cite{DeSaAA}, which is an easy consequence.

\medskip
Now we turn to the analogous theorems for the 
central Eulerian numbers $A(2n-1,n)$.

\begin{theorem} \label{thm:Euler2}
Let $\Psi(z)$ be given by \eqref{eq:Psidef}, 
and let $\al$ be some positive integer.
Then the generating function $E^{(2)}(z)=\sum_{n\ge0}A(2n-1,n)\,z^n$ 
for central Eulerian numbers with odd first index,
reduced modulo $3^{3^\al}$, 
can be expressed as a polynomial in $\Psi(z)$ of the form
$$
E^{(2)}(z)=a_0(z)+\sum _{i=0} ^{2\cdot 3^{\al}-1}a_{i}(z)\Psi^{i}(z^3)\quad 
\text {\em modulo }3^{3^\al},
$$ 
where the coefficients $a_{i}(z)$, $i=0,1,\dots,2\cdot 3^\al-1$, 
are Laurent polynomials in $z$ and $1+z$.
\end{theorem}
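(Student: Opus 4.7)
The plan is to adapt the proof of Theorem~\ref{thm:Euler1} with Proposition~\ref{lem:E2B} in place of Proposition~\ref{lem:E2}. Starting from \eqref{eq:Ank}, we have $A(2n-1,n)=\sum_{j=0}^{n}(-1)^{n-j}\binom{2n}{n-j}\,j^{2n-1}$. Fixing $\be=3^\al$ and invoking the Fermat--Euler congruence (with $\varphi(3^\be)=2\cdot 3^{\be-1}$), we get $j^{2n-1}\equiv j^{2s-1}\pmod{3^\be}$ for every $j$ whenever $n\equiv s\pmod{3^{\be-1}}$ and both $n,s$ are large enough that $j^{2s-1}\equiv 0\pmod{3^\be}$ for all $j$ divisible by~$3$. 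Hence, up to a polynomial correction taking care of finitely many small values of $n$, $E^{(2)}(z)$ will agree modulo~$3^\be$ with the sum, over the relevant residues~$s$, of the $3^{\be-1}$-sections of the ``shifted'' series
$$E^{(2)}_s(z):=\sum_{n\ge0}z^n\sum_{j=0}^{n}(-1)^{n-j}\binom{2n}{n-j}\,j^{2s-1}.$$

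By Proposition~\ref{lem:E2B}, $E^{(2)}_s(z)=\tfrac{z}{\sqrt{1+4z}}\bigl(1+p^{(2)}_s(z)\bigr)$ with $p^{(2)}_s(z)\in 3\Z[z]$ and $p^{(2)}_s(0)=0$. Squaring gives $(1+4z)(E^{(2)}_s)^{2}=z^{2}(1+p^{(2)}_s)^{2}$, but this equation does \emph{not} have a unique formal power series solution modulo~$3^\be$ (its constant coefficient yields only $F_0^{2}\equiv 0\pmod{3^\be}$), so I will instead work with the rescaled series $\widetilde E^{(2)}_s(z):=E^{(2)}_s(z)/z$, which satisfies
$$(1+4z)\bigl(\widetilde E^{(2)}_s(z)\bigr)^{2}-1-2p^{(2)}_s(z)-\bigl(p^{(2)}_s(z)\bigr)^{2}=0.$$
Writing $p^{(2)}_s(z)=3q_s(z)$ and setting $\mathcal Q(z;\widetilde F):=-2q_s(z)-3q_s^{2}(z)$, the above equation fits the form \eqref{eq:generalEF} of Theorem~\ref{thm:general} with $\ga=1$, $\ep=1$, $c_2(z)=1+4z$, $c_1(z)=0$, $c_0(z)=-1$, $e_1=f_1=f_2=0$, and $e_2=1$, since both $c_2(z)$ and $c_1^{2}(z)-c_0(z)c_2(z)$ equal $1+4z\equiv 1+z\pmod 3$. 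For the uniqueness condition~(4), the constant coefficient of the equation forces $\widetilde F_0^{2}\equiv 1\pmod{3^{3^\al}}$, so $\widetilde F_0\equiv\pm 1$, and the initial datum $\widetilde E^{(2)}_s(0)=1$ picks the correct branch; each subsequent coefficient of $z^n$ is then determined inductively via the invertible factor $2\widetilde F_0=\pm2$. Theorem~\ref{thm:general} accordingly yields $\widetilde E^{(2)}_s(z)$ modulo~$3^{3^\al}$ as a polynomial in $\Psi(z)$ with coefficients in $\Z[z,z^{-1},(1+z)^{-1}]$, and multiplying through by $z$ gives the same form for $E^{(2)}_s(z)$.

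It remains to form the $3^{\be-1}$-section of each $E^{(2)}_s(z)$ and sum over the relevant~$s$. Using \eqref{eq:psi3al} to write $\Psi(z)=\Psi(z^{3^{\be-1}})\prod_{j=0}^{\be-2}(1+z^{3^j})$, each $\Psi^{i}(z)$ becomes $\Psi^{i}(z^{3^{\be-1}})$ times a polynomial in $z$, so the section operation reduces to an elementary residue-class extraction on the Laurent-polynomial coefficients; Lemma~\ref{lem:1+z} then lets me re-expand the inverse factors $(1+z^{3^j})^{-1}$ that arise when collapsing $\Psi(z^{3^{\be-1}})$ back to $\Psi(z^{3})$, with all coefficients landing in $\Z[z,z^{-1},(1+z)^{-1}]$, exactly as asserted. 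The only genuinely new point compared with the even-index case is the rescaling to $\widetilde E^{(2)}_s$ needed to enforce the uniqueness hypothesis of Theorem~\ref{thm:general}; the main substance—the closed form underlying Proposition~\ref{lem:E2B}—has already been established, and the remaining steps are a direct transcription of the corresponding argument in the proof of Theorem~\ref{thm:Euler1}.
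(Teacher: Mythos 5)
Your proof is correct and follows essentially the same route as the paper: reduce the exponent $j^{2n-1}$ to $j^{2s-1}$ via the Fermat--Euler congruence, split according to the residue of $n$ modulo $3^{\be-1}$, invoke Proposition~\ref{lem:E2B} for the closed form of $E_s^{(2)}(z)$, feed the resulting quadratic equation into Theorem~\ref{thm:general}, and recombine the $3^{\be-1}$-sections using \eqref{eq:psi3al}. The one place where you deviate is the normalisation of the quadratic equation: the paper keeps $c_0(z)=-z^2$, $c_1(z)=0$, $c_2(z)=1+z$ (absorbing the surplus $3z(E_s^{(2)})^2(z)$ and the $p_s^{(2)}$-terms into $\mathcal Q$) and exploits the allowance $c_1^2(z)-c_0(z)c_2(z)=z^{2f_1}(1+z)^{2f_2+1}$ of condition~(2) with $f_1=1$, whereas you divide by $z$ first so that $c_0(z)=-1$ and $f_1=0$. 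Both choices satisfy conditions~(1)--(3). Your stated motivation for the rescaling --- that the unscaled equation fails the uniqueness condition~(4) because its constant coefficient only yields $F_0^2\equiv0$ --- is somewhat overstated: comparing coefficients of $z^2$ forces $F_0\equiv0\pmod{3^\be}$ outright (if $v_3(F_0)=c$ with $0<c<\be$, then the relation $2F_0F_2+F_1^2\equiv1$ is impossible, since $v_3(F_1)\ge\be-c$ makes the left-hand side a non-unit), so the unscaled equation has exactly the solution set $\{\pm E_s^{(2)}\}$, just as your rescaled one has $\{\pm\widetilde E_s^{(2)}\}$. In both formulations the residual ambiguity is the sign, which must be resolved by the known initial coefficient --- precisely the situation the paper already tolerates in its applications with $c_1=0$ (central binomial coefficients, trinomial coefficients, Delannoy numbers). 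So your extra step is harmless and arguably makes the role of condition~(4) more transparent, but it is not forced; the remainder of your argument (the sectioning via \eqref{eq:psi3al} and the correction polynomial for small $n$) is a faithful transcription of the even-index case.
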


\begin{proof} 
By \eqref{eq:Ank}, we have
$$
A(2n-1,n)=\sum _{j=0} ^{n}(-1)^{n-j}\binom {2n}{n-j}j^{2n-1}.
$$
Again, in order to analyse this expression modulo $3^\be$ (the reader should 
imagine that $\be=3^\al$), 
we have to distinguish several cases, depending on the
congruence class of $n$ modulo $3^{\be-1}$. Namely, we have
\begin{multline} \label{eq:AnsB} 
A(2n-1,n)\equiv\sum _{j=0} ^{n}(-1)^{n-j}\binom
{2n}{n-j}j^{2s-1}\pmod{3^\be}\\
\text{for}\quad n\equiv s~(\text{mod }3^{\be-1}).
\text{ and }n,s\ge\tfrac {\be} {2}.
\end{multline}
(The condition $n,s\ge\frac {\be} {2}$ is needed to ensure that
$j^{2n-1}\equiv j^{2s-1}\equiv0$~(mod~$3^{\be-1}$) for all $j$ which are
divisible by~$3$.)
From Proposition~\ref{lem:E2B}, we know that the generating function for the
numbers on the right-hand side of the above congruence, say
$$
E_s^{(2)}(z)=
\sum _{n\ge0} ^{}z^n\sum _{j=0} ^{n}(-1)^{n-j}\binom
{2n}{n-j}j^{2s-1},
$$
has the form
given on the right-hand side of \eqref{eq:E2B}. Consequently, it
satisfies the functional equation
\begin{equation} \label{eq:E2eq} 
(4z+1)(E_s^{(2)})^2(z)-z^2\left(1+p_s^{(2)}(z)\right)^2=
0.
\end{equation}
If we rewrite this in the form
$$
(1+z)(E_s^{(2)})^2(z)-z^2+3(E_s^{(2)})^2(z)
-z^2p_s^{(2)}(z)\left(2+p_s^{(2)}(z)\right)=
0,
$$
then we see that it fits the framework of Theorem~\ref{thm:general}.
Here, one chooses
$\ga=1$, $c_2(z)=1+z$, $c_1(z)=0$, $c_0(z)=-z^2$, which implies that
$$c_1^2(z)-c_0(z)c_2(z)=z^2(1+z)\quad \text{modulo }3,$$
so that the constants $e_1,e_2,f_1,f_2$ are given by
$e_1=0$,
$e_2=1$,
$f_1=1$,
$f_2=0$,
$\ep=1$, and
we have
$$\mathcal Q(\dots)=
(E_s^{(2)})^2(z)
-\tfrac {1} {3}zp_s^{(2)}(z)\left(2+p_s^{(2)}(z)\right).$$
This is indeed a polynomial in $z$ and $E_s^{(2)}(z)$ with integer
coefficients since, by Proposition~\ref{lem:E2B}, all coefficients of the
polynomial $p_s^{(2)}(z)$ are divisible by~$3$.
Consequently, for each $s$, the generating function $E_s^{(2)}(z)$, when reduced modulo
$3^\be$, can be written as a polynomial in the basic series
$\Psi(z)$. 

The rest of the argument is identical with corresponding arguments in
the proof of Theorem~\ref{thm:Euler1}.
\end{proof}

For the modulus 9, 
the procedure explained in the above proof leads to the following result.

\begin{theorem} \label{thm:Euler2-9}
Let $\Psi(z)$ be given by \eqref{eq:Psidef}.
Then, we have
\begin{equation} 
\label{eq:LoesE29}
\sum _{n\ge0} ^{}A(2n-1,n)\,z^n
=
-3 z \Psi(z)+
\left(6 z^4+6 z^3+4 z^2+4
   z\right) \Psi^3(z)
\quad \text {\em modulo }9.
\end{equation}
\end{theorem}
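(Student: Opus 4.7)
The plan is to mimic the proof of Theorem~\ref{thm:Euler1-9} almost verbatim, replacing the series $E^{(1)}(z)$ by $E^{(2)}(z)$ and invoking Proposition~\ref{lem:E2B} in place of Proposition~\ref{lem:E2}. First I would specialize \eqref{eq:AnsB} to the case $\be=2$, so that $3^{\be-1}=3$. This reduces the task to analyzing three congruence classes $n\equiv 1,2,0~(\text{mod }3)$, corresponding to $s=1,2,3$. For each such $s$ we have
\begin{equation*}
A(2n-1,n)\equiv \sum_{j=0}^{n}(-1)^{n-j}\binom{2n}{n-j}j^{2s-1}\pmod 9,
\end{equation*}
valid as soon as $n,s\ge 1$ (the possibly exceptional value $n=0$, where $A(-1,0)=0$ by convention, contributes nothing and needs only be inspected against the constant term of the candidate right-hand side).

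Next I would work out the three polynomials $p_s^{(2)}(z)$ explicitly via \eqref{eq:q2}--\eqref{eq:p2q2}. For $s=1$ one gets $p_1^{(2)}(z)=0$ and hence $E_1^{(2)}(z)=z/\sqrt{1+4z}$; for $s=2$ and $s=3$ a short computation using \eqref{eq:q2} yields concrete polynomial expressions $p_2^{(2)}(z), p_3^{(2)}(z)\in 3\mathbb Z[z]$, so that $E_s^{(2)}(z)=\tfrac{z}{\sqrt{1+4z}}(1+p_s^{(2)}(z))$. Each of these satisfies the functional equation \eqref{eq:E2eq}, which rewrites in the form
\begin{equation*}
(1+z)(E_s^{(2)})^2(z)-z^2+3(E_s^{(2)})^2(z)-z^2 p_s^{(2)}(z)\bigl(2+p_s^{(2)}(z)\bigr)=0,
\end{equation*}
matching the hypotheses of Theorem~\ref{thm:general} with the parameters $(\ga,e_1,e_2,f_1,f_2,\ep)=(1,0,1,1,0,1)$ as recorded in the proof of Theorem~\ref{thm:Euler2}.

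Now I would run the algorithm of Section~\ref{sec:method} (using the relation \eqref{eq:PsiEq} in place of \eqref{eq:PsiRel}, since we want only modulus $9$) on each of the three functional equations separately. The base step modulo~$3$ is dictated by \eqref{eq:a0}--\eqref{eq:a3al}; the single iteration step modulo~$9$ produces explicit Laurent polynomials in $z$ and $1+z$ as coefficients of $1,\Psi(z),\Psi^3(z)$. Having obtained closed expressions for $E_s^{(2)}(z)\pmod 9$ as polynomials in $\Psi(z)$, I would extract the $s$-th triadic section by using the identity $\Psi(z)=(1+z)(1+z^3)\Psi(z^9)\cdots$, or more pragmatically by replacing $\Psi(z)$ by $(1+z)\Psi(z^3)$ and then keeping only those monomials $z^n$ with $n\equiv s\pmod 3$.

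Finally, summing the three resulting triadic sections gives $E^{(2)}(z)\pmod 9$ as a polynomial in $\Psi(z^3)$ with coefficients in $\mathbb Z[z,z^{-1},(1+z)^{-1}]$; a last simplification using $\Psi^2(z)=(1+z)^{-1}\pmod 3$ where appropriate should collapse it to the claimed form $-3z\Psi(z)+(6z^4+6z^3+4z^2+4z)\Psi^3(z)$. The main obstacle I anticipate is purely bookkeeping rather than conceptual: keeping track of the correction term arising from the small-$n$ failures of \eqref{eq:AnsB} and correctly combining three separate triadic sections while making repeated reductions via \eqref{eq:PsiEq}. None of the steps requires any new idea beyond those already used for Theorem~\ref{thm:Euler1-9}.
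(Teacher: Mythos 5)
Your overall strategy is exactly the paper's: split $\sum_n A(2n-1,n)z^n$ into the three $3$-sections $n\equiv 1,2,0\pmod 3$, represent each auxiliary series $E_s^{(2)}(z)$ as a polynomial in $\Psi(z)$ via Proposition~\ref{lem:E2B}, Equation~\eqref{eq:E2eq} and Theorem~\ref{thm:general}, extract the relevant section, and sum. The gap is in your choice of exponents. For the congruence
$$
A(2n-1,n)\equiv\sum_{j=0}^{n}(-1)^{n-j}\binom{2n}{n-j}j^{2s-1}\pmod 9
$$
to hold, one needs not only $s\equiv n\pmod 3$ (which handles the $j$ coprime to $3$, via $j^6\equiv1\pmod 9$), but also $j^{2s-1}\equiv j^{2n-1}\pmod 9$ for the $j$ divisible by $3$; since $j^{2n-1}\equiv 0\pmod 9$ for such $j$ as soon as $n\ge2$, this forces $2s-1\ge 2$, that is $s\ge2$. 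Your choice $s=1$ for the class $n\equiv1\pmod 3$ violates this: already for $n=4$ one has $A(7,4)=2416\equiv4\pmod 9$, whereas $\sum_{j=0}^{4}(-1)^{4-j}\binom{8}{4-j}j=-20\equiv7\pmod 9$. The correct choice for this class is $s=4$, i.e.\ the exponent $j^{7}$, for which Proposition~\ref{lem:E2B} gives $\tfrac{z}{\sqrt{1+4z}}(1+126z+1680z^2+5040z^3)\equiv\tfrac{z}{\sqrt{1+4z}}(1+6z^2)\pmod 9$; this differs from your $\tfrac{z}{\sqrt{1+4z}}$ by $\tfrac{6z^3}{\sqrt{1+4z}}$, whose restriction to exponents $\equiv1\pmod 3$ does not vanish modulo $9$ (its coefficient of $z^4$ is $-12\equiv6$). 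Your choices $s=2$ (exponent $j^3$) and $s=3$ (exponent $j^5$) for the classes $n\equiv2$ and $n\equiv0$ satisfy $2s-1\ge2$ and are fine; the inequality $n,s\ge\be/2$ as stated in \eqref{eq:AnsB} is looser than what the argument actually requires, and the paper's own computation indeed uses the exponents $7,3,5$.
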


\begin{proof}
As in the proof of Theorem~\ref{thm:Euler1-9}, we distinguish three cases, 
depending on the congruence class of $n$ modulo~3.
In order to address the case where $n\equiv 1$~(mod~3), 
from Proposition~\ref{lem:E2B} we compute
\begin{equation} \label{eq:EEF1}
\sum _{n\ge0} ^{}z^n
\sum _{j=0} ^{n}(-1)^{n-j}\binom {2n}{n-j}j^7
=\frac {z} {\sqrt{1+4z}}(1 + 126 z + 1680 z^2 + 5040 z^3).
\end{equation}
Consequently, by applying \eqref{eq:E1eq} with $s=1$ and
$p_1^{(2)}(z)=1 + 126 z + 1680 z^2 + 5040 z^3$, we see that
the series $E_2^{(2)}(z)$ satisfies the functional equation 
$$
(1+4z)(E_s^{(2)})^2(z)-z^2(1 + 126 z + 1680 z^2 + 5040 z^3)^2=
0.
$$
The method from Section~\ref{sec:method} in the form of
Theorem~\ref{thm:general} then yields
$$
E_1^{(2)}(z)=
-3 z \Psi(z)+\left(6 z^4+6 z^3+7 z^2+4
   z\right) \Psi^3(z)
\quad 
\text{modulo }9,
$$
and the relevant part of it, consisting only of terms involving
powers $z^n$ with $n\equiv1$~(mod~3) is
\begin{equation} \label{eq:E21sec} 
-3 z \Psi(z^3)+\left(6 z^7+4z^4+4z\right)
   \Psi^3(z^3).
\end{equation}
Next, addressing the case where $n\equiv2$~(mod~3), we have
\begin{equation} \label{eq:EEF3}
\sum _{n\ge0} ^{}z^n
\sum _{j=0} ^{n}(-1)^{n-j}\binom {2n}{n-j}j^{3}
=\frac {z} {\sqrt{1+4z}}
(1+6z),
\end{equation}
so that this series satisfies the functional equation
$$
(1+4z)(E_s^{(2)})^2(z)-z^2(1 + 6 z )^2=
0.
$$
Our algorithm then yields
$$
E_2^{(2)}(z)=
-3 z \Psi(z)+\left(6 z^3+4 z^2+4 z\right)
   \Psi^3(z)
\quad 
\text{modulo }9,
$$
with the relevant part being
\begin{equation} \label{eq:E22sec} 
-3 z ^2 \Psi(z^3) + ( 22 z^5+16 z^2) \Psi^3(z^3).
\end{equation}
Finally, in order to address the case where $n\equiv0$~(mod~3),
we compute
\begin{equation} \label{eq:EEF5}
\sum _{n\ge0} ^{}z^n
\sum _{j=0} ^{n}(-1)^{n-j}\binom {2n}{n-j}j^{5}
=\frac {z} {\sqrt{1+4z}}
(1+30z+120z^2),
\end{equation}
so that this series satisfies the functional equation
$$
(1+4z)(E_s^{(2)})^2(z)-z^2(1 + 30z+120z^2)^2=
0.
$$
Our algorithm yields
$$
E_3^{(2)}(z)=
-3 z \Psi(z)
+\left(3 z^4+6 z^3+z^2+4
   z\right) \Psi^3(z)
\quad 
\text{modulo }9,
$$
with the relevant part being
\begin{equation} \label{eq:E23sec} 
( 15 z^6+21 z^3) \Psi^3(z^3).
\end{equation}
Summation of \eqref{eq:E21sec},  \eqref{eq:E22sec}, and \eqref{eq:E23sec}
then leads to \eqref{eq:LoesE29}. 
\end{proof}

The same procedure produces the following result for the modulus~27.

\begin{theorem} \label{thm:Euler2-27}
Let $\Psi(z)$ be given by \eqref{eq:Psidef}.
Then, we have
\begin{multline} 
\label{eq:LoesE227}
\sum _{n\ge0} ^{}A(2n-1,n)\,z^n
=
-{3
   z \left(3 z^2+5
   \right) }\Psi(z)
+z \left(24 z^3+15 z^2+10
   z+19\right) \Psi^3(z)
\\
+3 z
   \left(3 z^4+6 z^3+2 z^2+7
   z+8\right) \Psi^5(z)
\quad \text {\em modulo }27.
\end{multline}
\end{theorem}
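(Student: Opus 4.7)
The plan is to mimic the proof of Theorem~\ref{thm:Euler2-9}, but with modulus $27$ in place of $9$. Since here $3^{3^\al} = 27$, we have $\be = 3$, so the congruence \eqref{eq:AnsB} tells us that, for $n \ge 2$,
$$A(2n-1,n) \equiv \sum_{j=0}^{n}(-1)^{n-j}\binom{2n}{n-j}j^{2s-1} \pmod{27}\quad \text{whenever } n \equiv s \pmod{9}.$$
Consequently, we must analyse \emph{nine} residue classes $s \in \{1,2,\dots,9\}$ (rather than three as in Theorem~\ref{thm:Euler2-9}), with a possible small-$n$ correction polynomial $\mathcal{E}(z)$ absorbing the terms $n=0,1$ that violate the range hypothesis.

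For each such $s$, I would first compute the polynomial $p_s^{(2)}(z)$ from Proposition~\ref{lem:E2B} (equivalently, from \eqref{eq:q2} and \eqref{eq:p2q2}), obtaining the closed form
$$E_s^{(2)}(z)=\frac{z}{\sqrt{1+4z}}\bigl(1+p_s^{(2)}(z)\bigr).$$
Each $E_s^{(2)}(z)$ satisfies the functional equation \eqref{eq:E2eq}, which by the proof of Theorem~\ref{thm:Euler2} falls under the hypotheses of Theorem~\ref{thm:general} with $\ga=1$, $c_2(z)=1+z$, $c_1(z)=0$, $c_0(z)=-z^2$, $\ep=1$. Hence, the algorithm of Section~\ref{sec:method} produces, for each $s$, an expression for $E_s^{(2)}(z)$ modulo $27$ as a polynomial in $\Psi(z)$ of degree at most~$5$ with coefficients in $\Z[z,z^{-1},(1+z)^{-1}]$.

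From each such expression I would then extract its $9$-section, namely the terms of exponent $n \equiv s \pmod 9$. Using the factorisation \eqref{eq:psi3al} in the form $\Psi(z) = (1+z)(1+z^3)\Psi(z^9)$, each $9$-section rewrites as a polynomial in $\Psi(z^9)$, and then, by the reverse use of \eqref{eq:psi3al}, as a polynomial in $\Psi(z)$ modulo~$27$. Summing the nine sections (and adding $\mathcal{E}(z)$) yields the asserted formula \eqref{eq:LoesE227}.

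The only obstacle is computational volume rather than conceptual novelty: nine functional equations, nine applications of the Section~\ref{sec:method} algorithm modulo $27$, and nine $9$-section extractions to be added. All of this is routine, and can be verified with the accompanying \textsl{Mathematica} notebook. In particular, the fact that the final sum collapses to the simple shape displayed in \eqref{eq:LoesE227}, with only the modest Laurent polynomial coefficients shown, is guaranteed a priori by Theorem~\ref{thm:Euler2} together with the uniqueness of the solution modulo~$27$; only its explicit coefficients require the above case-by-case bookkeeping.
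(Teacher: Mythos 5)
Your proposal follows exactly the route the paper takes: it is the procedure of the proofs of Theorems~\ref{thm:Euler2} and~\ref{thm:Euler2-9}, now run with $\be=3$, i.e.\ with nine residue classes modulo~$9$, nine applications of Theorem~\ref{thm:general} modulo~$27$, nine $9$-section extractions via \eqref{eq:psi3al}, and a final summation (the paper itself only says ``the same procedure produces the following result for the modulus~$27$''). The one slip is your choice of representatives $s\in\{1,2,\dots,9\}$: the congruence \eqref{eq:AnsB} requires $s\ge\be/2=3/2$, i.e.\ $s\ge2$, because for $3\mid j$ one needs $j^{2s-1}\equiv0$ modulo a sufficiently high power of~$3$; with $s=1$ the exponent $2s-1=1$ is too small and the class $n\equiv1\pmod 9$ would be computed from the wrong series for \emph{all} large $n$ in that class, which cannot be absorbed into the finite correction $\mathcal E(z)$. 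The fix is to take $s=10$ (exponent $j^{19}$) for that class — precisely the analogue of the paper's use of $j^{7}$ rather than $j^{1}$ for the class $n\equiv1\pmod 3$ in the proof of Theorem~\ref{thm:Euler2-9}. With the representatives taken from $\{2,3,\dots,10\}$, the argument is correct and coincides with the paper's.
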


These results generalise
Theorem~5.16 in \cite{DeSaAA}, which is an easy consequence.

\begin{remark}
It is obvious that we could also treat ``almost" central Eulerian
numbers, such as $A(2n,n-1)$ or $A(2n-1,n-1)$, modulo powers of~$3$
using the same approach, and this would lead to similar results.
\end{remark}

\section{Ap\'ery numbers}
\label{sec:Apery}

In \cite{DeSaAA}, Deutsch and Sagan also discuss {\it Ap\'ery numbers}
(and, in fact, generalised Ap\'ery numbers), and they derive in
particular explicit congruences for Ap\'ery numbers modulo~3.
The corresponding result, Theorem~5.14 in \cite{DeSaAA}, has a
form which seems to suggest that our method from Section~\ref{sec:method}
should be applicable here as well. 
It turns out however that this is not the case. Embarrassingly,
we are not even able to make our method work for the modulus~$3$,
due to the fact that we were not able to find an equation 
for the corresponding generating functions which would be
suitable for our method.

Let us begin with the Ap\'ery numbers of the first kind, defined by
\begin{equation} \label{eq:Apery2} 
A^{(2)}_n=\sum_{k=0}^n {\binom nk}^2\binom {n+k}k.
\end{equation}
They are associated with $\zeta(2)$, see e.g.\ \cite{fi}.

There is a well-known recurrence relation for these numbers.
Namely,
Ap\'ery computed (and the Gosper--Zeilberger algorithm
(cf.\ \cite[Ch.~6]{PeWZAA}) finds
automatically) the recurrence
\begin{equation} \label{eq:Apery2Rek}
(n+2)^2
   A^{(2)}_{n+2}
-\left(11 n^2+33
   n+25\right)
   A^{(2)}_{n+1}
-(n+1)^2
   A^{(2)}_n
=0.
\end{equation}
This translates into the linear differential equation
\begin{equation} \label{eq:Apery2EF}
(z+3)
   A^{(2)}(z)
+\left(3
   z^2+22 z-1\right)
   (A^{(2)})'(z)
+\left(z^3+11
   z^2-z\right) (A^{(2)})''(z)=0
\end{equation}
for the generating function $A^{(2)}(z):=\sum_{n\ge0}A^{(2)}_n\,z^n$,
with initial conditions $A^{(2)}(0)=1$ and $(A^{(2)})'(0)=3$.
Unfortunately, this equation is {\it not\/} suitable for our
method, as Equation~\eqref{eq:Apery2EF} does not determine 
a unique solution modulo~$3$ (see the last paragraph in
Section~\ref{sec:method}). Thus, in lack of a different equation,
we cannot apply our method.

As it turns out, there is apparently a deeper reason why our method 
does not apply. By looking at enough data, we have worked
out a conjectural description of the Ap\'ery numbers of the
first kind modulo~$9$, see the conjecture below.
The patterns describing the various congruence classes do
not fit with the patterns which appear in the descriptions
of the coefficients of $\Psi^3(z)$ modulo~$9$ which one obtains by
carrying out the approach sketched in Section~\ref{sec:extr}
(see in particular the paragraphs after \eqref{eq:Psipoteven}
and \eqref{eq:Psipotodd}), the resulting congruences being
explicitly displayed 
%\begin{comment}
in Corollary~\ref{cor:Psi-3}.
%\end{comment}
\begin{comment}
in the appendix of \cite{KrMuAE}.
\end{comment}

\begin{conjecture} \label{thm:Apery9}
The Ap\'ery numbers $A^{(2)}_n$ obey the following congruences
modulo $9$:
\begin{enumerate} 
\item [(i)]
$A^{(2)}_n\equiv 1$~{\em(mod~$9$)} if, and only if,
the $3$-adic expansion of $n$ is an element of
$$
\{0,2\}^*;
$$
\item [(ii)]
$A^{(2)}_n\equiv 3$~{\em(mod~$9$)} if, and only if,
the $3$-adic expansion of $n$ has exactly one occurrence of the
string $01$ and otherwise contains only $0$'s and $2$'s;
\item [(iii)]
$A^{(2)}_n\equiv 6$~{\em(mod~$9$)} if, and only if,
the $3$-adic expansion of $n$ has exactly one occurrence of the
string $21$ and otherwise contains only $0$'s and $2$'s;
\item[(iv)]in the cases not covered by Items~{\em(i)}--{\em(iii),}
$A^{(2)}_n$ is divisible by $9;$
in particular, 
$A^{(2)}_n\not\equiv 2,4,5,7,8$~{\em(mod~$9$)} for all $n$.
\end{enumerate}
\end{conjecture}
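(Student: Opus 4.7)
\emph{Overall strategy.} The obstruction pointed out in Remark~\ref{rem:2} (the differential equation \eqref{eq:Apery2EF} fails to determine $A^{(2)}(z)$ uniquely even modulo~3) means that the $\Psi$-method of Section~\ref{sec:method} cannot be invoked; one must work directly from the arithmetic of the binomial sum $A^{(2)}_n=\sum_{k=0}^n\binom{n}{k}^2\binom{n+k}{k}$ or from the recurrence \eqref{eq:Apery2Rek}. The plan is to prove a ternary descent congruence modulo~9: explicit constants $c(a,b)\in\mathbb Z/9\mathbb Z$ (indexed by pairs $(a,b)\in\{0,1,2\}^2$) such that, for every $m\ge0$ whose lowest ternary digit is $a$ and every $b\in\{0,1,2\}$,
\begin{equation}\label{eq:descent}
A^{(2)}_{3m+b}\equiv c(a,b)\,A^{(2)}_{m}\pmod 9.
\end{equation}
Iterating \eqref{eq:descent} collapses $A^{(2)}_n\bmod 9$ to a product $\prod_{i\ge0}c(d_{i+1},d_i)$ over consecutive ternary digits of $n$ (with $d_{-1}=0$ by convention), and the classification in Items~(i)--(iv) would follow from reading off the table of values: one expects $c(a,b)\in\{1,-1\}$ whenever $\{a,b\}\subseteq\{0,2\}$, $c(0,1)\equiv 3$, $c(2,1)\equiv -3$, and $c(1,\cdot)\equiv 0\pmod 3$, which would force the residue $9\mid A^{(2)}_n$ in all cases where the ternary expansion of $n$ contains either two digits equal to $1$ or the string ``$1x$'' with $x\in\{0,2\}$.

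\emph{Execution of the steps.} First, I would confirm \eqref{eq:descent} on a computer for $n\le 3^4-1$ (enough to pin down all nine $c(a,b)$ unambiguously). Second, to prove \eqref{eq:descent} in general, I would attack the binomial sum using Granville's prime-power extension of Lucas' theorem \cite{GranAA}, splitting $k=3\ell+s$ and $n+k=3(m+\ell)+(b+s)$ (with possible carry) and tracking $v_3$ of each term in the triple product $\binom{n}{k}^2\binom{n+k}{k}$. The terms contributing nontrivially modulo~9 should organise themselves into a single copy of a shifted sum isomorphic (modulo~9) to $A^{(2)}_m$, multiplied by a combinatorial constant depending only on $(a,b)$; this is the content of \eqref{eq:descent}. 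As a cross-check, one can also exploit the recurrence \eqref{eq:Apery2Rek} to propagate \eqref{eq:descent} inductively, treating separately the three residues of $n$ modulo~$3$ and using the fact that, although $(n+2)^2$ vanishes modulo~9 when $n\equiv 1\pmod 3$, the explicit binomial expression supplies the missing determination.

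\emph{Main obstacle.} The core difficulty is precisely the step where the leading coefficient $(n+2)^2$ of \eqref{eq:Apery2Rek} becomes divisible by~$9$, i.e.\ where the recurrence loses information; this is the arithmetic shadow of the non-uniqueness phenomenon of Remark~\ref{rem:2}. Consequently the $3$-adic analysis of the Granville contributions must be carried out to one order of precision beyond what is needed for the Deutsch--Sagan result modulo~$3$, and one has to show delicate cancellations: the contributions of size exactly $3$ (rather than $9$) must combine to produce $c(0,1)\equiv 3$ and $c(2,1)\equiv -3$ precisely in the configurations claimed by Items~(ii) and (iii), and must sum to $0\pmod 9$ in every other configuration involving a digit~$1$. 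Verifying this cancellation --- in particular ruling out residues $\not\equiv 0\pmod 3$ other than $1,3,6$ --- is where the combinatorial work concentrates, and is also what explains why a direct generating function method in the style of Theorem~\ref{thm:general} fails here while the descent \eqref{eq:descent} still succeeds.
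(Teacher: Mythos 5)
First, a point of framing: the paper does not prove this statement at all --- it is presented explicitly as a \emph{conjecture}, supported only by numerical data, and Section~\ref{sec:Apery} explains that the authors' $\Psi$-method breaks down here because \eqref{eq:Apery2EF} does not determine its solution uniquely modulo~$3$. So there is no proof in the paper to compare against, and your text does not supply one either: it is a programme. The central claim, the descent congruence $A^{(2)}_{3m+b}\equiv c(a,b)\,A^{(2)}_m \pmod 9$, is exactly as hard as the conjecture itself, and the step where you say the Granville/Lucas contributions ``should organise themselves'' into a single shifted copy of $A^{(2)}_m$ is precisely the point where all the work lies; nothing is verified there. Note also that even the well-definedness of $c(a,b)$ is a substantive claim, and since $A^{(2)}_m$ can be $0$, $3$ or $6$ modulo~$9$ (a zero divisor), the relation $A^{(2)}_{3m+b}\equiv c(a,b)A^{(2)}_m$ does not even pin down $c(a,b)$ in $\mathbb Z/9\mathbb Z$; to close the induction one would almost certainly have to formulate and prove the descent modulo~$27$.

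Worse, the table of constants you announce is demonstrably wrong and contradicts the very statement you are trying to prove. You claim $c(1,\cdot)\equiv 0\pmod 3$, hence that $9\mid A^{(2)}_n$ whenever the ternary expansion of $n$ contains a digit $1$ followed by a $0$ or a $2$. But $A^{(2)}_3=147\equiv 3$ and $A^{(2)}_5=11253\equiv 3\pmod 9$, while $(3)_3=10$ and $(5)_3=12$; these are exactly the cases covered by Item~(ii) of Conjecture~\ref{thm:Apery9} (one occurrence of $01$, otherwise only $0$'s and $2$'s), so your asserted table would refute Items~(ii) and (iii) rather than prove them. In your notation the data force $c(1,0)\equiv c(1,2)\equiv 1\pmod 3$ (e.g.\ $A^{(2)}_3\equiv c(1,0)A^{(2)}_1=3\,c(1,0)$), and only $c(1,1)\equiv 0\pmod 3$; the vanishing modulo~$9$ in Item~(iv) must instead come from accumulating \emph{two} factors divisible by~$3$ (one $c(\cdot,1)$ for each digit $1$, or the single factor $c(1,1)$ for adjacent $1$'s), not from a blanket $c(1,\cdot)\equiv 0$. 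So even as a blueprint the proposal needs repair before the genuinely hard step --- proving the descent --- can be attempted.
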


The Ap\'ery numbers of the second kind are defined by
$$
A^{(3)}_n=\sum_{k=0}^n {\binom nk}^2{\binom {n+k}k}^2.
$$
They are associated with $\zeta(3)$, see e.g.\ \cite{fi}.

These numbers satisfy also a well-known recurrence relation
which 
Ap\'ery computed (and the Gosper--Zeilberger algorithm finds
automatically), namely
$$
(n+2)^3
   A^{(3)}_{n+2}
-(2n+3)\left(17 n^2+51
   n+39\right)
   A^{(3)}_{n+1}
+(n+1)^3
   A^{(3)}_n
=0.
$$
This translates into the linear differential equation
\begin{multline} \label{eq:Apery3EF}
(z-5)
   A^{(3)}(z)
+\left(7 z^2-112
   z+1\right) (A^{(3)})'(z)\\
+3
   \left(2 z^3-51 z^2+z\right)
   (A^{(3)})''(z)
+\left(z^4-34
   z^3+z^2\right) (A^{(3)})'''(z)=0
\end{multline}
for the generating function $A^{(3)}(z):=\sum_{n\ge0}A^{(3)}_n\,z^n$,
with initial conditions $A^{(3)}(0)=1$ and $(A^{(3)})'(0)=5$.

Again, this equation is {\it not\/} suitable for our
method, for the same reason as before.

Also here we have worked out a conjectural description modulo~$9$,
displaying patterns which do not fit with those appearing in the
description of the coefficients of $\Psi^3(z)$ modulo~$9$.

\begin{conjecture} \label{thm:Apery39}
The Ap\'ery numbers $A^{(3)}_n$ obey the following congruences
modulo $9$:
\begin{enumerate} 
\item [(i)]
$A^{(3)}_n\equiv 1$~{\em(mod~$9$)} if, and only if,
the $3$-adic expansion of $n$ contains $6k$ digits $1$, for some $k$,
and otherwise only $0$'s and $2$'s;
\item [(ii)]
$A^{(3)}_n\equiv 2$~{\em(mod~$9$)} if, and only if,
the $3$-adic expansion of $n$ contains $6k+5$ digits $1$, for some $k$,
and otherwise only $0$'s and $2$'s;
\item [(iii)]
$A^{(3)}_n\equiv 4$~{\em(mod~$9$)} if, and only if,
the $3$-adic expansion of $n$ contains $6k+4$ digits $1$, for some $k$,
and otherwise only $0$'s and $2$'s;
\item [(iv)]
$A^{(3)}_n\equiv 5$~{\em(mod~$9$)} if, and only if,
the $3$-adic expansion of $n$ contains $6k+1$ digits $1$, for some $k$,
and otherwise only $0$'s and $2$'s;
\item [(v)]
$A^{(3)}_n\equiv 7$~{\em(mod~$9$)} if, and only if,
the $3$-adic expansion of $n$ contains $6k+2$ digits $1$, for some $k$,
and otherwise only $0$'s and $2$'s;
\item [(vi)]
$A^{(3)}_n\equiv 8$~{\em(mod~$9$)} if, and only if,
the $3$-adic expansion of $n$ contains $6k+3$ digits $1$, for some $k$,
and otherwise only $0$'s and $2$'s;
\item[(vii)]in the cases not covered by Items~{\em(i)}--{\em(vi),}
$A^{(3)}_n$ is divisible by $9;$
in particular, 
$A^{(3)}_n\not\equiv 3,6$~{\em(mod~$9$)} for all $n$.
\end{enumerate}
\end{conjecture}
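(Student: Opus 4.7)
The conjecture is equivalent to the single clean statement
\begin{equation*}
A^{(3)}_n \equiv 5^{s_1(n)} \pmod{9},
\end{equation*}
where $s_1(n)$ denotes the number of digits~$1$ in the ternary expansion of $n$: indeed, $5$ has multiplicative order~$6$ modulo~$9$, with successive powers $1,5,7,8,4,2$, exactly matching cases~(i)--(vi) upon reducing $s_1(n)$ modulo~$6$; the remaining assertion in~(vii) that $A^{(3)}_n$ is never congruent to $0,3,6$ modulo~$9$ is then automatic, since $5$ is a unit modulo $9$. The plan would therefore be to prove the Lucas-type congruence
\begin{equation*}
A^{(3)}_{3m+r} \equiv A^{(3)}_r\, A^{(3)}_m \pmod{9}, \qquad r\in\{0,1,2\},
\end{equation*}
and iterate it down the ternary digits of $n$, using the base values $A^{(3)}_0 = 1$, $A^{(3)}_1 = 5$, and $A^{(3)}_2 = 73 \equiv 1 \pmod 9$. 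In generating-function language this is the single identity $A^{(3)}(z) \equiv (1+5z+z^2)\,A^{(3)}(z^3) \pmod 9$.

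To prove the Lucas-type congruence, my first attempt would be a direct digit-wise attack on the defining sum $A^{(3)}_n = \sum_{k=0}^n \binom{n}{k}^2\binom{n+k}{k}^2$. Write $n = 3m+r$ and $k = 3j+s$ with $0\le r,s\le 2$, and apply a refinement of Lucas' theorem modulo~$9$ (of Jacobsthal--Kazandzidis type, adapted to $p=3$) to expand
\begin{equation*}
\binom{3m+r}{3j+s} \equiv \binom{m}{j}\binom{r}{s} + 3\,\Theta(m,j;r,s) \pmod 9,
\end{equation*}
and analogously for $\binom{n+k}{k}$. Squaring and multiplying annihilates the $(3\Theta)^2$ contributions modulo~$9$, leaving a main product term $\binom{m}{j}^2\binom{m+j}{j}^2\binom{r}{s}^2\binom{r+s}{s}^2$ together with two ``cross'' terms, each $3$ times an explicit sum. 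After summation over $j\in\{0,\dots,m\}$ and $s\in\{0,1,2\}$, the main term yields exactly $A^{(3)}_m \cdot A^{(3)}_r$, and the result will follow provided the cross terms sum to zero modulo~$3$, a cancellation I would seek to prove by a symmetry or telescoping identity among the correction terms $\Theta$.

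The main obstacle will be precisely that final cancellation. For primes $p\ge 5$, Jacobsthal's congruence $\binom{pn}{pk}\equiv\binom{n}{k}\pmod{p^3}$ leaves ample slack for analogous arguments, but at $p=3$ only a single extra $3$-adic digit is available, so the required identity among the $\Theta$'s is tight and is likely to be intricate to verify. As a backup strategy, one could try to establish (or invoke, if available in the literature) a Beukers--Coster type supercongruence $A^{(3)}_{3m+r}\equiv A^{(3)}_m A^{(3)}_r \pmod{27}$ via the Beukers triple-integral representation of $A^{(3)}_n$, or by means of Strehl's combinatorial interpretation of $A^{(3)}_n$ in terms of lattice paths. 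The authors' observation that the differential equation~\eqref{eq:Apery3EF} fails to determine $A^{(3)}(z)$ uniquely modulo~$3$ explains precisely why Theorem~\ref{thm:general} cannot be brought to bear here, and underscores the need for an independent argument of this combinatorial--arithmetic flavour.
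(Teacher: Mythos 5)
First, be aware of what you are up against: the statement is recorded in the paper as a \emph{conjecture}. The authors obtained it empirically and explicitly explain that their $\Psi$-based machinery cannot be brought to bear, because the differential equation \eqref{eq:Apery3EF} does not determine its solution uniquely modulo~$3$. There is therefore no proof in the paper to compare yours against, and anything you supply must stand entirely on its own.

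Your reduction of the conjecture to the single statement $A^{(3)}_n\equiv 5^{e}\pmod 9$, with $e$ the number of ternary digits~$1$ of $n$, is correct: the powers $5^0,\dots,5^5\equiv 1,5,7,8,4,2\pmod 9$ match items (i), (iv), (v), (vi), (iii), (ii) in that order, these six cases exhaust all $n$ (every ternary digit is a $0$, $1$ or $2$, so the qualifier ``otherwise only $0$'s and $2$'s'' is automatic), and item (vii) is then immediate. The further reduction to the Lucas-type congruence $A^{(3)}_{3m+r}\equiv A^{(3)}_m A^{(3)}_r\pmod 9$ together with the base values $A^{(3)}_0=1$, $A^{(3)}_1=5$, $A^{(3)}_2=73\equiv1$ is likewise sound. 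But the proposal does not prove that congruence. The digit-splitting of $\binom nk^2\binom{n+k}k^2$ does dispose of several cases for free modulo~$9$ (whenever $s>r$ one has $3\mid\binom nk$, and whenever $r+s\ge3$ one has $3\mid\binom{n+k}k$, so those summands vanish after squaring), but for the surviving pairs $(r,s)$ the entire content of the claim is concentrated in the asserted vanishing modulo~$3$ of the sum of the cross terms involving your correction quantities $\Theta$, and you give no argument for this --- you yourself flag it as the main obstacle and offer only the hope of ``a symmetry or telescoping identity.'' The fallback you mention is no better off as stated: the Jacobsthal--Kazandzidis and Beukers--Coster supercongruences are theorems for primes $p\ge5$, precisely the range excluded here. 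Until that cancellation is actually carried out, what you have is a correct reduction of the conjecture to another unproved congruence, not a proof. I would add that a Lucas-type congruence modulo~$9$ for these Ap\'ery numbers is exactly the kind of statement treated in Gessel's work on Ap\'ery number congruences, so the missing step may well be available in the literature; but as written the argument has a genuine gap at its central point.
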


\appendix

\section{Expansions of powers of the Cantor-like power series $\Psi(z)$}
\label{appA}

In this section, we indicate how to explicitly extract
coefficients from powers of $\Psi(z)$ modulo~$3^e$, where $e$
is a given positive integer. The reader should recall that,
by \eqref{eq:Psipoteven} and \eqref{eq:Psipotodd}, it suffices to
explain how to extract coefficients from expressions of the form
\begin{equation} \label{eq:psimala} 
\Psi(z)\frac {1} {(1+z)^K}
\prod _{j=1} ^{s}\left(\frac {z^{3^{k_j}}(1+z^{3^{k_j}})} 
{1+z^{3^{k_j+1}}}\right)^{a_j}
\end{equation}
and
\begin{equation} \label{eq:mala} 
\frac {1} {(1+z)^K}
\prod _{j=1} ^{s}\left(\frac {z^{3^{k_j}}(1+z^{3^{k_j}})} 
{1+z^{3^{k_j+1}}}\right)^{a_j}.
\end{equation}
We start with \eqref{eq:psimala}. Rather than providing a general
description of how to proceed (which is possible but would be 
hardly comprehensible), we prefer to work through the
special case where $s=2$, that is,
\begin{equation} \label{eq:a1a2} 
\Psi(z)
\frac {1} {(1+z)^K}
\left(\frac {z^{3^{k_1}}(1+z^{3^{k_1}})} 
{1+z^{3^{k_1+1}}}\right)^{a_1}
\left(\frac {z^{3^{k_2}}(1+z^{3^{k_2}})} 
{1+z^{3^{k_2+1}}}\right)^{a_2},
\end{equation}
where $k_1>k_2\ge0$ and $a_1,a_2\ge1$.
We have to distinguish several cases, depending on whether $K,k_1,k_2$
are ``close to each other," or not.

First let $K\le k_2$ and $k_2+a_2<k_1$. In that case, the expression
\eqref{eq:a1a2} can be transformed into
\begin{multline*}
\prod _{i=1} ^{K-1}(1-z+z^2-\dots+z^{3^i-1})
\prod _{i=K} ^{k_2-1}(1+z^{3^i})\\
\times
(1+z^{3^{k_2}})(z^{3^{k_2}}+z^{2\cdot 3^{k_2}})^{a_2}
\prod _{i=k_2+2} ^{k_2+a_2}(1-z^{3^{k_2+1}}+z^{2\cdot 3^{k_2+1}}-\dots
+z^{3^i-3^{k_2+1}})
\prod _{i=k_2+a_2+1} ^{k_1-1}(1+z^{3^i})\\
\times
(1+z^{3^{k_1}})
(z^{3^{k_1}}+z^{2\cdot 3^{k_1}})^{a_1}
\prod _{i=k_1+2} ^{k_1+a_1}(1-z^{3^{k_1+1}}+z^{2\cdot 3^{k_1+1}}-\dots
+z^{3^i-3^{k_1+1}})
\prod _{i=k_1+a_1+1} ^{\infty}(1+z^{3^i}).
\end{multline*}
We write this as
\begin{equation} \label{eq:P012} 
P_0(z)
\left(\prod _{i=K} ^{k_2-1}(1+z^{3^i})\right)
P_1(z)
\left(\prod _{i=k_2+a_2+1} ^{k_1-1}(1+z^{3^i})\right)P_2(z)
\left(\prod _{i=k_1+a_1+1} ^{\infty}(1+z^{3^i})\right),
\end{equation}
which implicitly defines the polynomials $P_0(z),P_1(z),P_2(z)$.
It should be noted that $P_0(z)$ has degree at most $3^K$,
$P_1(z)$ has order $a_23^{k_2}$ and degree at most $3^{k_2+a_2}$,
and 
$P_2(z)$ has order $a_13^{k_1}$ and degree at most $3^{k_1+a_1}$.
If we ``organise" the polynomials $P_0(z),P_1(z),P_2(z)$ in the form
\begin{align*}
P_0(z)&=\sum_{\ga\in\Z}\ga\sum_{(m)_3\in Z_\ga}z^m,\\
P_1(z)&=\sum_{\be\in\Z}\be\sum_{(m)_3\in Y_\be 0^{k_2}}z^m,\\
P_2(z)&=\sum_{\al\in\Z}\al\sum_{(m)_3\in X_\al 0^{k_1}}z^m,
\end{align*}
where $(m)_3$ denotes the $3$-adic representation of the integer $m$,
considered as a string of $0$'s, $1$'s, and $2$'s,
$X_\al$ is an appropriate set of strings contained in
$\{0,1,2\}^{K}$,
$Y_\al$ is an appropriate set of strings contained in
$\{0,1,2\}^{a_2+1}$, and
$Z_\al$ is an appropriate set of strings contained in
$\{0,1,2\}^{a_1+1}$. Inspection of \eqref{eq:P012} makes it now
obvious that the coefficients $c_n$ in the series $\sum_{n\ge0}c_nz^n$
given by \eqref{eq:a1a2} can be described in the form
\begin{multline*}
c_n=C\quad \text{if, and and only if,}\quad \\
(n)_3\in \bigcup_{\al\cdot\be\cdot\ga=C}
\bigg(\bigcup_{X\in X_\al,\,Y\in Y_\be,\,Z\in Z_\ga}
\{0,1\}^*X 
\{0,1\}^{k_1-k_2-a_2-1}Y
\{0,1\}^{k_2-K}Z\bigg),
\end{multline*}
where we used again the word notation explained before 
Theorem~\ref{thm:Free9}.

Next let $K> k_2\ge0$ (``$k_1$ is close to $K$") 
and $K+a_2\le k_1$. In this case, the expression
\eqref{eq:a1a2} can be transformed into
\begin{multline*}
\prod _{i=1} ^{K-1}(1-z+z^2-\dots+z^{3^i-1})\\
\times
(1+z^{3^{k_2}})(z^{3^{k_2}}+z^{2\cdot 3^{k_2}})^{a_2}
\prod _{i=K} ^{K+a_2-1}(1-z^{3^{k_2+1}}+z^{2\cdot 3^{k_2+1}}-\dots
+z^{3^i-3^{k_2+1}})
\prod _{i=K+a_2} ^{k_1-1}(1+z^{3^i})\\
\times
(1+z^{3^{k_1}})
(z^{3^{k_1}}+z^{2\cdot 3^{k_1}})^{a_1}
\prod _{i=k_1+2} ^{k_1+a_1}(1-z^{3^{k_1+1}}+z^{2\cdot 3^{k_1+1}}-\dots
+z^{3^i-3^{k_1+1}})
\prod _{i=k_1+a_1+1} ^{\infty}(1+z^{3^i}).
\end{multline*}
Here, we write this as
\begin{equation} \label{eq:P012b} 
P_1(z)
\left(\prod _{i=K+a_2} ^{k_1-1}(1+z^{3^i})\right)P_2(z)
\left(\prod _{i=k_1+a_1+1} ^{\infty}(1+z^{3^i})\right),
\end{equation}
which implicitly defines the polynomials $P_1(z)$ and $P_2(z)$.
By going through the same reasoning as before, one comes to the
conclusion that in the present case
the coefficients $c_n$ in the series $\sum_{n\ge0}c_nz^n$
given by \eqref{eq:a1a2} can be described in the form
$$
c_n=C\quad \text{if, and and only if,}\quad 
(n)_3\in \bigcup_{\al\cdot\be=C}
\bigg(\bigcup_{X\in X_\al,\,Y\in Y_\be}
\{0,1\}^*X 
\{0,1\}^{k_1-a_2-K}Y\bigg),
$$
for appropriate sets of strings $X_\al$ and $Y_\be$.

The other cases to be discussed are the case where 
$K\le k_2$ and $k_2+a_2\ge k_1$ (``$k_1$ is close to $k_2$";
we always assume $k_2<k_1$),
and the case where $K\ge k_1>k_2\ge0$ (``$K.k_1,k_2$ are all close to
each other").  
Both of them can be treated in a similar fashion.

\begin{comment}
This approach is carried out explicitly in the appendix of
\cite{KrMuAE} in order to describe the coefficients of $\Psi^3(z)$ and
$\Psi^5(z)$ modulo~$9$ and $27$.
\end{comment}

%\begin{comment}
\medskip
We illustrate the above procedure by making explicit 
how to extract coefficients from odd powers of $\Psi(z)$
modulo~$27$. By Lemma~\ref{lem:Psi-2}, we have
\begin{multline} \label{eq:Psi-3mod27}
\Psi^3(z)=\frac {1} {1+z}\Psi(z)\Bigg(
1
+3\sum _{k_1\ge0} ^{}\frac {z^{3^{k_1}}(1+z^{3^{k_1}})} {1+z^{3^{k_1+1}}}
+9\sum _{k_1>k_2\ge0} ^{}\frac {z^{3^{k_1}+3^{k_2}}
(1+z^{3^{k_1}})(1+z^{3^{k_{2}}})} {(1+z^{3^{k_1+1}})(1+z^{3^{k_2+1}})}
\Bigg)\\
\text {modulo }27.
\end{multline}
The following lemma provides the means for coefficient extraction
from $\Psi^3(z)$ modulo any power of~$3$.

\begin{lemma} \label{lem:Psi-3exp}
{\em (1)} If $k_1=0$, we have
$$
\frac {1} {1+z}\Psi(z)\frac {z^{3^{k_1}}(1+z^{3^{k_1}})}
{1+z^{3^{k_1+1}}}=
\Psi(z)\frac {z}
{1+z^{3}}
=\sum _{n\ge0} ^{}c_nz^n,
$$
where
$$
c_n=\begin{cases} 
1,&\displaystyle\text {if\/ }(n)_3\in
%\underset{Y\in\{\}}
{\bigcup_{X\in\{01,02\}}}
\{0,1\}^*X,\\
0,&\text {otherwise,}
\end{cases}
$$
while, if $k_1\ge1$, we have
$$
\frac {1} {1+z}\Psi(z)\frac {z^{3^{k_1}}(1+z^{3^{k_1}})}
{1+z^{3^{k_1+1}}}=\sum _{n\ge0} ^{}c_nz^n,
$$
where
$$
c_n=\begin{cases} 
1,&\displaystyle\text {if\/ }(n)_3\in
%\underset{Y\in\{\}}
{\bigcup_{X\in\{01,10\}}}
\{0,1\}^*X\{0,1\}^{k_1-1}0,\\
2,&\displaystyle\text {if\/ }(n)_3\in
%\underset{Y\in\{\}}
%{\bigcup_{X\in\{01,10\}}}
\{0,1\}^*02\{0,1\}^{k_1-1}0,\\
0,&\text {otherwise.}
\end{cases}
$$

{\em (2)} If $k_2=0$, we have
$$
\frac {1} {1+z}\Psi(z)\frac {z^{3^{k_2+1}+3^{k_2}}(1+z^{3^{k_2}})}
{1+z^{3^{k_2+2}}}=
\Psi(z)\frac {z^4}
{1+z^{9}}
=\sum _{n\ge0} ^{}c_nz^n,
$$
where
$$
c_n=\begin{cases} 
1,&\displaystyle\text {if\/ }(n)_3\in
%\underset{Y\in\{\}}
{\bigcup_{X\in\{011,012,021,022\}}}
\{0,1\}^*X,\\
0,&\text {otherwise,}
\end{cases}
$$
while, if $k_2\ge1$, we have
$$
\frac {1} {1+z}\Psi(z)\frac {z^{3^{k_2+1}+3^{k_2}}(1+z^{3^{k_2}})}
{1+z^{3^{k_2+2}}}
=\sum _{n\ge0} ^{}c_nz^n,
$$
where
$$
c_n=\begin{cases} 
1,&\displaystyle\text {if\/ }(n)_3\in
%\underset{Y\in\{\}}
{\bigcup_{X\in\{011,020,021,100\}}}
\{0,1\}^*X\{0,1\}^{k_2-1}0,\\
2,&\displaystyle\text {if\/ }(n)_3\in
%\underset{Y\in\{\}}
{\bigcup_{X\in\{012,022\}}}
\{0,1\}^*X\{0,1\}^{k_2-1}0,\\
0,&\text {otherwise.}
\end{cases}
$$

{\em (3)} If $k_1-1>k_2=0$, we have
$$
\frac {1} {1+z}\Psi(z)\frac
{z^{3^{k_1}+3^{k_2}}(1+z^{3^{k_1}})(1+z^{3^{k_2}})}
{(1+z^{3^{k_1+1}})(1+z^{3^{k_2+1}})}=
\Psi(z)\frac {z^{3^{k_1}+1}(1+z^{3^{k_1}})}
{(1+z^3)(1+z^{3^{k_1+1}})}
=\sum _{n\ge0} ^{}c_nz^n,
$$
where
$$
c_n=\begin{cases} 
1,&\displaystyle\text {if\/ }(n)_3\in
{\bigcup_{X\in\{01,10\},\ 
Y\in\{01,02\}}}
\{0,1\}^*X\{0,1\}^{k_1-2}Y,\\
2,&\displaystyle\text {if\/ }(n)_3\in
%\underset{Y\in\{01,02\}}
{\bigcup_{X\in\{01,02\}}}
\{0,1\}^*02\{0,1\}^{k_1-2}X,\\
0,&\text {otherwise,}
\end{cases}
$$
while, if $k_1-1>k_2\ge1$, we have
$$
\frac {1} {1+z}\Psi(z)\frac
{z^{3^{k_1}+3^{k_2}}(1+z^{3^{k_1}})(1+z^{3^{k_2}})}
{(1+z^{3^{k_1+1}})(1+z^{3^{k_2+1}})}
=\sum _{n\ge0} ^{}c_nz^n,
$$
where
$$
c_n=\begin{cases} 
1,&\displaystyle\text {if\/ }(n)_3\in
{\bigcup_{X,Y\in\{01,10\}}}
\{0,1\}^*X\{0,1\}^{k_1-k_2-2}Y\{0,1\}^{k_2-1}0,\\
2,&\displaystyle\text {if\/ }(n)_3\in
\Bigg(
%\underset{Y\in\{\}}
{\bigcup_{X\in\{01,10\}}}
\{0,1\}^*02\{0,1\}^{k_1-k_2-2}X\{0,1\}^{k_2-1}0\Bigg)\\
&\kern1.5cm\displaystyle
\cup
\Bigg(
%\underset{Y\in\{\}}
{\bigcup_{X\in\{01,10\}}}
\{0,1\}^*X\{0,1\}^{k_1-k_2-2}02\{0,1\}^{k_2-1}0\Bigg),\\
4,&\displaystyle\text {if\/ }(n)_3\in
%\underset{Y\in\{01,10\}}
%{\bigcup_{X\in\{01,10\}}}
\{0,1\}^*02\{0,1\}^{k_1-k_2-2}02\{0,1\}^{k_2-1}0,\\
0,&\text {otherwise.}
\end{cases}
$$
\end{lemma}

For convenience, given a (left-infinite) string $s=\dots s_2s_1s_0$ 
consisting of $0$'s, $1$'s, and $2$'s, 
we introduce the following notation:
\begin{align*}
\Estring(s)&=\#(\text{maximal strings of $1$'s in $s$}),\\
\iso_2(s)&=\#(\text{isolated $0$'s and $1$'s in $s$, excluding $s_0$ and
$s_1$}),\\
\iso_3(s)&=\#(\text{isolated $0$'s and $1$'s in $s$, excluding $s_0,s_1,s_2$}),\\
\End(s;e)&=\begin{cases} 1&\text{if $s$ ends with the string $e$,}\\
0&\text{otherwise,}\end{cases}\\
\#100_1(s)&=\#(\text{occurrences of the substring $100$ in $s$ not
     involving $s_0$}),\\
\#011(s)&=\#(\text{occurrences of the substring $011$ in $s$}),
\end{align*}
and similarly for $\#020(s)$, $\#021(s)$, $\#102(s)$, $\#10(s)$, $\#01(s)$.

\begin{proposition} \label{prop:Psi-3}
Let $s$ denote the $3$-adic expansion of the non-negative integer $n$.
The coefficient of $z^n$ in the series $\Psi^3(z)$, when reduced
modulo~$27$, equals
\begin{enumerate} 
\item $1$, if, and only if, 
$s$ is either an element of
$\{0,1\}^*10$ and satisfies
$\Estring(s)\equiv2~(\text{\em mod }3)$ and
$$\tfrac {2\Estring(s)-1} {3}-\iso_2(s)+\#011(s)+\#100_1(s)
\equiv0~(\text{\em mod }3),
$$
or of\/
$\{0,1\}^*00$ and satisfies
$\Estring(s)\equiv0~(\text{\em mod }3)$ and
$$\tfrac {2\Estring(s)} {3}-\iso_2(s)+\#011(s)+\#100_1(s)
\equiv0~(\text{\em mod }3);
$$
\item $4$, if, and only if, 
$s$ is either an element of
$\{0,1\}^*10$ and satisfies
$\Estring(s)\equiv1~(\text{\em mod }3)$ and
$$\tfrac {2\Estring(s)-2} {3}-\iso_2(s)+\#011(s)+\#100_1(s)
\equiv0~(\text{\em mod }3),
$$
or of\/
$\{0,1\}^*00$ and satisfies
$\Estring(s)\equiv2~(\text{\em mod }3)$ and
$$\tfrac {2\Estring(s)-1} {3}-\iso_2(s)+\#011(s)+\#100_1(s)
\equiv0~(\text{\em mod }3);
$$
\item $7$, if, and only if, 
$s$ is either an element of
$\{0,1\}^*10$ and satisfies
$\Estring(s)\equiv0~(\text{\em mod }3)$ and
$$\tfrac {2\Estring(s)-3} {3}-\iso_2(s)+\#011(s)+\#100_1(s)
\equiv2~(\text{\em mod }3),
$$
or of\/
$\{0,1\}^*00$ and satisfies
$\Estring(s)\equiv1~(\text{\em mod }3)$ and
$$\tfrac {2\Estring(s)-2} {3}-\iso_2(s)+\#011(s)+\#100_1(s)
\equiv2~(\text{\em mod }3);
$$
\item $10$, if, and only if, 
$s$ is either an element of
$\{0,1\}^*10$ and satisfies
$\Estring(s)\equiv2~(\text{\em mod }3)$ and
$$\tfrac {2\Estring(s)-1} {3}-\iso_2(s)+\#011(s)+\#100_1(s)
\equiv1~(\text{\em mod }3),
$$
or of\/
$\{0,1\}^*00$ and satisfies
$\Estring(s)\equiv0~(\text{\em mod }3)$ and
$$\tfrac {2\Estring(s)} {3}-\iso_2(s)+\#011(s)+\#100_1(s)
\equiv1~(\text{\em mod }3);
$$
\item $13$, if, and only if, 
$s$ is either an element of
$\{0,1\}^*10$ and satisfies
$\Estring(s)\equiv1~(\text{\em mod }3)$ and
$$\tfrac {2\Estring(s)-2} {3}-\iso_2(s)+\#011(s)+\#100_1(s)
\equiv1~(\text{\em mod }3),
$$
or of\/
$\{0,1\}^*00$ and satisfies
$\Estring(s)\equiv2~(\text{\em mod }3)$ and
$$\tfrac {2\Estring(s)-1} {3}-\iso_2(s)+\#011(s)+\#100_1(s)
\equiv1~(\text{\em mod }3);
$$
\item $16$, if, and only if, 
$s$ is either an element of
$\{0,1\}^*10$ and satisfies
$\Estring(s)\equiv0~(\text{\em mod }3)$ and
$$\tfrac {2\Estring(s)-3} {3}-\iso_2(s)+\#011(s)+\#100_1(s)
\equiv0~(\text{\em mod }3),
$$
or of\/
$\{0,1\}^*00$ and satisfies
$\Estring(s)\equiv1~(\text{\em mod }3)$ and
$$\tfrac {2\Estring(s)-2} {3}-\iso_2(s)+\#011(s)+\#100_1(s)
\equiv0~(\text{\em mod }3);
$$
\item $19$, if, and only if, 
$s$ is either an element of
$\{0,1\}^*10$ and satisfies
$\Estring(s)\equiv2~(\text{\em mod }3)$ and
$$\tfrac {2\Estring(s)-1} {3}-\iso_2(s)+\#011(s)+\#100_1(s)
\equiv2~(\text{\em mod }3),
$$
or of\/
$\{0,1\}^*00$ and satisfies
$\Estring(s)\equiv0~(\text{\em mod }3)$ and
$$\tfrac {2\Estring(s)} {3}-\iso_2(s)+\#011(s)+\#100_1(s)
\equiv2~(\text{\em mod }3);
$$
\item $22$, if, and only if, 
$s$ is either an element of
$\{0,1\}^*10$ and satisfies
$\Estring(s)\equiv1~(\text{\em mod }3)$ and
$$\tfrac {2\Estring(s)-2} {3}-\iso_2(s)+\#011(s)+\#100_1(s)
\equiv2~(\text{\em mod }3),
$$
or of\/
$\{0,1\}^*00$ and satisfies
$\Estring(s)\equiv2~(\text{\em mod }3)$ and
$$\tfrac {2\Estring(s)-1} {3}-\iso_2(s)+\#011(s)+\#100_1(s)
\equiv2~(\text{\em mod }3);
$$
\item $25$, if, and only if, 
$s$ is either an element of
$\{0,1\}^*10$ and satisfies
$\Estring(s)\equiv0~(\text{\em mod }3)$ and
$$\tfrac {2\Estring(s)-3} {3}-\iso_2(s)+\#011(s)+\#100_1(s)
\equiv1~(\text{\em mod }3),
$$
or of\/
$\{0,1\}^*00$ and satisfies
$\Estring(s)\equiv1~(\text{\em mod }3)$ and
$$\tfrac {2\Estring(s)-2} {3}-\iso_2(s)+\#011(s)+\#100_1(s)
\equiv1~(\text{\em mod }3);
$$
\item $9$, if, and only if, 
the $3$-adic expansion of $n$ is an element of
$$
\Big(\{0,1\}^*02\{0,1\}^*02\{0,1\}^*0\Big)\cup
\bigg(
\bigcup_{X\in\{011,012,021,022\}}
\{0,1\}^*X\bigg);
$$
\item $18$, if, and only if, 
the $3$-adic expansion of $n$ is an element of
$$
\bigg(\bigcup_{X\in\{01,02\}}
\{0,1\}^*02\{0,1\}^*X\bigg)\cup
\bigg(\bigcup_{X\in\{012,022\}}
\{0,1\}^*X\{0,1\}^*0\bigg);
$$
\item $3$, if, and only if, 
the $3$-adic expansion of $n$ is an element of
$$
\bigg(\bigcup_{k\ge0,\ X\in\{01,02\}}\big(11^*00^*\big)^{3k+1}11^*X
\bigg)\cup\bigg(
\bigcup_{k\ge0,\ X\in\{01,02\}}\big(11^*00^*\big)^{3k+2}11^*0^*0X
\bigg)\cup\{1,2\};
$$
\item $12$, if, and only if, 
the $3$-adic expansion of $n$ is an element of
$$
\bigg(\bigcup_{k\ge0,\ X\in\{01,02\}}\big(11^*00^*\big)^{3k}11^*X
\bigg)\cup\bigg(
\bigcup_{k\ge0,\ X\in\{01,02\}}\big(11^*00^*\big)^{3k+1}11^*0^*0X
\bigg);
$$
\item $21$, if, and only if, 
the $3$-adic expansion of $n$ is an element of
$$
\bigg(\bigcup_{k\ge0,\ X\in\{01,02\}}\big(11^*00^*\big)^{3k+2}11^*X
\bigg)\cup\bigg(
\bigcup_{k\ge0,\ X\in\{01,02\}}\big(11^*00^*\big)^{3k}11^*0^*0X
\bigg);
$$
\item $6$, if, and only if, 
$s$ is either an element of
$\{0,1\}^*02\{0,1\}^*00$ and satisfies
$$
\#020(s)+\#021(s)+\#102(s)-\#01(s)-\#10(s)
\equiv0~(\text{\em mod }3),
$$
or an element of
$\{0,1\}^*02\{0,1\}^*10$ and satisfies
$$
\#020(s)+\#021(s)+\#102(s)-\#01(s)-\#10(s)
\equiv2~(\text{\em mod }3),
$$
or an element of
$\{0,1\}^*020$ and satisfies
$$
\#020(s)+\#021(s)+\#102(s)-\#01(s)-\#10(s)
-1
\equiv0~(\text{\em mod }3);
$$
\item $15$, if, and only if, 
$s$ is either an element of
$\{0,1\}^*02\{0,1\}^*00$ and satisfies
$$
\#020(s)+\#021(s)+\#102(s)-\#01(s)-\#10(s)
\equiv1~(\text{\em mod }3),
$$
or an element of
$\{0,1\}^*02\{0,1\}^*10$ and satisfies
$$
\#020(s)+\#021(s)+\#102(s)-\#01(s)-\#10(s)
\equiv0~(\text{\em mod }3),
$$
or an element of
$\{0,1\}^*020$ and satisfies
$$
\#020(s)+\#021(s)+\#102(s)-\#01(s)-\#10(s)
-1
\equiv1~(\text{\em mod }3);
$$
\item $24$, if, and only if, 
$s$ is either an element of
$\{0,1\}^*02\{0,1\}^*00$ and satisfies
$$
\#020(s)+\#021(s)+\#102(s)-\#01(s)-\#10(s)
\equiv2~(\text{\em mod }3),
$$
or an element of
$\{0,1\}^*02\{0,1\}^*10$ and satisfies
$$
\#020(s)+\#021(s)+\#102(s)-\#01(s)-\#10(s)
\equiv1~(\text{\em mod }3),
$$
or an element of
$\{0,1\}^*020$ and satisfies
$$
\#020(s)+\#021(s)+\#102(s)-\#01(s)-\#10(s)
-1
\equiv2~(\text{\em mod }3);
$$
\item In all other cases, this coefficient is divisible by $27$. In
particular, it is never congruent to $2$ modulo~$3$.
\end{enumerate}
\end{proposition}

If the above proposition is specialised to modulus~9, then one
obtains the following result.

\begin{corollary} \label{cor:Psi-3}
The coefficient of $z^n$ in the series $\Psi^3(z)$, when reduced
modulo~$9$, equals
\begin{enumerate} 
\item $1$, if, and only if, 
the $3$-adic expansion of $n$ is an element of
$$
\{0\}\cup\bigcup_{k\ge0}\big(00^*11^*\big)^{3k+2}0\cup
\bigcup_{k\ge0}\big(00^*11^*\big)^{3k}0^*00;
$$
\item $4$, if, and only if, 
the $3$-adic expansion of $n$ is an element of
$$
\bigcup_{k\ge0}\big(00^*11^*\big)^{3k+1}0\cup
\bigcup_{k\ge0}\big(00^*11^*\big)^{3k+2}0^*00;
$$
\item $7$, if, and only if, 
the $3$-adic expansion of $n$ is an element of
$$
\bigcup_{k\ge0}\big(00^*11^*\big)^{3k+3}0\cup
\bigcup_{k\ge0}\big(00^*11^*\big)^{3k+1}0^*00;
$$
\item $3$, if, and only if, 
the $3$-adic expansion of $n$ is an element of
$$
\{0,1\}^*01\cup\{0,1\}^*02,
$$
\item $6$, if, and only if, 
the $3$-adic expansion of $n$ is an element of
$$
\{0,1\}^*02\{0,1\}^*0.
$$
\item In all other cases, this coefficient is divisible by $9$. In
particular, it is never congruent to $2$ modulo~$3$.
\end{enumerate}
\end{corollary}

Now we turn our attention to coefficient extraction
from $\Psi^5(z)$ modulo~$27$. By Lemma~\ref{lem:Psi-2}, we have
\begin{multline} \label{eq:Psi-5mod27}
\Psi^5(z)=\frac {1} {(1+z)^2}\Psi(z)\Bigg(
1
+6\sum _{k_1\ge0} ^{}\frac {z^{3^{k_1}}(1+z^{3^{k_1}})} {1+z^{3^{k_1+1}}}
+9\sum _{k_1>k_2\ge0} ^{}\frac {z^{3^{k_1}+3^{k_2}}
(1+z^{3^{k_1}})(1+z^{3^{k_{2}}})}
{(1+z^{3^{k_1+1}})(1+z^{3^{k_2+1}})}\\
+9\sum _{k_1\ge0} ^{}\frac {z^{2\cdot3^{k_1}}(1+z^{3^{k_1}})^2} 
{(1+z^{3^{k_1+1}})^2}
\Bigg)\quad 
\text {modulo }27.
\end{multline}
The following lemma provides the means for coefficient extraction
from $\Psi^5(z)$ modulo any power of~$3$.

\begin{lemma} \label{lem:Psi-5exp}
{\em (1)} We have
$$
\frac {1} {(1+z)^2}\Psi(z)
=\sum _{n\ge0} ^{}c_nz^n,
$$
where
$$
c_n=\begin{cases} 
1,&\displaystyle\text {if\/ }(n)_3\in
%\underset{Y\in\{\}}
{\bigcup_{X\in\{00,02\}}}
\{0,1\}^*X,\\
-1,&\displaystyle\text {if\/ }(n)_3\in
%\underset{Y\in\{\}}
%{\bigcup_{X\in\{01\}}}
\{0,1\}^*01,\\
0,&\text {otherwise.}
\end{cases}
$$

{\em (2)} If $k_1=0$, we have
$$
\frac {1} {(1+z)^2}\Psi(z)\frac {z^{3^{k_1}}(1+z^{3^{k_1}})}
{1+z^{3^{k_1+1}}}=
\Psi(z)\frac {z}
{(1+z)(1+z^{3})}
=\sum _{n\ge0} ^{}c_nz^n,
$$
where
$$
c_n=\begin{cases} 
1,&\displaystyle\text {if\/ }(n)_3\in
%\underset{Y\in\{\}}
%{\bigcup_{X\in\{01,02\}}}
\{0,1\}^*01,\\
0,&\text {otherwise,}
\end{cases}
$$
if $k_1=1$, we have
$$
\frac {1} {(1+z)^2}\Psi(z)\frac {z^{3^{k_1}}(1+z^{3^{k_1}})}
{1+z^{3^{k_1+1}}}=
\Psi(z)\frac {z(1-z+z^2)}
{(1+z)(1+z^{9})}
=\sum _{n\ge0} ^{}c_nz^n,
$$
where
$$
c_n=\begin{cases} 
1,&\displaystyle\text {if\/ }(n)_3\in
%\underset{Y\in\{\}}
{\bigcup_{X\in\{010,012,020,022\}}}
\{0,1\}^*X,\\
-1,&\displaystyle\text {if\/ }(n)_3\in
%\underset{Y\in\{\}}
{\bigcup_{X\in\{011,021\}}}
\{0,1\}^*X,\\
0,&\text {otherwise,}
\end{cases}
$$
while, if $k_1\ge2$, we have
$$
\frac {1} {(1+z)^2}\Psi(z)\frac {z^{3^{k_1}}(1+z^{3^{k_1}})}
{1+z^{3^{k_1+1}}}=\sum _{n\ge0} ^{}c_nz^n,
$$
where
$$
c_n=\begin{cases} 
1,&\displaystyle\text {if\/ }(n)_3\in
{\bigcup_{X\in\{01,10\},\ Y\in\{00,02\}}}
\{0,1\}^*X\{0,1\}^{k_1-2}Y,\\
-1,&\displaystyle\text {if\/ }(n)_3\in
%\underset{Y\in\{00,02\}}
{\bigcup_{X\in\{01,10\}}}
\{0,1\}^*X\{0,1\}^{k_1-2}01,\\
2,&\displaystyle\text {if\/ }(n)_3\in
%\underset{Y\in\{\}}
{\bigcup_{X\in\{00,02\}}}
\{0,1\}^*02\{0,1\}^{k_1-2}X,\\
-2,&\displaystyle\text {if\/ }(n)_3\in
%\underset{Y\in\{\}}
%{\bigcup_{X\in\{00,02\}}}
\{0,1\}^*02\{0,1\}^{k_1-2}01,\\
0,&\text {otherwise.}
\end{cases}
$$

{\em (3)} If $k_2=0$, we have
$$
\frac {1} {(1+z)^2}\Psi(z)\frac {z^{3^{k_2+1}+3^{k_2}}(1+z^{3^{k_2}})}
{1+z^{3^{k_2+2}}}=
\Psi(z)\frac {z^4}
{(1+z)(1+z^{9})}
=\sum _{n\ge0} ^{}c_nz^n,
$$
where
$$
c_n=\begin{cases} 
1,&\displaystyle\text {if\/ }(n)_3\in
%\underset{Y\in\{\}}
{\bigcup_{X\in\{011,021\}}}
\{0,1\}^*X,\\
0,&\text {otherwise,}
\end{cases}
$$
if $k_2=1$, we have
$$
\frac {1} {(1+z)^2}\Psi(z)\frac {z^{3^{k_2+1}+3^{k_2}}(1+z^{3^{k_2}})}
{1+z^{3^{k_2+2}}}=
\Psi(z)\frac {z^{12}(1-z+z^2)}
{(1+z)(1+z^{27})}
=\sum _{n\ge0} ^{}c_nz^n,
$$
where
$$
c_n=\begin{cases} 
1,&\displaystyle\text {if\/ }(n)_3\in
%\underset{Y\in\{\}}
{\bigcup_{X\in\{0110,0112,0120,0122,0210,0212,0220,0222\}}}
\{0,1\}^*X,\\
-1,&\displaystyle\text {if\/ }(n)_3\in
%\underset{Y\in\{\}}
{\bigcup_{X\in\{0111,0121,0211,0221\}}}
\{0,1\}^*X,\\
0,&\text {otherwise,}
\end{cases}
$$
while, if $k_2\ge2$, we have
$$
\frac {1} {(1+z)^2}\Psi(z)\frac {z^{3^{k_2+1}+3^{k_2}}(1+z^{3^{k_2}})}
{1+z^{3^{k_2+2}}}
=\sum _{n\ge0} ^{}c_nz^n,
$$
where
$$
c_n=\begin{cases} 
1,&\displaystyle\text {if\/ }(n)_3\in
{\bigcup_{X\in\{011,020,021,100\},\
Y\in\{00,02\}}}
\{0,1\}^*X\{0,1\}^{k_2-2}Y,\\
-1,&\displaystyle\text {if\/ }(n)_3\in
{\bigcup_{X\in\{011,020,021,100\}}}
\{0,1\}^*X\{0,1\}^{k_2-2}01,\\
2,&\displaystyle\text {if\/ }(n)_3\in
{\bigcup_{X\in\{012,022\},\ Y\in\{00,02\}}}
\{0,1\}^*X\{0,1\}^{k_2-2}Y,\\
-2,&\displaystyle\text {if\/ }(n)_3\in
%\underset{Y\in\{00,02\}}
{\bigcup_{X\in\{012,022\}}}
\{0,1\}^*X\{0,1\}^{k_2-2}01,\\
0,&\text {otherwise.}
\end{cases}
$$

{\em (4)} If $k_1-1>k_2=0$, we have
\begin{multline*}
\frac {1} {(1+z)^2}\Psi(z)\frac
{z^{3^{k_1}+3^{k_2}}(1+z^{3^{k_1}})(1+z^{3^{k_2}})}
{(1+z^{3^{k_1+1}})(1+z^{3^{k_2+1}})}\\=
\Psi(z)\frac {z^{3^{k_1}+1}(1+z^{3^{k_1}})}
{(1+z)(1+z^3)(1+z^{3^{k_1+1}})}
=\sum _{n\ge0} ^{}c_nz^n,
\end{multline*}
where
$$
c_n=\begin{cases} 
1,&\displaystyle\text {if\/ }(n)_3\in
%\underset{Y\in\{01,02\}}
{\bigcup_{X\in\{01,10\}}}
\{0,1\}^*X\{0,1\}^{k_1-2}01,\\
2,&\displaystyle\text {if\/ }(n)_3\in
%\underset{Y\in\{01,02\}}
%{\bigcup_{X\in\{01,02\}}}
\{0,1\}^*02\{0,1\}^{k_1-2}01,\\
0,&\text {otherwise,}
\end{cases}
$$
if $k_1-1>k_2=1$, we have
\begin{multline*}
\frac {1} {(1+z)^2}\Psi(z)\frac
{z^{3^{k_1}+3^{k_2}}(1+z^{3^{k_1}})(1+z^{3^{k_2}})}
{(1+z^{3^{k_1+1}})(1+z^{3^{k_2+1}})}\\=
\Psi(z)\frac {z^{3^{k_1}+3}(1-z+z^2)(1+z^{3^{k_1}})}
{(1+z)(1+z^9)(1+z^{3^{k_1+1}})}
=\sum _{n\ge0} ^{}c_nz^n,
\end{multline*}
where
$$
c_n=\begin{cases} 
1,&\displaystyle\text {if\/ }(n)_3\in
{\bigcup_{X\in\{01,10\},\
Y\in\{010,012,020,022\}}}
\{0,1\}^*X\{0,1\}^{k_1-3}Y,\\
-1,&\displaystyle\text {if\/ }(n)_3\in
{\bigcup_{X\in\{01,10\},\ Y\in\{011,021\}}}
\{0,1\}^*X\{0,1\}^{k_1-3}Y,\\
2,&\displaystyle\text {if\/ }(n)_3\in
%\underset{Y\in\{01,02\}}
{\bigcup_{X\in\{010,012,020,022\}}}
\{0,1\}^*02\{0,1\}^{k_1-3}X,\\
-2,&\displaystyle\text {if\/ }(n)_3\in
%\underset{Y\in\{01,02\}}
{\bigcup_{X\in\{011,021\}}}
\{0,1\}^*02\{0,1\}^{k_1-3}X,\\
0,&\text {otherwise,}
\end{cases}
$$
while, if $k_1-1>k_2\ge2$, we have
$$
\frac {1} {(1+z)^2}\Psi(z)\frac
{z^{3^{k_1}+3^{k_2}}(1+z^{3^{k_1}})(1+z^{3^{k_2}})}
{(1+z^{3^{k_1+1}})(1+z^{3^{k_2+1}})}
=\sum _{n\ge0} ^{}c_nz^n,
$$
where
{\allowdisplaybreaks
$$
c_n=\begin{cases} 
1,&\displaystyle\text {if\/ }(n)_3\in
{\bigcup_{X,Y\in\{01,10\},\ 
Z\in\{00,02\}}}
\{0,1\}^*X\{0,1\}^{k_1-k_2-2}Y\{0,1\}^{k_2-2}Z,\\
-1,&\displaystyle\text {if\/ }(n)_3\in
%\underset{Z\in\{00,02\}}
{\bigcup_{X,Y\in\{01,10\}}}
\{0,1\}^*X\{0,1\}^{k_1-k_2-2}Y\{0,1\}^{k_2-2}01,\\
2,&\displaystyle\text {if\/ }(n)_3\in
\Bigg(
{\bigcup_{X\in\{01,10\},\ Y\in\{00,02\}}}
\{0,1\}^*02\{0,1\}^{k_1-k_2-2}X\{0,1\}^{k_2-2}Y\Bigg)\\
&\kern1.5cm\displaystyle
\cup
\Bigg(
{\bigcup_{X\in\{01,10\},\ Y\in\{00,02\}}}
\{0,1\}^*X\{0,1\}^{k_1-k_2-2}02\{0,1\}^{k_2-2}Y\Bigg),\\
-2,&\displaystyle\text {if\/ }(n)_3\in
\Bigg(
%\underset{Y\in\{00,02\}}
{\bigcup_{X\in\{01,10\}}}
\{0,1\}^*02\{0,1\}^{k_1-k_2-2}X\{0,1\}^{k_2-2}01\Bigg)\\
&\kern1.5cm\displaystyle
\cup
\Bigg(
%\underset{Y\in\{00,02\}}
{\bigcup_{X\in\{01,10\}}}
\{0,1\}^*X\{0,1\}^{k_1-k_2-2}02\{0,1\}^{k_2-2}01\Bigg),\\
4,&\displaystyle\text {if\/ }(n)_3\in
%\underset{Y\in\{01,10\}}
{\bigcup_{X\in\{00,02\}}}
\{0,1\}^*02\{0,1\}^{k_1-k_2-2}02\{0,1\}^{k_2-2}X,\\
-4,&\displaystyle\text {if\/ }(n)_3\in
%\underset{Y\in\{01,10\}}
%{\bigcup_{X\in\{00,02\}}}
\{0,1\}^*02\{0,1\}^{k_1-k_2-2}02\{0,1\}^{k_2-2}01,\\
0,&\text {otherwise.}
\end{cases}
$$}

{\em (5)} If $k_1=0$, we have
$$
\frac {1} {(1+z)^2}\Psi(z)\frac {z^{2\cdot 3^{k_1}}(1+z^{3^{k_1}})^2}
{(1+z^{3^{k_1+1}})^2}=
\Psi(z)\frac {z^2}
{(1+z^{3})^2}
=\sum _{n\ge0} ^{}c_nz^n,
$$
where
$$
c_n=\begin{cases} 
1,&\displaystyle\text {if\/ }(n)_3\in
%\underset{Y\in\{\}}
{\bigcup_{X\in\{002,010,022,100\}}}
\{0,1\}^*X,\\
-1,&\displaystyle\text {if\/ }(n)_3\in
%\underset{Y\in\{\}}
{\bigcup_{X\in\{012,020\}}}
\{0,1\}^*X,\\
0,&\text {otherwise,}
\end{cases}
$$
if $k_1=1$, we have
$$
\frac {1} {(1+z)^2}\Psi(z)\frac {z^{2\cdot 3^{k_1}}(1+z^{3^{k_1}})^2}
{(1+z^{3^{k_1+1}})^2}=
\Psi(z)\frac {z^{6}(1-z+z^2)(1+z^3)}
{(1+z)(1+z^{9})^2}
=\sum _{n\ge0} ^{}c_nz^n,
$$
where
$$
c_n=\begin{cases} 
1,&\displaystyle\text {if\/ }(n)_3\in
%\underset{Y\in\{\}}
{\bigcup_{X\in\{0020,0022,0110,0112,0121,0211,0220,0222,1010,1012\}}}
\{0,1\}^*X,\\
-1,&\displaystyle\text {if\/ }(n)_3\in
%\underset{Y\in\{\}}
{\bigcup_{X\in\{0021,0111,0120,0122,0210,0212,0221,1011\}}}
\{0,1\}^*X,\\
2,&\displaystyle\text {if\/ }(n)_3\in
%\underset{Y\in\{\}}
{\bigcup_{X\in\{0100,0102,0201,1000,1002\}}}
\{0,1\}^*X,\\
-2,&\displaystyle\text {if\/ }(n)_3\in
%\underset{Y\in\{\}}
{\bigcup_{X\in\{0101,0200,0202,1001\}}}
\{0,1\}^*X,\\
0,&\text {otherwise,}
\end{cases}
$$
while, if $k_1\ge2$, we have
$$
\frac {1} {(1+z)^2}\Psi(z)\frac {z^{2\cdot3^{k_1}}(1+z^{3^{k_1}})^2}
{(1+z^{3^{k_1+1}})^2}
=\sum _{n\ge0} ^{}c_nz^n,
$$
where
$$
c_n=\begin{cases} 
1,&\displaystyle\text {if\/ }(n)_3\in
{\bigcup_{X\in\{002,102\},\
Y\in\{00,02\}}}
\{0,1\}^*X\{0,1\}^{k_1-2}Y,\\
-1,&\displaystyle\text {if\/ }(n)_3\in
{\bigcup_{X\in\{002,102\}}}
\{0,1\}^*X\{0,1\}^{k_1-2}01,\\
3,&\displaystyle\text {if\/ }(n)_3\in
{\bigcup_{X\in\{010,011,100,101\},\ Y\in\{00,02\}}}
\{0,1\}^*X\{0,1\}^{k_1-2}Y\\
&\kern1.5cm\displaystyle
\cup
{\bigcup_{X\in\{020,021\}}}
\{0,1\}^*X\{0,1\}^{k_1-2}01,\\
-3,&\displaystyle\text {if\/ }(n)_3\in
%\underset{Y\in\{00,02\}}
{\bigcup_{X\in\{010,011,100,101\}}}
\{0,1\}^*X\{0,1\}^{k_1-2}01\\
&\kern1.5cm\displaystyle
\cup
{\bigcup_{X\in\{020,021\},\ Y\in\{00,02\}}}
\{0,1\}^*X\{0,1\}^{k_1-2}Y,\\
0,&\text {otherwise.}
\end{cases}
$$
\end{lemma}

\allowdisplaybreaks
\begin{proposition} \label{prop:Psi-5}
Let $s$ denote the $3$-adic expansion of the non-negative integer $n$.
The coefficient of $z^n$ in the series $\Psi^5(z)$, when reduced
modulo~$27$, equals
\begin{enumerate} 
\item $1$, if, and only if, 
$s$ is either an element of
$\bigcup_{X\in\{000,002\}}\{0,1\}^*X$ and satisfies
$\Estring(s)\equiv0~(\text{\em mod }3)$ and
\begin{multline*}
\tfrac {4\Estring(s)} {3}-\iso_3(s)+\#011(s)+\#100_1(s)\\
+\End(s;002)
+\End(s;1000)
+\End(s;1002)
\equiv0~(\text{\em mod }3),
\end{multline*}
or of\/
$\bigcup_{X\in\{100,102\}}\{0,1\}^*X$ and satisfies
$\Estring(s)\equiv2~(\text{\em mod }3)$ and
\begin{multline*}
\tfrac {4\Estring(s)-2} {3}-\iso_3(s)+\#011(s)+\#100_1(s)\\
+\End(s;100)
+2\End(s;0102)
\equiv0~(\text{\em mod }3);
\end{multline*}
\item $10$, if, and only if, 
$s$ is either an element of
$\bigcup_{X\in\{000,002\}}\{0,1\}^*X$ and satisfies
$\Estring(s)\equiv0~(\text{\em mod }3)$ and
\begin{multline*}
\tfrac {4\Estring(s)} {3}-\iso_3(s)+\#011(s)+\#100_1(s)\\
+\End(s;002)
+\End(s;1000)
+\End(s;1002)
\equiv1~(\text{\em mod }3),
\end{multline*}
or of\/
$\bigcup_{X\in\{100,102\}}\{0,1\}^*X$ and satisfies
$\Estring(s)\equiv2~(\text{\em mod }3)$ and
\begin{multline*}
\tfrac {4\Estring(s)-2} {3}-\iso_3(s)+\#011(s)+\#100_1(s)\\
+\End(s;100)
+2\End(s;0102)
\equiv1~(\text{\em mod }3);
\end{multline*}
\item $19$, if, and only if, 
$s$ is either an element of
$\bigcup_{X\in\{000,002\}}\{0,1\}^*X$ and satisfies
$\Estring(s)\equiv0~(\text{\em mod }3)$ and
\begin{multline*}
\tfrac {4\Estring(s)} {3}-\iso_3(s)+\#011(s)+\#100_1(s)\\
+\End(s;002)
+\End(s;1000)
+\End(s;1002)
\equiv2~(\text{\em mod }3),
\end{multline*}
or of\/
$\bigcup_{X\in\{100,102\}}\{0,1\}^*X$ and satisfies
$\Estring(s)\equiv2~(\text{\em mod }3)$ and
\begin{multline*}
\tfrac {4\Estring(s)-2} {3}-\iso_3(s)+\#011(s)+\#100_1(s)\\
+\End(s;100)
+2\End(s;0102)
\equiv2~(\text{\em mod }3);
\end{multline*}
\item $4$, if, and only if, 
$s$ is either an element of
$\bigcup_{X\in\{000,002\}}\{0,1\}^*X$ and satisfies
$\Estring(s)\equiv1~(\text{\em mod }3)$ and
\begin{multline*}
\tfrac {4\Estring(s)-1} {3}-\iso_3(s)+\#011(s)+\#100_1(s)\\
+\End(s;002)
+\End(s;1000)
+\End(s;1002)
\equiv0~(\text{\em mod }3),
\end{multline*}
or of\/
$\bigcup_{X\in\{100,102\}}\{0,1\}^*X$ and satisfies
$\Estring(s)\equiv0~(\text{\em mod }3)$ and
\begin{multline*}
\tfrac {4\Estring(s)-3} {3}-\iso_3(s)+\#011(s)+\#100_1(s)\\
+\End(s;100)
+2\End(s;0102)
\equiv0~(\text{\em mod }3);
\end{multline*}
\item $13$, if, and only if, 
$s$ is either an element of
$\bigcup_{X\in\{000,002\}}\{0,1\}^*X$ and satisfies
$\Estring(s)\equiv1~(\text{\em mod }3)$ and
\begin{multline*}
\tfrac {4\Estring(s)-1} {3}-\iso_3(s)+\#011(s)+\#100_1(s)\\
+\End(s;002)
+\End(s;1000)
+\End(s;1002)
\equiv1~(\text{\em mod }3),
\end{multline*}
or of\/
$\bigcup_{X\in\{100,102\}}\{0,1\}^*X$ and satisfies
$\Estring(s)\equiv0~(\text{\em mod }3)$ and
\begin{multline*}
\tfrac {4\Estring(s)-3} {3}-\iso_3(s)+\#011(s)+\#100_1(s)\\
+\End(s;100)
+2\End(s;0102)
\equiv1~(\text{\em mod }3);
\end{multline*}
\item $22$, if, and only if, 
$s$ is either an element of
$\bigcup_{X\in\{000,002\}}\{0,1\}^*X$ and satisfies
$\Estring(s)\equiv1~(\text{\em mod }3)$ and
\begin{multline*}
\tfrac {4\Estring(s)-1} {3}-\iso_3(s)+\#011(s)+\#100_1(s)\\
+\End(s;002)
+\End(s;1000)
+\End(s;1002)
\equiv2~(\text{\em mod }3),
\end{multline*}
or of\/
$\bigcup_{X\in\{100,102\}}\{0,1\}^*X$ and satisfies
$\Estring(s)\equiv0~(\text{\em mod }3)$ and
\begin{multline*}
\tfrac {4\Estring(s)-3} {3}-\iso_3(s)+\#011(s)+\#100_1(s)\\
+\End(s;100)
+2\End(s;0102)
\equiv2~(\text{\em mod }3);
\end{multline*}
\item $7$, if, and only if, 
$s$ is either an element of
$\bigcup_{X\in\{000,002\}}\{0,1\}^*X$ and satisfies
$\Estring(s)\equiv2~(\text{\em mod }3)$ and
\begin{multline*}
\tfrac {4\Estring(s)-2} {3}-\iso_3(s)+\#011(s)+\#100_1(s)\\
+\End(s;002)
+\End(s;1000)
+\End(s;1002)
\equiv0~(\text{\em mod }3),
\end{multline*}
or of\/
$\bigcup_{X\in\{100,102\}}\{0,1\}^*X$ and satisfies
$\Estring(s)\equiv1~(\text{\em mod }3)$ and
\begin{multline*}
\tfrac {4\Estring(s)-4} {3}-\iso_3(s)+\#011(s)+\#100_1(s)\\
+\End(s;100)
+2\End(s;0102)
\equiv0~(\text{\em mod }3);
\end{multline*}
\item $16$, if, and only if, 
$s$ is either an element of
$\bigcup_{X\in\{000,002\}}\{0,1\}^*X$ and satisfies
$\Estring(s)\equiv2~(\text{\em mod }3)$ and
\begin{multline*}
\tfrac {4\Estring(s)-2} {3}-\iso_3(s)+\#011(s)+\#100_1(s)\\
+\End(s;002)
+\End(s;1000)
+\End(s;1002)
\equiv1~(\text{\em mod }3),
\end{multline*}
or of\/
$\bigcup_{X\in\{100,102\}}\{0,1\}^*X$ and satisfies
$\Estring(s)\equiv1~(\text{\em mod }3)$ and
\begin{multline*}
\tfrac {4\Estring(s)-4} {3}-\iso_3(s)+\#011(s)+\#100_1(s)\\
+\End(s;100)
+2\End(s;0102)
\equiv1~(\text{\em mod }3);
\end{multline*}
\item $25$, if, and only if, 
$s$ is either an element of
$\bigcup_{X\in\{000,002\}}\{0,1\}^*X$ and satisfies
$\Estring(s)\equiv2~(\text{\em mod }3)$ and
\begin{multline*}
\tfrac {4\Estring(s)-2} {3}-\iso_3(s)+\#011(s)+\#100_1(s)\\
+\End(s;002)
+\End(s;1000)
+\End(s;1002)
\equiv2~(\text{\em mod }3),
\end{multline*}
or of\/
$\bigcup_{X\in\{100,102\}}\{0,1\}^*X$ and satisfies
$\Estring(s)\equiv1~(\text{\em mod }3)$ and
\begin{multline*}
\tfrac {4\Estring(s)-4} {3}-\iso_3(s)+\#011(s)+\#100_1(s)\\
+\End(s;100)
+2\End(s;0102)
\equiv2~(\text{\em mod }3);
\end{multline*}
\item $2$, if, and only if, 
$s$ is either an element of
$\{0,1\}^*001$ and satisfies
$\Estring(s)\equiv1~(\text{\em mod }3)$ and
\begin{equation*}
\tfrac {1-4\Estring(s)} {3}+\iso_3(s)-\#011(s)-\#100_1(s)
-\End(s;1001)
\equiv2~(\text{\em mod }3),
\end{equation*}
or of\/
$\{0,1\}^*101$ and satisfies
$\Estring(s)\equiv0~(\text{\em mod }3)$ and
\begin{equation*}
\tfrac {3-4\Estring(s)} {3}+\iso_3(s)-\#011(s)-\#100_1(s)
-2\End(s;1001)
\equiv2~(\text{\em mod }3);
\end{equation*}
\item $11$, if, and only if, 
$s$ is either an element of
$\{0,1\}^*001$ and satisfies
$\Estring(s)\equiv1~(\text{\em mod }3)$ and
\begin{equation*}
\tfrac {1-4\Estring(s)} {3}+\iso_3(s)-\#011(s)-\#100_1(s)
-\End(s;1001)
\equiv0~(\text{\em mod }3),
\end{equation*}
or of\/
$\{0,1\}^*101$ and satisfies
$\Estring(s)\equiv0~(\text{\em mod }3)$ and
\begin{equation*}
\tfrac {3-4\Estring(s)} {3}+\iso_3(s)-\#011(s)-\#100_1(s)
-2\End(s;1001)
\equiv0~(\text{\em mod }3);
\end{equation*}
\item $20$, if, and only if, 
$s$ is either an element of
$\{0,1\}^*001$ and satisfies
$\Estring(s)\equiv1~(\text{\em mod }3)$ and
\begin{equation*}
\tfrac {1-4\Estring(s)} {3}+\iso_3(s)-\#011(s)-\#100_1(s)
-\End(s;1001)
\equiv1~(\text{\em mod }3),
\end{equation*}
or of\/
$\{0,1\}^*101$ and satisfies
$\Estring(s)\equiv0~(\text{\em mod }3)$ and
\begin{equation*}
\tfrac {3-4\Estring(s)} {3}+\iso_3(s)-\#011(s)-\#100_1(s)
-2\End(s;1001)
\equiv1~(\text{\em mod }3);
\end{equation*}
\item $5$, if, and only if, 
$s$ is either an element of
$\{0,1\}^*001$ and satisfies
$\Estring(s)\equiv0~(\text{\em mod }3)$ and
\begin{equation*}
-\tfrac {4\Estring(s)} {3}+\iso_3(s)-\#011(s)-\#100_1(s)
-\End(s;1001)
\equiv2~(\text{\em mod }3),
\end{equation*}
or of\/
$\{0,1\}^*101$ and satisfies
$\Estring(s)\equiv2~(\text{\em mod }3)$ and
\begin{equation*}
\tfrac {2-4\Estring(s)} {3}+\iso_3(s)-\#011(s)-\#100_1(s)
-2\End(s;1001)
\equiv2~(\text{\em mod }3);
\end{equation*}
\item $14$, if, and only if, 
$s$ is either an element of
$\{0,1\}^*001$ and satisfies
$\Estring(s)\equiv0~(\text{\em mod }3)$ and
\begin{equation*}
-\tfrac {4\Estring(s)} {3}+\iso_3(s)-\#011(s)-\#100_1(s)
-\End(s;1001)
\equiv0~(\text{\em mod }3),
\end{equation*}
or of\/
$\{0,1\}^*101$ and satisfies
$\Estring(s)\equiv2~(\text{\em mod }3)$ and
\begin{equation*}
\tfrac {2-4\Estring(s)} {3}+\iso_3(s)-\#011(s)-\#100_1(s)
-2\End(s;1001)
\equiv0~(\text{\em mod }3);
\end{equation*}
\item $23$, if, and only if, 
$s$ is either an element of
$\{0,1\}^*001$ and satisfies
$\Estring(s)\equiv0~(\text{\em mod }3)$ and
\begin{equation*}
-\tfrac {4\Estring(s)} {3}+\iso_3(s)-\#011(s)-\#100_1(s)
-\End(s;1001)
\equiv1~(\text{\em mod }3),
\end{equation*}
or of\/
$\{0,1\}^*101$ and satisfies
$\Estring(s)\equiv2~(\text{\em mod }3)$ and
\begin{equation*}
\tfrac {2-4\Estring(s)} {3}+\iso_3(s)-\#011(s)-\#100_1(s)
-2\End(s;1001)
\equiv1~(\text{\em mod }3);
\end{equation*}
\item $8$, if, and only if, 
$s$ is either an element of
$\{0,1\}^*001$ and satisfies
$\Estring(s)\equiv2~(\text{\em mod }3)$ and
\begin{equation*}
-\tfrac {1+4\Estring(s)} {3}+\iso_3(s)-\#011(s)-\#100_1(s)
-\End(s;1001)
\equiv2~(\text{\em mod }3),
\end{equation*}
or of\/
$\{0,1\}^*101$ and satisfies
$\Estring(s)\equiv1~(\text{\em mod }3)$ and
\begin{equation*}
\tfrac {1-4\Estring(s)} {3}+\iso_3(s)-\#011(s)-\#100_1(s)
-2\End(s;1001)
\equiv2~(\text{\em mod }3);
\end{equation*}
\item $17$, if, and only if, 
$s$ is either an element of
$\{0,1\}^*001$ and satisfies
$\Estring(s)\equiv2~(\text{\em mod }3)$ and
\begin{equation*}
-\tfrac {1+4\Estring(s)} {3}+\iso_3(s)-\#011(s)-\#100_1(s)
-\End(s;1001)
\equiv0~(\text{\em mod }3),
\end{equation*}
or of\/
$\{0,1\}^*101$ and satisfies
$\Estring(s)\equiv1~(\text{\em mod }3)$ and
\begin{equation*}
\tfrac {1-4\Estring(s)} {3}+\iso_3(s)-\#011(s)-\#100_1(s)
-2\End(s;1001)
\equiv0~(\text{\em mod }3);
\end{equation*}
\item $26$, if, and only if, 
$s$ is either an element of
$\{0,1\}^*001$ and satisfies
$\Estring(s)\equiv2~(\text{\em mod }3)$ and
\begin{equation*}
-\tfrac {1+4\Estring(s)} {3}+\iso_3(s)-\#011(s)-\#100_1(s)
-\End(s;1001)
\equiv1~(\text{\em mod }3),
\end{equation*}
or of\/
$\{0,1\}^*101$ and satisfies
$\Estring(s)\equiv1~(\text{\em mod }3)$ and
\begin{equation*}
\tfrac {1-4\Estring(s)} {3}+\iso_3(s)-\#011(s)-\#100_1(s)
-2\End(s;1001)
\equiv1~(\text{\em mod }3);
\end{equation*}
\item $9$, if, and only if, 
the $3$-adic expansion of $n$ is an element of
\begin{multline*}
\bigg(\bigcup_{X\in\{0111,0121,0211,0221\}}\{0,1\}^*X\bigg)
\cup
\bigg({\bigcup_{X\in\{012,022\}}}
\{0,1\}^*X\{0,1\}^*01\bigg)\\
\cup
\bigg({\bigcup_{X\in\{011,021\}}}
\{0,1\}^*02\{0,1\}^*X\bigg)
\cup
\bigg({\bigcup_{X\in\{00,02\}}}
\{0,1\}^*02\{0,1\}^*02\{0,1\}^*X\bigg);
\end{multline*}
\item $18$, if, and only if, 
the $3$-adic expansion of $n$ is an element of
\begin{multline*}
\bigg(\bigcup_{X\in\{0110,0112,0220,0222\}}\{0,1\}^*X\bigg)
\cup
\bigg({\bigcup_{X\in\{012,022\},\ Y\in\{00,02\}}}
\{0,1\}^*X\{0,1\}^*Y\bigg)\\
\cup
\bigg({\bigcup_{X\in\{010,012,020,022\}}}
\{0,1\}^*02\{0,1\}^*X\bigg)
\cup
\Big(\{0,1\}^*02\{0,1\}^*02\{0,1\}^*01\Big);
\end{multline*}
\item $6$, if, and only if, 
the $3$-adic expansion of $n$ is an element of
\begin{multline*}
\bigg(\bigcup_{k\ge0}\big(11^*00^*\big)^{3k+2}0201\bigg)
\cup
\bigg(\bigcup_{k_1+k_2\equiv1\,(\text{\em mod }3)}
\big(11^*00^*\big)^{k_1}0200^*\big(11^*00^*\big)^{k_2}01\bigg)\\
\cup
\bigg(\bigcup_{k_1+k_2\equiv2\,(\text{\em mod }3)}
\big(11^*00^*\big)^{k_1}11^*0200^*\big(11^*00^*\big)^{k_2}01\bigg)\\
\cup
\bigg(\bigcup_{k_1+k_2\equiv0\,(\text{\em mod }3),\,
k_2>0}
\big(11^*00^*\big)^{k_1}02\big(11^*00^*\big)^{k_2}01\bigg)\\
\cup
\bigg(\bigcup_{k_1+k_2\equiv1\,(\text{\em mod }3),\,
k_2>0}
\big(11^*00^*\big)^{k_1}11^*02\big(11^*00^*\big)^{k_2}01\bigg)\\
\cup
\bigg(\bigcup_{k_1+k_2\equiv2\,(\text{\em mod }3)}
\big(11^*00^*\big)^{k_1}0200^*\big(11^*00^*\big)^{k_2}11^*01\bigg)\\
\cup
\bigg(\bigcup_{k_1+k_2\equiv0\,(\text{\em mod }3)}
\big(11^*00^*\big)^{k_1}11^*0200^*\big(11^*00^*\big)^{k_2}11^*01\bigg)\\
\cup
\bigg(\bigcup_{k_1+k_2\equiv1\,(\text{\em mod }3)}
\big(11^*00^*\big)^{k_1}02\big(11^*00^*\big)^{k_2}11^*01\bigg)\\
\cup
\bigg(\bigcup_{k_1+k_2\equiv2\,(\text{\em mod }3)}
\big(11^*00^*\big)^{k_1}11^*02\big(11^*00^*\big)^{k_2}11^*01\bigg)\\
\cup
\Big(\big(11^*00^*\big)^{3k+1}010\Big)
\cup
\Big(\big(11^*00^*\big)^{3k+2}012\Big)\\
\cup
\Big(\big(11^*00^*\big)^{3k}020\Big)
\cup
\Big(\big(11^*00^*\big)^{3k+2}022\Big)\\
\cup
\Big(\big(11^*00^*\big)^{3k}11^*010\Big)
\cup
\Big(\big(11^*00^*\big)^{3k+1}11^*012\Big)\\
\cup
\Big(\big(11^*00^*\big)^{3k}11^*020\Big)
\cup
\Big(\big(11^*00^*\big)^{3k+2}11^*022\Big);
\end{multline*}
\item $15$, if, and only if, 
the $3$-adic expansion of $n$ is an element of
\begin{multline*}
\bigg(\bigcup_{k\ge0}\big(11^*00^*\big)^{3k+1}0201\bigg)
\cup
\bigg(\bigcup_{k_1+k_2\equiv0\,(\text{\em mod }3)}
\big(11^*00^*\big)^{k_1}0200^*\big(11^*00^*\big)^{k_2}01\bigg)\\
\cup
\bigg(\bigcup_{k_1+k_2\equiv1\,(\text{\em mod }3)}
\big(11^*00^*\big)^{k_1}11^*0200^*\big(11^*00^*\big)^{k_2}01\bigg)\\
\cup
\bigg(\bigcup_{k_1+k_2\equiv2\,(\text{\em mod }3),\,
k_2>0}
\big(11^*00^*\big)^{k_1}02\big(11^*00^*\big)^{k_2}01\bigg)\\
\cup
\bigg(\bigcup_{k_1+k_2\equiv0\,(\text{\em mod }3),\,
k_2>0}
\big(11^*00^*\big)^{k_1}11^*02\big(11^*00^*\big)^{k_2}01\bigg)\\
\cup
\bigg(\bigcup_{k_1+k_2\equiv1\,(\text{\em mod }3)}
\big(11^*00^*\big)^{k_1}0200^*\big(11^*00^*\big)^{k_2}11^*01\bigg)\\
\cup
\bigg(\bigcup_{k_1+k_2\equiv2\,(\text{\em mod }3)}
\big(11^*00^*\big)^{k_1}11^*0200^*\big(11^*00^*\big)^{k_2}11^*01\bigg)\\
\cup
\bigg(\bigcup_{k_1+k_2\equiv0\,(\text{\em mod }3)}
\big(11^*00^*\big)^{k_1}02\big(11^*00^*\big)^{k_2}11^*01\bigg)\\
\cup
\bigg(\bigcup_{k_1+k_2\equiv1\,(\text{\em mod }3)}
\big(11^*00^*\big)^{k_1}11^*02\big(11^*00^*\big)^{k_2}11^*01\bigg)\\
\cup
\Big(\big(11^*00^*\big)^{3k}010\Big)
\cup
\Big(\big(11^*00^*\big)^{3k+1}012\Big)\\
\cup
\Big(\big(11^*00^*\big)^{3k+2}020\Big)
\cup
\Big(\big(11^*00^*\big)^{3k+1}022\Big)\\
\cup
\Big(\big(11^*00^*\big)^{3k+2}11^*010\Big)
\cup
\Big(\big(11^*00^*\big)^{3k}11^*012\Big)\\
\cup
\Big(\big(11^*00^*\big)^{3k+2}11^*020\Big)
\cup
\Big(\big(11^*00^*\big)^{3k+1}11^*022\Big);
\end{multline*}
\item $24$, if, and only if, 
the $3$-adic expansion of $n$ is an element of
\begin{multline*}
\bigg(\bigcup_{k\ge0}\big(11^*00^*\big)^{3k}0201\bigg)
\cup
\bigg(\bigcup_{k_1+k_2\equiv2\,(\text{\em mod }3)}
\big(11^*00^*\big)^{k_1}0200^*\big(11^*00^*\big)^{k_2}01\bigg)\\
\cup
\bigg(\bigcup_{k_1+k_2\equiv0\,(\text{\em mod }3)}
\big(11^*00^*\big)^{k_1}11^*0200^*\big(11^*00^*\big)^{k_2}01\bigg)\\
\cup
\bigg(\bigcup_{k_1+k_2\equiv1\,(\text{\em mod }3),\,
k_2>0}
\big(11^*00^*\big)^{k_1}02\big(11^*00^*\big)^{k_2}01\bigg)\\
\cup
\bigg(\bigcup_{k_1+k_2\equiv2\,(\text{\em mod }3),\,
k_2>0}
\big(11^*00^*\big)^{k_1}11^*02\big(11^*00^*\big)^{k_2}01\bigg)\\
\cup
\bigg(\bigcup_{k_1+k_2\equiv0\,(\text{\em mod }3)}
\big(11^*00^*\big)^{k_1}0200^*\big(11^*00^*\big)^{k_2}11^*01\bigg)\\
\cup
\bigg(\bigcup_{k_1+k_2\equiv1\,(\text{\em mod }3)}
\big(11^*00^*\big)^{k_1}11^*0200^*\big(11^*00^*\big)^{k_2}11^*01\bigg)\\
\cup
\bigg(\bigcup_{k_1+k_2\equiv2\,(\text{\em mod }3)}
\big(11^*00^*\big)^{k_1}02\big(11^*00^*\big)^{k_2}11^*01\bigg)\\
\cup
\bigg(\bigcup_{k_1+k_2\equiv0\,(\text{\em mod }3)}
\big(11^*00^*\big)^{k_1}11^*02\big(11^*00^*\big)^{k_2}11^*01\bigg)\\
\cup
\Big(\big(11^*00^*\big)^{3k+2}010\Big)
\cup
\Big(\big(11^*00^*\big)^{3k}012\Big)\\
\cup
\Big(\big(11^*00^*\big)^{3k+1}020\Big)
\cup
\Big(\big(11^*00^*\big)^{3k}022\Big)\\
\cup
\Big(\big(11^*00^*\big)^{3k+1}11^*010\Big)
\cup
\Big(\big(11^*00^*\big)^{3k+2}11^*012\Big)\\
\cup
\Big(\big(11^*00^*\big)^{3k+1}11^*020\Big)
\cup
\Big(\big(11^*00^*\big)^{3k}11^*022\Big);
\end{multline*}
\item $3$, if, and only if, 
the $3$-adic expansion of $n$ is an element of
\begin{multline*}
\bigg(\bigcup_{k\ge0,\,X\in\{00,02\}}\big(11^*00^*\big)^{3k+1}02X\bigg)\\
\cup
\bigg(\bigcup_{k_1+k_2\equiv0\,(\text{\em mod }3),\,X\in\{00,02\}}
\big(11^*00^*\big)^{k_1}0200^*\big(11^*00^*\big)^{k_2}X\bigg)\\
\cup
\bigg(\bigcup_{k_1+k_2\equiv1\,(\text{\em mod }3),\,X\in\{00,02\}}
\big(11^*00^*\big)^{k_1}11^*0200^*\big(11^*00^*\big)^{k_2}X\bigg)\\
\cup
\bigg(\bigcup_{k_1+k_2\equiv2\,(\text{\em mod }3),\,
k_2>0,\,X\in\{00,02\}}
\big(11^*00^*\big)^{k_1}02\big(11^*00^*\big)^{k_2}X\bigg)\\
\cup
\bigg(\bigcup_{k_1+k_2\equiv0\,(\text{\em mod }3),\,
k_2>0,\,X\in\{00,02\}}
\big(11^*00^*\big)^{k_1}11^*02\big(11^*00^*\big)^{k_2}X\bigg)\\
\cup
\bigg(\bigcup_{k_1+k_2\equiv1\,(\text{\em mod }3),\,X\in\{00,02\}}
\big(11^*00^*\big)^{k_1}0200^*\big(11^*00^*\big)^{k_2}11^*X\bigg)\\
\cup
\bigg(\bigcup_{k_1+k_2\equiv2\,(\text{\em mod }3),\,X\in\{00,02\}}
\big(11^*00^*\big)^{k_1}11^*0200^*\big(11^*00^*\big)^{k_2}11^*X\bigg)\\
\cup
\bigg(\bigcup_{k_1+k_2\equiv0\,(\text{\em mod }3),\,X\in\{00,02\}}
\big(11^*00^*\big)^{k_1}02\big(11^*00^*\big)^{k_2}11^*X\bigg)\\
\cup
\bigg(\bigcup_{k_1+k_2\equiv1\,(\text{\em mod }3),\,X\in\{00,02\}}
\big(11^*00^*\big)^{k_1}11^*02\big(11^*00^*\big)^{k_2}11^*X\bigg)\\
\cup
\Big(\big(11^*00^*\big)^{3k+1}011\Big)
\cup
\Big(\big(11^*00^*\big)^{3k+2}021\Big)\\
\cup
\Big(\big(11^*00^*\big)^{3k}11^*011\Big)
\cup
\Big(\big(11^*00^*\big)^{3k+2}11^*021\Big);
\end{multline*}
\item $12$, if, and only if, 
the $3$-adic expansion of $n$ is an element of
\begin{multline*}
\bigg(\bigcup_{k\ge0,\,X\in\{00,02\}}\big(11^*00^*\big)^{3k+2}02X\bigg)\\
\cup
\bigg(\bigcup_{k_1+k_2\equiv1\,(\text{\em mod }3),\,X\in\{00,02\}}
\big(11^*00^*\big)^{k_1}0200^*\big(11^*00^*\big)^{k_2}X\bigg)\\
\cup
\bigg(\bigcup_{k_1+k_2\equiv2\,(\text{\em mod }3),\,X\in\{00,02\}}
\big(11^*00^*\big)^{k_1}11^*0200^*\big(11^*00^*\big)^{k_2}X\bigg)\\
\cup
\bigg(\bigcup_{k_1+k_2\equiv0\,(\text{\em mod }3),\,
k_2>0,\,X\in\{00,02\}}
\big(11^*00^*\big)^{k_1}02\big(11^*00^*\big)^{k_2}X\bigg)\\
\cup
\bigg(\bigcup_{k_1+k_2\equiv1\,(\text{\em mod }3),\,
k_2>0,\,X\in\{00,02\}}
\big(11^*00^*\big)^{k_1}11^*02\big(11^*00^*\big)^{k_2}X\bigg)\\
\cup
\bigg(\bigcup_{k_1+k_2\equiv2\,(\text{\em mod }3),\,X\in\{00,02\}}
\big(11^*00^*\big)^{k_1}0200^*\big(11^*00^*\big)^{k_2}11^*X\bigg)\\
\cup
\bigg(\bigcup_{k_1+k_2\equiv0\,(\text{\em mod }3),\,X\in\{00,02\}}
\big(11^*00^*\big)^{k_1}11^*0200^*\big(11^*00^*\big)^{k_2}11^*X\bigg)\\
\cup
\bigg(\bigcup_{k_1+k_2\equiv1\,(\text{\em mod }3),\,X\in\{00,02\}}
\big(11^*00^*\big)^{k_1}02\big(11^*00^*\big)^{k_2}11^*X\bigg)\\
\cup
\bigg(\bigcup_{k_1+k_2\equiv2\,(\text{\em mod }3),\,X\in\{00,02\}}
\big(11^*00^*\big)^{k_1}11^*02\big(11^*00^*\big)^{k_2}11^*X\bigg)\\
\cup
\Big(\big(11^*00^*\big)^{3k+2}011\Big)
\cup
\Big(\big(11^*00^*\big)^{3k}021\Big)\\
\cup
\Big(\big(11^*00^*\big)^{3k+1}11^*011\Big)
\cup
\Big(\big(11^*00^*\big)^{3k}11^*021\Big);
\end{multline*}
\item $21$, if, and only if, 
the $3$-adic expansion of $n$ is an element of
\begin{multline*}
\bigg(\bigcup_{k\ge0,\,X\in\{00,02\}}\big(11^*00^*\big)^{3k}02X\bigg)\\
\cup
\bigg(\bigcup_{k_1+k_2\equiv2\,(\text{\em mod }3),\,X\in\{00,02\}}
\big(11^*00^*\big)^{k_1}0200^*\big(11^*00^*\big)^{k_2}X\bigg)\\
\cup
\bigg(\bigcup_{k_1+k_2\equiv0\,(\text{\em mod }3),\,X\in\{00,02\}}
\big(11^*00^*\big)^{k_1}11^*0200^*\big(11^*00^*\big)^{k_2}X\bigg)\\
\cup
\bigg(\bigcup_{k_1+k_2\equiv1\,(\text{\em mod }3),\,
k_2>0,\,X\in\{00,02\}}
\big(11^*00^*\big)^{k_1}02\big(11^*00^*\big)^{k_2}X\bigg)\\
\cup
\bigg(\bigcup_{k_1+k_2\equiv2\,(\text{\em mod }3),\,
k_2>0,\,X\in\{00,02\}}
\big(11^*00^*\big)^{k_1}11^*02\big(11^*00^*\big)^{k_2}X\bigg)\\
\cup
\bigg(\bigcup_{k_1+k_2\equiv0\,(\text{\em mod }3),\,X\in\{00,02\}}
\big(11^*00^*\big)^{k_1}0200^*\big(11^*00^*\big)^{k_2}11^*X\bigg)\\
\cup
\bigg(\bigcup_{k_1+k_2\equiv1\,(\text{\em mod }3),\,X\in\{00,02\}}
\big(11^*00^*\big)^{k_1}11^*0200^*\big(11^*00^*\big)^{k_2}11^*X\bigg)\\
\cup
\bigg(\bigcup_{k_1+k_2\equiv2\,(\text{\em mod }3),\,X\in\{00,02\}}
\big(11^*00^*\big)^{k_1}02\big(11^*00^*\big)^{k_2}11^*X\bigg)\\
\cup
\bigg(\bigcup_{k_1+k_2\equiv0\,(\text{\em mod }3),\,X\in\{00,02\}}
\big(11^*00^*\big)^{k_1}11^*02\big(11^*00^*\big)^{k_2}11^*X\bigg)\\
\cup
\Big(\big(11^*00^*\big)^{3k}011\Big)
\cup
\Big(\big(11^*00^*\big)^{3k+1}021\Big)\\
\cup
\Big(\big(11^*00^*\big)^{3k+2}11^*011\Big)
\cup
\Big(\big(11^*00^*\big)^{3k+1}11^*021\Big).
\end{multline*}
\item In all other cases, this coefficient is divisible by $27$. 
\end{enumerate}
\end{proposition}

If the above proposition is specialised to modulus~9, then one
obtains the following result.

\begin{corollary} \label{cor:Psi-5}
Let $s$ denote the $3$-adic expansion of the non-negative integer $n$.
The coefficient of $z^n$ in the series $\Psi^5(z)$, when reduced
modulo~$9$, equals
\begin{enumerate} 
\item $1$, if, and only if, 
$s$ is an element of
\begin{multline*} 
\{0\}\cup
\bigcup_{k\ge0}\big(00^*11^*\big)^{3k}000
\cup
\bigcup_{k\ge0}\big(00^*11^*\big)^{3k}002\\
\cup
\bigcup_{k\ge0}\big(00^*11^*\big)^{3k+2}00
\cup
\bigcup_{k\ge0}\big(00^*11^*\big)^{3k+2}02;
\end{multline*}
\item $4$, if, and only if, 
$s$ is an element of
$$
\bigcup_{k\ge0}\big(00^*11^*\big)^{3k+1}000
\cup
\bigcup_{k\ge0}\big(00^*11^*\big)^{3k+1}002
\cup
\bigcup_{k\ge0}\big(00^*11^*\big)^{3k+3}00
\cup
\bigcup_{k\ge0}\big(00^*11^*\big)^{3k+3}02;
$$
\item $7$, if, and only if, 
$s$ is an element of
$$
\bigcup_{k\ge0}\big(00^*11^*\big)^{3k+2}000
\cup
\bigcup_{k\ge0}\big(00^*11^*\big)^{3k+2}002
\cup
\bigcup_{k\ge0}\big(00^*11^*\big)^{3k+1}00
\cup
\bigcup_{k\ge0}\big(00^*11^*\big)^{3k+1}02;
$$
\item $2$, if, and only if, 
$s$ is an element of
$$
\bigcup_{k\ge0}\big(00^*11^*\big)^{3k+1}001
\cup
\bigcup_{k\ge0}\big(00^*11^*\big)^{3k+3}01;
$$
\item $5$, if, and only if, 
$s$ is an element of
$$
\bigcup_{k\ge0}\big(00^*11^*\big)^{3k}001
\cup
\bigcup_{k\ge0}\big(00^*11^*\big)^{3k+2}01;
$$
\item $8$, if, and only if, 
$s$ is an element of
$$
\bigcup_{k\ge0}\big(00^*11^*\big)^{3k+2}001
\cup
\bigcup_{k\ge0}\big(00^*11^*\big)^{3k+1}01;
$$
\item $6$, if, and only if, 
the $3$-adic expansion of $n$ is an element of
\begin{multline*}
\bigg(\bigcup_{k\ge0}\big(11^*00^*\big)^{k}0201\bigg)
\cup
\bigg(\bigcup_{k_1,k_2\ge0}
\big(11^*00^*\big)^{k_1}0200^*\big(11^*00^*\big)^{k_2}01\bigg)\\
\cup
\bigg(\bigcup_{k_1,k_2\ge0}
\big(11^*00^*\big)^{k_1}11^*0200^*\big(11^*00^*\big)^{k_2}01\bigg)\\
\cup
\bigg(\bigcup_{k_1\ge0,\,k_2>0}
\big(11^*00^*\big)^{k_1}02\big(11^*00^*\big)^{k_2}01\bigg)\\
\cup
\bigg(\bigcup_{k_1\ge0,\,k_2>0}
\big(11^*00^*\big)^{k_1}11^*02\big(11^*00^*\big)^{k_2}01\bigg)\\
\cup
\bigg(\bigcup_{k_1,k_2\ge0}
\big(11^*00^*\big)^{k_1}0200^*\big(11^*00^*\big)^{k_2}11^*01\bigg)\\
\cup
\bigg(\bigcup_{k_1,k_2\ge0}
\big(11^*00^*\big)^{k_1}11^*0200^*\big(11^*00^*\big)^{k_2}11^*01\bigg)\\
\cup
\bigg(\bigcup_{k_1,k_2\ge0}
\big(11^*00^*\big)^{k_1}02\big(11^*00^*\big)^{k_2}11^*01\bigg)\\
\cup
\bigg(\bigcup_{k_1,k_2\ge0}
\big(11^*00^*\big)^{k_1}11^*02\big(11^*00^*\big)^{k_2}11^*01\bigg)\\
\cup
\Big(\big(11^*00^*\big)^{k}010\Big)
\cup
\Big(\big(11^*00^*\big)^{k}012\Big)
\cup
\Big(\big(11^*00^*\big)^{k}020\Big)
\cup
\Big(\big(11^*00^*\big)^{k}022\Big)\\
\cup
\Big(\big(11^*00^*\big)^{k}11^*010\Big)
\cup
\Big(\big(11^*00^*\big)^{k}11^*012\Big)\\
\cup
\Big(\big(11^*00^*\big)^{k}11^*020\Big)
\cup
\Big(\big(11^*00^*\big)^{k}11^*022\Big);
\end{multline*}
\item $3$, if, and only if, 
the $3$-adic expansion of $n$ is an element of
\begin{multline*}
\bigg(\bigcup_{k\ge0,\,X\in\{00,02\}}\big(11^*00^*\big)^{k}02X\bigg)\\
\cup
\bigg(\bigcup_{k_1,k_2\ge0,\,X\in\{00,02\}}
\big(11^*00^*\big)^{k_1}0200^*\big(11^*00^*\big)^{k_2}X\bigg)\\
\cup
\bigg(\bigcup_{k_1,k_2\ge0,\,X\in\{00,02\}}
\big(11^*00^*\big)^{k_1}11^*0200^*\big(11^*00^*\big)^{k_2}X\bigg)\\
\cup
\bigg(\bigcup_{k_1,k_2\ge0,\,k_2>0,\,X\in\{00,02\}}
\big(11^*00^*\big)^{k_1}02\big(11^*00^*\big)^{k_2}X\bigg)\\
\cup
\bigg(\bigcup_{k_1,k_2\ge0,\,k_2>0,\,X\in\{00,02\}}
\big(11^*00^*\big)^{k_1}11^*02\big(11^*00^*\big)^{k_2}X\bigg)\\
\cup
\bigg(\bigcup_{k_1,k_2\ge0,\,X\in\{00,02\}}
\big(11^*00^*\big)^{k_1}0200^*\big(11^*00^*\big)^{k_2}11^*X\bigg)\\
\cup
\bigg(\bigcup_{k_1,k_2\ge0,\,X\in\{00,02\}}
\big(11^*00^*\big)^{k_1}11^*0200^*\big(11^*00^*\big)^{k_2}11^*X\bigg)\\
\cup
\bigg(\bigcup_{k_1,k_2\ge0,\,X\in\{00,02\}}
\big(11^*00^*\big)^{k_1}02\big(11^*00^*\big)^{k_2}11^*X\bigg)\\
\cup
\bigg(\bigcup_{k_1,k_2\ge0,\,X\in\{00,02\}}
\big(11^*00^*\big)^{k_1}11^*02\big(11^*00^*\big)^{k_2}11^*X\bigg)\\
\cup
\Big(\big(11^*00^*\big)^{k}011\Big)
\cup
\Big(\big(11^*00^*\big)^{k}021\Big)
\cup
\Big(\big(11^*00^*\big)^{k}11^*011\Big)
\cup
\Big(\big(11^*00^*\big)^{k}11^*021\Big);
\end{multline*}
\item In all other cases, this coefficient is divisible by $9$. 
\end{enumerate}
\end{corollary}

%\end{comment}

\end{document}